\theoremstyle{plain}
\newtheorem{theorem}{Theorem}[section]
\newtheorem{lemma}[theorem]{Lemma}
\newtheorem{prop}[theorem]{Proposition}
\newtheorem{proposition}[theorem]{Proposition}
\newtheorem{conjecture}[theorem]{Conjecture}
\newtheorem{fact}[theorem]{Fact}
\newtheorem{corollary}[theorem]{Corollary}
\theoremstyle{definition}
\newtheorem{definition}[theorem]{Definition}
\newtheorem{defn}[theorem]{Definition}
\newtheorem{notation}[theorem]{Notation}
\newtheorem{remark}[theorem]{Remark}
\newtheorem{remarks}[theorem]{Remarks}
\newtheorem{question}[theorem]{Question}
\newcommand{\abar}{{\ensuremath{\bar{a}}}}
\newcommand{\xbar}{{\ensuremath{\bar{x}}}}
\DeclareMathOperator{\rk}{rk}
\DeclareMathOperator{\acl}{acl}   % algebraic closure operator
\DeclareMathOperator{\loc}{Loc}   % (algebraic) locus
\newcommand{\Qab}{\Q^{\mathrm{ab}}}  %maximal abelian extension of rationals
\newcommand{\alg}{\ensuremath{\mathrm{alg}}} %as index for alg closure
\DeclareMathOperator{\ldim}{ldim}  %linear dimension
\DeclareMathOperator{\Mat}{Mat}  % Matrices
\DeclareMathOperator{\ecl}{ecl} % exponential algebraic closure
\newcommand{\N}{\ensuremath{\mathbb{N}}}
\newcommand{\Z}{\ensuremath{\mathbb{Z}}}
\newcommand{\Q}{\ensuremath{\mathbb{Q}}}
\newcommand{\R}{\ensuremath{\mathbb{R}}}
\newcommand{\C}{\ensuremath{\mathbb{C}}}
\newcommand{\Rexp}{\ensuremath{\mathbb{R}_{\mathrm{exp}}}}
\newcommand{\Cexp}{\ensuremath{\mathbb{C}_{\mathrm{exp}}}}
\newcommand{\Cat}{\ensuremath{\mathcal{C}}} % category C
\newcommand{\Loo}{\ensuremath{L_{\omega_1,\omega}}} 
\newcommand{\Looq}{\ensuremath{\Loo(Q)}}
\newcommand{\ga}{\ensuremath{\mathbb{G}_\mathrm{a}}}   %additive group of a field
\newcommand{\gm}{\ensuremath{\mathbb{G}_\mathrm{m}}}  %mult group of a field
\newcommand{\GL}{\ensuremath{\mathrm{GL}}}  %general linear group
\renewcommand{\phi}{\varphi}
\renewcommand{\le}{\ensuremath{\leqslant}}
\renewcommand{\ge}{\ensuremath{\geqslant}}
\newcommand{\class}[2]{\ensuremath{\left\{ #1 \,\left|\, #2 \right.\right\}}}
\newcommand{\iso}{\cong}
\newcommand{\into}{\hookrightarrow}
\newcommand{\onto}{\twoheadrightarrow}
\newcommand{\subs}{\subseteq} % diagrams package uses subset for hook
\newcommand{\sups}{\supseteq} % superset
\newcommand{\minus}{\ensuremath{\smallsetminus}}
\newcommand{\powerset}{\ensuremath{\mathcal{P}}} % powerset operator
\newcommand{\strong}{\ensuremath{\lhd}} % strong embedding - better
\newcommand{\nstrong}{\ensuremath{\not\kern-4pt\lhd\;}} % nonstrong embedding
\newcommand{\gen}[1]{\ensuremath{\left\langle #1 \right\rangle}} 
\newcommand{\hull}[1]{\ensuremath{\lceil #1\rceil}}
\newcommand{\cross}{\ensuremath{\times}}
\newbox\noforkbox \newdimen\forklinewidth
\noforkbox\hbox{\lower 2pt\box1\lower
2pt\box0\relax}
\def\unionstick{\mathop{\copy\noforkbox}\limits}
\def\nonfork_#1{\unionstick_{\textstyle #1}}
\newbox\doesforkbox
\doesforkbox\hbox{\lower 2pt\box1 \lower
2pt\box2\lower2pt\box0\relax}
\def\nunionstick{\mathop{\copy\doesforkbox}\limits}
\def\fork_#1{\nunionstick_{\textstyle #1}}
\newcommand{\ra}[3]{\ensuremath{#1 \stackrel{#2}{\longrightarrow} #3}}
\newcommand{\leteq}{\mathrel{\mathop:}=}
\newcommand{\DEQ}{(DEQ)}   % Name for old case (D). Could be (FT) or full torsion.
\newcommand{\DE}{\DEQ}
\newcommand{\struct}[1]{\ensuremath{\left\langle #1 \right\rangle}}
\providecommand{\Cexp}{\mathbb{C}_{\exp}}
\newcommand{\B}{\ensuremath{\mathbb{B}}}
\newcommand{\trd}{\operatorname{trd}}
\newcommand{\td}{\trd}
\newcommand{\pd}{\operatorname{\eta}} % predimension
\providecommand{\C}{\mathbb{C}}
\providecommand{\R}{\mathbb{R}}
\providecommand{\Q}{\mathbb{Q}}
\providecommand{\Z}{\mathbb{Z}}
\providecommand{\N}{\mathbb{N}}
\providecommand{\G}{\mathbb{G}}
\providecommand{\Ga}{\mathbb{G}_{\text{a}}}
\providecommand{\Gm}{\mathbb{G}_{\text{m}}}
\providecommand{\OO}{\ensuremath{\mathcal{O}}}
\providecommand{\powerset}{\mathcal{P}}
\providecommand{\Union}{\bigcup}
\newcommand{\isom}{\cong}
\newcommand{\tuple}[1]{\langle #1 \rangle}
\newcommand{\tp}{\operatorname{tp}}
\newcommand{\qftp}{\operatorname{qftp}}
\providecommand{\acl}{\operatorname{acl}}
\providecommand{\Gal}{\operatorname{Gal}}
\newcommand{\theorystyle}[1]{\operatorname{#1}}
\newcommand{\ACF}{\theorystyle{ACF}}
\providecommand{\cl}{\operatorname{cl}}
\providecommand{\ecl}{\operatorname{ecl}}
\providecommand{\G}{\mathbb{G}}
\providecommand{\End}{\operatorname{End}}
\providecommand{\Tor}{\operatorname{Tor}}
\newcommand{\epar}{\ar@{->>}}
\newcommand{\embar}{\ar@{^{(}->}}
\newcommand{\lstrar}{\ar@{^{(}->}_{\triangleleft}}
\newcommand{\hstrar}{\ar@{^{(}->}^{\triangleleft}}
\newcommand{\hstrarl}{\ar@{_{(}->}_{\triangleleft}}
\newcommand{\epexar}{\ar@{-->>}}
\newcommand{\embexar}{\ar@{^{(}-->}}
\newcommand{\lstrexar}{\ar@{^{(}-->}_{\triangleleft}}
\newcommand{\hstrexar}{\ar@{^{(}-->}^{\triangleleft}}
\newcommand{\Cc}{\mathcal{C}}
\newcommand{\Cfg}{\Cc^{\mathrm{fg}}}
\newcommand{\Ccount}{\Cc^{\le\aleph_0}}
\newcommand{\Cfullcount}{\Cc^{\mathrm{full},\le\aleph_0}}
\newcommand{\Cfull}{\Cc^{\mathrm{full}}}
\newcommand{\Cfgfull}{\Cc^{\mathrm{fg\text{-}full}}}
\newcommand{\Cfullfg}{\Cfgfull}
\newcommand{\Calg}{\Cc^{\mathrm{alg}}}
\newcommand{\Ctrans}{\mathcal{C}_{\Gamma\text{-}\mathrm{tr}}}
\newcommand{\Ctrfg}{\mathcal{C}_{\Gamma\text{-}\mathrm{tr}}^{\mathrm{fg}}}
\newcommand{\Ctransfull}{\Ctrans^{\mathrm{full}}}
\newcommand{\K}{\ensuremath{\mathcal{K}}}
\newcommand{\Afull}{A^{\mathrm{full}}}
\newcommand{\Bfull}{B^{\mathrm{full}}}
\DeclareMathOperator{\Aut}{Aut}
\newcommand{\bk}{\ensuremath{K_0}} %the base number field over which we work
\providecommand{\er}{\mathcal{O}} % the endomorphism ring
\providecommand{\ek}{{k_\er}} % the division ring of the endomorphism ring
\newcommand{\kO}{\ek}
\newcommand{\Fbase}{{F_\mathrm{base}}} %The base Gamma-field over which we build our model
\newcommand{\MM}{\ensuremath{\mathbb{M}}}  %The continuum-sized model
\newcommand{\Mtr}{\ensuremath{M_{\Gamma\text{-}\mathrm{tr}}}}  %The Fraisse limit of G-trans category
\providecommand{\ldim}{\operatorname{ldim}}
\providecommand{\trd}{\operatorname{trd}}
\newcommand{\closed}{\strong_{\cl}} % Gamma-closed submodel
\renewcommand{\Cat}{\ensuremath{\mathcal{K}}}
\newcommand{\A}{\ensuremath{\mathbb{A}}} % affine space
\providecommand{\Q}{\mathbb{Q}}
\providecommand{\Qalg}{\Q^{\mathrm{alg}}}
\providecommand{\maps}{\to} % ->
\providecommand{\isom}{\cong}
\providecommand{\End}{\operatorname{End}}
\providecommand{\Tor}{\operatorname{Tor}}
\providecommand{\Gal}{\operatorname{Gal}}
\providecommand{\Zar}{\operatorname{Zar}}
\renewcommand{\hat}{\widehat}
\newcommand{\Lqe}{\ensuremath{L^{\mathrm{QE}}}}
\newcommand{\Gammadim}{\operatorname{\Gamma dim}}
\newcommand{\Gdim}{\Gammadim}
\newcommand{\Fraisse}{Fra\"iss\'e}
\newcommand{\ECFskccp}{\ensuremath{\mathrm{ECF}_\mathrm{SK,CCP}}}
\newcommand{\PCFskccp}{\ensuremath{\mathrm{\wp CF}_\mathrm{SK,CCP}}}
\newcommand{\GCF}{\ensuremath{\mathrm{\Gamma CF}}} % \Gamma-closed fields
\newcommand{\GCFccp}{\ensuremath{\GCF_{\mathrm{CCP}}}} % Class of \Gamma-closed fields with ccp
\newcommand{\Gammacl}{\operatorname{\Gamma cl}}
\newcommand{\Gcl}{\Gammacl}
\newcommand{\Tate}{\ensuremath{\widehat{T}}}  % product of Tate modules
\newcommand{\strongembed}{\xhookrightarrow{\strong}}  % strong embedding arrow
\title%[Pseudo-exponential maps]
{Pseudo-exponential maps, variants, and quasiminimality}
\date{Version 3.0, March 3 2017}
\author{Martin Bays \and Jonathan Kirby} 
\thanks{JK was supported by EPSRC grant EP/L006375/1}
\address{Martin Bays, Institut f\"ur Mathematische Logik und Grundlagenforschung,
Fachbereich Mathematik und Informatik,
Universit\"at M\"unster,
Einsteinstrasse 62,
48149 M\"unster, Germany}
\email{m.bays@math.uni-muenster.de}
\address{Jonathan Kirby, School of Mathematics, University of East Anglia, Norwich Research Park, Norwich NR4 7TJ, UK}
\email{jonathan.kirby@uea.ac.uk}
\keywords{Exponential fields, Predimension, Categoricity, Schanuel
Conjecture, Ax-Schanuel, Zilber-Pink, Quasiminimality, Kummer Theory}
\subjclass[2010]{Primary: 03C65; Secondary: 12L12, 03C75}
\begin{document}

\begin{abstract}

We give a construction of quasiminimal fields equipped with pseudo-analytic maps, generalising Zilber's pseudo-exponential function. In particular we construct pseudo-exponential maps of simple abelian varieties, including pseudo-$\wp$-functions for elliptic curves. We show that the complex field with the corresponding analytic function is isomorphic to the pseudo-analytic version if and only the appropriate version of Schanuel's conjecture is true and the corresponding version of the strong exponential-algebraic closedness property holds. 
Moreover, we relativize the construction to build a model over a fairly arbitrary countable subfield and deduce that the complex exponential field is quasiminimal if it is exponentially-algebraically closed. This property asks only that the graph of exponentiation have non-trivial intersection with certain algebraic varieties but does not require genericity of these points. Furthermore Schanuel's conjecture is not required as a condition for quasiminimality.

%    We give a direct abstract account of the existence and categorical axiomatisation of Zilber's pseudo-exponentiation, and consider some variations on the theme. We consider pseudo-exponential maps  of abelian varieties, yielding in particular pseudo-$\wp$-functions for elliptic curves. Moreover, we consider relativising the  construction to a countable exponentially closed subfield; this allows us to show that quasiminimality of the complex exponential  field, a key consequence of Zilber's pseudo-exponentiation  conjecture, already results from a significantly weakened form which asks only that the graph of exponentiation have non-trivial intersection with certain algebraic varieties - requiring neither genericity of these points nor Schanuel's conjecture.
\end{abstract}

\maketitle

\tableofcontents

%Introduction
% !TEX root =  PEM2_1.tex

\section{Introduction}

\subsection{Exponential fields}
The field $\C$ of complex numbers is well-known to be \emph{strongly minimal}, that is, any subset of $\C$ definable in the ring language is either finite or cofinite. Consequently the model theory of $\C$ is very tame: there is a very well-understood behaviour of the models (one model of each uncountable cardinality, known as uncountable categoricity) and of the definable sets (they have finite Morley rank and we can understand them geometrically in terms of  algebraic varieties). The other most important mathematical field, the field $\R$ of real numbers, is \emph{o-minimal} which means that although the class of models is not well-behaved (not classifiable) there is a very good geometric understanding of the definable sets (they are the semialgebraic sets). Remarkably, Wilkie showed that when the real exponential function $e^x$ is adjoined, the structure $\Rexp$ is still o-minimal \cite{Wilkie96}. Adjoining the complex exponential function $e^z$ to $\C$ gives the structure $\Cexp$ which cannot be well-behaved in terms of the class of models or the definable sets because it interprets the ring $\Z$. However, Zilber suggested that in the model $\Cexp$ itself, the influence of $\Z$ might only extend to the countable subsets of $\C$. He made the following conjecture.

\begin{conjecture}[Zilber's weak quasiminimality conjecture]\label{weak qm conj}
The complex exponential field $\Cexp = \langle \C;+,\cdot,\exp\rangle$ is \emph{quasiminimal}, that is, every subset of $\C$ definable in $\Cexp$ is either countable or co-countable.
\end{conjecture}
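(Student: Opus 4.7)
The plan is to follow Zilber's strategy: construct a candidate \emph{pseudo-exponential field} $\B$ satisfying an abstract axiomatisation (a Schanuel-type predimension inequality, strong exponential-algebraic closedness, and a countable closure property), prove that $\B$ is quasiminimal by model-theoretic means, and then identify $\Cexp$ with $\B$ up to isomorphism. Quasiminimality of $\Cexp$ follows immediately from the last step.

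For the construction of $\B$, I would perform a Hru\-sh\-ov\-ski-style amalgamation using the predimension $\delta(X/F) = \trd(X,\exp(X)/F) - \ldim_\Q(X/F)$ on finitely generated partial exponential fields. Amalgamating partial $E$-field extensions with $\delta \ge 0$ along strong embeddings produces a countable \Fraisse{} limit, and iterated use of strong exponential-algebraic closedness (adjoining generic solutions to suitable systems $V \cap \Gamma_\exp$ for rotund, free algebraic varieties $V$) inflates this to a structure $\B$ of cardinality $2^{\aleph_0}$. The exponential-algebraic closure operator $\ecl$ arising from $\delta$ is then a pregeometry with countable closures of finite sets, definable in $\Looq$. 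Quasiminimality of $\B$ reduces via the standard back-and-forth argument to a quasi-minimal excellence / categoricity statement: any two $\ecl$-bases of the same cardinality are conjugate under $\Aut(\B)$. The key technical input here is an amalgamation property over algebraically closed subfields, which in turn rests on a Kummer-theoretic/thumbtack-lemma computation ensuring uniqueness of the roots-of-unity-twisted types.

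The central difficulty is identifying $\Cexp$ with $\B$. Classically this requires both Schanuel's conjecture (to guarantee $\delta \ge 0$ in $\Cexp$) and strong exponential-algebraic closedness of $\Cexp$ (to match the saturation of $\B$); verifying either in $\Cexp$ appears to demand deep arithmetic/analytic input that is not currently available. The main twist I would pursue, and the one advertised in the abstract, is to \emph{relativize} the construction over a countable subfield $\Fbase \subs \C$ chosen so that a strong substructure of $\Cexp$ of full transcendence degree sits inside it as a partial $E$-field. Then it suffices to show that $\Cexp$ is merely exponentially-algebraically closed (solutions exist, with no genericity requirement) and that the relativized construction produces a structure containing $\Cexp$ while remaining quasiminimal; Schanuel's conjecture is avoided because the predimension inequality only needs to hold \emph{over} $\Fbase$. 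The remaining obstacle, and the one I expect to be genuinely hard, is establishing exponential-algebraic closedness of $\Cexp$ itself: showing that every rotund, additively and multiplicatively free algebraic variety in $\C^{2n}$ meets the graph of $\exp$.
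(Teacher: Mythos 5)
The statement you are asked to prove is a \emph{conjecture}, and neither the paper nor your proposal proves it. Your own final sentence concedes the decisive gap: you reduce quasiminimality of $\Cexp$ to exponential-algebraic closedness of $\Cexp$, and then acknowledge that establishing exponential-algebraic closedness "appears genuinely hard" and is not done. That is exactly the status in the paper: Theorem~\ref{eac implies qm} is a conditional result ("if $\Cexp$ is exponentially-algebraically closed then it is quasiminimal"), and Conjecture~\ref{weak qm conj} remains open. So as a proof of the stated conjecture, your proposal has an irreducible missing step, and you should present it as a conditional reduction rather than a proof.

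Even as a reconstruction of the conditional reduction, your sketch omits the key new idea that makes it work. Saying "it suffices to show that $\Cexp$ is merely exponentially-algebraically closed ... and that the relativized construction produces a structure containing $\Cexp$ while remaining quasiminimal" buries the whole difficulty in the second clause. Exponential-algebraic closedness only gives \emph{existence} of points on rotund free varieties; the saturation argument needed to identify $\Cexp$ with the \Fraisse{} limit requires points that are $\Q$-linearly independent over the base and the given parameters (the "strong" version). The paper discharges this via two ingredients you do not mention: (i) the base is taken to be a countable $\Gcl$-closed subfield $K$ (whose existence and countability follow unconditionally from Ax's theorem, Proposition~\ref{Gcl=Gcl'}, which also yields the countable closure property and the relative Schanuel property), and the amalgamation is restricted to purely $\Gamma$-transcendental extensions of $K$; and (ii) the horizontal semiabelian weak Zilber-Pink theorem (Theorem~\ref{hwCIT}), used in Proposition~\ref{GSGC implies GGC} to prove unconditionally that over such a closed base, generic $\Gamma$-closedness already implies generic \emph{strong} $\Gamma$-closedness. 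Without step (ii) your reduction from "strongly exponentially-algebraically closed" to "exponentially-algebraically closed" does not go through.
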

A slightly stronger version of the conjecture which avoids reference to definable sets is that every automorphism-invariant subset is countable or co-countable. As far as we are aware, all known approaches to the conjecture would give this stronger result anyway. If the conjecture is true then the solution sets of exponential polynomial equations, which we can call complex exponential varieties, would be expected to  have good geometric properties similar to those of algebraic varieties, provided we avoid some exceptional cases like $\Z$. If the conjecture is false, another possibility is that $\R$ is definable as a subset of $\C$. The field $\R$ with $\Z$ as a definable subset is so-called \emph{second-order arithmetic}, and the definable sets are extremely wild, with no geometric properties in general.

As one approach to his conjecture, Zilber \cite{Zilber00fwpe}, \cite{Zilber05peACF0} showed how to construct a quasiminimal exponential field we call $\B$ using a variant of Hrushovski's predimension method from \cite{Hru93}. He called $\B$ a \emph{pseudo-exponential field} with the idea that the exponential map is a \emph{pseudo-analytic} function.

Zilber's approach was to prove that a certain list of axioms $\ECFskccp$ in the infinitary logic $\Looq$ behaves in an analogous way to a strongly minimal first-order theory. In particular, all its models are quasiminimal and it is uncountably categorical.
\begin{theorem}\label{main exp theorem}
Up to isomorphism there is exactly one model of the axioms $\ECFskccp$ of each uncountable cardinality, and it is quasiminimal.
\end{theorem}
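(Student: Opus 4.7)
The plan is to establish this theorem via the quasiminimal excellence framework applied to the pregeometry coming from the predimension. First I would introduce Zilber's predimension function
\[
\delta(\bar{a}) = \trd(\bar{a}, \exp(\bar{a})) - \ldim_\Q(\bar{a})
\]
on finite tuples of a model of $\ECFskccp$. The Schanuel axiom (SK) is precisely the statement $\delta(\bar{a}) \ge 0$ for all $\bar{a}$, so this is well-defined and non-negative. This predimension defines a self-sufficient (strong) substructure relation $A \strong B$ via $\delta(\bar{a}/A) \ge 0$ for all finite $\bar{a} \subset B$, and a corresponding closure operator $\ecl$ (exponential-algebraic closure): $a \in \ecl(X)$ iff $a$ lies in a finite strong extension of $\gen{X}$ of predimension $0$. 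Standard submodularity of $\delta$ then makes $\ecl$ a pregeometry, and CCP guarantees that $\ecl(X)$ is countable whenever $X$ is countable.

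Next I would verify the amalgamation/extension properties needed for quasiminimal excellence. The key ingredient is strong exponential-algebraic closedness: given a model $\MM$ and a finite strong extension $A \strong B$ inside some universal domain, one can realise $B$ over $A$ inside $\MM$. This should be used to run a back-and-forth argument: given two models $\MM_1, \MM_2$ of $\ECFskccp$ of the same uncountable cardinality $\kappa$, one starts with a partial isomorphism between $\ecl$-bases (which have the same cardinality $\kappa$ by a Löwenheim--Skolem-style counting argument using CCP) and extends step-by-step through finite strong tuples, each extension being possible thanks to strong exponential-algebraic closedness on both sides and compatible with the quantifier-free type in $\Looq$ determined by the predimension structure.

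The main obstacle, and the most delicate step, is verifying that types over strong countable substructures are uniquely determined by the natural invariants — effectively the excellence condition in the sense of quasiminimal classes. One needs that if $A_1,\ldots,A_n \strong \MM$ are finite-dimensional with intersection $A_0$, then the isomorphism type of $\ecl(A_1 \cup \cdots \cup A_n)$ is determined by the isomorphism types of the $\ecl(A_i)$. Here the Thumbtack Lemma (Kummer-theoretic analysis of the division points of $\exp$) together with Schanuel plays the crucial role, ensuring that no hidden algebraic dependence is created over unions of independent strong sets. Once this is in place, the abstract quasiminimal excellence machinery (à la Zilber, later streamlined by Kirby--Bays--Hart--Hyttinen--Kesälä) delivers both categoricity in every uncountable cardinal and the fact that $\ecl$ agrees with the model-theoretic algebraic closure in the $\Looq$ sense.

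Finally, quasiminimality itself follows from the pregeometry structure: any $\Looq$-definable (equivalently, automorphism-invariant) subset $D$ of a model $\MM$ is controlled by a countable parameter set $X$; by CCP, $\ecl(X)$ is countable. By the homogeneity established in the back-and-forth step, any two elements of $\MM \setminus \ecl(X)$ are automorphic over $X$, so $D$ is either contained in $\ecl(X)$ or contains its complement. This yields both clauses of the theorem simultaneously.
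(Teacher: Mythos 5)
Your outline is essentially Zilber's original strategy, and it is a genuinely different route from the one this paper takes. You propose to verify excellence directly (determination of the type of $\ecl(A_1\cup\dots\cup A_n)$ over an independent system from the types of the $\ecl(A_i)$) using the Thumbtack Lemma, and then invoke the quasiminimal-excellence machinery. The paper deliberately avoids verifying excellence at all: by the result of Bays--Hart--Hyttinen--Kes\"al\"a--Kirby, excellence follows automatically from the remaining quasiminimal pregeometry axioms QM1--QM5, which are checked only on a countable model. The Kummer theory therefore lands in a different place in the paper: it is used to prove the existence of \emph{good bases} for finitely generated kernel-preserving extensions (Proposition~\ref{good bases exist}), which yields $\aleph_0$-stability (only countably many finitely generated strong extensions) and the fact that quantifier-free $\Lqe$-types determine automorphism orbits; these feed the amalgamation-category axioms and QM1$'$, QM4, QM5a, QM5b. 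What the paper's route buys is that the hardest multi-variable Kummer-theoretic argument (over unions of several independent strong sets) disappears entirely; what your route would buy, if completed, is independence from the abstract theorem of \cite{BHHKK14} --- but at the cost of exactly the step where the published proof of \cite{Zilber05peACF0} had gaps, so "the Thumbtack Lemma together with Schanuel ensures no hidden dependence over unions of independent strong sets" cannot be left as a one-line appeal.

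Two further points you should address. First, the theorem asserts \emph{existence} of a model in each uncountable cardinality, and your proposal only compares two given models; the paper spends most of its effort constructing the canonical model (\Fraisse\ amalgamation of strong extensions over the base $\Fbase$ with standard kernel, including the existence and uniqueness of full closures), and that construction is also what produces the countable model on which the QM axioms are verified. Second, a back-and-forth "through finite strong tuples" only builds isomorphisms between countable closed substructures; to exhaust models of cardinality $\kappa>\aleph_0$ one must extend isomorphisms across closures of uncountable sets, and that is precisely what excellence (or the BHHKK14 substitute) licenses --- so this step cannot be presented as a routine back-and-forth but must be routed explicitly through the abstract categoricity theorem for quasiminimal classes.
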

This theorem appears in \cite{Zilber05peACF0}. Some gaps in the proof were filled in the unpublished note \cite{BKfix}, which this paper supersedes. In this paper we give a new construction of $\B$ and hence a complete proof of Theorem~\ref{main exp theorem}.  The theorem suggests a stronger form of the quasiminimality conjecture which evidently implies Conjecture~\ref{weak qm conj}.
\begin{conjecture}[Strong quasiminimality conjecture]\label{strong qm conj}
$\Cexp$ is isomorphic to the unique model $\B$ of $\ECFskccp$ of cardinality continuum.
\end{conjecture}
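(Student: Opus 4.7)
The plan is to reduce Conjecture~\ref{strong qm conj} to the statement that $\Cexp$ is a model of the $\Looq$-axiom system $\ECFskccp$: once that is established, the categoricity clause of Theorem~\ref{main exp theorem} at cardinality $2^{\aleph_0}$ immediately gives $\Cexp \iso \B$. So the whole task is to verify, axiom by axiom, that $\Cexp \models \ECFskccp$.

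The axioms of $\ECFskccp$ split naturally into four groups. The algebraic axioms (\emph{ELA}-type) say that $\exp$ is a surjective homomorphism from $(\C,+)$ to $(\C^\times,\cdot)$ with cyclic kernel and that the underlying field is algebraically closed of characteristic zero; for $\Cexp$ these are classical, with kernel generated by $\tau = 2\pi i$. The countable closure property (\emph{CCP}) — that the exponential-algebraic closure of a countable subset of $\C$ is countable — is proved by Zilber as a consequence of Khovanski\u{\i}'s theorem on the finiteness of connected components of real exponential-algebraic varieties. This leaves the two substantive axioms: (SK), the predimension inequality $\trd_\Q(\bar z, \exp(\bar z)) \ge \ldim_\Q(\bar z)$, which is precisely Schanuel's conjecture over $\C$; and (SEAC), strong exponential-algebraic closedness, asserting that every rotund, free algebraic variety over a finitely generated subfield contains a generic point of the graph of $\exp$.

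These two obstacles are of entirely different flavours. Verifying (SK) for $\Cexp$ is the transcendental-number-theoretic problem of Schanuel's conjecture itself; no general technique is known, only special cases such as Lindemann--Weierstrass and Baker's theorem, so any realistic attack must be \emph{conditional} on Schanuel. Verifying (SEAC) is by contrast an analytic existence statement, in principle susceptible to complex-analytic methods: the natural approach is to produce points on the graph of $\exp$ inside a sufficiently large polydisc by a global Rouch\'e- or topological-degree-type argument, using rotundity to control the growth and winding at the boundary; partial results of this kind are due to Marker, Mantova, D'Aquino--Macintyre--Terzo, Gallinaro and others for restricted classes of varieties.

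Thus the proposed proof runs: prove (SEAC) by a tailored complex-analytic argument for rotund free varieties, assume (SK) as Schanuel's conjecture, combine with the classical ELA properties and Zilber's derivation of CCP from Khovanski\u{\i}, and finally invoke Theorem~\ref{main exp theorem} at cardinality $2^{\aleph_0}$. The principal obstacle is unquestionably (SK): any fully unconditional proof of Conjecture~\ref{strong qm conj} must first resolve Schanuel's conjecture, so a realistic near-term target is the conditional statement ``if Schanuel holds and (SEAC) holds for $\Cexp$, then $\Cexp\iso\B$'', which is exactly the form stated in the abstract and which the present paper establishes.
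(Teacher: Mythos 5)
This statement is a conjecture, and the paper offers no proof of it; what the paper actually establishes is exactly the conditional reduction you describe, namely Theorem~\ref{SC + SEAC}: the two algebraic axioms are classical for $\Cexp$, the countable closure property is proved unconditionally (in this paper via Ax's theorem, Proposition~\ref{Gcl=Gcl'}, rather than Khovanskii's theorem as in Zilber's original argument), and so by the categoricity statement of Theorem~\ref{main exp theorem} at cardinality $2^{\aleph_0}$ the conjecture is equivalent to Schanuel's conjecture plus strong exponential-algebraic closedness. Your axiom-by-axiom analysis and your identification of the two genuinely open obstacles coincide with the paper's own framing, so the proposal is correct insofar as an open conjecture admits a ``proof'' at all.
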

The axioms in $\ECFskccp$ will be explained in Section~\ref{axiomatization section}, but briefly there are two algebraic axioms which are obviously true in $\Cexp$ and then three more axioms: Schanuel's conjecture, strong exponential-algebraic closedness, and the countable closure property.
Schanuel's conjecture is a conjecture of transcendental number theory which can be seen as saying that certain systems of exponential polynomial equations do not have solutions. Strong exponential-algebraic closedness roughly says that a system of equation has solutions (even generic over any given finite set) unless that would contradict Schanuel's conjecture. The countable closure property says roughly that such systems of equations which are \emph{balanced}, in the sense of having the same number of equations as variables, have only countably many solutions. Zilber proved the countable closure property for \Cexp, so we have the following reformulation.
\begin{theorem}\label{SC + SEAC}
Conjecture~\ref{strong qm conj} is true if and only if Schanuel's conjecture is true and $\Cexp$ is strongly exponentially-algebraically closed.
\end{theorem}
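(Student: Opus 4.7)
The plan is to deduce this as a direct corollary of Theorem~\ref{main exp theorem} (categoricity of $\ECFskccp$ in each uncountable cardinality) together with the fact, attributed to Zilber, that the countable closure property holds in $\Cexp$, and the easy observation that the two algebraic axioms of $\ECFskccp$ are visibly true in $\Cexp$. Once the axiom list is known to split into five clauses, namely the two algebraic axioms, Schanuel's conjecture (SC), strong exponential-algebraic closedness (SEAC), and the countable closure property (CCP), the statement reduces to bookkeeping.

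For the forward direction, I would assume $\Cexp \iso \B$. Since $\B \models \ECFskccp$ and the axioms of $\ECFskccp$ are isomorphism-invariant statements in $\Looq$ about the structure $\langle \C; +, \cdot, \exp\rangle$, the structure $\Cexp$ must then also satisfy each axiom. In particular $\Cexp$ satisfies SC and SEAC. For the converse, suppose SC and SEAC both hold in $\Cexp$. The two algebraic axioms hold in $\Cexp$ for purely algebraic reasons that will be explained in Section~\ref{axiomatization section} (typically: the kernel of $\exp$ is a cyclic subgroup of $\langle \C;+\rangle$ generated by a single transcendental element, and the exponential map is a surjective homomorphism from the additive to the multiplicative group). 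CCP for $\Cexp$ is Zilber's theorem that we are invoking as stated in the introduction. Hence $\Cexp$ satisfies every axiom of $\ECFskccp$. Since $\cardof{\C} = 2^{\aleph_0}$ is uncountable, Theorem~\ref{main exp theorem} gives a unique model of $\ECFskccp$ of cardinality continuum up to isomorphism, and by definition $\B$ is that model, so $\Cexp \iso \B$.

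There is no real obstacle in the argument itself; the weight is carried entirely by Theorem~\ref{main exp theorem}, whose full proof is the subject of the rest of the paper. The only nontrivial external inputs are (i) Zilber's proof that CCP holds in $\Cexp$, and (ii) the verification that $\Cexp$ satisfies the two algebraic axioms, both of which are standard and will be referenced rather than reproved. The potential pitfall is purely expository: one must be careful that the precise formulations of SC, SEAC, and CCP used to define $\ECFskccp$ are literally the statements one is conjecturing or asserting about $\Cexp$, so that no hidden change of formulation creates a gap. This will be ensured once the axioms are laid out in Section~\ref{axiomatization section}, after which the proof can be given in a few lines.
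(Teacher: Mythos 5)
Your proposal is correct and is exactly the argument the paper intends: the paper presents Theorem~\ref{SC + SEAC} as a ``reformulation'' obtained by combining the categoricity statement of Theorem~\ref{main exp theorem} with the facts that the two algebraic axioms of $\ECFskccp$ hold in $\Cexp$ and that the countable closure property for $\Cexp$ is known (Zilber's result, reproved here as Theorem~\ref{CCP for analytic Gamma-fields}), so that $\Cexp\models\ECFskccp$ if and only if Schanuel's conjecture and strong exponential-algebraic closedness hold, and categoricity in cardinality $2^{\aleph_0}$ then identifies $\Cexp$ with $\B$. No difference from the paper's route.
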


Theorems~\ref{main exp theorem} and~\ref{SC + SEAC} together imply that if $\Cexp$ satisfies Schanuel's conjecture and is strongly exponentially-algebraically closed then it is quasiminimal. Schanuel's conjecture is considered out of reach, since even the very simple consequence that the numbers $e$ and $\pi$ are algebraically independent is unknown. Proving strong exponential-algebraic closedness involves finding solutions of certain systems of equations and then showing they are generic, the latter step usually done using Schanuel's conjecture. A weaker condition is \emph{exponential-algebraic closedness} which requires the same systems of equations to have solutions, but says nothing about their genericity. We are able to remove the dependence on Schanuel's conjecture completely from Conjecture~\ref{weak qm conj}.
\begin{theorem}\label{eac implies qm}
If $\Cexp$ is exponentially-algebraically closed then it is quasiminimal.
\end{theorem}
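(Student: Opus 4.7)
The plan is to apply a relativized version of Zilber's categoricity theorem, which the paper develops for pseudo-exponential fields constructed over an arbitrary countable subfield. That construction yields an $\Looq$-axiomatization $\GCFccp$ (relativized to a countable parameter structure $A$) whose uncountable models are all quasiminimal and categorical in each uncountable cardinality. The task is then to exhibit a countable $A \subseteq \C$ so that $\Cexp$ becomes such a model over $A$, invoking $\ecl$-closedness of $\Cexp$ in place of Schanuel's conjecture.

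First I would construct a countable subset $A \subseteq \C$, closed under $\exp$, $\exp^{-1}$, and the exponential-algebraic closure $\ecl$ inherited from $\Cexp$, satisfying the \emph{relative Schanuel property}: for every finite tuple $\bar a$ from $\C$,
\[
\delta(\bar a / A) := \trd(\bar a, \exp(\bar a)/A) - \ldim_\Q(\bar a/\Q A) \ge 0.
\]
The construction is a standard transfinite closure argument: iteratively, for each finite tuple enumerated from the current stage, adjoin a finite $\delta$-minimizing \emph{hull} (which is finite by submodularity of $\delta$ together with the boundedness below of predimensions of hulls of a fixed cardinality), then close under $\ecl$. Zilber's countable closure property for $\Cexp$ keeps each $\ecl$-step countable, so the procedure stabilizes at some countable ordinal with the desired $A$.

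Next I would verify that the expansion $(\Cexp; A)$ satisfies the relativized axioms. The two algebraic axioms and relative Schanuel hold by construction; countable closure is inherited from $\Cexp$. The crucial point is to upgrade the hypothesized exponential-algebraic closedness of $\Cexp$ to the \emph{strong} relative version: given any rotund, free variety $V$ defined over $A$, exponential-algebraic closedness furnishes some point $(\bar a,\exp \bar a)\in V$; relative Schanuel then forces $\delta(\bar a/A)\ge 0$ and rotundity forces $\delta(\bar a/A)\le 0$, so any such point is automatically generic in $V$ over $A$. This is the key place where relativizing to $A$ eliminates the need for Schanuel globally.

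Finally, the relativized categoricity theorem gives a unique model of $\GCFccp$ over $A$ of cardinality continuum, and that model is quasiminimal. Hence $\Cexp\cong_A \B_A$, so $\Cexp$ itself is quasiminimal. The main obstacle is the construction of $A$: one must show that $\delta$-minimizing hulls of finite tuples are finite in $\Cexp$ despite the possible failure of Schanuel's conjecture, and that the transfinite iteration genuinely closes off at a countable stage. This rests on submodularity of $\delta$, the boundedness of possible defects in a given $\Q$-linear dimension (which ensures each hull is finite), and the CCP for $\ecl$ in $\Cexp$.
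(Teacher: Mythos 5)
Your overall strategy --- relativize the construction to a countable $\ecl$-closed base $A$ and then show $\Cexp$ falls into the resulting quasiminimal class --- is indeed the paper's strategy. But the step you present as automatic is exactly where the real work lies, and as written it fails. You claim that for a free rotund $V$ of dimension $n$, exponential-algebraic closedness furnishes a point $(\bar a,\exp\bar a)\in V$ which is ``automatically generic over $A$'' because relative Schanuel gives $\delta(\bar a/A)\ge 0$ while ``rotundity forces $\delta(\bar a/A)\le 0$''. The second inequality only follows if $\ldim_\Q(\bar a/A)=n$: one has $\trd(\bar a,\exp\bar a/A)\le n$, so $\delta(\bar a/A)\le n-\ldim_\Q(\bar a/A)$, and if the point happens to be $\Q$-linearly degenerate over $A$ (freeness of $V$ forbids $V$ from being \emph{contained} in a linear/multiplicative coset, not individual points from lying in one), you get no contradiction and no genericity. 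The strong axiom demands a point linearly independent over $\Gamma(A)$ and the given tuple, and mere existence (or even Zariski-density) of solutions does not yield this, since the degenerate points form a countable union of proper closed subsets which could a priori be dense. The paper closes this gap with the horizontal semiabelian weak Zilber--Pink theorem: it shows the degenerate locus is contained in a \emph{single} proper Zariski-closed subset $V_\alpha$ (uniformly, via a finite family of subgroups), so density of $\Gamma$-points lets one dodge it. Without this (or the CIT, as in the earlier conditional result of KZ14), your upgrade from closedness to strong closedness does not go through.

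A secondary gap: your construction of $A$ by iterating ``$\delta$-minimizing hulls'' presumes these hulls exist, i.e.\ that $\delta$ is bounded below on finite extensions of a fixed tuple. For tuples of length $m$ the only a priori bound is $\delta\ge -m$, which is unbounded as $m$ grows, so if Schanuel fails badly the minimum need not exist and the closure need not stabilize. In $\Cexp$ this is rescued not formally but by Ax's theorem: the paper proves (Proposition on $\Gcl=\Gcl'$, using Ax's analytic subgroup result) that the isolated-points closure coincides with $\Gcl$, is a pregeometry with countable closures, and in particular that a countable closed base exists. You should cite this (or the equivalent results of the EAEF/JKS16 papers) rather than derive it from submodularity alone.
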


\subsection{A more general construction: $\Gamma$-fields}

 Our construction is more general and we can use it to construct also a pseudo-analytic version of the Weierstrass $\wp$-functions, the exponential maps of simple abelian varieties, and more generally other pseudo-analytic subgroups of the product of two commutative algebraic groups. For example, we prove an analogous form of Theorems~\ref{main exp theorem} and~\ref{SC + SEAC} for $\wp$-functions. The list of axioms $\PCFskccp(E)$ and the other notions used in the statement of the theorem will be explained in section~\ref{pseudo-p section} of the paper.
\begin{theorem}\label{main p theorem}
Given an elliptic curve $E$ over a number field $K_0 \subs \C$, the list $\PCFskccp(E)$ of axioms is uncountably categorical and every model is quasiminimal. Furthermore, if $\wp$ is the Weierstrass function associated to $E(\C)$, so $\exp_E = [\wp:\wp':1] : \C \to E(\C)$ is the exponential map of $E(\C)$, then $\C_\wp \leteq \struct{\C;+,\cdot,\exp_E} \models \PCFskccp(E)$ if and only if the analogue of Schanuel's conjecture for $\wp$ holds and $\C_\wp$ is strongly $\wp$-algebraically closed.
\end{theorem}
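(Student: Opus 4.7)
The strategy is to imitate the construction and arguments used for the pseudo-exponential theorem (Theorem~\ref{main exp theorem}), replacing the graph of $\exp \colon \Ga \to \Gm$ by the graph of $\exp_E \colon \Ga \to E$ and the multiplicative Kummer theory by the Kummer theory of $E$. In the language of the paper, we take $\Gamma = \Gamma_E \subs \Ga \times E$ to be the graph of $\exp_E$ as a formal subgroup of $\Ga \times E$ over $K_0$, and apply the general $\Gamma$-field machinery developed earlier. The predimension is the expected one for elliptic curves: for a finite tuple $\abar = (a_1,\dots,a_n)$ with $\exp_E(a_i) = b_i$, one sets $\delta(\abar) = \trd_{K_0}(\abar,\bbar) - \ldim_R\langle a_1,\dots,a_n\rangle$, where $R = \End(E)$ is $\Z$ in the non-CM case and an order in an imaginary quadratic field in the CM case. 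The Schanuel-type axiom then says $\delta(\abar) \ge 0$ and the strong $\wp$-algebraic closedness axiom says that $\Gamma_E$-normal free rotund varieties have generic intersection points with $\Gamma_E^n$ over any finite set; CCP controls the cardinality of fibres.

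First I would verify that the abstract framework of $\Gamma$-fields applies with $\Gamma = \Gamma_E$: check that the formal subgroup $\Gamma_E$ satisfies the algebraic axioms (correct torsion structure in the kernel, divisibility, surjectivity onto $E$), that the associated predimension $\delta$ has the submodularity needed for a Hrushovski-style closure operator $\ecl$, and that the class of finitely generated $\Gamma_E$-fields with strong embeddings forms an amalgamation class. This yields a \Fraisse-style countable prime model $B_0$ with CCP and, by the general construction of the paper, for each uncountable $\kappa$ a model of size $\kappa$ with CCP. Uniqueness of each uncountable cardinality, and quasiminimality, would then be obtained via the paper's quasiminimal excellence / $\ecl$-homogeneity results: once amalgamation and $\aleph_0$-homogeneity over $\ecl$-closed sets are in place, the standard Zilber-style categoricity transfer gives the result. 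Quasiminimality is read off from the fact that $\ecl$ defines a pregeometry of Morley rank one and that every $\Loo$-definable set is controlled by finitely many $\ecl$-closures.

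For the second part I would show, in exactly the pattern of Theorem~\ref{SC + SEAC}, that the two algebraic axioms and CCP hold automatically in $\C_\wp$: the algebraic ones are immediate from the analytic description of $\exp_E$ and the structure of its kernel as a lattice in $\C$, while CCP for $\exp_E$ is a consequence of the fact that $\ker\exp_E$ is a discrete subgroup of $\C$ and that the zero set of a balanced system of exponential-polynomial equations in $(z,\exp_E(z))$ is at most countable (the analogue of Zilber's proof of CCP for $\Cexp$, using that $\exp_E$ is definable in an o-minimal expansion of $\R$). Then $\C_\wp \models \PCFskccp(E)$ if and only if the remaining two axioms, the Schanuel-type conjecture for $\wp$ and strong $\wp$-algebraic closedness, hold in $\C_\wp$.

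The main obstacle, as in the $\exp$ case, is establishing the Kummer-theoretic input needed for amalgamation and for the $\aleph_0$-homogeneity step: one needs a thumbtack-style lemma for the division points of $\exp_E$-generic points, i.e.\ control of the Galois action on $\frac{1}{n}\exp_E^{-1}(\bbar)$ over a suitable base. For elliptic curves this relies on open-image type results (Serre's theorem in the non-CM case, and its CM analogue) together with the relative Kummer theory for abelian varieties developed in the earlier sections of the paper. Once the appropriate $\Kumgen$-ity statement for $\Gamma_E$ is available, the rest of the argument goes through by substituting elliptic Kummer data for multiplicative Kummer data in the proof of Theorem~\ref{main exp theorem}.
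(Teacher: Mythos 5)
Your proposal is correct and follows essentially the same route as the paper: instantiate the general $\Gamma$-field construction with $G_1=\Ga$, $G_2=E$, $\OO=\End(E)$, use the Kummer theory of abelian varieties together with Serre's open image theorem (and its CM analogue) both for good bases and for pinning down the base $\Gamma$-field with only finitely much torsion data, and observe that axioms 1, 2 and 5 hold unconditionally in $\C_\wp$ (CCP being exactly the \cite{JKS16}-style argument you sketch), so that membership in the class reduces to the Schanuel-type axiom and strong $\wp$-algebraic closedness. The one ingredient of the paper's proof that your outline leaves implicit is the justification, via Bertolin's translation of the Andr\'e--Grothendieck period conjecture, that the predimension inequality really is ``the analogue of Schanuel's conjecture for $\wp$'' referred to in the statement, rather than merely axiom 3 by fiat.
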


In the most general form, we consider what we call $\Gamma$-fields, which are fields $F$ of characteristic zero equipped with a subgroup $\Gamma(F)$ of a product $G_1(F) \cross G_2(F)$ where $G_1$ and $G_2$ are commutative algebraic groups. The complete definition is given in section~\ref{Gamma field defn section}, where we also explain how the examples we consider fit into cases (EXP), generalizing the exponential and Weierstrass $\wp$-functions above, (COR), generalizing analytic correspondences between non-isogenous elliptic curves, and (DEQ), generalizing the solution sets of certain differential equations.

Hrushovski used \Fraisse's amalgamation method which produces countable structures. Zilber wanted uncountable structures so he instead framed his constructions in terms of existentially closed models within a certain category. He gave a framework of \emph{quasiminimal excellent classes} \cite{Zilber05qmec}, building on Shelah's notion of an excellent $\Loo$-sentence \cite{Sh87a}, to prove the uniqueness of the uncountable models. The second author showed \cite{OQMEC} that the quasiminimal excellence conditions can be checked just on the countable models, and with Hart, Hyttinen and Kes\"al\"a we proved in \cite{BHHKK14} that the most complicated of the conditions to check, excellence, follows from the other conditions. So in this paper we recast the construction in 4 stages.
\begin{enumerate}[1.]
\item We start with a suitable base $\Gamma$-field $\Fbase$, and describe a category $\Cc(\Fbase)$ of so-called \emph{strong extensions} of $\Fbase$.
\item We apply a suitable version of \emph{\Fraisse's amalgamation theorem} to the category to produce a countable model $M(\Fbase)$.
\item We check that $M(\Fbase)$ satisfies the conditions to be part of a \emph{quasiminimal class}, and deduce there is a unique model of cardinality continuum we denote by $\MM(\Fbase)$.
\item We give the \emph{axioms} $\GCFccp(\Fbase)$ describing the class.
\end{enumerate}

As a more general form of Theorem~\ref{main exp theorem} we prove:
\begin{theorem}\label{main Gamma theorem}
Given an essentially finitary $\Gamma$-field $\Fbase$ of type (EXP), (COR), or (DEQ), the list of axioms $\GCFccp(\Fbase)$ is uncountably categorical and every model is quasiminimal.
\end{theorem}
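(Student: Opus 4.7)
The plan follows precisely the four-stage blueprint already advertised in the introduction, specializing Hrushovski-style predimension arguments to the $\Gamma$-field setting and then feeding the outcome into the quasiminimal class machinery.

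First I would fix the \emph{predimension}. For a finitely generated extension of $\Fbase$, this takes the familiar Hrushovski--Zilber shape
\[
\delta(X/\Fbase) = \trd(X/\Fbase) - \ldim_{\End(G)}(X/\Fbase),
\]
where $X \subs \Gamma(F)$ and the linear dimension is measured in whatever $\End(G_1 \cross G_2)$-module structure is appropriate for the case (EXP), (COR), or (DEQ). The ``essentially finitary'' hypothesis on $\Fbase$ is exactly what is needed to ensure $\delta$ is bounded below on finite subsets, so that the relative predimension
\[
d(X/Y) = \inf\{\delta(XY'/Y) - \delta(Y'/Y) : Y \subs Y' \subsfin F\}
\]
is defined. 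Call $A \strong B$ when $\delta(X/A) \ge 0$ for every finitely generated $X \subs B$; this defines the category $\Cc(\Fbase)$ of strong extensions of $\Fbase$.

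Next I would verify that $\Cc(\Fbase)$ is a \Fraisse{} category. The Joint Embedding Property reduces to amalgamating two strong extensions over $\Fbase$, and the Amalgamation Property is the key geometric content: given $B_1, B_2 \in \Cc(\Fbase)$ with common strong substructure $A$, take the free algebraic amalgam $B_1 \tensor_A B_2$ of the underlying fields, equip it with the group generated by $\Gamma(B_1) \cup \Gamma(B_2)$, and show the predimension inequality is preserved. In each of the three cases one must rule out unexpected algebraic relations between $\Gamma(B_1) \setminus A$ and $\Gamma(B_2) \setminus A$, which for (EXP) uses the Ax--Schanuel type bound from the simple abelian variety side, for (COR) uses non-isogeny to separate the two factors, and for (DEQ) uses the differential algebraic constraint. \textbf{This amalgamation step is the main obstacle}, since all the arithmetic/geometric subtleties of the three cases are concentrated here. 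Assuming it, a standard countable direct limit produces $M(\Fbase) \in \Cc(\Fbase)$ which is $\Cc(\Fbase)$-universal and $\Cc(\Fbase)$-homogeneous, with $\Fbase \strong M(\Fbase)$.

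Third, I would define $\ecl$ on any $B \in \Cc$ by $a \in \ecl(X)$ iff $a$ is in the smallest strong substructure containing $X$ and having finite transcendence degree over $X$, and verify the five axioms of a quasiminimal pregeometry structure from \cite{BHHKK14}: that $\ecl$ is a pregeometry, that it is countably generated, that the countable closure property holds (this is where case-specific Kummer-theoretic arguments enter), that strong embeddings are $\ecl$-embeddings, and the $\aleph_0$-homogeneity over countable closed sets (which is exactly the \Fraisse{} homogeneity of $M(\Fbase)$). The main result of \cite{BHHKK14} then gives a quasiminimal class with a unique model $\MM(\Fbase)$ of each uncountable cardinality, and in particular of cardinality $2^{\aleph_0}$.

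Finally, I would write down the axioms $\GCFccp(\Fbase)$ as: the algebraic axioms defining a $\Gamma$-field extending $\Fbase$, the $\Gamma$-Schanuel inequality ($\delta \ge 0$ on every finitely generated set), strong $\Gamma$-algebraic closedness (every rotund, free system has a generic solution, guaranteeing richness under strong extensions), and the countable closure property. That every model of $\GCFccp(\Fbase)$ belongs to the quasiminimal class follows by back-and-forth using strong algebraic closedness to realize the required strong extensions, and conversely every $\MM$-type object clearly satisfies the axioms. Categoricity in every uncountable cardinal and quasiminimality then transfer from the quasiminimal class, yielding Theorem~\ref{main Gamma theorem}.
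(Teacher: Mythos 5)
Your outline follows the same four-stage blueprint as the paper, but it misses the step that carries most of the weight, and it misattributes the difficulty. The amalgamation property is \emph{not} the main obstacle and does not require any Ax--Schanuel input: one takes the free field compositum over the (full) base, puts $\Gamma(B_1)+\Gamma(B_2)$ on it, and strongness follows from submodularity of $\delta$ together with the algebraic independence built into the free amalgam; non-isogeny and fullness are only used to check kernel preservation. The genuinely hard ingredient, which your proposal omits entirely, is $\aleph_0$-stability: one must show that an essentially finitary $\Gamma$-field has only \emph{countably many} finitely generated kernel-preserving extensions up to isomorphism, since otherwise conditions AC3/AC4 fail and the countable \Fraisse\ limit does not exist, and the back-and-forth arguments for homogeneity break down. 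This is proved via the existence of \emph{good bases}, which rests on Kummer theory for semiabelian varieties over number fields (openness of the image of the Kummer map $\xi_a$ in the Tate module). In particular, ``essentially finitary'' is not about $\delta$ being bounded below (it is an integer-valued function on finitely generated extensions regardless); it is exactly the hypothesis under which the Kummer-theoretic argument applies. Relatedly, your predimension drops the factor $d=\dim G_i$, which matters for abelian varieties of dimension $>1$.

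There is a second gap in stage 3. Your proposed closure operator (``smallest strong substructure containing $X$ of finite transcendence degree over $X$'') is not the right object: the correct pregeometry is defined by declaring $A$ closed when every finitely generated $B\supseteq A$ with $\delta(B/A)\le 0$ equals $A$, and the closure of a finite set is typically of infinite transcendence degree. More seriously, verifying QM1 (that quantifier-free types determine the pregeometry) fails in the bare language of $\Gamma$-fields; the paper must pass to an expanded language $\Lqe$ with predicates $\phi_W,\psi_W$ recording that a tuple in the field sort is coordinatized by a $\kO$-linearly independent tuple of $\Gamma$-points on a given variety, and must translate between the field sort (where the pregeometry lives) and the sort $\Gamma$ (where $\delta$ lives) via a good-basis argument. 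Your sketch does not address the choice of language at all, and as written QM1 and the homogeneity axioms could not be checked. Finally, the countable closure property for the countable model is trivial and for uncountable models comes for free from the categoricity theorem of Bays--Hart--Hyttinen--Kes\"al\"a--Kirby; Kummer theory enters through good bases and homogeneity, not through CCP.
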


Our notion of $\Gamma$-fields is algebraic and not every example is related to an analytic prototype. However cases (EXP) and (COR) do have many analytic examples, given in Definitions~\ref{analytic EXP defn} and~\ref{analytic COR defn}. We call these \emph{analytic $\Gamma$-fields}. For these we are able to prove the countable closure property, extending Zilber's result for $\Cexp$ and the equivalent result in \cite{JKS16} for $\wp$-functions.
\begin{theorem}\label{CCP for analytic Gamma-fields}
Let $\C_\Gamma$ be an analytic $\Gamma$-field. Then $\C_\Gamma$ satisfies the countable closure property.
\end{theorem}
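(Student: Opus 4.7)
The plan is to extend Zilber's argument for $\Cexp$ (see \cite{Zilber05peACF0}) and the adaptation of \cite{JKS16} for $\wp$-functions to the uniform setting of analytic $\Gamma$-fields, using o-minimality. The countable closure property requires that for every balanced algebraic subvariety $V \subs (G_1 \cross G_2)^n$, meaning $\dim V = n \dim G_1$ so that the expected intersection $V(\C) \cap \Gamma(\C_\Gamma)^n$ is zero-dimensional, this intersection is actually at most countable.

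First I would unpack the definition of an analytic $\Gamma$-field in cases (EXP) and (COR): in both cases, $\Gamma(\C)$ is the image of an analytic map (respectively correspondence) defined on a simply connected covering space of $G_1(\C)$, with a discrete subgroup $\Lambda$ acting as deck transformations. In case (EXP), $\Gamma(\C)$ is the graph of the complex analytic exponential map $\exp_G : G_1(\C) \to G_2(\C)$ and $\Lambda = \ker \exp_G$; in case (COR) the set-up is analogous but with a multi-valued analytic correspondence replacing the single-valued exponential.

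The key step is to fix a bounded fundamental domain $D \subs G_1(\C)$ for the $\Lambda$-action and restrict attention to $D^n$. On such a domain the covering map, or each of its finitely many local analytic branches, becomes definable in an o-minimal expansion of the real field (using $\Ran$ in the $\wp$ case and an appropriate expansion involving $\Rexp$ in the $\Cexp$ case). Consequently the piece of $V(\C) \cap \Gamma(\C_\Gamma)^n$ lying in $D^n$ is definable in an o-minimal structure; by the balanced dimension count it has real dimension zero, hence is finite. Because $G_1(\C)^n$ is covered by the countably many $\Lambda^n$-translates of $D^n$, the full intersection is a countable union of finite sets, so countable.

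The main obstacle will be arranging the argument uniformly across the two types: in case (COR) one must work with a genuine multi-valued correspondence rather than a single-valued analytic map on $D$, and verify o-minimal definability of a consistent global choice of branches by reducing to the universal cover and its finitely generated deck group. A secondary technical point is to confirm that the intersection on $D^n$ really is zero-dimensional rather than positive-dimensional through some degeneration of $V$ (for example if $V$ happened to contain a coset of an algebraic subgroup pulled back through $\Gamma$); this should be controlled by the balanced dimension hypothesis combined with the transcendence/Ax--Schanuel-type input built into the axioms for analytic $\Gamma$-fields in the paper's earlier sections.
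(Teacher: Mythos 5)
There is a genuine gap at the central step. You assert that, on a fundamental domain $D^n$, the o-minimally definable set $V(\C)\cap\Gamma^n\cap D^n$ ``has real dimension zero, hence is finite'' by the balanced dimension count. The dimension count only gives the \emph{expected} dimension of the intersection; atypical, positive-dimensional analytic components of $V(\C)\cap\Gamma^n$ are precisely what must be ruled out (or shown to be harmless), and this is the entire content of the theorem, not a ``secondary technical point''. Worse, the patch you propose --- appealing to ``the transcendence/Ax--Schanuel-type input built into the axioms'' --- is not available: the analytic structure $\C_\Gamma$ is not known to satisfy the Schanuel-type predimension axiom (for $\Cexp$ that is Schanuel's conjecture), and Theorem~\ref{CCP for analytic Gamma-fields} is unconditional. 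The only unconditional transcendence input is Ax's theorem on analytic subgroups of algebraic groups (Fact~\ref{Ax72 fact}), and it has to be invoked explicitly. A second, related gap: even granting countability of each $V(\C)\cap\Gamma^n$, you never connect this to the pregeometry $\Gammacl$, which is defined via the predimension $\delta$; one must show that the set assembled from these solution points is actually $\Gammacl$-closed, i.e.\ that any tuple with $\delta\le 0$ over it already lies in it --- again an Ax-type statement.

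For comparison, the paper's proof sidesteps both problems: it introduces the operator $\Gcl'$ generated by the \emph{isolated} points of the sets $V(\C)\cap\Gamma^n$, whose countability is immediate from separability of $\Gamma^n$ (no o-minimality needed), and then applies Fact~\ref{Ax72 fact} to prove $\Gcl=\Gcl'$ (Proposition~\ref{Gcl=Gcl'}): a positive-dimensional analytic component through $b$ forces $\delta(b/A)>0$, so non-isolated solutions never drag new points into the closure. If you wish to keep the fundamental-domain/o-minimality route you would essentially be reconstructing \cite{EAEF} and \cite{JKS16}, which replace the naive dimension count by a Jacobian non-vanishing criterion for isolatedness together with an Ax-style differential-forms argument; you would also need to deal with the unbounded fundamental strip for $\gm$-factors (requiring $\R_{\mathrm{an},\exp}$ rather than $\Ran$) and with the multivaluedness in case (COR). None of this is supplied in the proposal as written.
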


One of the key ideas of this paper is that the amalgamation construction is done over a base $\Gamma$-field $\Fbase$, and that everything is done relative to that base. Pushing this idea further, we can also work over a base which is closed with respect to the quasiminimal pregeometry on the model $\Fbase$. This involves modifying the amalgamation construction so we only consider extensions of $\Fbase$ in which $\Fbase$ remains closed with respect to the pregeometry. This idea comes from differential fields, where the base would be the field of constants and one often wants to consider differential field extensions with no new constants. One advantage of this approach for us is that Ax's versions of Schanuel's conjecture then apply to say that Schanuel's conjecture is true relative to the base in the analytic $\Gamma$-fields. 

In the paper \cite{ECFCIT} it was shown that, assuming the Conjecture on Intersections with Tori (CIT, also known as the multiplicative Zilber-Pink conjecture), any exponential field satisfying Schanuel's conjecture and exponential-algebraic closedness is actually strongly exponentially-algebraically closed.
In this paper we are able to adapt that idea to show unconditionally that the difference between $\Gamma$-closedness and strong $\Gamma$-closedness (the analogues of exponential-algebraic closedness and strong exponential-algebraic closedness) disappears if we consider the generic version, meaning relative to a closed base. Instead of the CIT we use a theorem which we call the \emph{horizontal semiabelian weak Zilber-Pink}, a theorem about intersections of families of algebraic varieties with cosets of algebraic subgroups of semiabelian varieties. 
 This method allows us to prove Theorem~\ref{eac implies qm} and a more general version:
\begin{theorem}\label{Gcl implies qm}
Let $\C_\Gamma$ be an analytic $\Gamma$-field. If $\C_\Gamma$ is $\Gamma$-closed then it is quasiminimal.
\end{theorem}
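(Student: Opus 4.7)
The plan is to show that, for a suitably chosen countable base $\Fbase \subs \C_\Gamma$, the structure $\C_\Gamma$ satisfies the axioms $\GCFccp(\Fbase)$, so that Theorem~\ref{main Gamma theorem} applies and yields quasiminimality. The essential feature of the relativised construction is that the base must be closed in the $\Gamma$-closure pregeometry $\Gcl$: over such a base the appropriate Ax--Schanuel theorem supplies the Schanuel inequality unconditionally, and the gap between $\Gamma$-closedness and its strong version collapses.

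Concretely, I would take $\Fbase$ to be the $\Gcl$-closure (computed in $\C_\Gamma$) of a finite tuple, chosen large enough to make $\Fbase$ essentially finitary. By Theorem~\ref{CCP for analytic Gamma-fields} this base is countable, and it is manifestly $\Gcl$-closed. The algebraic axioms of $\GCFccp(\Fbase)$ are intrinsic to the $\Gamma$-field structure and transfer immediately; the countable closure property for $\C_\Gamma$ is precisely Theorem~\ref{CCP for analytic Gamma-fields}. The relativised Schanuel axiom asserts non-negativity of the $\Fbase$-relative predimension on finite tuples of $\C_\Gamma$, and in the analytic setting (cases (EXP), (COR) and their analytic (DEQ) avatars) this is exactly the content of the appropriate Ax--Schanuel theorem, provided parameters are reduced modulo the already $\Gcl$-closed $\Fbase$---which is precisely the motivation for insisting that $\Fbase$ be closed.

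The main obstacle is the remaining axiom: \emph{strong} $\Gamma$-closedness over $\Fbase$. We are given only plain $\Gamma$-closedness, so we must promote bare solutions to witnesses generic over $\Fbase$. Here I would imitate the strategy of \cite{ECFCIT}, in which the multiplicative CIT was used to deduce strong exponential-algebraic closedness from exponential-algebraic closedness together with Schanuel's conjecture, but with the horizontal semiabelian weak Zilber--Pink theorem announced in the introduction playing the role of CIT. The outline is: a failure of genericity for a solution to a rotund, free algebraic system would place that solution on a proper coset of an algebraic subgroup of $G_1 \cross G_2$ belonging to an algebraic family indexed by the parameters; horizontal weak Zilber--Pink bounds the cosets in the family that can give atypical intersections with the variety, and a dimension count using the just-established Ax--Schanuel inequality together with rotundity shows that for parameters generic over the closed base $\Fbase$ no such coset can contribute. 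Thus plain $\Gamma$-closedness already supplies a generic witness. Assembling the right parameter family so that horizontal Zilber--Pink genuinely bites is the technical heart of the argument. Once this is done, $\C_\Gamma \models \GCFccp(\Fbase)$ and Theorem~\ref{main Gamma theorem} finishes the proof.
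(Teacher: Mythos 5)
Your overall strategy (pass to a countable $\Gcl$-closed base, then use the horizontal semiabelian weak Zilber--Pink to upgrade mere solvability to genericity) is the right one in spirit, but it aims at the wrong target, and the step you call the ``technical heart'' fails for the target you chose. You propose to verify the full axiom list $\GCFccp(\Fbase)$, in particular \textbf{strong $\Gamma$-closedness over $\Fbase$}: witnesses in $V(F)\cap\Gamma(F)^n$, linearly independent over $\Gamma(\Fbase)\cup a$, for \emph{every} free rotund $V\subs G^n$ of dimension $dn$ defined over $F$. The weak Zilber--Pink argument cannot deliver this. In the dimension count (see the proof of Proposition~\ref{GSGC implies GGC}), the step that confines the linearly dependent solutions to a proper Zariski-closed subset of $V$ needs the \emph{strict} inequality $\dim(M\cdot W)>d\,\rk M$ for the combined locus $W=\loc(\alpha,\beta/K)$ over the closed base --- i.e.\ it needs $W$ to be \emph{strongly} rotund, which is exactly the condition that the extension be purely $\Gamma$-transcendental over $K$. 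For a general free rotund $V$ (e.g.\ one defined over $K$ itself with $\dim(M\cdot V)=d\,\rk M$ for some nonzero $M$), one only gets $\ge$, the relevant fibre of the quotient map need not be atypical, and the dependent points need not lie in a proper subvariety; this is precisely why \cite{ECFCIT} had to assume the (conjectural) CIT to pass from exponential-algebraic closedness to the strong version. Unconditionally one only obtains the \emph{generic} strong $\Gamma$-closedness of Definition~\ref{GSGC defn}, so you cannot conclude $\C_\Gamma\models\GCFccp(\Fbase)$ and Theorem~\ref{main Gamma theorem} does not apply as stated.

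The repair is to change both the axiom system and the categoricity input: work in the variant category $\Ctrans(K)$ of purely $\Gamma$-transcendental extensions of $K=\Gcl(\text{countable set})$ (an amalgamation category by Theorem~\ref{Gammatr is amalg cat}, whose \Fraisse\ limit is a quasiminimal pregeometry structure by Theorem~\ref{Fraisse limit is QPS}), characterize membership in the class $\K(\Mtr(K))$ by $\aleph_0$-saturation for $\Gamma$-algebraic extensions that are purely $\Gamma$-transcendental over $K$ (Proposition~\ref{GSGC equiv saturation}, replacing Lemma~\ref{SGClosed lemma} in the proof of Theorem~\ref{main theorem}), and then check that $\Gamma$-closedness implies generic $\Gamma$-closedness over $K$, which by Proposition~\ref{GSGC implies GGC} gives the required saturation. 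Two smaller points: the relative Schanuel inequality over $K$ is not an appeal to Ax--Schanuel but is automatic from $K$ being $\Gcl$-closed (closed sets are strong); Ax's theorem enters only through Proposition~\ref{Gcl=Gcl'} and Theorem~\ref{CCP for analytic Gamma-fields}, which give that $\Gcl$ is a pregeometry with the countable closure property and hence that $K$ can be taken countable. Also, the base for $\Ctrans$ needs to be a countable \emph{full} $\Gamma$-field, which any $\Gcl$-closed set is; no appeal to essential finitarity of $\Fbase$ in the finitely generated sense is needed.
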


\subsection{An overview of the paper}

In section~\ref{alg background section} we explain our conventions on viewing algebraic varieties and their profinite covers in a model-theoretic way. We also explain the relationship between subgroups and endomorphisms of the commutative algebraic groups we study. 

In section~\ref{Gamma fields section} we define our $\Gamma$-fields and their finitely generated extensions. 
We prove that finitely generated extensions of suitable (so-called \emph{essentially finitary}) $\Gamma$-fields are determined by good bases, and that these good bases exist and are determined by finite data from a countable range of possibilities. This is the key step in proving the form of $\aleph_0$-stability which is essential for the existence of quasiminimal models. The main tool here is Kummer theory over torsion for abelian varieties.

In section~\ref{predim section} we introduce the predimension notion and use it to define which extensions of $\Gamma$-fields are strong. We also use it to define a pregeometry on $\Gamma$-fields. Then we show that there is a unique \emph{full-closure} of an essentially finitary $\Gamma$-field, and classify the strong finitely generated extensions of $\Gamma$-fields and of full $\Gamma$-fields. This completes stage 1 of the construction as described above.

Section~\ref{amalg section} covers stage 2 of the construction. We recall a category-theoretic version of \Fraisse's amalgamation theorem which is suitably general for us. Then, starting with a suitable base $\Gamma$-field $\Fbase$, we consider the category $\Cc(\Fbase)$ of strong extensions of $\Fbase$ and apply the amalgamation theorem to get a countable \Fraisse\ limit $M(\Fbase)$. We also consider a variant amalgamating only the $\Gamma$-algebraic extensions and another variant where we consider only extensions which are purely $\Gamma$-transcendental over $\Fbase$.

In section~\ref{categoricity section} we show that the  \Fraisse\ limit  models we have produced are quasiminimal pregeometry structures, and hence give rise to uncountably categorical classes. In this way we get the uncountable models, in particular the model $\MM(\Fbase)$ of cardinality continuum. This is stage 3.

In section~\ref{rotund section} we give a classification of the finitely generated strong extensions of $\Gamma$-fields, and in section~\ref{axiomatization section} we use that to give  axiomatizations of our models and prove Theorem~\ref{main Gamma theorem}. This completes stage 4.

In section~\ref{applications section} we consider specific instances of our $\Gamma$-fields including pseudo-exponentiation, pseudo Weierstrass $\wp$-functions, and others, and prove Theorem~\ref{main exp theorem} and half of Theorem~\ref{main p theorem}.

 In section~\ref{analytic comparison} we compare our models to the complex analytic prototypes. For Weierstrass $\wp$-function we relate the Schanuel property to the Andr\'e-Grothendieck conjecture on the periods of 1-motives, using work of Bertolin, finishing the proof of Theorem~\ref{main p theorem}. We briefly discuss the literature on steps towards proving the strong $\Gamma$-closedness and $\Gamma$-closedness properties for analytic $\Gamma$-fields. Then we prove Theorem~\ref{CCP for analytic Gamma-fields}.
 
In section~\ref{GGC section} we consider $\Gamma$-fields which may not be $\Gamma$-closed but are generically so. These are the $\Gamma$-fields produced by the variant construction in which the base $\Fbase$ remains closed with respect to the pregeometry. We state and prove the horizontal semiabelian weak Zilber-Pink, and then prove Theorems~\ref{eac implies qm} and~\ref{Gcl implies qm}.

\subsection*{Acknowledgements}

Some of this work was carried out in the Max Planck Institute for Mathematics, Bonn during the programme on Model Theory and Applications, Spring 2012. Further work was carried out at the Mathematical Sciences Research Institute, Berkeley, during the programme on Model Theory, Arithmetic Geometry and Number Theory, Spring 2014, which was supported by the NSF under Grant No.\ 0932078 000. Our thanks to both institutions for their support and hospitality.
We would like to thank Boris Zilber and Misha Gavrilovich for useful discussions. Particular thanks are due to Juan Diego Caycedo, whose suggestions on building a pseudo-$\wp$-function were part of the initial foundations of this project, and who further contributed to the project in its earlier stages.

%Algebraic background section
% !TEX root =  PEM2_1.tex

\section{Algebraic background}\label{alg background section}

\subsection{Algebraic varieties and groups}

We will use the standard model-theoretic foundations for algebraic varieties and algebraic groups, as described by Pillay \cite{Pillay98}, roughly following Weil. In particular, we will work in the theory $\ACF_0$ with parameters for a field $\bk$. Any variety $V$ is considered as a definable set, and using elimination of imaginaries it is in definable bijection with a constructible subset of affine space. We always assume we have chosen such a bijection, although we will not mention it explicitly. Given any field extension $F$ of $\bk$, we write $V(F)$ for the points of $V$ all of whose coordinates lie in $F$. In this way, $V$ is a functor from the category of field extensions of its field of definition to the category of sets. Similarly, given any subset $A \subs V(F)$, we can form the subfield of $F$ which is generated by (the co-ordinates of) the points in $A$.

In the same way, a commutative algebraic group $G$, defined over $\bk$, is considered as a functor from the category of field extensions of $\bk$ to the category of abelian groups. If $G$ is an algebraic $\OO$-module, that is, the ring $\OO$ acts on $G$ via regular endomorphisms, defined over $\bk$, we can also consider it as a functor to the category of $\OO$-modules.

$\ga$ denotes the additive group, $\ga(F) = \tuple{F;+}$,
and $\gm$ denotes the multiplicative group, $\gm(F) = \tuple{F^\times;\cdot}$.

An algebraic group is \emph{connected} if it has no proper finite index algebraic subgroups.
The \emph{connected component} $G^o$ of an algebraic group $G$ is the largest connected algebraic subgroup.

We write $G[m]$ for the $m$-torsion subgroup of an algebraic group $G$.

If $G$ is a commutative algebraic group over a field of characteristic 0,
we write $LG$ for the (commutative) Lie algebra of $G$,
the tangent space at the identity considered as an algebraic group. So $LG \iso \ga^{\dim(G)}$.
If $\theta : G \maps G'$ is an algebraic group homomorphism,
then $L\theta : LG \maps LG'$ is the derivative at the identity.
For algebraic groups over $\C$, these definitions agree with the usual definitions for complex Lie groups.

\subsection{Subgroups and endomorphisms}

Any connected algebraic subgroup $H$ of a power $\gm^n$ of the multiplicative group can be defined by a system of monomial equations,
$H = \ker(M)$ for some integer square matrix $M \in \Mat_n(\Z)$ acting multiplicatively.
Then $LH \leq L\gm^n = \ga^n$ is the kernel of the same matrix acting additively.

As we observe in the following lemma,
the picture is almost the same when we replace $\gm$ with an abelian variety $G$ and $\Z$ with its endomorphism ring $\End(G)$:
up to finite index, subgroups are defined by $\OO$-linear equations,
namely those which define the corresponding Lie subalgebra.
With a few self-contained exceptions, this lemma is essentially all we will use of the theory of abelian varieties.

\begin{lemma} \label{subgroupsEndomorphisms}
  Suppose $G$ is $\gm$ or an abelian variety over a field of characteristic 0,
  and $\OO = \End(G)$ is its endomorphism ring. Then
  \begin{enumerate}[(i)]
    \item any connected algebraic subgroup $H \leq G^n$
      is the connected component of the kernel of an endomorphism
      $\eta \in \End(G^n) \isom \Mat_n(\OO)$,
        \[ H = \ker(\eta)^o; \]
    \item $LH \leq LG^n$ is then the kernel of $L\eta \in \End(LG^n)$.
  \end{enumerate}
\end{lemma}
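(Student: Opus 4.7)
The plan is to treat the cases $G = \gm$ and $G$ an abelian variety separately in part (i), then deduce (ii) uniformly. In both cases the identification $\End(G^n) \cong \Mat_n(\OO)$ is immediate from the decomposition $G^n = G \times \cdots \times G$ and the distributivity of $\Hom$ over products.

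For $G = \gm$, I would appeal to the standard duality between connected algebraic subgroups of $\gm^n$ and saturated sublattices of the character lattice $X^*(\gm^n) \cong \Z^n$. Given a connected $H \le \gm^n$, its annihilator $\Lambda \le \Z^n$ is saturated; choose a saturated complement so that $\Z^n = \Lambda \oplus \Lambda'$, and take $\eta \in \Mat_n(\Z) = \End(\gm^n)$ to be an endomorphism whose induced map on characters is the projection onto $\Lambda$ along $\Lambda'$. Then $H = \ker(\eta)$, which is already connected, so $\ker(\eta)^o = H$ trivially.

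For $G$ an abelian variety, I would invoke Poincar\'e's complete reducibility theorem to find an abelian subvariety $H' \le G^n$ with $H + H' = G^n$ and $H \cap H'$ finite. The composition $\phi \colon H' \hookrightarrow G^n \twoheadrightarrow G^n/H$ is then an isogeny; if $N$ is its degree, then $[N]_{H'}$ factors through $\phi$, producing an isogeny $\psi \colon G^n/H \to H'$ with $\psi \circ \phi = [N]_{H'}$. Set $\eta$ to be the composition $G^n \twoheadrightarrow G^n/H \xrightarrow{\psi} H' \hookrightarrow G^n$, which is a genuine element of $\End(G^n) \cong \Mat_n(\OO)$. Since $\psi$ has finite kernel, $\ker(\eta)$ is a finite union of cosets of $H$, and hence $\ker(\eta)^o = H$ as required.

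For (ii), the key fact is that over a field of characteristic $0$ every group scheme is smooth (Cartier's theorem), so the scheme-theoretic $\ker(\eta)$ is reduced and its Lie algebra coincides with $\ker(L\eta)$. Moreover $L$ does not distinguish a connected algebraic group from any open subgroup containing the identity, so $LH = L\ker(\eta)^o = L\ker(\eta) = \ker(L\eta)$. The main technical obstacle is in the abelian case of (i), where Poincar\'e reducibility together with the splitting of $[N]$ through $\phi$ must be applied carefully to land in $\End(G^n)$ rather than merely in the isogeny category; once a genuine isogeny $\psi$ is in hand, the rest of the construction is formal.
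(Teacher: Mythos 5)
Your proof is correct; the interesting difference is in part (i). The paper treats $\gm$ and abelian varieties uniformly: it takes the complement $H'$ from Poincar\'e reducibility, inverts the summation isogeny $\Sigma\colon H\times H'\to G^n$ up to multiplication by the exponent $m$ of $\ker\Sigma$ (obtaining $\theta\colon G^n\to H\times H'$ with $\theta\circ\Sigma=[m]$), and sets $\eta=\iota_{H'}\circ\pi_2\circ\theta$. You instead split the cases: for $\gm$ you use character-lattice duality, which buys you the stronger conclusion that $\ker(\eta)=H$ is already connected; for abelian varieties you form the quotient $G^n\twoheadrightarrow G^n/H$ and take the quasi-inverse $\psi$ of the isogeny $\phi\colon H'\to G^n/H$ rather than of the summation map. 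The two abelian-variety constructions are close cousins (both hinge on Poincar\'e reducibility plus the fact that an isogeny with kernel killed by $N$ divides $[N]$), but yours requires knowing that $G^n/H$ exists as an abelian variety, while the paper's avoids quotients entirely and, as a by-product, covers the torus case by the same words (Poincar\'e reducibility for subtori being elementary). Your part (ii) is the same argument as the paper's, just phrased via Cartier smoothness and $L\ker(\eta)=L\ker(\eta)^o$ instead of the paper's explicit dimension count showing $\ker(L\eta)$ is connected.
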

\begin{proof}
 \begin{enumerate}[(i)]
  \item By Poincar\'e's complete reducibility theorem \cite[p173]{MumfordAbVars},
there exists an algebraic subgroup $H'$ such that the summation map $\Sigma:H\times H' \maps G^n$ is an isogeny, that is, a surjective homomorphism with finite kernel. Let $m$ be the exponent of the kernel of $\Sigma$.
Then $\theta(\Sigma(h,h')) := (mh,mh')$ defines an isogeny
$\theta : G^n \to H\times H'$.
      Let $\pi_2 : H\times H' \maps H'$ be the projection.
      Then $(\pi_2\circ\theta\circ\Sigma)(h,h')=mh'$,
      so $\ker(\pi_2\circ\theta)^o=(\Sigma(H\times H'[m]))^o=(H+H'[m])^o=H$.
    \item
      Since $H \subs \ker(\eta)$, the derivative $L\eta$ of $\eta$ at 0 vanishes on $LH$, so $LH \le \ker(L\eta)$. Also $\ker(L \eta) = L\ker(\eta)$, so we have
      \begin{eqnarray*}
      \dim(\ker(L \eta)) &=& \dim (L \ker(\eta)) \\
      & = & \dim (\ker(\eta)) \quad \text{since $0$ is a smooth point}\\
      & = & \dim H \quad \text{since $H$ has finite index in $\ker(\eta)$} \\
      & = & \dim LH \quad \text{again since $0$ is a smooth point.}
      \end{eqnarray*}
      So $LH$ has finite index in $\ker(L\eta)$, but $\ker(L\eta) \le LG^n$ which is torsion-free, so $\ker(L\eta)$ is connected, so $LH = \ker(L\eta)$.
  \end{enumerate}
\end{proof}

\subsection{Division points and the profinite cover}

\begin{defn}
Let $G$ be a commutative group and let $a \in G$.  A \emph{division point} of $a$ in $G$ is any $b \in G$ such that, for some $m \in \N^+$, $m b = a$.

A \emph{division sequence} for $a$ in $G$ is a sequence $(a_m)_{m \in \N^+}$ in $G$ such that $a_1 = a$ and for all $m,n \in \N^+$ we have $n a_{nm} = a_m$.
\end{defn}

If $(a_m)_{m \in \N^+}$ is a division sequence for $a$ in $G$ we can define a group homomorphism $\theta: \Q \to G$ by $\theta(\frac rm) =r a_m$ for $r \in \Z$ and $m \in \N^+$. This gives a bijective correspondence between division sequences for $a$ in $G$ and group homomorphisms $\theta: \Q \to G$ such that $\theta(1) = a$. 

\begin{defn}
The \emph{profinite cover} $\hat G$ of a commutative group $G$ is the group of all  homomorphisms $\Q \to G$, with the group structure defined pointwise in $G$. We write $\rho_G : \hat G \to G$ for the evaluation homomorphism given by $\rho_G(\theta) = \theta(1)$.
\end{defn}
Thus the set of division sequences for $a$ in $G$ is in bijective correspondence with $\rho_G^{-1}(a)$, and we think of elements of $\hat G$ both as homomorphisms from $\Q$ and as division sequences.

The group $\hat G$ itself is divisible and torsion-free. The image of $\rho_G$ is the subgroup of divisible points of $G$, and $\rho_G$ is injective if and only if $G$ is torsion-free. In general, $\ker(\rho_G)$ is a profinite group built from the torsion of $G$ (in fact it is, the product over primes $l$ of the $l$-adic Tate modules of $G$).

For an element $a \in G$, we will often use the notation $\hat a$ for a chosen element of $\hat G$ such that $\rho_G(\hat a) = a$. Of course $\hat a$ is determined by $a$ only when $\rho_G$ is injective, that is, when $G$ is torsion-free.

If $f : G \to H$ is a group homomorphism, we can lift it to a homomorphism $\hat f : \hat G \to \hat H$ defined by $\theta \mapsto f \circ \theta$.  In particular, if $G \subs H$ is a subgroup then $\hat G$ is naturally a subgroup of $\hat H$. (In category-theoretic language, $\left.\hat{\ }\right.$ is a covariant representable functor and in fact $\rho_G : \hat G \to G$ is the universal arrow from the category of divisible, torsion-free abelian groups into $G$.)

When $G$ is a commutative algebraic group we think of $\hat G$ also as a functor, so we write $\hat G(F)$ rather than $\hat{G(F)}$ for the group of division sequences of the group $G(F)$.

Model-theoretically we think of $\hat G$ as the set of division sequences from $G$, which is a set of infinite tuples satisfying the divisibility conditions. It can be seen as an inverse limit of definable sets, sometimes called a pro-definable set \cite{Kamensky07}.

\begin{remark}
Suppose that $G$ is a Lie group, for example the complex points of a complex algebraic group, and $\exp: LG \to G$ is the exponential map. For $a \in LG$, the sequence $(\exp(a/m))_{m \in \N^+}$ is a division sequence in $G$. In fact, the division sequences which arise this way are precisely those which converge topologically to the identity of $G$ \cite[Remark~2.20]{BHP14}.
\end{remark}

%Gamma fields section
% !TEX root = PEM2_1.tex
\section{$\Gamma$-fields}\label{Gamma fields section}

\subsection{$\Gamma$-fields}\label{Gamma field defn section}
In this section we describe the analytic examples we are studying and give the definition of a $\Gamma$-field which is intended to capture and generalize the model-theoretic algebra of the examples.

\begin{defn}[Analytic $\Gamma$-fields of type (EXP)]\label{analytic EXP defn}
The graph of the usual complex exponential function is a subgroup of $\ga(\C)\cross \gm(\C)$. Similarly, if $A(\C)$ is a complex abelian variety (or more generally any commutative complex algebraic group) of dimension $d$, then the graph $\Gamma$ of the exponential map of $A$ is a subgroup of $LA(\C) \cross A(\C)$. Here $LA(\C)$ is the Lie algebra of $A$ and we can identify it with the group $\ga^d(\C)$. 
In this paper we will only consider the cases when $A$ is $\gm$ or $A$ is a simple abelian variety of dimension $d$. We combine these by saying $A$ is a \emph{simple semiabelian variety}.

We write $\OO$ for the ring $\End(A)$ of algebraic endomorphisms of $A$. In many cases $\OO = \Z$, but sometimes, for example if $A$ is an elliptic curve with \emph{complex multiplication}, then \OO\ properly extends $\Z$. Any $\eta \in \OO$ acts on $LA$ as the derivative $d\eta$, a linear map. Thus $\OO$ naturally acts on $LA(\C)$ as a subring of $\GL_d(\C)$, and $\Gamma$ is an $\OO$-submodule of $LA(\C) \cross A(\C)$.

In the case where $A$ is an elliptic curve $E$, embedded in projective space $\mathbb P^2$ in the usual way via its Weierstrass equation, then the exponential map of $E(\C)$ is written in homogeneous coordinates as $z \mapsto (\wp(z) : \wp'(z) : 1)$, where $\wp$ is the Weierstrass $\wp$-function associated with $E$.

We call all of these examples \emph{analytic $\Gamma$-fields of type (EXP)}.
\end{defn}

\begin{defn}[Analytic $\Gamma$-fields of type (COR)]\label{analytic COR defn}
The exponential map of a complex elliptic curve factors through $\gm(\C)$, giving an analytic map $\theta:\gm(\C) \to E(\C)$. More generally there are analytic correspondences between semiabelian varieties. We take $G_1$ and $G_2$ both to be simple complex semiabelian varieties of the same dimension $d$, and assume $G_1$ and $G_2$ are not isogenous. Suppose $\End(G_1)$ and $\End(G_2)$ are both isomorphic to a ring $\OO$, and furthermore there is a $\C$-vector space isomorphism $\psi : LG_1 \maps LG_2$ which respects the actions of $\OO$. We choose such a $\psi$ and take $\Gamma$ to be the image
of the graph of $\psi$ under $\exp_{G_1\cross G_2} : LG_1(\C) \times LG_2(\C) \maps G_1(\C) \times G_2(\C)$. Then $\Gamma$ is an $\OO$-submodule of $G_1(\C) \times G_2(\C)$, and a complex Lie-subgroup. The graph of the map $\theta$ is an example of such a $\Gamma$, but more generally $\Gamma$ will not be the graph of a function.

We call these examples \emph{analytic $\Gamma$-fields of type (COR)}. By an \emph{analytic $\Gamma$-field} we mean one of type (EXP) or type (COR).
\end{defn}

\begin{defn}[Differential equation examples]
If $f(t)$ is a holomorphic function in a neighbourhood of $0 \in \C$, then the pair $(x,y) = (f(t), \exp(f(t)))$ satisfies the differential equation $Dx = \frac{Dy}y$, where $D = \frac{\mathrm d}{\mathrm d t}$. We can consider the set $\Gamma$ of solutions of the differential equation not just in a field of functions but in a differentially closed field $F$. Then $\Gamma$ is a subgroup of $\ga(F) \cross \gm(F)$. The paper \cite{TEDESV} studies this situation for the differential equations satisfied by the exponential maps of semiabelian varieties $S$. While these $S$ do not have to be simple, they do have to be defined over the constant field $C$ of $F$. In these cases the group $\Gamma$ is quite closely related to the graph of the exponential map and can be analysed via a similar amalgamation construction.
\end{defn}

We capture all of these three types of examples in the notion of a $\Gamma$-field. We next give the assumptions we will use on the algebraic groups, and then define $\Gamma$-fields. The assumptions we make are not the most general possible, but they are what we will use throughout this paper.

\begin{defn}[Conventions for $\bk$,  $G_2$, $\OO$, and $\kO$]
We take $\bk$ to be a countable field of characteristic 0, which must be a number field except in case (DEQ) below. Let $G_2$ be a simple semiabelian variety defined over $\bk$.

We write $\OO$ for the ring $\End(G_2)$ of algebraic (that is, regular) group endomorphisms of $G_2$ and assume that they are also all defined over $\bk$. Let $\kO$ denote the ring $\Q \otimes_{\Z} \OO$. 
\end{defn}

\begin{remarks}
The ring $\OO$ has no zero divisors because $G_2$ is simple. So \OO\ embeds in $\kO$. If $\OO = \Z$ then $\kO$ is just $\Q$. Every non-zero algebraic group endomorphism of a simple abelian variety is an isogeny, so becomes invertible in $\kO$. Hence $\kO$ is a division ring, and the $\OO$-torsion of any $\OO$-module is exactly the $\Z$-torsion.
\end{remarks}

\begin{defn}[Conventions for $G_1$, $G$, and the torsion]
We consider two cases for the choice of $G_1$, corresponding to the above analytic examples.
\begin{description}
\item[Case (EXP)] We take $G_1=\Ga^d$, where $d=\dim G_2$. We identify $G_1$ with the Lie algebra $LG_2$, that is, the tangent space at the identity of $G_2$. As in the analytic case, this identification makes $G_1$ into an algebraic $\OO$-module, that is, an $\OO$-module in which every element of $\OO$ acts as a regular map.
  
\item[Case (COR)]  $G_1$ is also a simple semiabelian variety defined over $\bk$, and with all its algebraic endomorphisms defined over $\bk$. We assume $G_1$ is not isogenous to $G_2$, but $\End(G_1) \iso \OO$ and we choose an isomorphism, so $G_1 \cross G_2$ becomes an algebraic $\OO$-module over $\bk$.
\end{description}
 
Let $G = G_1 \cross G_2$, and write $\pi_i : G \to G_i$ for the projection maps of the product, for $i=1,2$. We will write the groups $G_1$, $G_2$ and $G$ additively.

For $i=1,2$ the torsion of $G_i$ is contained in $G_i(\bk^\alg)$, and hence is bounded. We write $\Tor_i$ for the torsion of $G_i(F)$ for any $F$ such that $G(F)$ contains $\Tor(G(\bk^\alg))$. The torsion of $G(F)$ is written $\Tor(G)$. It is equal to $(\Tor_1\cross\Tor_2) \cap G(F)$.
\end{defn}

\begin{remarks}
Note that for any algebraically closed field $F$ extending $\bk$ the groups $G_i(F)$ and $G(F)$ are divisible $\OO$-modules. Furthermore, $G(F)/\Tor(G)$ is divisible and torsion-free, and hence is a $\kO$-vector space.
\end{remarks}

\begin{defn}[$\Gamma$-fields]
A \emph{$\Gamma$-field} (with respect to the $\OO$-module $G$) is a field extension $A$ of $\bk$ equipped with a divisible $\OO$-submodule $\Gamma(A)$ of $G(A)$ such that
\begin{enumerate}
\item $A$ is generated as a field by $\Gamma(A)$.
\item The projection $\pi_i(\Gamma(A))$ in $G_i(A)$ contains $\Tor_i$ for $i=1,2$.
\end{enumerate}
We  write $\Gamma_i(A)$ for the projections $\pi_i(\Gamma(A))$. The $\Gamma$-field $A$ is \emph{full} if, in addition, $A$ is algebraically closed and the projections $\Gamma_i(A)$ are equal to $G_i(A)$.

The \emph{kernels} of a $\Gamma$-field $A$ are defined to be
\[
\ker_1(A) := \class{ x \in G_1(A)}{(x,0) \in \Gamma(A)},\]
\[\ker_2(A) := \class{ y \in G_2(A)}{(0,y) \in \Gamma(A) } .
\]
\end{defn}

When $A$ is full and $\ker_2(A)$ is trivial, $\Gamma(A)$ will be the graph of a surjective $\OO$-module homomorphism from $G_1(F)$ to $G_2(F)$ with kernel $\ker_1(F)$ as in the analytic examples of type (EXP). However the case (EXP) for our $\Gamma$-fields is more general.
 
 \medskip
 
The most difficult part of this paper uses the Kummer theory of semiabelian varieties over number fields. This is not needed for the differential equations examples, or more generally in the following variant.
\begin{defn}[Case \DEQ] A $\Gamma$-field in case \DEQ\ is the same as above except that we require the full torsion group $\Tor(G)$ to be contained in $\Gamma(A)$, and we relax the assumption that $K_0$ is a number field so it can be any countable field of characteristic 0. 
 \end{defn}

\begin{defn}[Extensions of $\Gamma$-fields]
An \emph{extension} of a $\Gamma$-field $A$ is a $\Gamma$-field $B$ together with an inclusion of fields $A \subs B$ over $\bk$ such that $\Gamma(A) \subs \Gamma(B)$. We also say that $A$ is a \emph{$\Gamma$-subfield} of $B$.
We say an extension $A \subs B$ \emph{preserves the kernels}, and that $A$ and $B$ \emph{have the same kernels}, if $\ker_i(A) = \ker_i(B)$, for $i=1,2$.
\end{defn}

In this paper, we will only consider extensions of $\Gamma$-fields which preserve the kernels.
%, and every $\Gamma$-subfield of a $\Gamma$-field $B$ we consider will have the same kernels as $B$. %However, we will mention this every time it is important. 

\begin{remarks}[$\Gamma$-fields as model-theoretic structures] \label{Gamma extension remarks} \
\begin{enumerate}
\item Model-theoretically, we consider a $\Gamma$-field as a structure in the 1-sorted first-order language $L_\Gamma = \langle +,\cdot,-,\Gamma, (c_a)_{a\in \bk} \rangle$, where $\Gamma$ is a relation symbol of appropriate arity to denote a subset of the group $G$, and we have parameters for the field $\bk$. Later we will also be adding parameters for a base $\Gamma$-field $\Fbase$.

\item However, our notion of $\Gamma$-field extension corresponds to an injective $L_\Gamma$-homomorphism, not necessarily an $L_\Gamma$-embedding. Specifically, it is not necessary in an extension $A \into B$ of $\Gamma$-fields that $\Gamma(B) \cap G(A) = \Gamma(A)$, although in most cases we will consider later that will be true. 

\item Although we use the 1-sorted language with the sort being that of the underlying field, we will also refer to elements of $\Gamma$ as being from the sort $\Gamma$, rather than from the definable set $\Gamma$. Model theorists used to working with $L^{eq}$ will see there is no important difference.

\item By definition, the $\Gamma$-field $A$ is determined by the submodule $\Gamma(A)$ of $G(F)$. Furthermore, an extension $A \into B$ is determined by the inclusion of submodules $\Gamma(A) \into \Gamma(B)$. Thus, if $F$ is a monster model of $\ACF_0$, the category of $\Gamma$-fields is equivalent to the category of divisible $\OO$-submodules of $G(F)$ (whose projections contain $\Tor_1$ and $\Tor_2$), with embeddings, in a first-order language with relation symbols for all of the Zariski-closed subsets of $G(F)$ which are defined over $\bk$. This is more-or-less Zilber's setting in \cite{Zilber05peACF0}.
\end{enumerate}
\end{remarks}

\begin{remark}
 One might also consider the case that $G_1$ and $G_2$ are equal (or,
   which comes to essentially the same thing, isogenous). $\Gamma$ can
   then be considered as the graph of a new (quasi)endomorphism of $G_1$. The
   situation is complicated by the need to consider the extension
   of the algebraic endomorphism ring generated by $\Gamma$. Analytic
   examples include raising to a complex power on $\Gm$, which is
   analysed with a different setup in \cite{Zilber03powers} and \cite{Zilber11tes}. 
   
   In an earlier draft of this paper we tried to incorporate this into our setup, and in fact produced an example where $\Gamma$ was the graph of a multivalued endomorphism $\theta$ on $\gm$, lifting to a generic action of the ring $\Q[\theta,\theta^{-1}]$ on the profinite cover $\hat\gm$. However this is subtly different from giving an action of the field $\Q(\theta)$ which is what occurs for complex powers. 

While we expect that such $\Gamma$ can be treated along the lines of this paper, much as we expect that the simplicity assumption on the semiabelian variety could be relaxed, these elaborations are left to future work.
\end{remark}

% TODO: remark on what goes wrong if End(G_1) is not isomorphic to End(G_2)?
% e.g. correspondence between CM elliptic curve and G_m. What does go wrong,
% actually? Good bases don't exist?

\subsection{Finitely generated extensions}
\begin{definition}
Let $B$ be a $\Gamma$-field, and let $\class{A_j}{j \in J}$ be a set of $\Gamma$-subfields of $B$, each with the same kernels as $B$. We define $\bigwedge_{j\in J} A_j$ to be the $\Gamma$-subfield $A$ of $B$ such that $\Gamma(A) = \bigcap_{j \in J}\Gamma(A_j)$.
\end{definition}
\begin{lemma}\label{intersection is Gamma-subfield}
$\bigwedge_{j\in J} A_j$ is a $\Gamma$-subfield of $B$.
\end{lemma}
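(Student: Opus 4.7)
The plan is to set $\Gamma' := \bigcap_{j \in J} \Gamma(A_j)$, let $A$ be the subfield of $B$ generated over $\bk$ by the coordinates of the elements of $\Gamma'$, and verify that $A$ equipped with $\Gamma(A) := \Gamma'$ satisfies conditions (1) and (2) of the definition of a $\Gamma$-field, and that the extension $A \subs B$ preserves the kernels.

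The formal parts are quick. $\Gamma'$ is an $\OO$-submodule of $G(B)$ as an intersection of such; by construction its coordinates lie in $A$ and generate $A$ as a field over $\bk$, so (1) holds. Kernel preservation reduces to showing $\ker_i(A) \supseteq \ker_i(B)$: any $x \in \ker_i(B) = \ker_i(A_j)$ gives a pair $(x,0)$ or $(0,x)$ in every $\Gamma(A_j)$ and therefore in $\Gamma'$. The torsion-projection half of (2) is slightly more interesting: given $t \in \Tor_1$ and witnesses $y_j \in G_2$ with $(t, y_j) \in \Gamma(A_j)$, the difference $(0, y_j - y_{j'}) \in \Gamma(B)$ forces $y_j - y_{j'} \in \ker_2(B) = \ker_2(A_{j'})$, whence $(t, y_j) \in \Gamma(A_{j'})$ for every $j'$, so $(t, y_j) \in \Gamma'$; the argument for $\Tor_2$ is symmetric.

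The main obstacle is divisibility of $\Gamma'$, since an intersection of divisible subgroups need not be divisible. Given $x \in \Gamma'$ and $n \ge 1$, each $A_j$ supplies some $y_j \in \Gamma(A_j)$ with $n y_j = x$; the plan is to show that any such $y_j$ already lies in every $\Gamma(A_{j'})$. Writing the difference as $y_j - y_{j'} = (a, b) \in G[n] \subs \Tor(G)$, in case (DEQ) this lies in $\Gamma(A_{j'})$ by hypothesis, so $y_j \in \Gamma(A_{j'})$. In cases (EXP) and (COR), applying the previous paragraph's torsion step inside $A_{j'}$ produces $(a, b') \in \Gamma(A_{j'})$, and then $b - b' \in \ker_2(B) = \ker_2(A_{j'})$ forces $(0, b - b') \in \Gamma(A_{j'})$ and hence $(a, b) \in \Gamma(A_{j'})$, so again $y_j \in \Gamma(A_{j'})$. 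Consequently $y_j \in \Gamma'$ and $\Gamma'$ is divisible. The kernel-preservation hypothesis on the $A_j$ inside $B$ is essential precisely at this step.
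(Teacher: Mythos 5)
Your proof is correct and follows essentially the same route as the paper's: verify the $\OO$-submodule and generation conditions, use kernel preservation to transfer torsion-projection witnesses between the $A_j$, and reduce divisibility of the intersection to the fact that it contains all the torsion of $\Gamma(B)$. The only difference is cosmetic — you spell out in full the divisibility step that the paper compresses into one sentence, and you pick torsion witnesses inside each $A_j$ rather than starting from one in $B$.
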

The proof is straightforward, but we give the details because they show exactly where all the hypotheses of the definitions are used.
\begin{proof}
Let $A = \bigwedge_{j\in J} A_j$. Since $\Gamma(A)$ is defined as the intersection of a set of $\OO$-submodules of $\Gamma(B)$, it is also an $\OO$-submodule of $\Gamma(B)$. $A$ is defined as the subfield of $B$ generated by the coordinates of the points in $\Gamma(A)$, so $\Gamma(A)$ is an $\OO$-submodule of $G(A)$.

If $a \in \ker_1(B)$, then $(a,0) \in \Gamma(A_j)$ for all $j\in J$ because $\ker_1(A_j) = \ker_1(B)$, so $(a,0) \in \Gamma(A)$. So $\ker_1(A) = \ker_1(B)$ and similarly $\ker_2(A) = \ker_2(B)$.

If $a \in \Tor_1 = \Tor_1(B)$ then there is $b \in G_2(B)$ such that $(a,b) \in \Gamma(B)$. Furthermore for any $b' \in G_2(B)$ we have $(a,b') \in \Gamma(B)$ if and only if $b'-b \in \ker_2(B)$. For each $j \in J$, $A_j$ is a $\Gamma$-subfield of $B$, so $\Gamma_1(A_j)$ contains $\Tor_1(B)$, so there is $b'$ such that $(a,b') \in \Gamma(A_j)$. But $\ker_2(A_j) = \ker_2(B)$ by assumption, so we have $(a,b) \in \Gamma(A_j)$, and since this holds for all $j$ we have $(a,b) \in \Gamma(A)$. Thus $\Gamma_1(A)$ contains $\Tor_1(B)$, and similarly $\Gamma_2(A)$ contains $\Tor_2(B)$.

In particular, $\Gamma(A)$ contains all the torsion from $\Gamma(B)$, so since it is the intersection of divisible $\OO$-submodules, it is itself divisible as an $\OO$-submodule of $G(A)$. Hence $A$ is a $\Gamma$-subfield of $B$.
\end{proof}

\begin{definition}
Let $B$ be a $\Gamma$-field and $X \subs \Gamma(B)$ a subset. We say that 
\[A = \bigwedge \class{A' \text{, a $\Gamma$-subfield of $B$ with the same kernels as $B$}}{X \subs \Gamma(A')}\]
 is the $\Gamma$-subfield \emph{generated by $X$}, and write it as $\gen{X}_B$ or, more usually with the $B$ suppressed, as $\gen{X}$.
 
 We say $A$ is a \emph{finitely generated} $\Gamma$-field if $\Gamma(A)$ is of finite rank as an $\OO$-module, or equivalently as a $\Z$-module. Equivalently, $A$ is generated by a finite subset and $\ker_1(A)$ and $\ker_2(A)$ are of finite rank.

Note that a finitely generated $\Gamma$-field will not usually be finitely generated as a field, because we insist that $\Gamma(A)$ is a divisible $\OO$-submodule.

If $Y$ is a subset of $\Gamma(A)$, we say that $A$ is \emph{finitely generated over $Y$} if there is a finite subset $X$ of $\Gamma(A)$ such that $A$ is the $\Gamma$-subfield of itself generated by $X \cup Y$. In particular, for $Y$ a $\Gamma$-subfield of $A$, we have the notion of a finitely-generated extension of $\Gamma$-fields. It is easy to see that an extension $A \into B$ of $\Gamma$-fields is finitely generated if and only if $\ldim_\kO(\Gamma(B)/\Gamma(A))$ is finite.
\end{definition}

\begin{definition}
The intersection of full $\Gamma$-subfields of $B$ (with the same kernels as $B$) is again a full $\Gamma$-subfield. Thus we can define a full $\Gamma$-field $A$ to be \emph{finitely generated as a full $\Gamma$-field} if there is a finite subset $X$ of $A$ such that 
\[A = \bigwedge \class{A' \text{, a full $\Gamma$-subfield of $A$ with the same kernels as $A$}}{X \subs \Gamma(A')}\]
Likewise there is the notion of being \emph{finitely generated as a full $\Gamma$-field extension}.
\end{definition}
Except in trivial cases, a finitely generated full $\Gamma$-field will not be finitely generated as a $\Gamma$-field, and a finitely generated full $\Gamma$-field extension will not be finitely generated as a $\Gamma$-field extension.

\begin{defn}
Recall that an $\OO$-submodule $H$ of $G$ is \emph{pure} in $G$ if
whenever $x\in G$ and $nx \in H$ for some $n\in\N^+$, then $x \in
H$.
\end{defn}

\begin{lemma}
If $A$ is the $\Gamma$-subfield of $B$ generated by $X$, then
   $\Gamma(A)$ is the pure $\OO$-submodule of $\Gamma(B)$
   generated by $X \cup \pi_1^{-1}(\Tor_1) \cup \pi_2^{-1}(\Tor_2)$.
\end{lemma}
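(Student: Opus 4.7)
The plan is to prove two inclusions, where $H$ denotes the pure $\OO$-submodule of $\Gamma(B)$ generated by $X \cup \pi_1^{-1}(\Tor_1) \cup \pi_2^{-1}(\Tor_2)$. For the inclusion $H \subs \Gamma(A)$, I would argue that every $\Gamma$-subfield $A'$ of $B$ with the same kernels as $B$ and containing $X$ must have $\Gamma(A')$ a pure submodule of $\Gamma(B)$ containing the generating set above. For the reverse inclusion, I would verify that $H$ itself arises as $\Gamma(A_H)$ for the subfield $A_H$ of $B$ generated by the coordinates of $H$, thus realising a competitor in the intersection defining $\Gamma(A)$.

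First I would verify that any qualifying $A'$ contains the generating set. For $(a,b) \in \Gamma(B)$ with $a \in \Tor_1$, since $A'$ is a $\Gamma$-field we have $\Tor_1 \subs \Gamma_1(A')$, so there is $b'$ with $(a,b') \in \Gamma(A')$; then $(0, b-b') \in \Gamma(B)$ shows $b-b' \in \ker_2(B) = \ker_2(A')$, hence $(a,b) \in \Gamma(A')$. The symmetric argument handles $\pi_2^{-1}(\Tor_2)$. Next I would show $\Gamma(A')$ is \emph{pure} in $\Gamma(B)$: if $nx = h \in \Gamma(A')$ with $x \in \Gamma(B)$, divisibility of $\Gamma(A')$ gives $x' \in \Gamma(A')$ with $nx' = h$, so $x - x' \in \Tor(\Gamma(B))$; but all of $\Tor(\Gamma(B))$ lies in $\pi_1^{-1}(\Tor_1) \cap \Gamma(B) \subs \Gamma(A')$, hence $x \in \Gamma(A')$. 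Combining these with the definition of pure submodule generated by a set yields $H \subs \Gamma(A')$, and intersecting over all such $A'$ gives $H \subs \Gamma(A)$.

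For the reverse inclusion, I would define $A_H$ to be the subfield of $B$ generated by the coordinates of the points of $H$ and show $(A_H, H)$ is a $\Gamma$-subfield of $B$ with the same kernels as $B$. The submodule $H$ is divisible because it is pure in the divisible module $\Gamma(B)$ (any pure submodule of a divisible module inherits divisibility via the same argument just used). The generation-as-a-field clause is by construction of $A_H$. The inclusions $\Tor_i \subs \pi_i(H)$ follow from $\pi_i^{-1}(\Tor_i) \cap \Gamma(B) \subs H$ together with $\Tor_i \subs \pi_i(\Gamma(B))$. For the kernels: $\pi_2^{-1}(\Tor_2) \cap \Gamma(B)$ contains $\{(x,0) \in \Gamma(B)\}$, hence $\ker_1(A_H) = \ker_1(B)$, and symmetrically for $\ker_2$. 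Since $X \subs H$, this $A_H$ is one of the $\Gamma$-subfields being intersected, so $\Gamma(A) \subs \Gamma(A_H) = H$.

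The main conceptual obstacle is the purity-in-$\Gamma(B)$ step: it is precisely the ``same kernels'' hypothesis that lets the torsion correction be absorbed, so that divisible submodules are automatically pure; without this the construction would not close up, and the pure submodule generated would have to be enlarged further. Once that is in place the rest is bookkeeping with the $\Gamma$-field axioms from Lemma~\ref{intersection is Gamma-subfield}.
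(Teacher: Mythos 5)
Your proof is correct and follows essentially the same route as the paper's: show the pure submodule $H$ together with the field it generates is itself a qualifying $\Gamma$-subfield (giving $\Gamma(A)\subs H$), and show every qualifying $\Gamma(A')$ contains the generators and, being divisible and containing $\Tor(G)\cap\Gamma(B)$, is pure in $\Gamma(B)$, hence contains $H$. The only difference is that you spell out the verifications (kernel preservation, divisibility of $H$, the torsion-absorption step) that the paper leaves implicit.
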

\begin{proof}
   This pure $\OO$-submodule together with the field it generates is a
   $\Gamma$-subfield of $B$ with the same kernels as $B$, so it
   suffices to see that it is contained in $\Gamma(A_j)$ for any $A_j$
   in the definition.

   $\Gamma(A_j)$ contains $X$ by definition, and since
   $\pi_i(\Gamma(A_j)) = \Tor_i$ and $A_j$ has the same kernels as $B$,
   it also contains $\pi_i^{-1}(\Tor_i)$. Hence it also contains
   $\Tor(G) \cap \Gamma(B)$. Since it is divisible, it follows that it
   is pure in $\Gamma(B)$.
\end{proof}

\subsection{Good bases}

Let $A$ be a $\Gamma$-field, and $B$ a finitely generated $\Gamma$-field extension of $A$. So the linear dimension $\ldim_\ek(\Gamma(B)/\Gamma(A))$ is finite. Thus we can find a \emph{basis} for the extension, by which we mean a tuple $b = (b_1,\ldots,b_n) \in \Gamma(B)^n$ of minimal length $n$ such that $b \cup \Gamma(A)$ generates $\Gamma(B)$, or equivalently such that $b_1+\Gamma(A),\ldots,b_n + \Gamma(A)$ is a basis for the quotient $\kO$-vector space $\Gamma(B) / \Gamma(A)$.

 We consider the locus $\loc(b/A)$ of $b$, that is, the smallest Zariski-closed subset of $G$, defined over $A$ and containing $b$.

\begin{definition}
A basis $b \in \Gamma(B)^n$ for a finitely generated extension $A \into B$ of $\Gamma$-fields is \emph{good} if the isomorphism type of the extension is determined up to isomorphism by the locus $\loc(b/A)$. That is, whenever $B'$ is another extension of $A$ which is generated by a basis $b'$ such that $\loc(b'/A) = \loc(b/A)$ then there is an isomorphism of $\Gamma$-fields $B \iso B'$ fixing $A$ pointwise, which takes $b$ to $b'$.
\end{definition}

\begin{prop}\label{good bases (D)}
Suppose we are in case \DE, that is $\Tor(G) \subs \Gamma$. Let $A \into B$ be a finitely generated extension of $\Gamma$-fields. Then every basis of the extension is good.
\end{prop}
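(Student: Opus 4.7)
The plan is, given any other $\Gamma$-field extension $B' \supseteq A$ with basis $b'$ satisfying $\loc(b'/A) = \loc(b/A) =: V$, to construct an $A$-isomorphism of $\Gamma$-fields $\phi : B \to B'$ sending $b \mapsto b'$. The crucial feature of case \DE\ is that $\Tor(G) \subseteq \Gamma(A)$, from which I would first extract the following ``flexibility'' observation: if $x \in \Gamma(B)$ and $y \in G(B)$ satisfy $my = x$ for some $m \in \N^+$, then already $y \in \Gamma(B)$, since any two $m$-division points of $x$ in $G(B)$ differ by an element of $G[m] \subseteq \Tor(G) \subseteq \Gamma(A) \subseteq \Gamma(B)$; the analogous statement holds for $B'$.

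Using this flexibility, I would fix a chain $1 = m_0 \mid m_1 \mid m_2 \mid \cdots$ cofinal under divisibility in $\N^+$ (e.g.\ $m_k = k!$) and inductively build compatible division sequences $\hat b \in \Gamma(B)^n$ and $\hat b' \in \Gamma(B')^n$ whose $m_k$-entries $b/m_k$ and $b'/m_k$ have matching loci $W_k := \loc(b/m_k\,/\,A) = \loc(b'/m_k\,/\,A)$. For the inductive step, pick any $b/m_{k+1} \in \Gamma(B)^n$ with $(m_{k+1}/m_k)\cdot b/m_{k+1} = b/m_k$; a transcendence-degree count identifies $W_{k+1} := \loc(b/m_{k+1}/A)$ as an $A$-irreducible component of $[m_{k+1}/m_k]^{-1}(W_k) \subseteq G^n$. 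Pick any $c' \in \Gamma(B')^n$ with $(m_{k+1}/m_k)\cdot c' = b'/m_k$ (using divisibility of $\Gamma(B')$); its locus is another such component. In case \DE\ all torsion of $G^n$ lies in $G^n(A)$, so for any $N \in \N^+$ the group $G^n[N]$ acts on $G^n$ by $A$-definable translations, and the standard Galois-theoretic property of the étale cover $[N]: G^n \to G^n$ yields a transitive action of $G^n[N]$ on the $A$-irreducible components of $[N]^{-1}(W_k)$ lying over $W_k$. Applying this with $N = m_{k+1}/m_k$ produces $t \in G^n[N]$ with $\loc(c'+t/A) = W_{k+1}$, and setting $b'/m_{k+1} := c' + t$ gives an element of $\Gamma(B')^n$ (as $t \in \Gamma(A)^n$ by \DE) that still multiplies by $N$ to $b'/m_k$.

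Next, at each level $k$ the equality $W_k = W'_k$ furnishes a canonical $A$-field isomorphism $A(b/m_k) \to A(b'/m_k)$ sending $b/m_k \mapsto b'/m_k$, and these are compatible along the inclusions $A(b/m_k) \subseteq A(b/m_{k+1})$ induced by $b/m_k = (m_{k+1}/m_k)\cdot b/m_{k+1}$ (and symmetrically on the $B'$ side), so I can pass to the union to obtain a field isomorphism $\phi : A(\hat b) \to A(\hat b')$ over $A$. One more application of the flexibility observation shows every $y \in \Gamma(B)$ can be written as $a + \sum_i r_i (b_i/m)$ with $a \in \Gamma(A)$, $r_i \in \OO$, $m \in \N^+$: lift a $\kO$-linear expression for $\bar y \in \Gamma(B)/\Gamma(A)$ with common denominator $m$, divide the residual $\Gamma(A)$-term by $m$ using divisibility, and absorb the leftover $m$-torsion discrepancy (which lies in $\Tor(G) \subseteq \Gamma(A)$) into $a$. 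Hence $B = A(\hat b)$ and symmetrically $B' = A(\hat b')$, so $\phi$ is a field isomorphism $B \to B'$ that sends such a typical element of $\Gamma(B)$ to $a + \sum_i r_i (b'_i/m) \in \Gamma(B')$; by symmetry $\phi(\Gamma(B)) = \Gamma(B')$, so $\phi$ is a $\Gamma$-field isomorphism as required.

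The one point that requires more than a routine argument is the transitivity of the $G^n[N]$-action on the $A$-irreducible components of $[N]^{-1}(W_k)$ lying over $W_k$: this is standard for geometric components of the $G^n[N]$-Galois étale cover $[N]$, and in case \DE\ it descends to $A$-components precisely because all of $G^n[N]$ is $A$-rational and the translation action therefore respects the Galois orbits constituting the $A$-components.
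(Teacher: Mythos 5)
Your proof is correct, but it takes a more laborious route than the paper's, and the comparison is instructive. The paper's proof takes \emph{any} field isomorphism $\theta : B \to B'$ over $A$ with $\theta(b) = b'$ — such exists because $B$ and $B'$ are generated over the isomorphic fields $A(b) \iso A(b')$ by the division points of $b$, resp.\ $b'$, and any two $m$-division points differ by torsion, which in case \DE\ lies in $A$ — and then observes that for $c \in G(B)$ one has $c \in \Gamma(B)$ if and only if $mc$ lies in the $\OO$-linear span of $\Gamma(A) \cup \{b\}$ for some $m \in \N^+$. This is an intrinsic characterization of $\Gamma(B)$ in terms of the field structure, $A$ and $b$ alone, and it is visibly transported by any such $\theta$; no control over the loci of individual division points is ever needed, because one never has to match them up. Your argument instead manufactures a specific isomorphism by synchronizing division sequences level by level, via the transitivity of the $G^n[N]$-translation action on the $A$-irreducible components of $[N]^{-1}(W_k)$. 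That step is sound — all components of $[N]^{-1}(W_k)$ dominate $W_k$ since $[N]$ is finite, transitivity on geometric components is the standard Galois-cover fact, and the descent to $A$-components because $G^n[N]$ is $A$-rational (hence commutes with $\Gal(A^\alg/A)$) is exactly the torsion-in-$A$ phenomenon that drives the whole proposition. But it amounts to re-proving, with precision you do not subsequently use, the existence of the field isomorphism that the paper obtains for free. Note that your final computation, showing every element of $\Gamma(B)$ has the form $a + \sum_i r_i(b_i/m)$ with $a \in \Gamma(A)$, is precisely the paper's membership criterion; had you isolated it at the outset, your locus-matching construction would have been unnecessary.
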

\begin{proof}
Suppose $b$ is a basis of $B$ over $A$, and we have another extension $B'$ of $A$ with basis $b'$ such that $\loc(b'/A) = \loc(b/A)$. There is a (not necessarily unique) field isomorphism $\theta: B \iso B'$ over $A$ which takes $b$ to $b'$. Now for an element $c \in G(B)$ we have $c \in \Gamma(B)$ if and only if there is $m \in \N$ such that $mc$ is in the $\OO$-linear span of $\Gamma(A)$ and $b$, because $\Gamma(B)$ is divisible and contains all the torsion of $G$. It follows that $c \in \Gamma(B)$ if and only if $\theta(c) \in \Gamma(B')$, so $\theta$ is an isomorphism of $\Gamma$-field extensions. So $b$ is a good basis.
\end{proof}
In the proof it is critical that $\Tor(G) \subs \Gamma$ since otherwise some division points of the basis will be in $\Gamma$ but others will not. In general we can specify an extension $B$ of $A$ by specifying a choice of division sequence $\hat b$ for a basis $b$, such that $\hat b \in \hat\Gamma(B)$.

\begin{defn}
A $\Gamma$-field is \emph{essentially finitary} if it is finitely generated or if it is a finitely generated extension of a countable full $\Gamma$-field.
\end{defn}

\begin{prop}[Existence of good bases]\label{good bases exist} \ \\
  Let $A$ be an essentially finitary $\Gamma$-field, and let $B$ be a finitely generated $\Gamma$-field extension of $A$ (with the same kernels as $A$). Let $b$ be a basis for the extension. Then there is $m \in \N^+$ such that any $m^\mathrm{th}$ division point of $b$ in $\Gamma(B)$ is a good basis. Furthermore in case \DE\ we may take $m=1$, so every basis is good, and we may even remove the assumption that $A$ is essentially finitary.
\end{prop}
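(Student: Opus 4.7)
The case (DEQ) part is immediate from Proposition~\ref{good bases (D)}; the content is cases (EXP) and (COR), where the ambiguity of torsion in choosing division points above a given point of $\Gamma$ is the obstruction. The plan is to use Kummer theory for simple semiabelian varieties over number fields to force this ambiguity to vanish after going down to a sufficient division level.

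First, I would reformulate ``good basis'' in Kummer-theoretic terms. A $\Gamma$-field extension $B$ of $A$ with basis $b$ is completely pinned down by the pair $(\loc(b/A),\hat{b})$, where $\hat{b}$ is any division sequence for $b$ in $\Gamma(B)$: indeed $\Gamma(B)$ is then the pure divisible $\OO$-submodule of $G(B)$ generated by $\Gamma(A)$, $\hat{b}$, and $\pi_1^{-1}(\Tor_1)\cup \pi_2^{-1}(\Tor_2)$. Two extensions with the same $\loc(b/A)$ can differ only in the choice of $\hat{b}$, up to Galois action over $A$. This choice is a torsor under the cokernel of the Kummer pairing $\kappa_b\colon \Gal(A^\alg/A)\to \ker(\rho_G)$, so $b$ is good precisely when the image of $\kappa_b$ is full modulo the contribution coming from division sequences already present in $\Gamma(A)$.

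Second, I would invoke Bashmakov--Ribet-type Kummer theorems: for simple semiabelian varieties over a number field, if $b_1,\ldots,b_n \in G(A^\alg)$ are $\ek$-linearly independent over $\Gamma(A)$ modulo torsion (which holds because $b$ is a basis), then $\kappa_b$ has image of \emph{finite} index in $\ker(\rho_G)$. Here the \emph{essentially finitary} hypothesis on $A$ is precisely what allows such a bound to apply: either $A$ is itself finitely generated, or it is a finitely generated extension of a countable full $\Gamma$-field $C$ whose kernels $\ker_i(C)$ have finite rank, and in either case the relevant Kummer bounds are uniformly controlled. Choosing $m$ large enough to absorb this index, any $m$-th division point $c \in \Gamma(B)$ of $b$ is again a basis (since $\ek$ is a field) and has $\kappa_c$ surjective onto $\ker(\rho_G)$ over $A(c)$.

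Third, I would verify that any such $c$ is good. Given $B'$ over $A$ with basis $c' \in \Gamma(B')$ having $\loc(c'/A) = \loc(c/A)$, pick a field isomorphism $A(c) \to A(c')$ over $A$ sending $c$ to $c'$. Surjectivity of $\kappa_c$ and $\kappa_{c'}$ over $A(c)$ and $A(c')$ respectively lets me extend this isomorphism compatibly along the division tower so that chosen division sequences $\hat{c}$ and $\hat{c'}$ correspond; by the first step the resulting field isomorphism is a $\Gamma$-isomorphism. The main obstacle will be case (COR), where one needs joint surjectivity of the Kummer map onto $\ker(\rho_{G_1})\times\ker(\rho_{G_2})$ rather than only factor-wise surjectivity; here the non-isogeny hypothesis on $G_1,G_2$ is crucial, via an independence result in the style of Ribet. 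A subsidiary care point is that in cases (EXP) and (COR) one has $\Tor(G)\not\subs \Gamma$, so the ``purity'' step must be performed modulo $\pi_i^{-1}(\Tor_i)$ rather than outright as in the (DEQ) case.
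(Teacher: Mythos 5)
Your proposal is correct and follows essentially the same route as the paper's proof: reduce goodness of a basis to surjectivity of the Kummer map onto the relevant Tate module, invoke Kummer theory for simple semiabelian varieties (over torsion in the finitely generated case, over an algebraically closed field in the essentially finitary case) to get finite index of its image, and take $m$ to be the exponent of the finite quotient, with the non-isogeny of $G_1$ and $G_2$ handling the joint surjectivity needed in case (COR). The only cosmetic difference is that the paper runs the Kummer argument on the $G_2$-coordinates alone in case (EXP), since division in $G_1=\ga^d$ is unambiguous there.
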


The bulk of the proof is contained in the following Kummer-theoretic results.

\begin{defn}
For a commutative algebraic group $H$ we write $\Tate(H)$ for the kernel of the map $\rho_H : \hat H \to H$. So $\Tate(H)$ is the group of division sequences of the identity of $H$ (which is the product over primes $l$ of the $l$-adic Tate modules $T_l(H)$ of $H$, hence the notation).
\end{defn}

\begin{proposition}\label{prop:kummer}

  Let $H = A \times \G_m^r$ be the product of an abelian variety and an algebraic torus.

  Suppose that $A$ is defined over a number field $\bk$, and moreover that
  every endomorphism of $A$ is also defined over $\bk$.

  Let $D$ be either $\Tor(H)$ or $H(L)$ for an algebraically
  closed field extension $L$ of $\bk$ and let $K$ be a finitely generated field extension of $\bk(D)$.

  Let $a \in H(K)$ and suppose that $a$ is free in $H$ over $D$, that is, in no coset $H'+\gamma$ for a proper algebraic subgroup $H'$ of $H$
  and $\gamma\in D$.

Let $\hat a = (a_m)_{m \in \N^+} $ be a division sequence for $a$ in $\hat H(K^\alg)$ and consider the Kummer map $\xi_a : \Gal(K^\alg/K) \maps \Tate(H)$ given by
    \[ \xi_a(\sigma) = (\sigma (a_m) - a_m)_{m \in \N^+} .\]

  Then $\xi_a$ does not depend on the choice of division sequence $\hat a$, so is well-defined, and the image of $\xi_a$ is of finite index in $\Tate(H)$.
\end{proposition}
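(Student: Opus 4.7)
The plan is to handle well-definedness and the finite-index claim separately.

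For well-definedness, observe that any two division sequences $\hat a = (a_m)$ and $\hat a' = (a'_m)$ for $a$ differ componentwise by torsion, since $m(a_m - a'_m) = 0$ for each $m$. In both cases we have $\Tor(H) \subseteq D$: tautologically when $D = \Tor(H)$, and when $D = H(L)$ because $L$, being algebraically closed and containing $\bk$, contains $H(\bk^\alg) \supseteq \Tor(H)$. Hence $\Tor(H) \subseteq \bk(D) \subseteq K$, so each $a_m - a'_m$ is fixed by $\Gal(K^\alg/K)$, whence $\sigma(a_m) - a_m = \sigma(a'_m) - a'_m$. The same observation shows that $\xi_a$ is a homomorphism, since for any $\sigma,\tau \in \Gal(K^\alg/K)$ the element $\tau(a_m) - a_m$ is torsion, hence in $K$ and fixed by $\sigma$.

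The core of the proof is the finite-index claim. I would first reduce to the case $D = \Tor(H)$. When $D = H(L)$, pick a transcendence basis of $L$ over $\bk$ and specialise it to a sufficiently generic tuple in $\bk^\alg$; genericity preserves the freeness of $a$ over $\Tor(H)$, and the specialised base field is finitely generated over $\bk(\Tor(H))$. A commutative diagram comparing $\xi_a$ with the Kummer map of the specialised datum then transports the finite-index conclusion back. Assuming now $D = \Tor(H)$, the result is a classical consequence of Kummer theory for semiabelian varieties over number fields. Two ingredients combine: the theorem of Ribet (and Bashmakov) giving finite-index image of the Kummer map for any point of $A$ free modulo torsion over $\bk(\Tor(A))$, and the elementary Kummer theory for the torus $\gm^r$. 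The joint statement for $H = A \times \gm^r$ then follows because the $\ell$-adic Galois representations on $\Tate(A)$ and $\Tate(\gm^r)$ share no common irreducible constituents: since $A$ is not isogenous to any isogeny factor of $\gm^r$, Goursat's lemma applied to the image of Galois in $\Aut(\Tate(A)) \times \Aut(\Tate(\gm^r))$ produces a finite-index subgroup of the product. The joint freeness of $a$ in $H$ (rather than separate freeness of the two projections) is precisely what rules out the translated proper algebraic subgroups of $H$ which would otherwise obstruct the combined Kummer map.

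The main obstacle I expect is invoking the abelian-variety Kummer theory in exactly the needed generality: Ribet's theorem and its extensions by Bashmakov and Bertrand furnish this, but translating between the usual formulations, phrased in terms of Galois groups of division towers $\bk(\Tor(A))(m^{-1}a)/\bk(\Tor(A))$, and the present formulation in terms of $\Tate(H)$, requires some cohomological bookkeeping via the identification of the Kummer map with the connecting map in the long exact sequence of $H \xrightarrow{m} H$. A secondary obstacle is the specialisation step for $D = H(L)$, where one must verify both that freeness is preserved by a generic specialisation and that the Kummer maps before and after specialisation fit into a suitable commutative diagram.
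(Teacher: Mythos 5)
Your treatment of well-definedness is fine and matches the paper's (two division sequences differ componentwise by torsion, and $\Tor(H)\subseteq D$ so the torsion is rational over $K$). The finite-index argument, however, has two genuine gaps, and both stem from organising the reduction between the two choices of $D$ in the wrong direction.

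First, the case $D=\Tor(H)$ with $K$ \emph{finitely generated} over $\bk(\Tor(H))$ is not a direct consequence of the classical arithmetic Kummer theory of Ribet--Bashmakov--Bertrand: those results cover $K$ finite over $\bk(\Tor(H))$, whereas here $a$ may have positive transcendence degree over $\Qalg$ (e.g.\ $a$ generic in $H$), and the Galois group $\Gal(K^\alg/K)$ then has a large ``geometric'' part that the arithmetic results say nothing about. The paper handles this by a d\'evissage: it takes the minimal algebraic subgroup $B\le H$ with $\theta(a)\in (H/B)(\Qalg)$ for the quotient $\theta:H\to H/B$, applies the algebraically-closed-field case to the component of $a$ in $B$ over $\Qalg$ (where $a$ is free over $B(\Qalg)$ by minimality), applies the number-field case to $\theta(a)$ in $H/B$, and splices the two finite-index statements together along the exact sequences $1\to\Gal(K^\alg/\Qalg(K))\to\Gal(K^\alg/K)\to\Gal(\Qalg/K\cap\Qalg)\to 1$ and $0\to\Tate(B)\to\Tate(H)\to\Tate(H/B)\to 0$. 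Your proposal omits this step entirely, and it cannot be omitted: the $D=H(L)$ case is an \emph{input} to the finitely generated $D=\Tor(H)$ case, not something to be derived from it.

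Second, the proposed specialisation of $D=H(L)$ down to $D=\Tor(H)$ does not work as stated. A ``sufficiently generic tuple in $\bk^\alg$'' need not exist: the locus of specialisations at which the reduction of $a$ becomes torsion modulo some proper algebraic subgroup is a countable union of proper closed conditions, and a countable union of proper closed subsets can exhaust the $\bk^\alg$-points of the parameter space (as it does for $\mathbb{A}^1(\Qalg)$). Preserving non-torsionness of $a$ modulo every proper subgroup simultaneously at an \emph{algebraic} point of the base is a Silverman/N\'eron-type specialisation theorem, i.e.\ a nontrivial height-theoretic input, not a genericity remark; and when $L$ has infinite transcendence degree over $\bk$ even setting up the place requires care. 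The paper avoids all of this by citing a self-contained Galois-theoretic proof of the $D=H(L)$ case (\cite[Section~3, Claim~2]{BGH14}), which is elementary precisely because over an algebraically closed constant field no arithmetic input is needed. Your sketch of the product case $H=A\times\G_m^r$ via the disjointness of the Galois representations on $\Tate(A)$ and $\Tate(\gm^r)$ is reasonable in spirit (the paper cites \cite[Proposition~A.9]{BHP14} for exactly this), but it sits on top of the two unfilled holes above.
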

\begin{remark}
  For the groups $\Tate(H)$ which occur in this theorem, the finite index subgroups are precisely those which are open in the profinite topology, so the conclusion of the proposition is that $\xi_a(\Gal(K^\alg/K))$ is open in $\Tate(H)$. 
\end{remark}
\begin{proof}[Proof of Proposition~\ref{prop:kummer}]
It is straightforward that $\xi_a$ is well-defined.
\medskip

First suppose $D=\Tor(H)$ and $K$ is a {\em finite} extension of
      $\bk(D)$ - so by increasing $\bk$, we may assume $K=\bk(D)$.

      The result then follows from Kummer theory for abelian varieties. For the case $H=A$, we refer to \cite[Theorem~5.2]{Bertrand11}, and for the generalization to $H = A \cross \gm^n$ we refer to \cite[Proposition~A.9]{BHP14}.
      
\medskip
 Suppose now that $D=H(L)$ where $L$ is an algebraically closed field. In this case, the result has a Galois-theoretic proof given as \cite[Section~3,~Claim~2]{BGH14}. In the case that
    $K=\bk(D,a)$, the result follows directly from that claim; in general, it
    follows on noting that
	\[ \xi_a(\Gal(K^\alg/K)) \isom \Gal(K(\hat{a}) / K) \isom
	\Gal(\bk(D,\hat{a}) / K \cap \bk(D,\hat{a}))
	,\]
    and $K \cap \bk(D,\hat{a})$ is a finite extension of $\bk(D,a)$.

    See also references in the introduction of \cite{BGH14} for alternative
    proofs, and \cite[Theorem~5.3]{Bertrand11} for an analytic proof.

\medskip
 Finally, suppose $D=\Tor(H)$ and $K$ is a finitely generated extension
    of $\bk(D)$.

    The result in this case follows from the first two cases. This can be seen
    model-theoretically in the context of \cite{BHP14} as a matter of
    transitivity of atomicity, but we give here a direct argument.

    Say $B$ is the minimal algebraic subgroup of $H$ such
    that, writing $\theta: H \to H/B$ for the quotient map, we have $\theta(a)\in (H/B)(\Qalg)$. Let $K' = K \cap \Qalg$, so
    $K$ is a regular extension of $K'$ and $K'$ is a finite extension of $\bk(D)$.
    Consider the following diagram
      %  \[ \xymatrix{
%	1 \ar[d] & 0 \ar[d] \\
%	\Gal(K^\alg/\Qalg(K)) \ar[d]\ar[r]^{\xi_a} &\Tate(B) \ar[d] \\
%	\Gal(K^\alg/K) \ar[d]\ar[r]^{\xi_a} &\Tate(H) \ar[d] \\
%	\Gal(\Qalg/K') \ar[d]\ar[r]^{\xi_{\theta(a)}} &\Tate(H/B) \ar[d] \\
%	1 & 0} \]
   \begin{diagram}[height=2em]
	1 & & 0 \\
	\dTo & & \dTo\\
	\Gal(K^\alg/\Qalg(K)) & \rTo^{\xi_a} & \Tate(B) \\
	\dTo & & \dTo\\
	\Gal(K^\alg/K) & \rTo^{\xi_a} &\Tate(H) \\
	\dTo & & \dTo\\
	\Gal(\Qalg/K') & \rTo^{\xi_{\theta(a)}} &\Tate(H/B)  \\
	\dTo & & \dTo\\
	1 & &0 
	\end{diagram}
    where the middle horizontal map is the Kummer map for $a$, the top map is
    its restriction, and the bottom map is the Kummer map for $\theta(a)$.
    The vertical sequences are exact.

    Say $a\in a_B + H(\Qalg)$, where $a_B\in B$. By minimality of $B$, we
    have that $a_B$ is free in $B$ over $B(\Qalg)$. Now the top
    map agrees with the Kummer map $\xi_{a_B}$ in $B$, and so by the second
    case above, the map has finite index image.

    Now since $a$ is free in $H$ over $\Tor(H)$, we have that
    $\theta(a)$ is free in $H/B$ over $\Tor(H/B)$,
    %and it follows that there is a representative $\zeta\in H(\Qalg)$
    %contained in no proper coset over $\Tor(H)$. Applying the first case to
    %$\zeta$ and quotienting by $B$ yields that
    so by the first case applied to $H/B$,
    the bottom map in the above diagram also has finite index image.

    It follows that the central map has finite index image, as required.
  \end{proof}

Now we prove that good bases exist.
\begin{proof}[Proof of Proposition~\ref{good bases exist}]

Let $\hat b \in \hat\Gamma(B)^n$ be a division sequence of the basis $b$ and write $\hat b = (b_m)_{m \in \N^+}$. Then $\Gamma(B)$ is precisely the $\OO$-linear span of $\Gamma(A)$ and the $b_m$, so to specify $B$ up to isomorphism it is enough to specify the $\ACF$-type of $\hat b$ over $A$. 
$A$ is an essentially finitary $\Gamma$-field, so it is either finitely generated or a finitely generated extension of a countable full $\Gamma$-field $A_0$. In the former case, let $D = \Tor(G)$ and and in the latter case let $D = G(A_0)$. For $i=1,2$, write $b_i = \pi_i(b)$ and $D_i = \pi_i(D)$, and let $a_i$ be a $\kO$-basis for $\pi_i(\Gamma(A))$ over $D_i$.

We consider the different cases in turn.

 \begin{enumerate}[Case (EXP)]
    \item[Case (EXP)]
      Since the extensions are kernel-preserving, $(a_2,b_2)$ is $\ek$-linearly independent over $D_2$,
      and so is free in $G_2^{n+k}$ over $D_2$.

      So, by Proposition~\ref{prop:kummer}, $\xi_{a_2,b_2}(\Gal(K_0(D,a,b)^\alg/K_0(D,a,b)))$ has finite index in $\Tate(G_2^{n+k})$. In particular, its intersection with $0 \times \Tate(G_2^n)$ is of finite index.

      Since $A$ is generated as a field by $K_0(D,a)$ and the division
      points of $a_2$, it follows that
	$\Xi := \xi_{b_2}(\Gal(A(b)^\alg/ A(b)))$ has finite index in $\Tate(G_2^n)$.
      So if $m$ is the exponent of the finite quotient $\Tate(G_2^n) / \Xi$,
      then $m\Tate(G_2^n)$ is a subgroup of $\Xi$.

      Hence, if $b'$ is an $m^\mathrm{th}$ division point of $b$ we have $\xi_{b'_2}(\Gal(A(b')^\alg/A(b'))) = \Tate(G_2^n)$.
      So all division sequences of $b'$ have the same $\ACF$-type over $A(b')$, and hence $b'$ is a good basis for $B$ over $A$.

    \item[Case (COR)]
 Again, since the extensions are kernel-preserving, $(a_i,b_i)$ is free over $D_i$ for $i=1,2$.

      Since $G_1$ and $G_2$ are simple and non-isogenous, every algebraic subgroup of $G^{k+n}$ is of the form $H_1 \cross H_2$ for $H_i$ a subgroup of $G_i^{k+n}$, so it follows that $(a,b)$ is free in $G^{k+n}$ over $D$.

      Since $A$ is generated as a field by $K_0(D)$ and the division points of
      $a_1$ and of $a_2$, we conclude as in case (EXP).

      \item[Case \DE] was covered in Proposition~\ref{good bases (D)}.
  \end{enumerate}
\end{proof}

\begin{corollary}\label{0-stability over ess finitary}
If $A$ is an essentially finitary $\Gamma$-field there are, up to isomorphism, only countably many finitely generated kernel-preserving extensions of $A$.
\end{corollary}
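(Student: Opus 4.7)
The plan is to read this off as a counting corollary of Proposition~\ref{good bases exist}. By that proposition, every finitely generated kernel-preserving extension $A \into B$ has a good basis $b' \in \Gamma(B)^n$ for some $n$, and by the definition of ``good basis'' the isomorphism type of the extension $A \into B$ is determined by the locus $\loc(b'/A) \subs G^n$. So the isomorphism classes of such extensions inject into the collection of pairs $(n, V)$ where $n \in \N$ and $V$ is a Zariski-closed subset of $G^n$ defined over $A$.

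First I would verify the (easy) observation that an essentially finitary $\Gamma$-field $A$ is countable. If $A$ is finitely generated then $\Gamma(A)$ has finite $\OO$-rank, its torsion is $\Tor(G) \cap \Gamma(A)$ which is contained in $\Tor(G) \subs G(\bk^\alg)$ and hence countable, and thus $\Gamma(A)$ is itself countable; then $A$, being generated as a field over $\bk$ by $\Gamma(A)$, is countable since $\bk$ is. If instead $A$ is a finitely generated extension of a countable full $\Gamma$-field $A_0$, the same argument relativised to $A_0$ gives countability of $A$.

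Next, for each $n \in \N$ there are only countably many Zariski-closed subsets of $G^n$ defined over the countable field $A$, since there are only countably many finite tuples of polynomials over $A$ cutting them out. Taking the union over $n$, the target of the injection above is countable, which gives the corollary.

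The main ``obstacle'' is really just to notice that Proposition~\ref{good bases exist} is doing all the work: the Kummer-theoretic content (finite index image of the Kummer map) has already been used to guarantee existence of good bases, so here there is nothing substantive left beyond confirming countability of $A$ and of the Zariski-closed subsets of the relevant ambient varieties over $A$.
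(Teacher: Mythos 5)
Your argument is exactly the paper's proof: invoke Proposition~\ref{good bases exist} to get a good basis, note that the isomorphism type is determined by the locus of that basis over $A$, and count the countably many varieties defined over the countable field $A$. The only addition is your explicit verification that an essentially finitary $\Gamma$-field is countable, which the paper takes for granted; everything is correct.
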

\begin{proof}
Each extension $B$ has a good basis $b$, and is determined by $\loc(b/A)$. Since $A$ is countable there are only countably many algebraic varieties defined over it.
\end{proof}

%Predimension and strong extensions
\section{Predimension and strong extensions}\label{predim section}
% !TEX root = PEM2_1.tex

\subsection{Predimension}

We define a predimension function $\delta$ as follows.
\begin{definition}
Let $A \subs B$ be $\Gamma$-fields. For any $\Gamma$-subfield $X$ of $B$ that is finitely generated over $A$, let
\[
\delta(X/A) := \trd(X/A) - d \ldim_\kO(\Gamma(X)/\Gamma(A))
\]
where recall $d = \dim G_1 = \dim G_2$.

Note that since $X$ is assumed to be finitely generated over $A$, the linear dimension $\ldim_\kO(\Gamma(X)/\Gamma(A))$ is finite, and, since $\OO$ acts by $\bk$-definable functions and $X$ is the field generated by $\Gamma(X)$, $\trd(X/A)$ is also finite. Hence the predimension is well-defined. 

As a convention, for any finite $b \subset \Gamma(B)$, we set
\[
\delta(b/A) := \delta(X/A),
\]
where $X = \gen{Ab}$, the $\Gamma$-subfield of $B$ generated by $b \cup A$.
\end{definition}
Note that $\delta(b/A) = \trd(b/A) - d \ldim_\kO(b/\Gamma(A))$.

\begin{lemma}\label{delta properties}
Let $A \subs B$ be $\Gamma$-fields. 
\begin{enumerate}
\item (Finite character for $\delta$)\\
If $b\subs\Gamma(B)$ is finite, there is a finitely generated $\Gamma$-subfield $A_0$ of $A$ such that for any intermediate $\Gamma$-field $A_0 \subs A' \subs A$, we have $\delta(b/A) = \delta(b/A')$.

\item (Addition formula for $\delta$)\\
Let $X,Y$ be $\Gamma$-subfields of $B$ finitely generated over $A$ with $X \subs Y$. Then
\[
\delta(Y/A) =  \delta(Y/X) + \delta(X/A) .
\]

\item (Submodularity of $\delta$)\\
Suppose $X,Y$ are $\Gamma$-subfields of $B$ with $X$ finitely generated over $X \wedge Y$. Then abbreviating $\gen{X\cup Y}$ by $XY$ we have
\[ \delta(XY/ Y) \le \delta(X / X \wedge Y). \]
\end{enumerate}
\end{lemma}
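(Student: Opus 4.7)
The three parts follow the standard pattern for predimension functions, once we unpack what transcendence degree and $\kO$-linear dimension mean. I would handle them in turn.

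For part (1), I would exploit the finite character of both ingredients separately. The linear dimension $\ldim_\kO(b/\Gamma(A))$ is determined by finitely many $\OO$-linear dependence relations of elements of $b$ over $\Gamma(A)$, and each involves only finitely many coefficients from $\Gamma(A)$; likewise $\trd(b/A)$ is witnessed by finitely many polynomial relations with finitely many coefficients from $A$. Let $A_0$ be the $\Gamma$-subfield of $A$ generated by all of these witnesses (finitely many elements of $\Gamma(A)$ together with the field elements needed — the latter can be absorbed because the coordinates of the generators of $\Gamma(A_0)$ already live in $A_0$ as a field). For any intermediate $A_0 \subs A' \subs A$, both $\trd(b/A')$ and $\ldim_\kO(b/\Gamma(A'))$ agree with the values computed relative to $A$.

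Part (2) is immediate from additivity in towers: transcendence degree satisfies $\trd(Y/A) = \trd(Y/X) + \trd(X/A)$, and $\kO$-linear dimension is additive in short exact sequences of $\OO$-modules, so $\ldim_\kO(\Gamma(Y)/\Gamma(A)) = \ldim_\kO(\Gamma(Y)/\Gamma(X)) + \ldim_\kO(\Gamma(X)/\Gamma(A))$. Subtracting $d$ times the second from the first and grouping gives the formula.

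Part (3) has two ingredients. The submodularity of transcendence degree gives $\trd(XY/Y) \le \trd(X / X \cap Y)$, where $X \cap Y$ is the intersection of fields; since $X \wedge Y \subs X \cap Y$ this is bounded above by $\trd(X/X \wedge Y)$. For the linear dimension term, the point is that $\Gamma(X \wedge Y) = \Gamma(X) \cap \Gamma(Y)$ by Lemma~\ref{intersection is Gamma-subfield}, so the natural inclusion induces an injection
\[ \Gamma(X)/\Gamma(X\wedge Y) \;\into\; \Gamma(XY)/\Gamma(Y), \]
whose cokernel is a quotient of $\Gamma(XY)/(\Gamma(X)+\Gamma(Y))$. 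Because $\Gamma(XY)$ is the \emph{pure} divisible $\OO$-submodule generated by $\Gamma(X) + \Gamma(Y)$ (plus torsion preimages already present in both), this cokernel is torsion. Since $\ldim_\kO$ is invariant under taking pure divisible closure — equivalently, it factors through $- \otimes_\OO \kO$, which kills torsion — we conclude
\[ \ldim_\kO(\Gamma(XY)/\Gamma(Y)) = \ldim_\kO(\Gamma(X)/\Gamma(X\wedge Y)). \]
Combining with the transcendence degree inequality yields the claim.

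The main delicacy is verifying the linear dimension equality in part (3): one must check that replacing $\Gamma(X)+\Gamma(Y)$ by its pure divisible closure inside $\Gamma(XY)$ does not change $\kO$-linear dimension, and that the intersection $\Gamma(X)\cap\Gamma(Y)$ really is $\Gamma(X \wedge Y)$ (rather than some larger module arising from field-theoretic intersection). Both follow from the set-up, but together they are the only nontrivial point.
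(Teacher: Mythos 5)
Your proof is correct and follows the same route as the paper: finite character of each ingredient for (1), additivity of each ingredient for (2), and submodularity of transcendence degree combined with modularity of $\kO$-linear dimension for (3). The paper compresses the linear-dimension equality in (3) into the single phrase ``linear dimension is modular''; your verification that passing to the pure divisible closure of $\Gamma(X)+\Gamma(Y)$ does not change $\ldim_\kO$ (and that $\Gamma(X\wedge Y)=\Gamma(X)\cap\Gamma(Y)$, which is in fact the definition of $\wedge$ rather than the content of Lemma~\ref{intersection is Gamma-subfield}) is exactly the detail being suppressed there.
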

\begin{proof}
\begin{enumerate}

\item Immediate since transcendence degree and $\kO$-linear dimension have finite character.

\item Note that the addition formula holds with transcendence degree or linear dimension in place of $\delta$, so it also holds for $\delta$ by linearity.

\item The submodularity condition is true when $\delta$ is replaced by transcendence degree. Linear dimension is modular, which means 
\[\ldim_\kO(\Gamma(XY)/\Gamma(Y)) = \ldim_\kO(\Gamma(X)/\Gamma(X \wedge Y)),\]
 so by subtracting we get the required submodularity of $\delta$. 
 
  \end{enumerate}
\end{proof}

\subsection{Strong extensions}

\begin{definition}
An extension $A \subs B$ of $\Gamma$-fields is said to be a \emph{strong extension} if
\begin{enumerate}
\item the extension preserves kernels; and
\item for every $\Gamma$-subfield $X$ of $B$ that is finitely generated over $A$, $\delta(X/A) \ge 0$.
\end{enumerate}
In this case, we also say that $A$ is a \emph{strong $\Gamma$-subfield} of $B$, and write $A \strong B$.

For arbitrary $\Gamma$-fields $A$, $B$, an embedding $A \hookrightarrow B$ is said to be a \emph{strong embedding} if the image of $A$ is a strong $\Gamma$-subfield of $B$. To denote that an embedding is strong we use the notation $A \strongembed B$.
\end{definition}

The method of predimensions and strong (also known as self-sufficient) extensions has been widely used since it was introduced by Hrushovski~\cite{Hru93}. We now give a few basic results which are well-known in general, but fundamental to the later development so it would be inappropriate to omit them. Some of the proofs are slightly more involved for this setting than the more well-known settings, especially those where no field is present.

\begin{lemma}\label{composite of strong is strong}
The  composition of strong embeddings is strong.
\end{lemma}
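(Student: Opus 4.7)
The plan is to prove transitivity by reducing to the two defining conditions of strongness via the addition formula and submodularity of $\delta$ from Lemma~\ref{delta properties}. Suppose $A \strongembed B$ and $B \strongembed C$; we show $A \strongembed C$. Kernel preservation is automatic by transitivity of equality: $\ker_i(A)=\ker_i(B)=\ker_i(C)$ for $i=1,2$. So the real content is to verify that $\delta(X/A)\ge 0$ for every $\Gamma$-subfield $X \subs C$ that is finitely generated over $A$.

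Fix such an $X$, and form the meet $X \wedge B$ inside $C$ (this is well-defined by Lemma~\ref{intersection is Gamma-subfield}, since $X$ and $B$ have the same kernels as $C$). First I would check that $X \wedge B$ is finitely generated over $A$: indeed $\Gamma(X \wedge B)/\Gamma(A)$ embeds as a pure divisible $\OO$-submodule of the finite-dimensional $\kO$-space $\Gamma(X)/\Gamma(A)$, so it is itself finite-dimensional. In particular $X$ is finitely generated over $X \wedge B$ as well, so the submodularity hypothesis of Lemma~\ref{delta properties}(3) is satisfied for the pair $(X,B)$ inside $C$.

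Now apply the addition formula for the chain $A \subs X\wedge B \subs X$:
\[
\delta(X/A) = \delta(X/X\wedge B) + \delta(X\wedge B/A).
\]
The second summand is $\ge 0$ because $X\wedge B$ is a $\Gamma$-subfield of $B$ finitely generated over $A$ and $A \strongembed B$. For the first, note that $XB$ is finitely generated over $B$ (it is generated by the basis of $X$ over $A$), so $\delta(XB/B)\ge 0$ by $B \strongembed C$; and submodularity gives
\[
\delta(X/X\wedge B) \ge \delta(XB/B) \ge 0.
\]
Summing, $\delta(X/A)\ge 0$, as required.

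The main (and only real) obstacle is the bookkeeping around the meet $X\wedge B$: checking it exists as a $\Gamma$-subfield, that it has the right kernels, and that it is finitely generated over $A$ so that the addition formula legitimately applies. Once those are in place, everything reduces to invoking the two properties of $\delta$ proved in Lemma~\ref{delta properties}.
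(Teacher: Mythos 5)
Your proof is correct and follows essentially the same route as the paper's: kernel preservation by transitivity, then the addition formula along $A \subs X\wedge B \subs X$ combined with submodularity applied to $X$ and $B$ inside $C$. The extra bookkeeping you supply (that $X\wedge B$ is finitely generated over $A$, etc.) is a sensible elaboration of steps the paper leaves implicit.
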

\begin{proof}
Suppose $A \strong B$ and $B \strong C$. Clearly the kernels of $C$ are the same as those of $A$, since both are the same as those of $B$. Let $X \subs C$ be finitely generated over $A$. Then $\delta(X/A) = \delta(X/X \wedge B) + \delta(X \wedge B /A)$ by the addition formula. We have $\delta(X/X\wedge B) \ge \delta(XB/B)$ by submodularity, and $\delta(XB/B) \ge 0$ because $B \strong C$. Also $\delta(X \wedge B /A) \ge 0$ because $A \strong B$. So $\delta(X/A) \ge 0$.
\end{proof}

Given a strong extension $A \strong B$ of $\Gamma$-fields, and an intermediate $\Gamma$-field $X$,  finitely generated over $A$, it follows that $A \strong X$ but it may not be the case that $X \strong B$. However, as $Y$ varies over finitely generated extensions of $X$ inside $B$, the predimension $\delta(Y/A)$ takes integer values bounded below by 0 because $A \strong B$. Thus we can replace $X$ by a finitely generated extension $X'$ of $X$, inside $B$, such that $\delta(X'/A)$ is minimal, and from the addition formula for $\delta$ it follows that $X' \strong B$.

The next lemma shows that we can find this $X'$ in a canonical way. It is crucial for understanding the finitely generated $\Gamma$-fields we will amalgamate, and it will allow us to understand the types in our models and prove there are only countably many of them.
\begin{lemma}\label{meet of strong is strong}
 Suppose $B$ is a $\Gamma$-field and for each $j \in J$, $A_j$ is a strong $\Gamma$-subfield of $B$. Then $\bigwedge_{j \in J}
  A_j$ is also strong in $B$.  
\end{lemma}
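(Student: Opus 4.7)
The plan is to verify the two conditions of being a strong extension for $A = \bigwedge_{j \in J} A_j$ inside $B$. The first condition, preservation of kernels, is immediate from Lemma~\ref{intersection is Gamma-subfield}, which already establishes that $A$ is a $\Gamma$-subfield of $B$ with the same kernels as $B$. For the predimension condition, fix a $\Gamma$-subfield $X$ of $B$ that is finitely generated over $A$; we must show $\delta(X/A) \geq 0$.

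The strategy is to build a descending chain $X = X_0 \supseteq X_1 \supseteq \cdots \supseteq X_n = A$ of $\Gamma$-subfields of $B$, each of the form $X_{k+1} = X_k \wedge A_{j_{k+1}}$ for some $j_{k+1} \in J$ chosen carefully. Each $X_k$ inherits the kernels of $B$, so Lemma~\ref{intersection is Gamma-subfield} applies at every step and $\Gamma(X_{k+1}) = \Gamma(X_k) \cap \Gamma(A_{j_{k+1}})$. Because $A_{j_{k+1}} \strong B$, and $X_k$ is finitely generated over $X_{k+1} = X_k \wedge A_{j_{k+1}}$ (since $\Gamma(X_k)/\Gamma(X_{k+1})$ is a quotient of the finite-dimensional $\kO$-space $\Gamma(X)/\Gamma(A)$), the submodularity of $\delta$ (Lemma~\ref{delta properties}(3)) gives
\[ 0 \leq \delta(X_k A_{j_{k+1}} / A_{j_{k+1}}) \leq \delta(X_k / X_{k+1}). \]
Once the chain reaches $A$, the addition formula (Lemma~\ref{delta properties}(2)) yields $\delta(X/A) = \sum_{k=0}^{n-1} \delta(X_k/X_{k+1}) \geq 0$, as required.

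The main point to verify, and the only real obstacle, is that the chain can be chosen to terminate at $A$ in finitely many steps. At any stage with $X_k \neq A$, we have $\Gamma(X_k) \not\subseteq \bigcap_{j \in J} \Gamma(A_j) = \Gamma(A)$, so we can select $j_{k+1} \in J$ with $\Gamma(X_k) \not\subseteq \Gamma(A_{j_{k+1}})$; then $\Gamma(X_{k+1}) \subsetneq \Gamma(X_k)$. Since both modules contain $\Gamma(A)$ together with all of $\Tor(G) \cap \Gamma(B)$, the quotient $\Gamma(X_k)/\Gamma(X_{k+1})$ is divisible and torsion-free, hence a nontrivial $\kO$-vector space. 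Therefore $\ldim_\kO(\Gamma(X_k)/\Gamma(A))$ strictly decreases at each step, and starting from the finite value $\ldim_\kO(\Gamma(X)/\Gamma(A))$, the process terminates after at most that many steps with $\Gamma(X_n) = \Gamma(A)$, i.e.\ $X_n = A$. This completes the proof.
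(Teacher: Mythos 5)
Your proof is correct. It rests on the same two pillars as the paper's own argument --- submodularity of $\delta$ and the finiteness of $\ldim_\kO(\Gamma(X)/\Gamma(A))$ --- but organizes them differently. The paper proceeds in two stages: it first proves $A_1 \wedge A_2 \strong B$ for a pair (by showing $A_1 \wedge A_2 \strong A_1$ and invoking transitivity of strong embeddings, Lemma~\ref{composite of strong is strong}), extends to finite $J$ by induction, and then reduces an infinite $J$ to a finite subset $J_0$ via the descending chain condition on the lattice of intermediate $\Gamma$-fields, finishing with one further application of submodularity. You instead run a single descending chain $X = X_0 \supset X_1 \supset \cdots \supset X_n = A$ inside $X$, intersecting with one well-chosen $A_j$ at a time so that $\delta(X_k/X_{k+1}) \ge 0$ at every step, and telescope with the addition formula; the strict decrease of $\ldim_\kO(\Gamma(X_k)/\Gamma(A))$ plays the role of the paper's chain-condition argument and treats finite and infinite $J$ uniformly, with no appeal to transitivity of strongness. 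Your justification that each quotient is a nontrivial $\kO$-vector space (divisibility plus the fact that every kernel-preserving $\Gamma$-subfield contains all the torsion of $\Gamma(B)$, hence is pure) is exactly what is needed. One cosmetic remark: $\Gamma(X_k)/\Gamma(X_{k+1})$ is a subquotient rather than a quotient of $\Gamma(X)/\Gamma(A)$, but finite-dimensionality follows all the same.
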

\begin{proof}
The kernels of $\bigwedge_{j\in J} A_j$ are the same as those of $B$ since they are for each $A_j$, so it remains to consider the predimension condition.

First we prove that if $A_1,A_2 \strong B$ then $A_1 \wedge A_2 \strong A_1$. So suppose $X$ is a finitely generated $\Gamma$-field extension of $A_1 \wedge A_2$, inside $A_1$. Then 
\[\delta(X / A_1 \wedge A_2) = \delta(X/X \wedge A_2) \ge \delta(X A_2/A_2) \ge 0\]
using submodularity and the fact that $A_2 \strong B$. So $A_1 \wedge A_2 \strong A_1$, but $A_1 \strong B$ so, by Lemma~
\ref{composite of strong is strong}, $A_1\wedge A_2 \strong B$.
It follows by induction that if $J$ is finite, $\bigwedge_{j \in J} A_j \strong B$.

Now suppose that $J$ is infinite and that $X$ is a $\Gamma$-subfield of $B$ which is finitely generated as an extension of $A = \bigwedge_{j \in J} A_j$.  Then we have
\[A = A\wedge X = \bigwedge_{j\in J}(A_j \wedge X).\]
Each $\Gamma$-field $A_j \wedge X$ is in the lattice of $\Gamma$-fields intermediate between $A$ and $X$. This lattice is isomorphic to the lattice of vector subspaces of the finite-dimensional vector space $\Gamma(X)/\Gamma(A)$ and so has no infinite chains. Thus there is a finite subset $J_0$ of $J$ such that writing $A_{J_0} = \bigwedge_{j \in J_0} A_j $ we have
\[A = \bigwedge_{j \in J} (A_j \wedge X) = \bigwedge_{j \in J_0} (A_j \wedge X) = A_{J_0} \wedge X.\]
 Now using the result for finite intersections we have that $A_{J_0} \strong B$, so using also submodularity we have 
\[ \delta(X/A) = \delta(X / A_{J_0} \wedge X) \ge \delta( A_{J_0}X/A_{J_0}) \ge 0 \]
and hence $A \strong B$ as required.
\end{proof}

%\begin{lemma}
%If $A \strong B$ and $X$ is a finite subset of $B$, then the smallest strong $\Gamma$-subfield of $B$ containing $A \cup X$ is finitely generated as a $\Gamma$-field extension of $A$. 
%\end{lemma}
%Hence if $A \strong B$ then $B$ is finitely generated as a strong $\Gamma$-field extension of $A$ if and only if it is finitely generated as a $\Gamma$-field extension (although there may be a smaller generating set), and these occur if and only if $\ldim_\kO(\Gamma(B)/\Gamma(A))$ is finite.
%\begin{proof}
% 
%\todo 
%\end{proof}

Consider again a strong extension $A \strong B$ of $\Gamma$-fields, and an intermediate $\Gamma$-field $X$.
\begin{defn}\label{hull defn}
We define the \emph{hull} of $X$ in $B$, $\hull{X}_B$ (also known as the strong closure of $X$ or the self-sufficient closure of $X$) by 
\[\hull{X}_B = \bigwedge \class{Y \text{ a strong $\Gamma$-subfield of } B}{X \subs Y}.\]
\end{defn}
The previous lemma shows that $\hull{X}_B$ is indeed strong in $Y$, and we observe also that if $X$ is finitely generated as an extension of $A$ then so is $\hull{X}_B$. Furthermore, if $B \strong C$ then it is immediate that $\hull{X}_C = \hull{X}_B$, so often we will drop the subscript $B$.

\begin{lemma}\label{hull finite char}
The hull operator has finite character. That is, if $A \strong B$ and $X$ is an intermediate $\Gamma$-field,
\[ \hull{X}_B = \bigcup \class{\hull{X_0}_B}{X_0 \subs X \text{ and } X_0 \text{ is a finitely generated extension of } A}.\]
\end{lemma}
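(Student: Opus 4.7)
The plan is to set
\[ U \leteq \bigcup \class{\hull{X_0}_B}{X_0 \text{ a finitely generated extension of } A,\; X_0 \subs X} \]
and show that $U = \hull{X}_B$. For each $X_0$ appearing in the union, $\hull{X}_B$ is a strong $\Gamma$-subfield of $B$ containing $X_0$ (since $X_0 \subs X \subs \hull{X}_B$), so by the minimality built into Definition~\ref{hull defn} we have $\hull{X_0}_B \subs \hull{X}_B$; this gives the easy inclusion $U \subs \hull{X}_B$. For the reverse inclusion, it suffices to show that $U$ is itself a strong $\Gamma$-subfield of $B$ containing $X$, as then $\hull{X}_B \subs U$ by minimality.

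To see $X \subs U$, I would observe that every finite $b \subs \Gamma(X)$ generates, together with $A$, a finitely generated $\Gamma$-subfield $X_0 \leteq \gen{A \cup b}$ which is contained in $X$, because by the pure $\OO$-submodule description of $\gen{A \cup b}$ its elements all lie inside $\Gamma(X)$. Hence $X_0 \subs \hull{X_0}_B \subs U$, and since every element of $\Gamma(X)$ lies in such an $X_0$, we conclude $X \subs U$.

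Next, the family $\mathcal{F}$ of hulls defining $U$ is directed under inclusion: given $\hull{X_0}_B, \hull{X_0'}_B \in \mathcal{F}$, the $\Gamma$-subfield $\gen{X_0 \cup X_0'}$ is again finitely generated over $A$ and contained in $X$, so its hull lies in $\mathcal{F}$ and contains both $\hull{X_0}_B$ and $\hull{X_0'}_B$. Being a directed union of $\Gamma$-subfields of $B$, each preserving the kernels of $B$, $U$ is itself a $\Gamma$-subfield of $B$ preserving those kernels.

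The main step, which I expect to be the principal obstacle, is verifying that $U \strong B$. Given a $\Gamma$-subfield $Y$ of $B$ finitely generated over $U$, choose a finite basis $b$ for $\Gamma(Y)$ over $\Gamma(U)$. By the finite character of transcendence degree and of $\kO$-linear dimension, applied to the finite tuple $b$ over the directed union $U = \bigcup_{A_j \in \mathcal{F}} A_j$ (an instance of Lemma~\ref{delta properties}(1)), there exists $A_j \in \mathcal{F}$ such that both $\trd(\gen{A_j \cup b}/A_j) = \trd(Y/U)$ and $\ldim_\kO(\Gamma(\gen{A_j \cup b})/\Gamma(A_j)) = \ldim_\kO(\Gamma(Y)/\Gamma(U))$, whence $\delta(\gen{A_j \cup b}/A_j) = \delta(Y/U)$. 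Since $A_j \strong B$ the left-hand side is nonnegative, so $\delta(Y/U) \ge 0$ and $U \strong B$. The delicate point is the simultaneous stabilization of both summands in $\delta$ at a single stage of $\mathcal{F}$; once this is handled the rest of the argument is bookkeeping.
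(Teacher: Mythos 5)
Your proposal is correct and follows essentially the same route as the paper: the easy inclusion $U \subs \hull{X}_B$ by monotonicity of the hull, the observation that $X \subs U$, and then the key step that $U$, being a directed union of strong $\Gamma$-subfields (each $\hull{X_0}_B$ is strong by Lemma~\ref{meet of strong is strong}), is itself strong in $B$ by the finite character of $\delta$, whence $\hull{X}_B \subs U$ by minimality. The ``delicate point'' you flag is exactly what Lemma~\ref{delta properties}(1) plus directedness handles, since each of $\trd$ and $\ldim_\kO$ of the fixed finite tuple is non-increasing along the directed system and so both stabilize at a common stage.
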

\begin{proof}
Let $U$ be the union in the statement of the lemma. It is immediate from the definition of the hull that if $X_0 \subs X$ then $\hull{X_0}_B \subs \hull{X}_B$. It follows that $U \subs \hull{X}_B$. Also $X \subs U$. Now $U$ is a directed union of strong $\Gamma$-subfields of $B$, and since $\delta$ has finite character, it follows that $U \strong B$. So $\hull{X}_B \subs U$, as required.
\end{proof}

Finally in this section we give a useful lemma giving a simple sufficient condition for an extension of a strong $\Gamma$-subfield also to be strong.
\begin{lemma}\label{delta=0 implies strong}
If $A \strong B$ and $A \subs A' \subs B$ with $\delta(A'/A) = 0$ then $A' \strong B$.
\end{lemma}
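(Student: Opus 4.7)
The plan is a very short application of the addition formula for $\delta$ (Lemma~\ref{delta properties}(2)), together with the observation that the hypothesis $\delta(A'/A)=0$ implicitly carries the assumption that $A'$ is finitely generated over $A$, since $\delta$ is only defined for finitely generated extensions.

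First I would deal with the kernels, which is the easy part. Since $A \strong B$ we have $\ker_i(A) = \ker_i(B)$ for $i=1,2$, and the inclusions $A \subs A' \subs B$ preserve kernels (this is part of the standing assumption on $\Gamma$-field extensions), so sandwiching gives $\ker_i(A') = \ker_i(B)$.

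For the predimension condition, let $X$ be a $\Gamma$-subfield of $B$ finitely generated over $A'$. Since $A'$ is finitely generated over $A$, the field $X$ is also finitely generated over $A$ (combine a finite basis of $\Gamma(A')$ over $\Gamma(A)$ with one of $\Gamma(X)$ over $\Gamma(A')$). Thus all three of $X$, $A'$ are finitely generated over $A$ with $A' \subs X$, and the addition formula applies to give
\[
\delta(X/A) \;=\; \delta(X/A') + \delta(A'/A) \;=\; \delta(X/A'),
\]
using $\delta(A'/A)=0$. Since $A \strong B$, the left-hand side is nonnegative, hence so is $\delta(X/A')$. As $X$ was an arbitrary finitely generated extension of $A'$ inside $B$, this yields $A' \strong B$.

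There is no real obstacle here: the only mild subtlety is noting that the hypothesis $\delta(A'/A)=0$ entails that $A'$ is a finitely generated extension of $A$, so that the addition formula is legitimately available.
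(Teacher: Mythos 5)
Your proof is correct and is essentially the paper's own argument: the paper's proof is exactly the one-line application of the addition formula, $\delta(X/A') = \delta(X/A) - \delta(A'/A) = \delta(X/A) \ge 0$. Your extra remarks on kernel preservation and on $A'$ being finitely generated over $A$ are fine but not needed beyond what the paper takes as read.
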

\begin{proof}
Let $X \subs B$ be a finitely generated extension of $A'$. Then
\[\delta(X/A') = \delta(X/A) - \delta(A'/A) = \delta(X/A) \ge 0.\]
\end{proof}

% !TEX root = PEM2_1.tex

\subsection{Pregeometry}

In this section, $F$ is any full $\Gamma$-field strongly extending a $\Gamma$-subfield $\Fbase$. We will use the predimension function $\delta$ to define a pregeometry on $F$. We could drop the assumptions that $F$ is full and that $F$ strongly extends some $\Fbase$ and give a definition along the lines of that done for exponential fields in \cite{EAEF} and for Weierstrass $\wp$-functions in \cite{JKS16}. However, it is sufficient for our purposes and much more straightforward to do it this way.

\begin{defn}
A $\Gamma$-subfield $A$ of $F$, extending $\Fbase$, is \emph{$\Gamma$-closed in $F$}, written $A \strong_{\cl} F$, if for any $A \subs B \subs F$ with $B$ finitely generated over $A$ and $\delta(B/A) \le 0$ we have $B = A$.
\end{defn}
\begin{lemma}
\begin{enumerate}
\item If $A \strong_{\cl} F$ then $A \strong F$.
\item If $A \strong_{\cl} F$ then $A$ is a full $\Gamma$-subfield of $F$.
\item If $A_j \strong_{\cl} F$ for $j \in J$ and $A = \bigwedge_{j\in J} A_j$ then $A \strong_{\cl} F$.
\end{enumerate}
\end{lemma}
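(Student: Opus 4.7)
The plan is to prove the three parts in order, each reduced to a short calculation with the predimension arithmetic (addition formula and submodularity) established in this section.

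For (1), kernel preservation holds by our standing convention that all $\Gamma$-field extensions preserve kernels, so only the predimension condition requires checking. If $\delta(X/A) < 0$ for some finitely generated $\Gamma$-subfield $X$ of $F$ extending $A$, then $X \ne A$ (since $\delta(A/A) = 0$) while $\delta(X/A) \le 0$, directly contradicting the $\Gamma$-closedness of $A$.

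For (2), I apply the same predimension trick twice. To show $\Gamma_1(A) = G_1(A)$, take $g_1 \in G_1(A)$; by fullness of $F$ pick $g_2 \in G_2(F)$ with $(g_1, g_2) \in \Gamma(F)$, and set $B := \gen{A, (g_1, g_2)}_F$. Since $g_1 \in G_1(A)$, and since division points and $G$-torsion are algebraic over the given data, $\trd(B/A) \le d$, while $\ldim_\kO(\Gamma(B)/\Gamma(A)) \le 1$. Hence $\delta(B/A) \le 0$, so $\Gamma$-closedness forces $B = A$, giving $(g_1, g_2) \in \Gamma(A)$ and thus $g_1 \in \Gamma_1(A)$; $\Gamma_2(A) = G_2(A)$ is symmetric. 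For algebraic closedness, suppose $x \in (A^{\alg} \cap F) \setminus A$. Pick $g_1 \in G_1(A^{\alg}) \setminus G_1(A)$: in case (EXP) simply $g_1 = (x, 0, \ldots, 0)$, and in case (COR) any point in the fibre over $x$ of some non-constant coordinate projection $G_1 \to \mathbb{A}^1$ defined over $\bk$, which exists since $G_1$ has positive dimension. The same bound $\trd(B/A) \le d$ applies because $g_1$ is algebraic over $A$, so again $\delta(B/A) \le 0$ and $\Gamma$-closedness gives $B = A$, forcing $g_1 \in G_1(A)$ and contradicting the choice of $g_1$.

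For (3), part (1) gives each $A_j \strong F$, so Lemma~\ref{meet of strong is strong} yields $A = \bigwedge_j A_j \strong F$. Given a finitely generated $\Gamma$-subfield $D \supseteq A$ of $F$ with $\delta(D/A) \le 0$, for each $j \in J$ the inclusion $A \subs D \wedge A_j$ and strongness of $A$ give $\delta(D \wedge A_j/A) \ge 0$; additivity then forces $\delta(D/D \wedge A_j) \le 0$, and submodularity yields $\delta(DA_j/A_j) \le \delta(D/D \wedge A_j) \le 0$. Since $A_j \strong_{\cl} F$, this implies $DA_j = A_j$, i.e., $D \subs A_j$. Intersecting over $j \in J$ gives $D \subs \bigwedge_j A_j = A$, hence $D = A$. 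The only mildly non-routine step is the choice of witness $g_1$ in case (COR) of part (2); but simplicity and positive dimension of $G_1$ make this a standard fibre-argument, and everything else is bookkeeping around the $\delta$-calculus.
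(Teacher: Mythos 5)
Your proof is correct and follows essentially the same route as the paper's: part (1) is immediate from the definitions, part (2) is the paper's predimension argument (a point of $G_1$ or $G_2$ algebraic over $A$ forces a $\delta\le 0$ extension, hence lies in $\Gamma(A)$ by closedness), and part (3) is exactly the paper's submodularity-plus-additivity computation $\delta(DA_j/A_j)\le\delta(D/D\wedge A_j)=\delta(D/A)-\delta(D\wedge A_j/A)\le 0$. The only cosmetic difference is that you separate surjectivity of the projections from field-theoretic algebraic closedness in part (2), where the paper treats all $G_i$-points algebraic over $A$ in a single step; both handle the degenerate case $\ldim_\kO(\Gamma(B)/\Gamma(A))=0$ correctly since there $\Gamma(B)=\Gamma(A)$ forces $B=A$ outright.
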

\begin{proof}
\begin{enumerate}
\item Immediate.
\item Suppose $a \in G_1(F)$ is algebraic over $A$. Since $F$ is full, there is $b \in G_2(F)$ with $(a,b) \in \Gamma(F)$. We have $\td((a,b)/A) = \td(b/A) \le \dim G_2 = d$, so $\delta((a,b)/A) \le d - d\ldim_\kO((a,b)/\Gamma(A))$. If $\ldim_\kO((a,b)/\Gamma(A)) =  1$ then $\delta((a,b)/A) \le 0$, so since $A$ is closed in $F$ we have $(a,b) \in \Gamma(A)$. Otherwise $\ldim_\kO((a,b)/\Gamma(A)) =  0$ so again $(a,b) \in \Gamma(A)$. Similarly if $b \in G_2(F)$ is algebraic over $A$. Since $G_1(A)$ contains all points of $G_1$ that are algebraic over $A$, $A$ is an algebraically closed field. Thus $A$ is a full $\Gamma$-field.
\item Suppose $\delta(B/A) \le 0$. By submodularity and Lemma~\ref{meet of strong is strong}, for each $j$ we have $\delta(B A_j/A_j) \le \delta(B/A_j \wedge B) = \delta(B/A) - \delta(A_j \wedge B/A) \le 0$, so $B \subs A_j$. Thus $B \subs A$.
\end{enumerate}
\end{proof}

%Since $\Gamma$-closed subfields are full, it makes sense to talk about $\Gamma$-closed subsets in the sort $\Gamma(F)$, which are just $\Gamma(A)$ for $A$ a $\Gamma$-closed subset of $F$. 

This notion of $\Gamma$-closedness induces a closure operator on the field $F$.

\begin{defn}
If $A \subs F$ is any subset the \emph{$\Gamma$-closure of $A$ in $F$} is defined to be the smallest $\Gamma$-closed $\Gamma$-subfield containing $A$,
\[\Gammacl^F(A) = \bigwedge \class{B \strong_{\cl} F}{A \subs B}.\]
$\Gammacl^F(A)$ is a $\Gamma$-subfield of $F$, and in particular a subset of $F$, so $\Gammacl^F$ induces a map $\powerset F \rightarrow \powerset F$ which we also denote by $\Gammacl^F$.
\end{defn}

%Recall that we identify the variety $G(F)$ with some constructible subset of some $F^n$, defined over $\bk$. Thus we regard elements of $\Gamma(F)$ as tuples from $F$, and it makes sense to speak of the $\Gamma$-closure of a subset of $\Gamma(F)$, or to say that an element $\alpha \in \Gamma(F)$ is in the $\Gamma$-closure of $A$.

% The following remark doesn't seem to be actually needed in the end, but I leave it here anyway in case it turns out to be useful:

% if $A (= F$ is a kernel-preserving inclusion of full $\Gamma$ fields, and $B (= F$ is a sub-$\Gamma$-field, then $B (= A$ as $\Gamma$-fields iff $B (= A$ as fields. Indeed, the fullness gives $\Gamma(A) = G(A) \cap \Gamma(F)$, and then the statement easily follows. For any $A (= F$, $\Gammacl(A)$ is the underlying field of a full $\Gamma$-field, so this remark applies.

\begin{lemma}\label{closure as union}
For any $\Gamma$-subfield $A$ of $F$, we have $\Gammacl^F(A) = \bigcup \mathcal B$, where $\mathcal B$ is the set of all $\Gamma$-subfields $B \subs F$ such that $B$ is a finitely generated $\Gamma$-field extension of $\hull{A}_F$ and $\delta(B/\hull A_F) = 0$.
\end{lemma}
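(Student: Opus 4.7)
Write $U = \bigcup \mathcal B$ and, for brevity, $H = \hull{A}_F$; by Lemma~\ref{meet of strong is strong}, $H \strong F$. The plan is to prove both inclusions directly, the first by a short submodularity computation and the second by showing $U$ is itself a $\Gamma$-closed $\Gamma$-subfield of $F$ containing $A$.

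For $U \subs \Gammacl^F(A)$: by definition $\Gammacl^F(A)$ is a $\Gamma$-closed strong $\Gamma$-subfield of $F$ containing $A$, and so contains $H$ (from the hull definition). Given $B \in \mathcal B$, consider $B \wedge \Gammacl^F(A)$, which is intermediate between $H$ and $B$. Using additivity of $\delta$ together with $\delta(B/H) = 0$ and the strongness of $H$ in $F$ (which forces $\delta(B\wedge \Gammacl^F(A)/H) \ge 0$), one gets $\delta(B/B\wedge\Gammacl^F(A)) \le 0$. Submodularity then gives $\delta(B\,\Gammacl^F(A)/\Gammacl^F(A)) \le 0$, and the $\Gamma$-closedness of $\Gammacl^F(A)$ forces $B \subs \Gammacl^F(A)$.

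For $\Gammacl^F(A) \subs U$, one checks $U$ is a $\Gamma$-closed $\Gamma$-subfield containing $A$. Containment is immediate: $H \in \mathcal B$ trivially, so $A \subs H \subs U$. For directedness of $\mathcal B$ (hence $U$ a $\Gamma$-subfield), given $B_1, B_2 \in \mathcal B$ one shows $B_3 := \gen{B_1 B_2} \in \mathcal B$ by the usual sandwich: $H \strong F$ gives $\delta(B_3/H) \ge 0$, while additivity plus submodularity applied to $B_1, B_2$ over $B_1 \wedge B_2 \supseteq H$ give $\delta(B_3/H) = \delta(B_3/B_1) \le \delta(B_2/B_1 \wedge B_2) \le \delta(B_2/H) = 0$. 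For the $\Gamma$-closedness, suppose $X \supseteq U$ is finitely generated over $U$ with $\delta(X/U) \le 0$, and let $b \subs \Gamma(X)$ be a finite generating set. By finite character of $\delta$ (Lemma~\ref{delta properties}(1)), pick a finitely generated $\Gamma$-subfield $U_0$ of $U$ such that $\delta(b/U') = \delta(b/U)$ for every intermediate $U_0 \subs U' \subs U$; by directedness of $\mathcal B$, some $B' \in \mathcal B$ contains $U_0$, and then $\delta(b/B') = \delta(b/U) \le 0$. A final addition plus strongness of $H$ yields $\delta(\gen{B'b}/H) = 0$, so $\gen{B' b} \in \mathcal B$, whence $b \subs U$ and $X = U$. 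The main technical point, and the place where one must be careful, is this last reduction: the finite character of $\delta$ together with the directedness of $\mathcal B$ is exactly what lets us replace the potentially large base $U$ by a single member of $\mathcal B$ without changing $\delta$.
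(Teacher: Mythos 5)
Your proof is correct and follows essentially the same route as the paper's: both directions are handled by the same submodularity computation (for $B \subs \Gammacl^F(A)$) and by showing the directed union is itself $\Gamma$-closed via finite character of $\delta$ plus the addition formula. The only difference is cosmetic — you carry $\hull{A}_F$ explicitly throughout, whereas the paper first reduces to the case $A \strong F$.
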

\begin{proof}
Since $\Gammacl^F(A) \strong F$ we have $\hull{A}_F \subs \Gammacl^F(A)$. So $\Gammacl^F(A) = \Gammacl^F(\hull{A}_F)$, and thus we may assume $A \strong F$. Let $C = \bigcup \mathcal B$. Using the submodularity of $\delta$ it is easy to see that the system $\mathcal B$ of $\Gamma$-subfields of $F$ is directed, so its union $C$ is a $\Gamma$-subfield of $F$.

Suppose that $b$ is a finite tuple from $\Gamma(F)$ such that $\delta(b/C) \le 0$. Then by the finite character of $\delta$ and directedness of the union defining $C$, there is a finitely generated extension $B$ of $A$ inside $C$ such that $\delta(B/A)=0$ and $\delta(b/B) = \delta(b/C)$. Using the addition formula, 
\[0 \ge \delta(b/B) = \delta(b/A) - \delta(B/A) = \delta(b/A) \ge 0.\]
 So $\delta(b/A) = 0$ and hence $b \in \Gamma(C)$. Thus $C$ is $\Gamma$-closed, so $\Gammacl^F(A) \subs C$.

Now suppose $B$ is a finitely generated $\Gamma$-field extension of $A$ with $\delta(B/ A) = 0$ and $A \subs D \strong_{\cl} F$.
Then 
\begin{eqnarray*}
\delta(BD/D) & \le & \delta(B/B \wedge D)\\
& = & \delta(B/A) - \delta(B \wedge D / A)\\
& \le &0
\end{eqnarray*}
because $A \strong F$ so $\delta(B \wedge D / A) \ge 0$ and so $B \subs D$. Hence $B \subs \Gammacl^F(A)$, and so $C \subs \Gammacl^F(A)$.
\end{proof}

The predimension function $\delta$ is a function depending on the sort $\Gamma$, but $\Gamma$-closure will be shown to be a pregeometry on the field sort. The next lemma allows us to move from one sort to the other.
\begin{lemma}\label{changing sorts}
If $A \strong F$ and $a \in F \minus \Gammacl^F(A)$ there is $\alpha \in \Gamma(F)$ such that $\pi_1(\alpha) \in G_1(F)$ is interalgebraic with $a$ over $A$ and $\delta(\alpha/A) = 1$, and the $\Gamma$-subfield  $\gen{A\alpha}$ of $F$ generated by $A$ and $\alpha$ satisfies  $\gen{A\alpha} \strong F$. We can choose $\alpha$ such that $a$ is rational over $A(\pi_1(\alpha))$, and if $A$ is essentially finitary also such that $\alpha$ is a good basis. Furthermore, the locus $\loc(\alpha,a/A)$ can be taken to be a particular algebraic curve defined over $\bk$, not depending on $A$ or $a$.
\end{lemma}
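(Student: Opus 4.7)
The plan is to construct $\alpha$ from $a$ by first choosing a point $b \in G_1(F)$ interalgebraic with $a$ that lies on a fixed curve in $G_1$, and then using fullness of $F$ to lift to $\Gamma$. Independently of $A$ and $a$, fix once and for all a smooth irreducible algebraic curve $C \subs G_1$ defined over $\bk$ together with a dominant rational map $\phi \colon C \to \ga$ over $\bk$: in case (EXP), where $G_1 = \ga^d$, take $C = \ga \times \{0\}^{d-1}$ and $\phi$ the first-coordinate projection; in case (COR) any non-constant $\phi$ on any curve $C \subs G_1$ will do. Since $F$ is full it is algebraically closed, and since $\phi$ is dominant and defined over $\bk \subs A$, pick $b \in C(F)$ with $\phi(b) = a$. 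Then $a = \phi(b) \in \bk(b) \subs A(b)$ witnesses rationality of $a$ over $A(\pi_1(\alpha))$, and conversely $b$ is algebraic over $\bk(a) \subs A(a)$, so $b$ and $a$ are interalgebraic over $A$. Fullness of $F$ yields $c \in G_2(F)$ with $(b,c) \in \Gamma(F)$, and set $\alpha := (b,c)$.

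The heart of the proof is then showing $\delta(\alpha/A) = 1$ and $\gen{A\alpha} \strong F$. The key input is that $\Gammacl^F(A)$, being $\Gamma$-closed in $F$, is a full $\Gamma$-subfield (by the earlier lemma), hence algebraically closed as a field, so $a \notin \Gammacl^F(A)$ in particular forces $a \notin A^{\alg}$. From this I would derive: (i) $\alpha$ is $\kO$-linearly independent of $\Gamma(A)$ modulo $\Tor(G)$, else $b \in G_1(A)$ (the $\kO$-action being by $\bk$-definable maps that preserve $G_1(A)$ modulo torsion) and $a = \phi(b) \in A \subs \Gammacl^F(A)$; hence $\ldim_{\kO}(\alpha/\Gamma(A)) = 1$. (ii) $\trd(\alpha/A) \le \trd(b/A) + \trd(c/A(b)) \le 1 + d$, so $\delta(\alpha/A) \le 1$, while $A \strong F$ gives $\delta(\alpha/A) \ge 0$. (iii) $\delta(\alpha/A) = 0$ is impossible, since then Lemma~\ref{closure as union} would place $\gen{A\alpha}$, and so $b$ and hence $a$, inside $\Gammacl^F(A)$. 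So $\delta(\alpha/A) = 1$. The same style of argument gives strongness: any finitely generated $X \supseteq \gen{A\alpha}$ with $\delta(X/\gen{A\alpha}) < 0$ would satisfy $\delta(X/A) \le 0$, forcing $\alpha \in \Gammacl^F(A)$ by Lemma~\ref{closure as union}, hence $a \in \Gammacl^F(A)$, contradiction.

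For the remaining clauses, if $A$ is essentially finitary apply Proposition~\ref{good bases exist} to the finitely generated strong extension $A \strong \gen{A\alpha}$ with basis $\alpha$ to obtain $m \in \N^+$ such that every $m^{\mathrm{th}}$ division point of $\alpha$ in $\Gamma(\gen{A\alpha})$ (which exists by divisibility of $\Gamma$) is a good basis. Replacing $\alpha$ by such an $\alpha'$ preserves all previously verified properties: $m\pi_1(\alpha') = b$ keeps $\pi_1(\alpha')$ interalgebraic with $a$, and $a \in A(b) \subs A(\pi_1(\alpha'))$ keeps rationality. For the locus clause, the defining relations $\pi_1(\alpha) \in C$ and $\phi(\pi_1(\alpha)) = a$ are equations over $\bk$ depending only on $(C,\phi)$; they cut out a specific subvariety of $G \times \ga$ inside which $(\alpha,a)$ generically lies, and this variety (or its pullback under $[m] \times \id$ when we pass to a division point) furnishes the fixed algebraic curve over $\bk$ as required.

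The main obstacle is managing the interplay between the field-theoretic algebraic closure $A^{\alg}$ and the pregeometric closure $\Gammacl^F(A)$, so that the hypothesis $a \notin \Gammacl^F(A)$ can be converted into sharp lower bounds on $\trd(\alpha/A)$; this rests essentially on the earlier observation that closed subfields are full and hence algebraically closed. A secondary subtlety is ensuring that the "particular curve over $\bk$" clause survives the passage to a division point $\alpha'$ needed for the good-basis conclusion, which is handled by noting that the locus is still cut out, up to a fixed isogeny, by the same equations in $(C,\phi)$.
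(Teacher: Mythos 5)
Your proposal is correct and follows essentially the same route as the paper: fix a curve in $G_1$ over $\bk$ with a dominant map to $\ga$, lift the preimage of $a$ into $\Gamma$ using fullness, compute $\delta(\alpha/A)=1$ by combining $A\strong F$ with Lemma~\ref{closure as union} (via the fact that $\Gamma$-closed sets are full, hence algebraically closed), deduce strongness of $\gen{A\alpha}$ by the addition formula, and pass to an $m^{\mathrm{th}}$ division point for the good-basis clause. Your explicit check that interalgebraicity and rationality survive division by $m$ is a small point the paper leaves implicit.
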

\begin{proof}
We have $\td(a/A) = 1$ (since otherwise $a \in \Gammacl^F(A)$).
We have fixed an identification of $G_1$ with a constructible subset of affine space $\mathbb A^N$ for some $N$, defined over \bk. Let $f$ be the projection map from $G_1$ to the first coordinate where the projection is dominant, and choose a constructible curve $X \subseteq  G_1$ by fixing the values of all the other coordinates to be values in \bk. Then $X$ and the map $f$ are defined over \bk.
Choose $\alpha_1 \in X(F)$ with $f(\alpha_1) = a$. Then $\alpha_1$ is interalgebraic with $a$ over $A$, with $a$ rational over $A(\alpha_1)$, and the locus $\loc(\alpha_1,a/A)$ is defined over $\bk$.

Since $F$ is full, there is $\alpha \in \Gamma(F)$ with $\pi_1(\alpha) = \alpha_1$. Then $\alpha \in \Gammacl^F(A a)$ and $a \in \Gammacl^F(A \alpha)$. Then $\ldim_\kO(\alpha/\Gamma(A)) = 1$ and so since $\delta(\alpha/A) \neq 0$ by Lemma~\ref{closure as union}, $\td(\alpha/A) = d+1$ and  $\delta(\alpha/A) = 1$. If there were $B \sups \gen{A\alpha}$ with $\delta(B/A\alpha) < 0$ then $\delta(B/A) \le 0$ which contradicts $a \notin \Gammacl^F(A)$. So $\gen{A\alpha} \strong F$.  If $A$ is essentially finitary then by Lemma~\ref{good bases exist} we can divide $\alpha$ by some $m \in \N^+$ to ensure it is a good basis. Since $\alpha_2$ is generic in $G_2(A)$ over $\alpha_1$, and $G_2$ is defined over $\bk$, we deduce that $\loc(\alpha,a/A)$ is defined over $\bk$.
\end{proof}

\begin{prop}\label{closed sets of pregeom}
The $\Gamma$-closed subsets of $F$ are the closed sets of a pregeometry on $F$.
\end{prop}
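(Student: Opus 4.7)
The plan is to verify the five pregeometry axioms---reflexivity, monotonicity, idempotence, finite character, and exchange---for the operator $\Gammacl^F$ on subsets of $F$. The first three follow formally from the definition of $\Gammacl^F(A)$ as the smallest $\Gamma$-closed $\Gamma$-subfield of $F$ containing $A$: reflexivity because every such subfield contains $A$, monotonicity because enlarging $A$ only shrinks the defining family of closed subfields, and idempotence because $\Gammacl^F(A)$ is itself closed.

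For finite character, I would first use that each $x \in F$ lies in the field generated by finitely many elements of $\Gamma(F)$, reducing everything to $\Gamma$-data. Then, given $x \in \Gammacl^F(A)$, Lemma~\ref{closure as union} places $x$ in a finitely generated extension $B$ of $\hull{\gen A}_F$ with $\delta(B/\hull{\gen A}_F) = 0$. Writing $B$ via a finite tuple $c$ over $\hull{\gen A}_F$ and invoking the finite character of transcendence degree, $\kO$-linear dimension, and of the hull (Lemma~\ref{hull finite char}), I can absorb enough of the base into $\hull{\gen{A_1}}_F$ for some finite $A_1 \subs A$. Submodularity then forces $\delta(c/\hull{\gen{A_1}}_F) = 0$, and a second application of Lemma~\ref{closure as union} gives $x \in \Gammacl^F(A_1)$.

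The main obstacle is exchange. Suppose $a \in \Gammacl^F(A \cup \{b\}) \setminus \Gammacl^F(A)$; I want $b \in \Gammacl^F(A \cup \{a\})$. Replacing $A$ by $\Gammacl^F(A)$, I may assume $A$ is $\Gamma$-closed, hence strong in $F$ and full. The crucial move is to pass from the field sort to the $\Gamma$ sort via Lemma~\ref{changing sorts}: apply it twice to produce $\alpha, \beta \in \Gamma(F)$ with $\pi_1(\alpha)$ interalgebraic with $a$ over $A$, $\pi_1(\beta)$ interalgebraic with $b$ over $A$, $\delta(\alpha/A) = \delta(\beta/A) = 1$, and both $\gen{A\alpha}, \gen{A\beta} \strong F$. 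Using fullness of $A$ and preservation of kernels, one checks $\Gammacl^F(A \cup \{a\}) = \Gammacl^F(A \cup \{\alpha\})$ and similarly for $b, \beta$, so the task becomes: from $\alpha \in \Gammacl^F(A \cup \{\beta\})$ deduce $\beta \in \Gammacl^F(A \cup \{\alpha\})$.

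For this, since $\gen{A\beta}$ equals its own hull, Lemma~\ref{closure as union} supplies a finitely generated extension $B \supseteq \gen{A\beta}$ with $\delta(B/\gen{A\beta}) = 0$ and $\alpha \in B$. Then $\gen{A\alpha} \subs B$, and the addition formula for $\delta$ (Lemma~\ref{delta properties}) yields
\[
\delta(B/\gen{A\alpha}) = \delta(B/\gen{A\beta}) + \delta(\gen{A\beta}/A) - \delta(\gen{A\alpha}/A) = 0 + 1 - 1 = 0.
\]
Lemma~\ref{closure as union} applied with base $\gen{A\alpha}$ then gives $B \subs \Gammacl^F(A \cup \{\alpha\})$, and in particular $\beta \in \Gammacl^F(A \cup \{\alpha\})$. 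The computation is symmetric in $\alpha$ and $\beta$ precisely because of the normalisation $\delta(\alpha/A) = \delta(\beta/A) = 1$ provided by Lemma~\ref{changing sorts}; without this uniform value, the two predimension contributions would not cancel and exchange would not emerge from the addition formula.
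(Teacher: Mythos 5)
Your proposal is correct and follows essentially the same route as the paper: the first three axioms are formal, finite character is obtained by combining Lemma~\ref{closure as union} with the finite character of $\delta$ and of the hull, and exchange is reduced via Lemma~\ref{changing sorts} to elements $\alpha,\beta$ of $\Gamma(F)$ with $\delta(\alpha/A)=\delta(\beta/A)=1$, after which the addition formula gives the same cancellation $\delta(B/\gen{A\alpha}) = 0 + 1 - 1 = 0$ that appears in the paper. The only points to tidy are that the tuple $\alpha$ must be taken in $\Gamma(B)$ (not merely with coordinates in the field $B$), which the directedness in Lemma~\ref{closure as union} permits, and that the vanishing $\delta(c/\hull{\gen{A_1}}_F)=0$ in the finite-character step comes from finite character of $\delta$ rather than submodularity.
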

\begin{proof}
It is immediate that for any subsets $A \subs B$ of $F$ we have $A \subs \Gammacl^F(A)$, $\Gammacl^F(\Gammacl^F(A)) = \Gammacl^F(A)$ and $\Gammacl^F(A) \subs \Gammacl^F(B)$. 

For finite character, suppose $b \in \Gammacl^F(A)$.
By Lemma~\ref{closure as union} there is a finitely generated extension $\hull{A}_F \subs B$ in $F$ such that $\delta(B/\hull{A}_F) = 0$ and $b \in B$. Then there is a finite tuple $\beta \in \Gamma(F)$ with $b$ rational over $\beta$ and $\delta(\beta/A) = 0$. By finite character of $\delta$ from Lemma~\ref{delta properties}, there is a finitely generated $\Gamma$-subfield $A_0$ of $\hull{A}_F$ such that for any $A'$ with $A_0 \subs A' \subs \hull{A}_F$ we have $\delta(\beta/A') = 0$. So by Lemma~\ref{closure as union} again, $b \in \Gammacl^F(A_0)$.

The hull operator is a closure operator which by Lemma~\ref{hull finite char} has finite character. We have $A_0 \subs \hull{A}_F$, so there is a finite subset $A_{00}$ of $A$ such that $\hull{A_0}_F = \hull{A_{00}}_F$. Hence $b \in \Gammacl^F(A_{00})$, and so $\Gammacl^F$ has finite character.

For exchange, suppose $A \strong_{\cl} F$ and that $a,b \in F \minus A$ with $b \in \Gammacl^F(Aa)$. Using Lemma~\ref{changing sorts}, we choose $\alpha, \beta \in \Gamma(F)$ corresponding to $a$ and $b$ respectively.

Now $\beta \in \Gammacl^F(A\alpha)$ so there is a finitely generated $\Gamma$-field extension $A \subs B$ inside $F$ with $\beta,\alpha \in \Gamma(B)$ and $\delta(B/A\alpha) = 0$. Then we have
\begin{eqnarray*}
\delta(B/A\beta) & = & \delta(B/A) - \delta(\beta/A) \\
 & = & \delta(B/A) - 1\\
 & = & \delta(B/A) - \delta(\alpha/A)\\
 & = & \delta(B/A\alpha) = 0 \\
\end{eqnarray*}
so $\alpha \in \Gammacl^F(A\beta)$, or equivalently $a \in \Gammacl^F(Ab)$.
\end{proof}

We write $\Gammadim^F$ for the dimension with respect to the pregeometry $\Gammacl^F$. However, if $F_1$ and $F_2$ are both full $\Gamma$-fields with $F_1 \strong_{\cl} F_2$ and $A \subs F_1$ then $\Gammacl^{F_1}(A) = \Gammacl^{F_2}(A)$. So from now on we will usually drop the superscript $F$ and just write $\Gammacl$ and $\Gammadim$ except where it might cause confusion.

We have the usual connection between the dimension and the predimension function.
\begin{lemma}\label{dim and predim}
Suppose that $A \strong F$ and $B$ is a finitely generated $\Gamma$-field extension of $A$ in $F$. Then:
\begin{enumerate}
\item $\Gammadim(B/A) = \min\class{\delta(C/A)}{B \subs C \subs F}$, and
\item  $B \strong F$ if and only if \ $\Gammadim(B/A) = \delta(B/A)$.
\end{enumerate}
\end{lemma}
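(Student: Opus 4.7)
Define $m(B/A) := \min\{\delta(C/A) : B \subseteq C \subseteq F \text{ with } C \text{ finitely generated over } A\}$; since $A \strong F$ forces $\delta(C/A) \ge 0$ for all such $C$, this minimum is a well-defined non-negative integer. My plan is to establish (1) via the identity $\Gammadim(B/A) = m(B/A)$, and then deduce (2) from it.

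The first reduction is $m(B/A) = \delta(\hull{B}_F/A)$ and $\Gammadim(B/A) = \Gammadim(\hull{B}_F/A)$. For the former, any minimizer $C^* \supseteq B$ is automatically strong in $F$ by minimality and the addition formula, hence contains $\hull{B}_F$; strongness of $\hull{B}_F$ then gives $\delta(C^*/\hull{B}_F) \ge 0$, forcing equality with $\delta(\hull{B}_F/A)$. For the latter, $B \subseteq \hull{B}_F \subseteq \Gammacl^F(B)$ gives $\Gammacl^F(B) = \Gammacl^F(\hull{B}_F)$, and pregeometry dimension over $A$ depends only on the closure.

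It therefore remains to prove the main identity $\Gammadim(D/A) = \delta(D/A)$ whenever $A \strong D \strong F$. Let $k = \Gammadim(D/A)$ and pick a $\Gammacl^F$-basis $b_1, \ldots, b_k$ of $D$ over $A$. Iterating Lemma~\ref{changing sorts} one obtains $\alpha_i \in \Gamma(F)$ with $\pi_1(\alpha_i)$ interalgebraic with $b_i$ over $\gen{A\alpha_1\ldots\alpha_{i-1}}$, $\delta(\alpha_i/\gen{A\alpha_1\ldots\alpha_{i-1}}) = 1$, and $\gen{A\alpha_1\ldots\alpha_i} \strong F$. Set $E = \gen{A\alpha_1\ldots\alpha_k}$; the addition formula gives $\delta(E/A) = k$, and $E \strong F$. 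The key claim is $\Gammacl^F(D) = \Gammacl^F(E)$: the inclusion $D \subseteq \Gammacl^F(E)$ is immediate from interalgebraicity, while for $E \subseteq \Gammacl^F(D)$ one uses that $\Gammacl^F(D)$ is full (being $\strong_{\cl}$ in $F$, it is full by the earlier lemma), so given $\alpha_{i,1} \in \acl(D) \subseteq \Gammacl^F(D)$ there is $\alpha'_{i,2} \in G_2(\Gammacl^F(D))$ with $(\alpha_{i,1}, \alpha'_{i,2}) \in \Gamma$; the actual $\alpha_{i,2}$ differs from $\alpha'_{i,2}$ by an element of $\ker_2$, which lies in $G_2(\Gammacl^F(D))$ by kernel preservation. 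Now let $H := \hull{DE}_F$, a finitely generated strong $\Gamma$-subfield of $F$ containing $D$ and $E$; since the $\Gamma$-closure is itself strong, $H \subseteq \Gammacl^F(D) = \Gammacl^F(E)$. By Lemma~\ref{closure as union} applied with base $D$, $H \subseteq B'$ for some finitely generated extension $B'$ of $D$ with $\delta(B'/D) = 0$; strongness of $H$ gives $\delta(B'/H) \ge 0$, whence $\delta(H/D) = 0$. Symmetrically $\delta(H/E) = 0$, and the addition formula then yields $\delta(D/A) = \delta(H/A) = \delta(E/A) = k$.

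Combining the reduction with this main identity gives (1). For (2), $\Gammadim(B/A) = \delta(B/A)$ is equivalent by (1) to $m(B/A) = \delta(B/A)$; since $m(B/A) \le \delta(B/A)$ always (take $C = B$ in the minimum), equality holds iff $\delta(C/A) \ge \delta(B/A)$, i.e.\ $\delta(C/B) \ge 0$, for every finitely generated $C \supseteq B$ in $F$, which is precisely the condition $B \strong F$. The main technical obstacle I anticipate is the inclusion $E \subseteq \Gammacl^F(D)$: the lifts $\alpha_i$ live in $\Gamma(F)$ rather than $\Gamma(D)$, and bringing them into $\Gammacl^F(D)$ relies crucially on the fullness of the $\Gamma$-closure together with kernel preservation.
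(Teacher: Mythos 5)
Your proposal is correct and follows essentially the same route as the paper: reduce both statements to the identity $\Gammadim(D/A)=\delta(D/A)$ for strong $D$ via the hull, build the auxiliary strong extension $E$ from a $\Gammacl$-basis using Lemma~\ref{changing sorts}, show $\Gammacl^F(D)=\Gammacl^F(E)$, and transfer $\delta$ between $D$ and $E$ using Lemma~\ref{closure as union} and the addition formula. Your passage through $H=\hull{DE}_F$ with $\delta(H/D)=\delta(H/E)=0$ is only a cosmetic variant of the paper's symmetric argument, and your careful treatment of why the lifts $\alpha_i$ land in $\Gammacl^F(D)$ (fullness of closed sets plus kernel preservation) is exactly the point the paper delegates to Lemma~\ref{changing sorts}.
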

\begin{proof}
Since $\hull B_F \subs \Gammacl^F(B)$, we have $\Gammadim(B/A) = \Gammadim(\hull B_F/A)$. Now it follows from the addition formula that $\delta(\hull B_F/A) = \min\class{\delta(C/A)}{B \subs C \subs F}$, so statement 1 reduces to the left-to-right direction of statement 2. To prove that, first assume $B \strong F$.

Let $n = \Gammadim(B/A)$ and let $b_1,\ldots,b_n$ be a $\Gcl$-basis for $B$ over $A$. Applying Lemma~\ref{changing sorts}, we get $\beta_i \in \Gamma(F)$ in the closure of $B$ with $\beta_i$ corresponding to $b_i$. Let $D$ be the $\Gamma$-subfield of $F$ generated by $A$ and $\beta_1,\ldots,\beta_n$. Then $\delta(D/A) = n$ and $D \strong F$. Furthermore $\Gammacl^F(D) = \Gammacl^F(B)$.

Since $D \subs  \Gammacl^F(B)$, there is $C \sups B \cup D$ such that $\delta(C/B) = 0$. Then
\[\delta(B/A) = \delta(C/A) - \delta(C/B) = \delta(C/A) \ge \delta(D/A) = n\]
using that $D \strong F$. Reversing the roles of $B$ and $D$, the same argument shows that $\delta(B/A) \le \delta(D/A)$, and so $\delta(B/A) = n = \Gammadim(B/A)$ as required.

The right-to-left direction of statement 2 now follows from statement 1 and the addition property. 
\end{proof}

\begin{remarks}
\begin{enumerate}
\item In the sense of the pregeometry $\Gammacl$, the set $\Gamma(F)$ is $d$-dimensional. Thus when $d=1$ such as in pseudo-exponentiation and pseudo-$\wp$ we actually get a pregeometry directly on $\Gamma(F)$.
\item In the case of pseudo-exponentiation or a pseudo-$\wp$-function, $G_1(F) = \ga(F) = F$, and $\Gamma$ is the graph of a function $\exp$, so we have a bijection $\phi: F \to \Gamma(F)$ given by $x \mapsto (x,\exp(x))$. The predimension usually considered for exponentiation, for example in \cite{Zilber05peACF0}, is a function on tuples from the field sort, and in fact is just the composite $\delta \circ \phi$ of the predimension function described here with $\phi$.
%\item In the generality we are considering, $\Gamma$ may not be the graph of a function and we may have $d > 1$, so there is no way to define a bijection or finite-to-finite correspondence between $F$ and $\Gamma(F)$. In the proof of Lemma~\ref{changing sorts} we started with an element $a$ of $F$ and found an element $\alpha \in \Gamma(F)$ which corresponded to it. This can be done more systematically by choosing an algebraic curve $C$ inside $G_1$ and then observing that points of $F$ are in finite-to-finite correspondence with points of $C(F)$, and that the ambiguity in the correspondence is not problematic.
\item It is possible to define a predimension function directly on the field sort, even in our generality. 
Given any subfield $A$ of $F$ we write $\Gamma(A)$ for $\Gamma(F) \cap G(A)$. For any subset $X$ of $F$ (in the field sort) we write $X^\alg$ for the field-theoretic algebraic closure of $\Fbase(X)$ in $F$.

Given subsets $X,Y$ of $F$, with $\td\left((X\cup Y )^\alg/X^\alg\right) < \infty$,  define 
\[\pd(Y/X) = \td\left((X\cup Y )^\alg/X^\alg\right) - \ldim_\kO\left(\Gamma((X\cup Y )^\alg)/\Gamma(X^\alg)\right)\]
which takes values in $\Z \cup \{-\infty\}$.

The predimension functions $\pd$ and $\delta$ are closely related, and we could write $\pd(Y/X) = \delta((X\cup Y)^\alg/X^\alg)$ except that $X^\alg$ will usually fail to be a $\Gamma$-subfield of $F$ by our definition, because as a field it will not usually be generated by the coordinates of the points in $\Gamma(X^\alg)$. It may not even be algebraic over the field generated by those points.

It is possible to define the notion of strong embeddings of $\Gamma$-fields using this predimension function instead. Some things are easier with this approach, because the predimension is defined on a 1-dimensional sort. However we choose to work in the sort $\Gamma$ because it is a vector space and hence has a modular geometry, which makes other things much easier.
\end{enumerate}
\end{remarks}

% !TEX root =  PEM2_1.tex

\subsection{Full closures}
The following theorem and proof follow \cite[Theorem~2.18]{FPEF}.

\begin{theorem}\label{full extension}
If $A$ is a $\Gamma$-field then there is a full $\Gamma$-field extension $\Afull$ of $A$ such that $A \strong \Afull$ and $\Afull$ is generated as a full $\Gamma$-field by $A$.
Furthermore if $A$ is essentially finitary then $\Afull$ is unique up to isomorphism as an extension of $A$.
\end{theorem}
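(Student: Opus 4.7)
\medskip

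\noindent\textbf{Existence.} My plan is to build $\Afull$ as the union of a transfinite chain $A = A_0 \strong A_1 \strong A_2 \strong \cdots$ of $\Gamma$-field extensions, where each $A_\alpha \strong A_{\alpha+1}$ remedies one defect in fullness. Embed everything into a large algebraically closed ambient field so that generic witnesses are available. At successor stage $\alpha{+}1$, I pick either: (i) some $a \in G_1(A_\alpha) \setminus \Gamma_1(A_\alpha)$, choose $b \in G_2$ generic over $A_\alpha$ in the ambient field, and take $A_{\alpha+1}$ to be the $\Gamma$-subfield generated by $A_\alpha$ and $(a,b)$ (with $\Gamma(A_{\alpha+1})$ the divisible pure $\OO$-hull of $\Gamma(A_\alpha) + \OO(a,b)$, absorbing the required torsion above it); (ii) the analogous step for $G_2$; or (iii) if some algebraic extension is missing, view a missing algebraic element as a coordinate of an algebraic point $a\in G_1(\overline{A_\alpha})$ and carry out step (i) at that $a$, which necessarily realizes the field-algebraic element. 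In cases (i), (ii) a direct computation gives $\delta(A_{\alpha+1}/A_\alpha) = d - d = 0$; in case (iii) one has $\delta = 0$ already because $a$ is algebraic over $A_\alpha$ and $b$ generic. In every case Lemma~\ref{delta=0 implies strong} upgrades $A_\alpha \strong A_{\alpha+1}$, and Lemma~\ref{composite of strong is strong} keeps the whole chain strong. At limits I take unions, where strongness is preserved by finite character of $\delta$. Since each step adds boundedly many new field elements, the process terminates at some ordinal of bounded cardinality, and the resulting field $\Afull$ is full (no remaining defect), strong over $A$, and evidently generated by $A$ as a full $\Gamma$-field.

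\medskip

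\noindent\textbf{Uniqueness.} Assume $A$ is essentially finitary, and let $F, F'$ be two full strong extensions of $A$, each generated as a full $\Gamma$-field by $A$. By the existence construction, each is countable (as $A$ is countable and we perform countably many one-step additions). The plan is a back-and-forth argument building an isomorphism $F \to F'$ over $A$. The inductive data is a triple $(C, C', \phi)$ where $A \strong C \strong F$ and $A \strong C' \strong F'$ are essentially finitary and $\phi \colon C \riso C'$ is a $\Gamma$-field isomorphism extending $\id_A$. To add $x \in F$, I form $C_1 = \hull{C \cup \{x\}}_F$, which is still essentially finitary and strong in $F$ by Lemmas~\ref{meet of strong is strong} and~\ref{hull finite char}; then $C_1$ is a finitely generated strong extension of $C$, hence by Proposition~\ref{good bases exist} it admits a good basis $b \in \Gamma(C_1)$, possibly after passing to an $m$-division point of an original basis (and such an $m$-division point exists in $F$ by fullness and divisibility). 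The locus $\loc(\phi\text{-image of data}/C')$ is defined over $C'$, and fullness of $F'$ lets me realize a generic point $b'$ on this locus inside $\Gamma(F')$, thereby producing the required $C_1' \subs F'$ and extending $\phi$ to an isomorphism $C_1 \riso C_1'$. Symmetrically in the other direction. Alternating and using a standard enumeration of $F$ and $F'$ produces the desired isomorphism in the limit.

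\medskip

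\noindent\textbf{Main obstacle.} The delicate point is running the back-and-forth inside an already-fixed $F$ and $F'$: when the chosen basis for the one-step extension is not itself good, I need the $m$-division point from Proposition~\ref{good bases exist} to live in $F$ (and its image to be realizable in $F'$). Divisibility of $\Gamma(F), \Gamma(F')$ and kernel preservation ensure the division points are present and determined up to torsion, while fullness of $F'$ supplies the matching generic realization; but the bookkeeping is what makes the argument work, and this is where the hypothesis that $A$ is essentially finitary is genuinely used, via Proposition~\ref{good bases exist}.
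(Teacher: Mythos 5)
Your existence argument is essentially the paper's own: adjoin pairs $(a,b)$ with one coordinate an algebraic defect point and the other generic, compute $\delta=0$, invoke Lemma~\ref{delta=0 implies strong}, and iterate; that part is fine.

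The uniqueness argument has a genuine gap at the step ``fullness of $F'$ lets me realize a generic point $b'$ on this locus inside $\Gamma(F')$''. For an arbitrary next element $x\in F$, the extension $C\strong C_1=\hull{C\cup\{x\}}_F$ is a finitely generated strong extension with $\delta(C_1/C)=0$ whose good-basis locus $V$ is, a priori, just \emph{some} free rotund subvariety of $G^n$ of dimension $dn$. Fullness of $F'$ only supplies $\Gamma$-points lying above a \emph{prescribed} element of $G_1(F')$ or of $G_2(F')$; it does not produce a generic $\Gamma(F')^n$-point of a general such $V$ --- that is the content of strong $\Gamma$-closedness (axiom~4 of Theorem~\ref{main theorem}, equivalently $\aleph_0$-saturation for $\Gamma$-algebraic extensions, Lemma~\ref{SGClosed lemma}), which a minimal full closure does not satisfy. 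Nothing in your set-up constrains the loci that arise to a shape that fullness can handle, so the crucial realization step is asserted without justification.

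The paper supplies exactly the missing constraint: it enumerates $\Gamma(B)$ as $(s_n)$ so that at each stage one of $\pi_1(s_{n+1})$, $\pi_2(s_{n+1})$ is algebraic over $A\cup\{s_1,\dots,s_n\}$ --- possible precisely because $B$ is generated as a full $\Gamma$-field by $A$. Each one-step locus then has the special form $W\cross G_2$ (or $G_1\cross W$) with $\dim W=0$, and for such loci fullness of $B'$ really does yield a point $(w,v)\in\Gamma(B')$ above any $w\in W^{\theta_n}$, whose genericity over $\theta_n(A_n)$ follows automatically from $\theta_n(A_n)\strong B'$ by a predimension count; only then does the good-basis machinery extend the embedding. (With this enumeration no back-and-forth is needed: the union of the images is a full $\Gamma$-subfield of $B'$ containing $A$, hence equals $B'$ by the generation hypothesis; your ``back'' direction would in any case face the same unproved realization step.) To repair your proof you must build this restriction into the enumeration, at which point it becomes the paper's argument.
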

\begin{proof}
First we prove existence. Embed $A$ in a large algebraically closed field $F$. 
Choose a point $a \in G_1(F)$ which is algebraic over $A$ and not in $\pi_1(\Gamma(A))$, if such exists. Choose a division sequence $\hat a \in \hat G_1(F)$ for $a$. Let $b \in G_2(F)$ be generic over $A$ and choose a division system $\hat b$ for it. (Up to field isomorphism over $A$, $\hat b$ is unique.) Let $A'$ be the field generated by $A$ and the division sequences $\hat a = (a_m)_{m \in \N^+}$ and $\hat b = (b_m)_{m\in \N^+}$, and define $\Gamma(A')$ to be the $\OO$-submodule of $G(A')$ generated by $\Gamma(A)$ and the points $(a_m,b_m)$ for $m \in \N^+$. Then $A'$ is a $\Gamma$-field extension of $A$. Since $\pi_1(\Gamma(A))$ already contains the torsion of $G_1$, the extension preserves the kernels. We have
\[\delta(A'/A) = \td(b/A) - d \ldim_\kO (b/A) = d-d = 0,\]
so it is a strong extension. Similarly if there is $b \in G_2(F)$ which is algebraic over $A$ but not in $\pi_2(\Gamma(A))$ we can form a similar strong extension. Iterating these constructions, a strong full extension $A^\mathrm{full}$ of $A$ is readily seen to exist.

Now we prove uniqueness under the additional hypothesis that $A$ is essentially finitary. Suppose that $B$ and $B'$ both satisfy the conditions for $\Afull$. Since $A$ is essentially finitary it is countable, and then the construction above shows that we can take $B$ to be countable as well. Enumerate $\Gamma(B)$ as $(s_n)_{n \in \N^+}$ such that for each $n$, either $\pi_1(s_n)$ or $\pi_2(s_n)$ is algebraic over $A \cup \{s_1,\ldots,s_{n-1}\}$. This is possible since $B$ is generated as a full $\Gamma$-field by $A$.

We will inductively construct a chain of strong $\Gamma$-subfields $A_n \strong B$, each a finitely generated $\Gamma$-field extension of $A$ such that $A_0 = A$ and $s_n \in \Gamma(A_n)$. We will also construct a chain of strong embeddings $\theta_n : A_n \strongembed B'$. Assume we have $A_n$ and $\theta_n$. Let $A_{n+1}$ be the $\Gamma$-subfield of $B$ generated by $A_n$ and $s_{n+1}$. As a field, $A_{n+1}$ is generated by $A_n$ and the division points of $s_{n+1}$. If $s_{n+1} \in \Gamma(A_n)$, then we have $A_{n+1} = A_n$ and can just take $\theta_{n+1} = \theta_n$. Otherwise, we have $\ldim_\kO(\Gamma(A_{n+1})/\Gamma(A_n)) \ge 1$. By hypothesis, one of $\pi_1(s_{n+1})$ or $\pi_2(s_{n+1})$ is algebraic over $A_n$, say $\pi_1(s_{n+1})$. Thus $\td(A_{n+1}/A_n) = \td(s_{n+1}/A_n) \le d$. By inductive hypothesis $A_n \strong B$, so we have $\delta(A_{n+1}/A_n) \ge 0$. It follows that $\ldim_\kO(\Gamma(A_{n+1})/\Gamma(A_n)) = 1$ and $\td(A_{n+1}/A_n)  = d$, so  $\delta(A_{n+1}/A_n) = 0$. Thus by Lemma~\ref{delta=0 implies strong}, $A_{n+1} \strong B$. Also $\pi_2(s_{n+1})$ is generic in $G_2$ over $A_n$.

Since $A_n$ is a finitely generated $\Gamma$-field extension of $A$, by Proposition~\ref{good bases exist} there is $m \in \N$ such that $\{s_{n+1}/m\}$ is a good basis for the extension $A_n \strong A_{n+1}$. Replacing $s_{n+1}$ by $s_{n+1}/m$, we may assume $m=1$. Now let $W$ be the locus of $\pi_1(s_{n+1})$ over $A_n$ (a variety of dimension 0, irreducible over $A_n$, but not necessarily absolutely irreducible), and let $w$ be any point in $W^{\theta_n}$, the corresponding subvariety of $G_1(B')$. Choose $v \in G_2(B')$ such that $(w,v) \in \Gamma(B')$. Since $w$ is algebraic over $\theta_n(A_n)$ which is strong in $B'$, the same predimension argument as above shows that $v$ is generic in $G_2$ over $\theta_n(A_n)$, so $\loc(w,v/\theta_n(A_n)) = W^{\theta_n} \cross G_2 = (\loc(s_{n+1}/A_n))^{\theta_n}$. Since $s_{n+1}$ is a good basis over $A_n$, we can extend $\theta_n$ to a field embedding $\theta_{n+1} : A_{n+1} \to B'$ with $\theta(s_{n+1}) = (w,v)$, and using again that $\theta_n(A_n) \strong B'$ we get that $\Gamma(\theta_{n+1}(A_{n+1}))$ is generated by $(w,v)$ over $\Gamma(\theta_n(A_n))$ and hence $\theta_{n+1}$ is a $\Gamma$-field embedding. 

Also $\delta(\theta_{n+1}(A_{n+1})/ \theta_n(A_n)) = 0$ so $\theta_{n+1}$ is a strong embedding by Lemma~\ref{delta=0 implies strong}.

Now $B = \Union \class{A_n}{n \in \N}$ and $\Union \class{\theta_n(A_n)}{n \in \N}$ is a full $\Gamma$-subfield of $B'$ containing $A$, so it must be $B'$. Hence $\Union \class{\theta_n}{n \in \N}$ is an isomorphism $B \iso B'$. So $\Afull$ is unique, up to isomorphism as an extension of $A$.
\end{proof}

\begin{prop}\label{0-stability for full}
Let $A$ be a countable full $\Gamma$-field. Then there are only countably many finitely generated strong full $\Gamma$-field extensions of $A$, up to isomorphism.
\end{prop}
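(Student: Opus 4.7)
The plan is to reduce to Corollary~\ref{0-stability over ess finitary} by exhibiting each finitely generated strong full $\Gamma$-field extension $B$ of $A$ as a full closure, in the sense of Theorem~\ref{full extension}, of some finitely generated strong $\Gamma$-field extension $B_0 \subs B$ of $A$. Since $A$ is countable and full, it is essentially finitary, and so is any finitely generated $\Gamma$-field extension of it; Corollary~\ref{0-stability over ess finitary} then bounds the number of such $B_0$ up to isomorphism by $\aleph_0$, and the uniqueness half of Theorem~\ref{full extension} (which requires $B_0$ to be essentially finitary) recovers $B$ from $B_0$ up to isomorphism over $A$.

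Concretely, I would fix a finite subset $X \subs \Gamma(B)$ witnessing that $B$ is finitely generated as a full $\Gamma$-field extension of $A$, and take $B_0$ to be the hull in $B$ of the $\Gamma$-subfield generated by $A \cup X$. By Lemmas~\ref{composite of strong is strong} and~\ref{meet of strong is strong}, $B_0$ is then a finitely generated strong $\Gamma$-field extension of $A$. I then want to realise $B_0^{\mathrm{full}}$ as an intermediate $\Gamma$-subfield $B_1$ of $B$, by running the inductive construction from the proof of Theorem~\ref{full extension} verbatim inside $B$.

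The key observation enabling this is as follows: whenever $C$ is a $\Gamma$-subfield with $B_0 \subs C \strong B$ and $a \in G_1(B)$ is algebraic over $C$ but $a \notin \pi_1(\Gamma(C))$, then since $B$ is full some $(a,b) \in \Gamma(B)$ exists, and as $C \strong B$ preserves kernels one has $\ldim_\kO((a,b)/\Gamma(C)) = 1$, hence $\delta((a,b)/C) = \trd(b/C) - d$. Strongness forces this to be $\ge 0$, so $b$ is automatically generic in $G_2$ over $C$; the symmetric observation applies when the new point is algebraic in $G_2$. Thus each step of the construction in Theorem~\ref{full extension} can be carried out inside $B$, and by Lemma~\ref{delta=0 implies strong} preserves strongness over $B_0$. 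Iterating produces $B_1 \subs B$ which is full, strong over $B_0$, and generated as a full $\Gamma$-field by $B_0$. Since $B_0$ is essentially finitary, uniqueness in Theorem~\ref{full extension} gives $B_1 \iso B_0^{\mathrm{full}}$ over $B_0$. Finally, $B_1$ is a full $\Gamma$-subfield of $B$ containing $A \cup X$ with the same kernels as $B$, so the minimality built into the definition of finitely generated full extension forces $B = B_1 \iso B_0^{\mathrm{full}}$ over $A$.

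The main obstacle is the middle step: realising an isomorphic copy of the abstract full closure of $B_0$ inside $B$. This is where the genericity observation above is essential, and it is the only place in the argument where fullness of $B$ and strongness of $B_0$ in $B$ interact nontrivially. Everything else is a direct appeal to the existence/uniqueness result Theorem~\ref{full extension} and the countability estimate of Corollary~\ref{0-stability over ess finitary}.
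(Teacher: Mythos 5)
Your proposal is correct, and its overall strategy is the same as the paper's: reduce to the countability of finitely generated strong extensions $B_0$ of $A$ (Corollary~\ref{0-stability over ess finitary}) together with the uniqueness of the full closure (Theorem~\ref{full extension}). The one place you do more work than necessary is the middle step: there is no need to re-run the construction of Theorem~\ref{full extension} inside $B$ to produce a copy $B_1$ of $B_0^{\mathrm{full}}$ and then argue $B_1 = B$. Since $B$ is full, $B_0 \strong B$, and (by the minimality in the definition of finite generation as a full extension) $B$ is generated as a full $\Gamma$-field by $B_0$, the $\Gamma$-field $B$ \emph{itself} already satisfies the defining conditions of a full closure of $B_0$; as $B_0$ is essentially finitary, the uniqueness clause of Theorem~\ref{full extension} immediately yields $B \iso B_0^{\mathrm{full}}$, which is exactly how the paper concludes. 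Your genericity observation is sound (it is the same predimension computation that appears in the uniqueness proof of Theorem~\ref{full extension}), so the detour is harmless, just redundant; likewise your choice of $B_0$ as the hull $\hull{\gen{AX}}_B$ is an acceptable variant of the paper's minimal-length generating tuple.
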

\begin{proof}
Let $A \strong B$ be such an extension and let $b$ be a finite tuple generating $B$ over $A$ as a full $\Gamma$-field, such that $B_0 \leteq \gen{Ab} \strong B$, and of minimal length such. Then by Proposition~\ref{good bases exist} we may replace $b$ by $b/m$ for some $m \in \N^+$ to ensure that $b$ is a good basis for the extension $A \strong B_0$. Then $B = B_0^\mathrm{full}$ which by Proposition~\ref{full extension} is determined uniquely up to isomorphism by $B_0$, and by Corollary~\ref{0-stability over ess finitary} there are only countably many choices for $B_0$.
\end{proof}

%Amalgamation section: The canonical countable model
% !TEX root = PEM2_1.tex

\section{The canonical countable model}\label{amalg section}

\subsection{The amalgamation theorem}
We use the definition of \emph{amalgamation category} from \cite{TEDESV}, slightly extending Droste and G\"obel \cite{DG92} who were themselves abstracting from Fra\"iss\'e's amalgamation theorem. We restrict to the countable case. We will apply the general theory to various categories of $\Gamma$-fields with strong embeddings as morphisms. The notions of \emph{finitely generated}, \emph{universal} and \emph{saturated} all have category-theoretic translations which we give first.
\begin{defn}
Given a category $\Cat$, an object $A$ of $\Cat$ is said to be \emph{$\aleph_0$-small} if and only if for every $\omega$-chain $(Z_i,
\gamma_{ij})$ in $\Cat$ with direct limit $Z_\omega$, any arrow $\ra{A}{f}{Z_\omega}$ factors through the chain, that is, there is $i < \omega$ and $\ra{A}{f^*}{Z_i}$ such that $f = \gamma_{i\omega} \circ f^*$. We write $\Cat^{<\aleph_0}$ for the full subcategory of $\aleph_0$-small objects of $\Cat$ and $\Cat^{\le\aleph_0}$ for the full subcategory of the limits of $\omega$-chains of $\aleph_0$-small objects of $\Cat$. 
\end{defn}

\begin{defn}
Given a category $\Cat$ and a subcategory $\Cat'$, an object $U$ of $\Cat$ is said to be \emph{$\Cat'$-universal} if for every object $A$ of $\Cat'$ there is an arrow $\ra{A}{}{U}$ in $\Cat$.  $U$ is \emph{$\Cat'$-saturated} if for every arrow $\ra{A}{f}{B}$ in $\Cat'$ and every arrow $\ra{A}{g}{U}$ in $\Cat$, there is an arrow $\ra{B}{h}{U}$ in $\Cat$ such that $g=h \circ f$. $U$ is \emph{$\Cat'$-homogeneous} if for every object $A$ of $\Cat'$ and every pair of arrows $\ra{A}{f,g}{U}$ in $\Cat$, there exists an isomorphism $\ra{U}{h}{U}$ in $\Cat$ such that $g=h \circ f$.
\end{defn}
Some authors refer to $\Cat'$-saturation as \emph{richness} with respect to the objects and arrows from $\Cat'$.

\begin{definition}
A category $\Cat$ is an \emph{amalgamation category} if the following hold.
 \begin{enumerate}[{AC}1.]
  \item Every arrow in $\Cat$ is a monomorphism.
  \item $\Cat$ has direct limits (unions) of $\omega$-chains.
  \item $\Cat^{<\aleph_0}$ has at most $\aleph_0$ objects up to
    isomorphism.
  \item For each object $A \in \Cat^{<\aleph_0}$ there are at most
    $\aleph_0$ extensions of $A$ in $\Cat^{<\aleph_0}$, up to isomorphism.
  \item $\Cat^{<\aleph_0}$ has the \emph{amalgamation property} (AP),
    that is, any diagram of the form
    \begin{diagram}[height=2em,width=2em]
      B_1 &&&&B_2\\
      &\luTo&& \ruTo\\
      &&A
    \end{diagram}
    can be completed to a commuting square
    \begin{diagram}[height=2em,width=2em]
      &&C\\
      &\ruTo&&\luTo\\
      B_1 &&&&B_2\\
      &\luTo&& \ruTo\\
      &&A
    \end{diagram}
    in $\Cat^{<\aleph_0}$.
  \item $\Cat^{<\aleph_0}$ has the \emph{joint embedding property} (JEP),
    that is, for every $B_1,B_2 \in \Cat^{<\aleph_0}$ there is $C \in
     \Cat^{<\aleph_0}$ and arrows
    \begin{diagram}[height=2em,width=2em]
      &&C\\
      &\ruTo&&\luTo\\
      B_1 &&&&B_2\\
    \end{diagram}
    in $\Cat^{<\aleph_0}$.
  \end{enumerate}
\end{definition}
The point of the definition is that the following form of Fra\"iss\'e's amalgamation theorem holds.
\begin{theorem}[{\cite[Theorem~2.18]{TEDESV}}]\label{amalgamation theorem}
  If $\Cat$ is an amalgamation category then there is an object
  $U \in \Cat^{\le\aleph_0}$, the ``Fra\"iss\'e limit'', which is $\Cat^{\le\aleph_0}$-universal and
  $\Cat^{<\aleph_0}$-saturated. 
  
Furthermore, if $A \in \Cat^{\le\aleph_0}$ is $\Cat^{<\aleph_0}$-saturated then $A \iso U$.
\end{theorem}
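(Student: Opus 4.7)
The plan is to follow the classical Fra\"iss\'e construction, adapted to the categorical setting. First I would fix a bookkeeping list: by AC3 there are only countably many isomorphism classes in $\Cat^{<\aleph_0}$, and by AC4 each such object has only countably many extensions in $\Cat^{<\aleph_0}$, so we can enumerate all triples $(A, f\colon A\hookrightarrow B, i)$ with $A,B\in\Cat^{<\aleph_0}$ and $i\in\N$ in a single sequence, each triple occurring cofinally often. Starting from any object $U_0\in\Cat^{<\aleph_0}$ (use JEP to pick something nontrivial), I would build an $\omega$-chain $U_0\to U_1\to\cdots$ as follows: at stage $n$, look at the $n$\textsuperscript{th} triple $(A,f,i)$; if there exists an arrow $g\colon A\to U_n$ corresponding to our bookkeeping (one can enumerate pairs (triple, choice of such $g$) jointly, again using countability), apply AP to the span $U_n\xleftarrow{g}A\xrightarrow{f}B$ to obtain $U_{n+1}$ with $U_n\hookrightarrow U_{n+1}$ and $B\hookrightarrow U_{n+1}$ extending $g$. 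If no such $g$ exists, set $U_{n+1}=U_n$. Let $U=\varinjlim U_n$, which lies in $\Cat^{\le\aleph_0}$ by AC2.

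For $\Cat^{<\aleph_0}$-saturation: given $f\colon A\hookrightarrow B$ in $\Cat^{<\aleph_0}$ and $g\colon A\to U$, since $A$ is $\aleph_0$-small the arrow $g$ factors as $A\xrightarrow{g^*}U_n\to U$ for some $n$. The triple $(A,f,i)$ appears at some stage $m\ge n$ of the enumeration, at which point the bookkeeping will have been set up to produce an arrow $h^*\colon B\to U_{m+1}$ extending $g^*$ composed with $U_n\to U_m$; composing with $U_{m+1}\to U$ gives the required $h\colon B\to U$.

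For $\Cat^{\le\aleph_0}$-universality: given $V=\varinjlim V_j$ with each $V_j\in\Cat^{<\aleph_0}$, I would inductively build a coherent system of arrows $V_j\to U$. The base case uses JEP (or just that $V_0\in\Cat^{<\aleph_0}$ embeds into $U$ by saturation applied to an initial object, or directly by the amalgamation of $V_0$ with $U_0$). At each inductive step, given $V_j\hookrightarrow V_{j+1}$ and an already-chosen $V_j\to U$, apply $\Cat^{<\aleph_0}$-saturation to extend it to $V_{j+1}\to U$. Passing to the colimit gives $V\to U$, which is a morphism (hence mono by AC1).

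For uniqueness: suppose $A\in\Cat^{\le\aleph_0}$ is also $\Cat^{<\aleph_0}$-saturated, say $A=\varinjlim A_j$ with $A_j\in\Cat^{<\aleph_0}$, and similarly $U=\varinjlim U_n$. Do a back-and-forth between $U$ and $A$: at odd stages extend the partial isomorphism to cover the next $U_n$ using saturation of $A$ (applied after factoring $U_n\to U$ through the existing image), and at even stages cover the next $A_j$ using saturation of $U$. Since the $U_n$ and $A_j$ are $\aleph_0$-small, each such extension step can be carried out inside the countable cofinal chain, and $\aleph_0$-smallness guarantees the maps we construct factor appropriately. The union of these partial arrows yields the required isomorphism $U\iso A$.

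The main obstacle I expect is the careful bookkeeping in the construction step and in the back-and-forth: one must simultaneously track (arrow out of $A$ into the current stage, choice of extension of $A$) with enough care that every required diagram eventually gets amalgamated, while using AC1 to be sure all composites remain monic and AC3--AC4 to keep the enumeration countable. The AP/JEP axioms do the heavy algebraic lifting, and $\aleph_0$-smallness is what lets each saturation query be resolved at a finite stage of the chain.
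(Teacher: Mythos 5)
First, a remark on the comparison: the paper does not prove this statement — it is quoted from \cite[Theorem~2.18]{TEDESV} — so there is no internal argument to set yours against. Your overall architecture (an $\omega$-chain built by repeated amalgamation, saturation by factoring an arrow $g\colon A\to U$ through a finite stage using $\aleph_0$-smallness, universality by iterating saturation along a presentation of $V$, and uniqueness by back-and-forth, with AC1 used to see that the factorizations are forced) is the standard one and matches the cited proof in shape.

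There is, however, a genuine gap in the bookkeeping. You enumerate pairs consisting of an extension $f\colon A\to B$ and an arrow $g\colon A\to U_n$, and assert this enumeration is countable ``again using countability''. But AC3 and AC4 bound only the number of \emph{objects} and of \emph{extensions up to isomorphism}; nothing in AC1--AC6 bounds the cardinality of the hom-sets $\Cat(A,U_n)$. In several of the categories to which the theorem is applied in this paper these hom-sets have cardinality $2^{\aleph_0}$: in case (DEQ), and generally whenever $\Tor(\Gamma(B))$ is infinite (e.g.\ the (COR) example of Section~\ref{theta factorization section}), a finitely generated $\Gamma$-field admits continuum many strong embeddings into a countable one, one for each Galois-conjugate choice of division sequences of a basis, since the image of the Kummer map of Proposition~\ref{prop:kummer} is an open, hence uncountable, subgroup of the relevant Tate module. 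So your list of requirements is uncountable and an $\omega$-chain cannot meet them all; nor does handling one representative per $\Aut(U_n)$-orbit suffice, since automorphisms of $U_n$ need not propagate up the chain. The repair is to index requirements at the level of the stages themselves: for each $n$, AC4 applied to $U_n\in\Cat^{<\aleph_0}$ gives only countably many isomorphism classes of extensions $e\colon U_n\to C$, and one arranges by dovetailed amalgamation of $U_m\xleftarrow{\gamma_{nm}} U_n\xrightarrow{e} C$ that every such extension is realized over the chain, i.e.\ there are $m\ge n$ and $c\colon C\to U_m$ with $c\circ e=\gamma_{nm}$. Saturation for arbitrary $f$ and $g$ then follows: factor $g$ as $\gamma_{n\omega}\circ g^*$, amalgamate $U_n\xleftarrow{g^*}A\xrightarrow{f}B$ into an extension $e\colon U_n\to C$ with $b\colon B\to C$ satisfying $e\circ g^*=b\circ f$, and set $h=\gamma_{m\omega}\circ c\circ b$. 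With this change your universality and uniqueness arguments go through as written.
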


\begin{remark}
  It follows from saturation and a back-and-forth argument that $U$ is also $\Cat^{<\aleph_0}$-homogeneous.
\end{remark}

\subsection{Amalgamation of $\Gamma$-fields}

We fix a $\Gamma$-field $\Fbase$ which is either finitely generated as a $\Gamma$-field, or is a countable full $\Gamma$-field.

The identity map on a $\Gamma$-field is obviously a strong embedding, hence from Lemma~\ref{composite of strong is strong} we have a category of strong $\Gamma$-field extensions of $\Fbase$, with strong embeddings as the arrows. We write $\Cc(\Fbase)$ for this category, but will usually abbreviate it to $\Cc$.
We also consider the following full subcategories of $\Cc$.
\begin{notation} \
\begin{itemize}
\item $\Cfull$ (or $\Cfull(\Fbase)$) consists of the full strong $\Gamma$-field extensions of $\Fbase$.
\item $\Cfg$ consists of the strong $\Gamma$-field extensions of $\Fbase$ which are finitely generated.
\item $\Cfgfull$ consists of the strong $\Gamma$-field extensions of $\Fbase$ which are full and finitely generated as full extensions.
\item $\Ccount$ consists of the strong $\Gamma$-field extensions of $\Fbase$ which are countable.
\item $\Cfullcount$ consists of the strong $\Gamma$-field extensions of $\Fbase$ which are full and countable.
\end{itemize}
\end{notation}
For our categories $\Cc$ and $\Cfull$, it is immediate that $\aleph_0$-small just means finitely generated in the appropriate sense, and a (full) $\Gamma$-field is the union of an $\omega$-chain of finitely generated (full) $\Gamma$-fields if and only if it is countable.

We will construct our canonical model as the Fra\"iss\'e limit of $\Cfg$. In fact it is also the Fra\"iss\'e limit of $\Cfgfull$.

%We note that $\Cc$ and $\Cfull$ are abstract elementary classes. The content of this is that the union of a chain of arrows in the category is still in the category, and is a strong extension of each $\Gamma$-field in the chain. 

In proving the amalgamation property we actually prove a stronger result, asymmetric amalgamation, which will be necessary when we come to axiomatize our models. However, the asymmetric property holds only in the case of full $\Gamma$-fields, not for $\Cfg$. We also observe that our amalgams are disjoint.
\begin{prop}\label{AAP}
The categories $\Cfull$ and $\Cfullcount$ have the disjoint asymmetric amalgamation property. That is, given full $\Gamma$-fields $A_0, A_L, A_R \in \Cfull$, an embedding $A_0 \into A_L$ and a strong embedding $A_0 \strongembed A_R$, there exist $A \in \Cfull$ and dashed arrows making the following diagram commute;
  \[ \xymatrix{
      & A_L \hstrexar[dr] & \\
    A_0 \embar[ur] \lstrar[dr] &   & A \\
      & A_R \embexar[ur] &  \\
      }
      \]
      moreover, if the embedding $A_0 \into A_L$ is also strong, then so is the embedding $A_R \into A$;  furthermore, identifying $A_0$, $A_L$ and $A_R$ with their images in $A$, we have that $A_L \cap A_R = A_0$.
\end{prop}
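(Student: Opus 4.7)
The plan is to construct the amalgam $A$ as the full closure of the free amalgam of $A_L$ and $A_R$ over $A_0$. First, working inside a sufficiently large algebraically closed field, I would arrange $A_L$ and $A_R$ to be algebraically independent over $A_0$, and let $B$ denote the field compositum $A_L \cdot A_R$. Since $A_0$ is full and hence algebraically closed, standard facts about tensor products of algebraically closed fields over an algebraically closed subfield give $A_L \cap A_R = A_0$ in $B$. Define $\Gamma(B)$ to be the pure divisible $\OO$-submodule of $G(B)$ generated by $\Gamma(A_L) \cup \Gamma(A_R)$. Routine verification shows that $B$ is a $\Gamma$-field extension of both $A_L$ and $A_R$ preserving kernels, and a short argument using fullness of $A_0$ and kernel preservation gives $\Gamma(A_L) \cap \Gamma(A_R) = \Gamma(A_0)$.

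The central claim is that $A_L \strong B$. Let $X$ be a $\Gamma$-subfield of $B$ finitely generated over $A_L$, with $n := \ldim_\kO(\Gamma(X)/\Gamma(A_L))$. Because $\Gamma(B)/\Gamma(A_L)$ is generated (modulo torsion) by the image of $\Gamma(A_R)$ and both $\Gamma(A_L)$ and $\Gamma(A_R)$ are divisible, I can pick a $\kO$-basis $\gamma_1,\ldots,\gamma_n$ of $\Gamma(X)/\Gamma(A_L)$ with each $\gamma_i \in \Gamma(A_R) \cap \Gamma(X)$; then $X$ is the $\Gamma$-subfield of $B$ generated by $A_L$ and the $\gamma_i$, and as a field it is the compositum $A_L \cdot Y$ where $Y := \langle A_0,\gamma_1,\ldots,\gamma_n\rangle$ is the $\Gamma$-subfield of $A_R$ generated by these elements. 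By algebraic independence of $A_L$ and $A_R$ over $A_0$, we have $\trd(X/A_L) = \trd(Y/A_0)$, and using $\Gamma(A_L) \cap \Gamma(A_R) = \Gamma(A_0)$ the elements $\gamma_i$ are $\kO$-linearly independent over $\Gamma(A_0)$, so $\ldim_\kO(\Gamma(Y)/\Gamma(A_0)) = n$. The hypothesis $A_0 \strong A_R$ then yields $\delta(Y/A_0) \ge 0$, and hence
\[ \delta(X/A_L) \;=\; \trd(X/A_L) - dn \;=\; \trd(Y/A_0) - dn \;=\; \delta(Y/A_0) \;\ge\; 0. \]
Running the same argument with the roles of $A_L$ and $A_R$ interchanged shows that if, in addition, $A_0 \strong A_L$, then $A_R \strong B$ as well.

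Finally, I would set $A := B^{\mathrm{full}}$, the full closure provided by Theorem~\ref{full extension}, which satisfies $B \strong A$. Composition of strong embeddings (Lemma~\ref{composite of strong is strong}) then gives $\Fbase \strong A_L \strong B \strong A$, so $A \in \Cfull(\Fbase)$; under the extra hypothesis we likewise obtain $A_R \strong A$. The intersection $A_L \cap A_R = A_0$ established in $B$ persists in $A$. For the countable case, $B$ is countable whenever $A_L$ and $A_R$ are, and the construction in Theorem~\ref{full extension} yields a countable full closure from a countable input, so the argument restricts to $\Cfullcount$.

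I expect the main obstacle to be the bookkeeping around the pure divisible hull defining $\Gamma(B)$: in particular, justifying that $\Gamma(A_L)\cap\Gamma(A_R) = \Gamma(A_0)$ (which relies on fullness of $A_0$ combined with kernel preservation) and that a $\kO$-basis of $\Gamma(X)/\Gamma(A_L)$ can genuinely be chosen inside $\Gamma(A_R)\cap\Gamma(X)$. No deep new idea is needed beyond predimension submodularity and the free composite.
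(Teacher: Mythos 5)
Your construction follows the paper's proof essentially step for step: the free field compositum over the algebraically closed $A_0$ (giving $A_L\cap A_R=A_0$), a $\Gamma$-structure on it built from $\Gamma(A_L)$ and $\Gamma(A_R)$, strongness of $A_L$ in the amalgam by moving a basis of $X$ over $A_L$ into $\Gamma(A_R)$ and combining $A_0\strong A_R$ with the algebraic independence of $A_L$ and $A_R$ over $A_0$, and finally passing to the full closure of Theorem~\ref{full extension}. The computation $\delta(X/A_L)=\trd(Y/A_0)-dn=\delta(Y/A_0)\ge 0$ is exactly the paper's.

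The one step that fails as written is your definition of $\Gamma(B)$ as the \emph{pure} divisible $\OO$-submodule generated by $\Gamma(A_L)\cup\Gamma(A_R)$. Outside case (DEQ), $\Gamma$ need not contain $\Tor(G)$, and the pure hull of the (already divisible) sum $S=\Gamma(A_L)+\Gamma(A_R)$ is $S+\Tor(G)$: if $nx\in S$ then, choosing $y\in S$ with $ny=nx$, the difference $x-y$ is $n$-torsion, and conversely every $s+t$ with $t$ torsion lies in the pure hull. In pseudo-exponentiation this forces $(0,\zeta)\in\Gamma(B)$ for every root of unity $\zeta$, so $\ker_2(B)$ strictly contains $\ker_2(A_0)=\{1\}$; the inclusions of $A_L$ and $A_R$ into $B$ then fail to preserve kernels and so cannot be strong, no matter what the predimension does. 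The fix is simply to take $\Gamma(B)=\Gamma(A_L)+\Gamma(A_R)$, which is divisible because each summand is; kernel preservation is then the genuinely non-routine check (the paper writes $(a,0)=(a_L,b_L)+(a_R,b_R)$, deduces $b_L=-b_R\in G_2(A_L)\cap G_2(A_R)=G_2(A_0)$, and uses fullness of $A_0$ to pull everything into $\Gamma(A_0)$). With that correction, your argument is the paper's; your auxiliary claim $\Gamma(A_L)\cap\Gamma(A_R)=\Gamma(A_0)$ is true for the same reason but is not actually needed, since the $\gamma_i$ are $\kO$-independent over $\Gamma(A_0)$ already because they are independent over the larger module $\Gamma(A_L)$.
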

\begin{proof}
Since $A_0$ is algebraically closed as a field, we may form the free amalgam $A_1$ of $A_L$ and $A_R$ over $A_0$ as fields, that is, the unique (up to isomorphism) field compositum of $A_L$ and $A_R$ in which they are algebraically independent over $A_0$. We identify $A_L$ and $A_R$ as subfields of $A_1$ so, in particular, $A_L \cap A_R = A_0$. We make $A_1$ into a $\Gamma$-field by defining $\Gamma(A_1)$ to be the $\OO$-submodule $\Gamma(A_L) + \Gamma(A_R)$ of $G(A_1)$. 

Then $\Gamma(A_L)$ and $\Gamma(A_R)$ are $\OO$-submodules of $\Gamma(A_1)$.

Suppose that $a \in \ker_1(A_1)$, that is, $(a,0) \in \Gamma(A_1)$. Then there are $(a_L,b_L) \in \Gamma(A_L)$ and $(a_R,b_R) \in \Gamma(A_R)$ such that $(a,0) = (a_L,b_L) + (a_R,b_R)$.
Then $b_L = - b_R$, so 
\[b_L,b_R \in \Gamma_2(A_L) \cap \Gamma_2(A_R) \subs G_2(A_L) \cap G_2(A_R) = G_2(A_0).\]
Since $A_0$ is a full $\Gamma$-field there is $a_0 \in G_1(A_0)$ such that $(a_0,b_L) \in \Gamma(A_0)$.  Then $a_L - a_0 \in \ker_1(A_L) = \ker_1(A_0)$, so $a_L \in \Gamma_1(A_0)$. Similarly, $a_R \in \Gamma_1(A_0)$, so $a \in \Gamma_1(A_0)$.

Thus $\ker_1(A_1) = \ker_1(A_0)$. The same argument shows that $\ker_2(A_1) = \ker_2(A_0)$, and hence the inclusions of $A_L$ and $A_R$ into $A_1$ preserve the kernels. 

Let us check that the inclusion $A_L \into A_1$ is strong. Let $X$ be a $\Gamma$-subfield of $A_1$ which is finitely generated over $A_L$. Choose a basis $b$ for the extension, say of length $n$. Translating by points in $\Gamma(A_L)$, we may assume that $b \in \Gamma(A_R)^n$. Now $\delta(b/A_0) \ge 0$ since $A_0 \strong A_R$, so $\td(b/A_0) \ge d \ldim_\kO(g/\Gamma(A_0) = dn$.

Since $A_R$ is $\ACF$-independent from $A_L$ over $A_0$, we have $\td(b/A_L) = \td(b/A_0)$, and we also have $\ldim_\kO(b/\Gamma(A_L)) = n$ by assumption, so
\[\delta(X/A_L) = \td(b/A_L) - d \ldim_\kO(b/\Gamma(A_L)) = \delta(b/A_0) \ge 0\]
as required. Thus $A_L \strong A_1$. The same argument shows that if the embedding $A_0 \into A_L$ is strong, then so is the embedding $A_R \into A_1$.

Now take $A = A_1^\mathrm{full}$, which exists and is a strong extension of $A_1$, by the existence part of Theorem~\ref{full extension}. Note that if $A_L$ and $A_R$ are countable then so is $A$.
\end{proof}

\begin{corollary}\label{AP}

The category $\Cfg$ has the amalgamation property. That is, given $A_0, A_L, A_R \in \Cfg$ and strong embeddings $A_0 \strongembed A_L$ and $A_0 \strongembed A_R$ as in the following diagram, there exist $A \in \Cfg$ and dashed arrows making the diagram commute.
  \[ \xymatrix{
      & A_L \hstrexar[dr] & \\
    A_0 \hstrar[ur] \lstrar[dr] &   & A \\
      & A_R \lstrexar[ur] &  \\
      }
      \]

\end{corollary}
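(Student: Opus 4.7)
The plan is to reduce to Proposition~\ref{AAP} by enlarging each of $A_0$, $A_L$, $A_R$ to a full $\Gamma$-field. Since $\Fbase$ is either finitely generated or a countable full $\Gamma$-field, every object of $\Cfg$ is essentially finitary, so Theorem~\ref{full extension} applies: the full closure of each is unique up to isomorphism over its base.

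First I would fix full closures $A_L^{\mathrm{full}} \sups A_L$ and $A_R^{\mathrm{full}} \sups A_R$ as strong extensions, and then inside each build a full closure of $A_0$ by the stepwise procedure of Theorem~\ref{full extension}. By the uniqueness clause of that theorem the two resulting copies are isomorphic over $A_0$, so they may be identified to produce a single $A_0^{\mathrm{full}}$ embedded in both $A_L^{\mathrm{full}}$ and $A_R^{\mathrm{full}}$. Each step of the full-closure construction adds a single generic division sequence and contributes predimension $0$ over its predecessor, so an induction based on Lemma~\ref{delta=0 implies strong} (combined with a finite-character argument via Lemma~\ref{delta properties}(1) to pass to the countable union) will show that both embeddings $A_0^{\mathrm{full}} \strongembed A_L^{\mathrm{full}}$ and $A_0^{\mathrm{full}} \strongembed A_R^{\mathrm{full}}$ are strong.

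Next I would apply Proposition~\ref{AAP} to this pair of strong embeddings between full $\Gamma$-fields, obtaining $A^\star \in \Cfull$ together with strong embeddings $A_L^{\mathrm{full}} \strongembed A^\star$ and $A_R^{\mathrm{full}} \strongembed A^\star$ making the outer diagram commute. I would then take $A$ to be the $\Gamma$-subfield of $A^\star$ generated by $A_L \cup A_R$. Since $A_L$ and $A_R$ are each finitely generated over $\Fbase$, so is $A$, giving $A \in \Cfg$. By Lemma~\ref{composite of strong is strong} the composite $A_L \strong A_L^{\mathrm{full}} \strong A^\star$ is strong; restricting the predimension condition from $A^\star$ to those $\Gamma$-subfields lying inside $A$ yields $A_L \strongembed A$, and symmetrically $A_R \strongembed A$. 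Commutativity of the required inner square is inherited from the outer one.

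The only genuinely non-formal step is verifying that the full closure of $A_0$ constructed inside $A_L^{\mathrm{full}}$ really is strong in the ambient $A_L^{\mathrm{full}}$; this is where the main work lies and I expect it to be the main obstacle. Everything else amounts to routine assembly using Proposition~\ref{AAP} and the basic calculus of strong extensions.
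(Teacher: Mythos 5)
Your proposal is correct and follows essentially the same route as the paper: pass to full closures, use the uniqueness clause of Theorem~\ref{full extension} to obtain a common $A_0^{\mathrm{full}}$ strongly embedded (via Lemma~\ref{delta=0 implies strong} and finite character of $\delta$) in $A_L^{\mathrm{full}}$ and $A_R^{\mathrm{full}}$, apply Proposition~\ref{AAP}, and cut back down to the $\Gamma$-subfield generated by $A_L \cup A_R$. The step you flag as the main obstacle is handled exactly as you suggest, so there is no gap.
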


\begin{proof}
Let $A_0, A_L, A_R$ be as in the statement. By the existence part of Theorem~\ref{full extension}, we can extend each of the three $\Gamma$-fields to its full closure.
\[ 
\xymatrix{
      & A_L \hstrar[r] & A_L^\mathrm{full}\\
  A_0 \hstrar[ur] \lstrar[dr] \hstrar[r] & A_0^\mathrm{full}\\
      & A_R \lstrar[r] & A_R^\mathrm{full}\\
}
\]
Then because we have $A_0 \strong A_L^\mathrm{full}$ and $A_0 \strong A_R^\mathrm{full}$, by the uniqueness part of the same theorem there are embeddings as in the following diagram, which are strong by Lemma~\ref{delta=0 implies strong} and finite character of $\delta$.
\[ 
\xymatrix{
      & A_L \hstrar[r] & A_L^\mathrm{full}\\
  A_0 \hstrar[ur] \lstrar[dr] \hstrar[r] & A_0^\mathrm{full}  \hstrar[ur] \lstrar[dr] \\
      & A_R \lstrar[r] & A_R^\mathrm{full}\\
}
\]
By Theorem~\ref{AAP}, we can complete the diagram to
\[ 
\xymatrix{
      & A_L \hstrar[r] & A_L^\mathrm{full} \hstrar[dr] &\\
  A_0 \hstrar[ur] \lstrar[dr] \hstrar[r] & A_0^\mathrm{full}  \hstrar[ur] \lstrar[dr] && A'\\
      & A_R \lstrar[r] & A_R^\mathrm{full} \lstrar[ur] &\\
}
\]
and then we can take $A$ to be the $\Gamma$-subfield of $A'$ generated by $A_L \cup A_R$, which is in $\Cfg$.
\end{proof}

%\subsection{The Fra\"iss\'e limit}

\begin{theorem}\label{amalg category theorem}
The categories $\Cc$ and $\Cfull$ are amalgamation categories, with the same Fra\"iss\'e limit.
\end{theorem}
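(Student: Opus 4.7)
The plan is to verify the six amalgamation category axioms for $\Cc$ and $\Cfull$ in turn, and then show that the \Fraisse\ limit $M$ of $\Cc$ is itself in $\Cfull$ and satisfies the saturation property characterizing the \Fraisse\ limit of $\Cfull$; the uniqueness clause of Theorem~\ref{amalgamation theorem} then identifies the two limits. For $\Cc$, AC1 holds because strong embeddings are injective, and AC2 follows from finite character of $\delta$ (Lemma~\ref{delta properties}(1)) together with Lemma~\ref{composite of strong is strong}, which ensure that a directed union of a chain of strong embeddings is strong. The objects of $\Cc^{<\aleph_0} = \Cfg$ are finitely generated strong extensions of $\Fbase$, all essentially finitary since $\Fbase$ is; Corollary~\ref{0-stability over ess finitary} then gives AC3 and AC4. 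AC5 is Corollary~\ref{AP}, and AC6 (JEP) reduces to AC5 by amalgamating over the initial object $\Fbase$.

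For $\Cfull$, when $\Fbase$ is not already full its full closure $\Fbase^{\mathrm{full}}$ exists uniquely by Theorem~\ref{full extension} and serves as the initial object. The objects of $\Cfull^{<\aleph_0} = \Cfgfull$ are countable and essentially finitary, so Proposition~\ref{0-stability for full} supplies AC3 and AC4. AC5 comes from Proposition~\ref{AAP} (noting that the amalgam $A_1^{\mathrm{full}}$ built there is again finitely generated as a full extension, since $A_L$ and $A_R$ are), and AC6 again reduces to AC5 via the initial object; AC1 and AC2 are as before.

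Now let $M$ be the \Fraisse\ limit of $\Cc$; we show $M$ is full and is $\Cfgfull$-saturated viewed as an object of $\Cfull$. For fullness, fix any $A \in \Cfg$ with $A \strong M$. The existence part of Theorem~\ref{full extension} presents $A^{\mathrm{full}}$ as the union of a chain of finitely generated $\delta = 0$ extensions of $A$, each strong by Lemma~\ref{delta=0 implies strong} and hence a $\Cc^{<\aleph_0}$-arrow. Iterating $\Cc^{<\aleph_0}$-saturation of $M$ along this chain produces a strong embedding $A^{\mathrm{full}} \strongembed M$ extending the inclusion of $A$. Since $M$ is a union of such $A$, it is covered by full $\Gamma$-subfields and is therefore itself full. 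In particular $M$ strongly extends $\Fbase^{\mathrm{full}}$ and is a countable object of $\Cfull$, i.e.\ $M \in \Cfull^{\le\aleph_0}$.

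For $\Cfgfull$-saturation, let $A \strong B$ be a $\Cfgfull$-arrow with $A$ already strongly embedded in $M$. Choose a finite tuple $X \subs B$ generating $B$ over $A$ as a full extension, so $B = C^{\mathrm{full}}$ where $C = \gen{AX}_B$ is a finitely generated extension of $A$ in $\Cc$; in particular $A, C \in \Cc^{\le\aleph_0}$. A standard back-and-forth argument upgrading $\Cc^{<\aleph_0}$-saturation and the resulting $\Cc^{<\aleph_0}$-homogeneity of $M$ to $\Cc^{\le\aleph_0}$-saturation yields a strong embedding $C \strongembed M$ extending the given embedding of $A$. The uniqueness clause of Theorem~\ref{full extension}, applied to the essentially finitary $\Gamma$-field $C$, then extends this to a strong embedding $B = C^{\mathrm{full}} \strongembed M$ over $A$, as required. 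The main technical point is this promotion of saturation from $\Cc^{<\aleph_0}$ to $\Cc^{\le\aleph_0}$ while preserving the given embedding of the countable object $A$; this is standard once $\Cc$ is known to be an amalgamation category, but is where the countable-object nature of $\Cfgfull$ interacts non-trivially with the finitely-generated level at which saturation is initially stated.
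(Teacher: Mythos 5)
Your verification of axioms AC1--AC6 for $\Cc$ and $\Cfull$ matches the paper's proof (same lemmas: Corollary~\ref{0-stability over ess finitary}, Proposition~\ref{0-stability for full}, Proposition~\ref{AAP}, Corollary~\ref{AP}, with AC3 and AC6 reduced to AC4 and AC5 via the base object). The problem is in the identification of the two \Fraisse\ limits, which you run in the opposite direction from the paper, and the step you yourself flag as "the main technical point" is a genuine gap. The promotion of $\Cc^{<\aleph_0}$-saturation to $\Cc^{\le\aleph_0}$-saturation is \emph{not} a standard consequence of $\Cc$ being an amalgamation category: in general a \Fraisse\ limit is neither saturated nor homogeneous over countable (non-finitely-generated) subobjects — for the category of finite graphs the random graph already fails both. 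Concretely, to embed $C=\gen{A\,c}$ into $M$ over a \emph{fixed} strong embedding of the countable full $\Gamma$-field $A$, you must produce $\gamma\in\Gamma(M)^n$ with $\loc(\gamma/A)=\loc(c/A)$ and $\gen{A\gamma}\strong M$. Any single application of $\Cfg$-saturation only controls $\loc(\gamma/A_0)$ for a finitely generated $A_0\strong A$ over which $V$ is defined; the witness $\gamma$ it returns may be algebraically dependent on, or even lie inside, the rest of $A$ (e.g.\ $\gamma$ could coincide with a point of $\Gamma(A)$ generic over $A_0$). A back-and-forth using $\Cc^{<\aleph_0}$-homogeneity does not help either, because it would have to rebuild the embedding of $A$ rather than respect the given one; this is exactly the difficulty that homogeneity over countable closed sets (QM5) presents later in the paper.

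The statement you need at that point is precisely saturation over countable \emph{full} bases, and the only tool in the paper that delivers it is the asymmetric amalgamation of Proposition~\ref{AAP} fed into Theorem~\ref{amalgamation theorem} for $\Cfull$ — which you have verified but do not invoke here. The paper therefore argues in the other direction: let $M$ be the \Fraisse\ limit of $\Cfull$, so $\Cfgfull$-saturation is given outright; then $\Ccount$-universality of $M$ follows by passing from $A$ to $\Afull$ and restricting, and $\Cfg$-saturation follows from $\Cfgfull$-saturation together with Theorem~\ref{full extension} (replace a $\Cfg$-arrow $A\strongembed B$ by $\Afull\strongembed\Bfull$, using uniqueness of full closures of essentially finitary $\Gamma$-fields to realise $\Afull$ inside $M$ over $A$). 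These reductions are genuinely easy, unlike the upgrade your argument requires. Your observation that the limit of $\Cc$ is full (via iterating finite saturation along the $\delta=0$ chain building $\Afull$) is correct and is in effect the same computation the paper uses for universality, but the saturation half of your identification needs to be replaced by the paper's route.
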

\begin{proof}
Strong embeddings are injective functions, so monomorphisms. Hence AC1 holds. It is clear that the union of a chain of (full) $\Gamma$-fields is a (full) $\Gamma$-field, so AC2 holds. AC4 is given by Corollary~\ref{0-stability over ess finitary} for $\Cc$ and Proposition~\ref{0-stability for full} for $\Cfullfg$. The amalgamation property AC5 is proved in Proposition~\ref{AAP} and Corollary~\ref{AP}. Since every $\Gamma$-field in $\Cc$ is an extension of $\Fbase$, and every full $\Gamma$-field in $\Cfull$ is an extension of $(\Fbase)^\mathrm{full}$, properties AC3 and AC6 follow from AC4 and AC5 respectively.

Thus $\Cc$ and $\Cfull$ are both amalgamation categories. Let $M$ be the Fra\"iss\'e limit of $\Cfull$. If $A \in \Ccount$ then $\Afull \in \Cfullcount$, so as $M$ is $ \Cfullcount$-universal there is a strong embedding $\Afull \strong M$, which restricts to a strong embedding $A \strong M$. Hence $M$ is $\Ccount$-universal. 
Similarly, using Proposition~\ref{full extension} and the $\Cfullfg$-saturation of $M$ we can see that $M$ is also $\Cfg$-saturated. Hence $M$ is also the Fra\"iss\'e limit of $\Cfg$.
\end{proof}

\begin{notation}
We write $M(\Fbase)$ for the Fra\"iss\'e limit in $\Cc$.
\end{notation}

\subsection{$\Gamma$-algebraic extensions}

\begin{defn}
Let $A \strong B$ be a strong extension of $\Gamma$-fields. The extension is \emph{$\Gamma$-algebraic} if for all finite tuples $b$ from $\Gamma(B)$ there is a finite tuple $c \in \Gamma(B)$ containing $b$ such that $\delta(c/A) =0$.
\end{defn}
\begin{remark}\label{Galg predim}
From Lemma~\ref{dim and predim} we see that if $F$ is a full $\Gamma$-field such that $B \strong F$ then the extension $A \strong B$ is $\Gamma$-algebraic if and only if $B \subs \Gcl^F(A)$.
\end{remark}

Let $\Calg$ be the subcategory of $\Cc$ consisting of the $\Gamma$-algebraic extensions of $\Fbase$.
%, and let $\Calgfg = \Cfg \cap \Calg$.

\begin{prop}\label{Calg is amalg cat}
$\Calg$ is an amalgamation category.
\end{prop}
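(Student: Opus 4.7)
The plan is to verify the six amalgamation category axioms for $\Calg$. Since $\Calg$ sits as a full subcategory of $\Cc$, AC1 (all arrows mono) is immediate. For AC2, I want to show that the directed union of a chain of $\Gamma$-algebraic extensions of $\Fbase$ is itself $\Gamma$-algebraic: any finite tuple in the union already lies in some stage $A_n$, where $\Gamma$-algebraicity supplies a finite tuple $c \supseteq b$ in $\Gamma(A_n)$ with $\delta(c/\Fbase)=0$, and this remains a valid witness in the whole union.

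For AC3 and AC4, I will first identify $\Calg^{<\aleph_0}$ with the finitely generated objects of $\Calg$. The non-obvious direction is that an $\aleph_0$-small object is finitely generated: any $A \in \Calg$ is the directed union of the sub-$\Gamma$-fields $\gen{\Fbase,c}$ as $c$ ranges over finite tuples in $\Gamma(A)$ with $\delta(c/\Fbase)=0$, each of which is strong in $A$ by Lemma~\ref{delta=0 implies strong} and so lies in $\Calg^{<\aleph_0}$; $\aleph_0$-smallness of $A$ then forces it to equal one such. AC3 and AC4 now reduce to Corollary~\ref{0-stability over ess finitary}, since $\Calg^{<\aleph_0} \subs \Cc^{<\aleph_0}$ and a finitely generated extension of an essentially finitary $\Gamma$-field is itself essentially finitary. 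AC6 (JEP) will follow from AC5 by amalgamating over $\Fbase$ itself.

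The substance of the proof is AC5. Given $A_0, A_L, A_R \in \Calg^{<\aleph_0}$ with strong embeddings $A_0 \strong A_L$ and $A_0 \strong A_R$, I first apply Corollary~\ref{AP} to produce a finitely generated amalgam $A \in \Cfg^{<\aleph_0}$ and strong embeddings $A_L, A_R \strong A$ making the square commute. The hard part is to check that $A$ itself is $\Gamma$-algebraic over $\Fbase$. The composition of two $\Gamma$-algebraic extensions is $\Gamma$-algebraic---using Remark~\ref{Galg predim} and the idempotence $\Gcl^F(\Gcl^F(A_0)) = \Gcl^F(A_0)$ of the closure operator---so, combined with the hypothesis that $\Fbase \strong A_0$ is $\Gamma$-algebraic, it suffices to show that $A_0 \strong A$ is $\Gamma$-algebraic.

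For this, I take $F = \Afull$, which exists and satisfies $A \strong F$ by Theorem~\ref{full extension}. By composition $A_L, A_R \strong F$, so the characterization in Remark~\ref{Galg predim} applied to the given $\Gamma$-algebraic extensions yields $A_L, A_R \subs \Gcl^F(A_0)$. Since $\Gcl^F(A_0)$ is a full $\Gamma$-subfield of $F$, it contains the $\Gamma$-subfield generated by $A_L \cup A_R$, which is exactly $A$. Thus $A \subs \Gcl^F(A_0)$, and applying the same characterization in the reverse direction---available because $A \strong F$---yields $\Gamma$-algebraicity of $A_0 \strong A$, completing the verification.
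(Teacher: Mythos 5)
Your proof is correct and follows essentially the same route as the paper: the non-routine axiom is AC5, and both you and the paper obtain the amalgam from Corollary~\ref{AP} and then verify $\Gamma$-algebraicity by applying Remark~\ref{Galg predim} to place $A_L\cup A_R$, and hence the generated $\Gamma$-subfield $A$, inside $\Gcl(A_0)$. The only cosmetic differences are your choice of ambient full field ($\Afull$ rather than the $A'$ produced in Corollary~\ref{AP}) and your explicit transitivity step from $\Fbase$ through $A_0$, which the paper leaves implicit.
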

\begin{proof}
The proof of Theorem~\ref{amalg category theorem} goes through, except we also have to show that the amalgam of $\Gamma$-algebraic extensions is $\Gamma$-algebraic. So suppose we have the amalgamation square
  \[ \xymatrix{
      & A_L \hstrexar[dr] & \\
    A_0 \hstrar[ur] \lstrar[dr] &   & A \\
      & A_R \lstrexar[ur] &  \\
      }
      \]
as in Corollary~\ref{AP} with $A_L$ and $A_R$ both $\Gamma$-algebraic over $A_0$, $A'$ a full $\Gamma$-field and $A$ the $\Gamma$-subfield of $A'$ generated by $A_L \cup A_R$. Then by remark~\ref{Galg predim}, we have $A_L \cup A_R \subs \Gcl^{A'}(A_0)$ and so $A \subs \Gcl^{A'}(A_0)$, so $A_0 \strong A$ is $\Gamma$-algebraic.
\end{proof}

Write $M_0$ (or $M_0(\Fbase)$) for the \Fraisse\ limit of $\Calg$. 

\begin{defn}\label{alg-sat defn}
A $\Gamma$-field $F$ strongly extending $\Fbase$ is \emph{$\aleph_0$-saturated for $\Gamma$-algebraic extensions over $\Fbase$} if whenever $\Fbase \strong A \strong F$ with $A$ finitely generated over $\Fbase$ and $A \xrightarrow{\strong} B$ is a finitely generated $\Gamma$-algebraic extension then $B$ embeds (necessarily strongly) into $F$ over $A$.
\end{defn}

\begin{prop}\label{uniqueness of M_0}
%\begin{enumerate}
%\item 
$M_0(\Fbase)$ is the unique countable full $\Gamma$-field strongly extending $\Fbase$ which is $\Gamma$-algebraic over $\Fbase$ and $\aleph_0$-saturated for $\Gamma$-algebraic extensions.

%\item $M(\Fbase)$ is the unique countable full $\Gamma$-field strongly extending $\Fbase$ which is saturated for $\Gamma$-algebraic extensions and such that $\Gdim^M(M)$ is infinite.
%\end{enumerate}
\end{prop}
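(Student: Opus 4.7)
The plan is to verify that $M_0 := M_0(\Fbase)$ has the listed properties (existence) and then show any other candidate is isomorphic to it (uniqueness), both halves hinging on the $\Calg^{<\aleph_0}$-saturation of $M_0$ provided by Theorem~\ref{amalgamation theorem}.

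For existence, countability and $\Fbase \strong M_0$ are built into the Fra\"iss\'e construction, and $M_0$ is $\Gamma$-algebraic over $\Fbase$ since it is a countable union of finitely generated $\Gamma$-algebraic extensions. For the saturation condition of Definition~\ref{alg-sat defn}: given $\Fbase \strong A \strong M_0$ with $A$ finitely generated and a finitely generated $\Gamma$-algebraic extension $A \strong B$, the composite $\Fbase \strong B$ is again $\Gamma$-algebraic---computed in a common full hull using Remark~\ref{Galg predim}---so $A \strong B$ is an arrow in $\Calg^{<\aleph_0}$ and $\Calg^{<\aleph_0}$-saturation directly yields the required strong embedding $B \strongembed M_0$ over $A$.

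The subtle point is verifying that $M_0$ is full, which I would establish by applying $\Calg^{<\aleph_0}$-saturation to carefully chosen $\Gamma$-algebraic extensions. For surjectivity of $\pi_1$: given $a \in G_1(M_0)$, pick a finitely generated $A \strong M_0$ containing $a$; if $a \notin \pi_1(\Gamma(A))$, build $B$ by adjoining $(a,b)$ with $b$ generic in $G_2$ over $A$, taking $\Gamma(B)$ to be the pure divisible $\OO$-submodule generated. A direct computation gives $\td(B/A) = d$ and $\ldim_\kO(\Gamma(B)/\Gamma(A)) = 1$, hence $\delta(B/A) = 0$; since this is the only non-trivial intermediate extension we have $A \strong B$ directly, and kernels are preserved because $a \notin \pi_1(\Gamma(A))$ and $b \notin \pi_2(\Gamma(A))$, so $A \strong B$ is $\Gamma$-algebraic. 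Saturation embeds $B$ strongly into $M_0$ fixing $A$ pointwise, producing $b' \in G_2(M_0)$ with $(a,b') \in \Gamma(M_0)$; the argument for $\pi_2$ is symmetric. For algebraic closedness, given $a$ algebraic over some finitely generated $A \strong M_0$, I would embed $a$ as a coordinate of a point $\alpha \in G_1$ via a dominant projection to an affine coordinate (as in the proof of Lemma~\ref{changing sorts}) and run the same argument with $\alpha$; this forces some Galois conjugate of $a$ into $M_0$, which suffices.

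For uniqueness, let $N$ be another countable full $\Gamma$-field strongly extending $\Fbase$, $\Gamma$-algebraic over $\Fbase$ and $\aleph_0$-saturated for $\Gamma$-algebraic extensions, and build an isomorphism $M_0 \iso N$ over $\Fbase$ by back-and-forth. Enumerating both models, inductively construct finitely generated $A_n \strong M_0$ with strong $\Gamma$-field isomorphisms $\theta_n \colon A_n \to A_n' \strong N$ fixing $\Fbase$, starting with $A_0 = \Fbase$, $\theta_0 = \id$. At odd stages, to realise the next $m \in M_0$, set $A_{n+1} = \hull{A_n \cup \{m\}}_{M_0}$; since $M_0$ is $\Gamma$-algebraic over $\Fbase$ and $A_n \strong M_0$, this is a finitely generated $\Gamma$-algebraic extension of $A_n$, so saturation of $N$ applied to $\theta_n \colon A_n \strongembed N$ extends $\theta_n$ to $\theta_{n+1}$; even stages are symmetric. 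The union $\bigcup_n \theta_n$ is the desired isomorphism. The principal obstacle throughout is the fullness verification; the back-and-forth is routine once the saturation machinery is in place.
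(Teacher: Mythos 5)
Your proof is correct, and it is worth noting how it relates to the paper's, which consists of the single line ``Immediate from the uniqueness part of the amalgamation theorem and Proposition~\ref{Calg is amalg cat}.'' What you have written is that one-liner with the black box opened: your back-and-forth is exactly the proof of the uniqueness clause of Theorem~\ref{amalgamation theorem} specialised to $\Calg$, and your translation between ``$\aleph_0$-saturated for $\Gamma$-algebraic extensions'' and $\Calg^{<\aleph_0}$-saturation (via transitivity of $\Gamma$-algebraicity, Remark~\ref{Galg predim}) is bookkeeping the paper suppresses. The one place where you add genuine content is the verification that $M_0(\Fbase)$ is \emph{full}: this is not automatic from the \Fraisse\ construction, since the objects of $\Calg^{<\aleph_0}$ are finitely generated and hence never full, and the paper's proof does not address it. Your argument --- adjoining $(a,b)$ with $b$ generic to obtain a kernel-preserving extension with $\delta=0$, hence strong and $\Gamma$-algebraic, and then invoking saturation --- is the right one (it is the existence step of Theorem~\ref{full extension} fed back through saturation), and your observation that producing a single conjugate of each algebraic element suffices for algebraic closedness is sound, provided $A$ is taken large enough to contain the coefficients of the minimal polynomial of $a$ over $M_0$, so that its conjugates over $A$ and over $M_0$ coincide. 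Two cosmetic points: $\hull{A_n\cup\{m\}}_{M_0}$ should strictly be the hull of the $\Gamma$-subfield generated by $A_n$ and a finite tuple of $\Gamma$-points over which $m$ is rational, since the hull operator is defined on $\Gamma$-subfields rather than arbitrary field elements; and in the back-and-forth one should record that the maps $\theta_{n+1}$ supplied by saturation are automatically strong because $\delta(A_{n+1}/A_n)=0$ (Lemma~\ref{delta=0 implies strong}), so the induction hypothesis $A_n'\strong N$ is preserved. Neither affects correctness.
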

\begin{proof}
Immediate from the uniqueness part of the amalgamation theorem and Proposition~\ref{Calg is amalg cat}. 
%For 2 suppose $F$ is infinite dimensional and saturated for $\Gamma$-algebraic extensions. Let $A \strong F$ be finitely generated and let $A \xrightarrow{\strong} B$ be a finitely generated strong $\Gamma$-field extension, say with $\Gdim(B/A) = d$. Choose a $\Gcl$-basis $\bbar \in B^d$ for $B$ over $A$. 
%show it is $\Cfg$-saturated. \Note{to finish}
\end{proof}

\subsection{Purely $\Gamma$-transcendental extensions}\label{purely G-trans section}

In  contrast with $\Gamma$-algebraic extensions are those we will call purely $\Gamma$-transcendental extensions. We discuss amalgamation of these which gives rise to some variant constructions.
\begin{defn}
Let $A \strong B$ be a strong extension of $\Gamma$-fields. The extension is \emph{purely $\Gamma$-transcendental} if for all tuples $b$ from $\Gamma(B)$, either $\delta(b/A) > 0$ or $b \subs \Gamma(A)$.
\end{defn}

\begin{remark}
If $A \strong B$ is an extension of full $\Gamma$-fields then it is purely $\Gamma$-transcendental if and only if $A$ is $\Gamma$-closed in $B$.
\end{remark}

\begin{defn}
When $\Fbase$ is a full countable $\Gamma$-field, we define $\Ctrans(\Fbase)$ (usually abbreviated to $\Ctrans$) to be  the full subcategory of $\Cc$ consisting of the strong purely $\Gamma$-transcendental extensions of $\Fbase$.
\end{defn}

\begin{lemma}\label{full transc}
If $A \in \Ctrans$ then $\Afull \in \Ctrans$.
\end{lemma}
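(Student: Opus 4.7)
The plan is to invoke the Remark immediately preceding the lemma: since both $\Fbase$ and $\Afull$ are full $\Gamma$-fields, it suffices to show $\Fbase$ is $\Gamma$-closed in $\Afull$, i.e.\ every finitely generated $\Fbase \subs B \subs \Afull$ with $\delta(B/\Fbase)\le 0$ satisfies $B=\Fbase$. Recall from the existence proof of Theorem~\ref{full extension} that $\Afull = \bigcup_n A_n$, with $A_0 = A$ and each $A_{n+1}$ obtained by adjoining a single $\Gamma$-point $\alpha_n$ whose one coordinate is algebraic over $A_n$ and whose other coordinate is generic, so that $\delta(\alpha_n/A_n)=0$, $\td(A_{n+1}/A_n)=d$, and $A_n\strong A_{n+1}$. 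By Lemma~\ref{closure as union} any witness $B$ lies in some $A_n$, so I would prove by induction on $n$ that $\Fbase$ is $\Gamma$-closed in $A_n$; the base $n=0$ is the hypothesis $A \in \Ctrans$.

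For the inductive step, assume $\Fbase$ is $\Gamma$-closed in $A_n$ and take $\Fbase \subs B \subs A_{n+1}$ finitely generated with $\delta(B/\Fbase)=0$. Decompose via the addition formula
\[ \delta(B/\Fbase) \;=\; \delta(B/B\wedge A_n) \;+\; \delta(B\wedge A_n/\Fbase). \]
Submodularity together with $A_n\strong A_{n+1}$ makes the first summand $\ge \delta(BA_n/A_n)\ge 0$, and the chain $\Fbase\strong A\strong A_n$ makes the second $\ge 0$; both therefore vanish. The IH applied to $\delta(B\wedge A_n/\Fbase)=0$ forces $B\wedge A_n = \Fbase$, so $\Gamma(B)\cap\Gamma(A_n) = \Gamma(\Fbase)$. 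Writing $k = \ldim_\kO(\Gamma(B)/\Gamma(\Fbase))$, modularity gives $\ldim_\kO(\Gamma(BA_n)/\Gamma(A_n))=k$; combined with $\delta(BA_n/A_n)=0$ and $\delta(B/\Fbase)=0$ this yields $\td(B/\Fbase)=\td(BA_n/A_n)=dk$, so $B$ is $\ACF$-independent from $A_n$ over $\Fbase$. Since $\td(A_{n+1}/A_n)=d$, this forces $dk=\td(B/A_n)\le d$, hence $k\in\{0,1\}$; the case $k=0$ gives $B=\Fbase$ directly.

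The core work is ruling out $k=1$. Let $\gamma \in \Gamma(B)$ represent a basis element of $\Gamma(B)/\Gamma(\Fbase)$; since $\Gamma(A_{n+1})/\Gamma(A_n)$ has $\kO$-rank one, spanned by (the image of) $\alpha_n$, there are $p\in\OO$, $r\in\N^+$ and $c\in\Gamma(A_n)$ with $r\gamma = p\alpha_n + c$. Taking $\pi_1$ and assuming without loss of generality that $\pi_1(\alpha_n)$ is the coordinate algebraic over $A_n$, one deduces $\pi_1(r\gamma)\in A_n^{\mathrm{alg}}$, whence $\pi_1(\gamma)\in A_n^{\mathrm{alg}}$ (since $[r]\colon G_1\to G_1$ is a finite morphism). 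Independence of $B$ from $A_n$ over $\Fbase$ then forces $\pi_1(\gamma)\in\Fbase^{\mathrm{alg}}=\Fbase$, as $\Fbase$ is algebraically closed. By fullness of $\Fbase$ there exists $\gamma'\in\Gamma(\Fbase)$ with $\pi_1(\gamma')=\pi_1(\gamma)$; then $\gamma-\gamma'$ has vanishing first coordinate, so by definition of $\ker_2$ and kernel-preservation $\gamma - \gamma' \in \Gamma(\Fbase)$, forcing $\gamma\in\Gamma(\Fbase)$ and contradicting $\gamma\notin\Gamma(A_n)\supseteq\Gamma(\Fbase)$.

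The main obstacle is this last paragraph: the $\OO$-module relation on the $\Gamma$-side has to be translated into a field-theoretic algebraicity statement on coordinates, where one must simultaneously use (i) that divisions and endomorphisms of $G_1$ are finite maps, (ii) the field-theoretic independence established from the predimension bookkeeping, (iii) fullness of $\Fbase$ to invert $\pi_1$ on $\Gamma(\Fbase)$, and (iv) kernel-preservation to collapse the resulting correction term into $\Gamma(\Fbase)$. Once this is done cleanly, both genuinely inequivalent cases (whichever of $\pi_1(\alpha_n),\pi_2(\alpha_n)$ is algebraic over $A_n$) are handled symmetrically by renaming coordinates and kernels.
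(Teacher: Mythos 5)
Your proof is correct and follows essentially the same strategy as the paper's: induct along the tower building $\Afull$, and at each successor step use the fact that the newly adjoined $\Gamma$-point has one coordinate algebraic (so that a $\delta\le 0$ configuration over $\Fbase$ forces that coordinate into the algebraically closed field $\Fbase$, whence fullness of $\Fbase$ and kernel preservation collapse the whole point into $\Gamma(\Fbase)$, a contradiction). The paper compresses your explicit induction, addition-formula bookkeeping, and the reduction to the rank-one case into the phrase ``iterating appropriately''; the only cosmetic point is that for uncountable $A$ the tower is transfinite rather than an $\omega$-chain, but your successor-step argument together with finite character at limit stages handles that without change.
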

\begin{proof}
Consider the case when $(a_1,a_2) \in \Gamma(\Afull)\minus \Gamma(A)$ with $a_1 \in G_1(\Afull)$ algebraic over $A$. Since $A \strong \Afull$ we have $\td(a_2/A) = d$. If $\delta((a_1,a_2)/\Fbase) \le 0$ then $\td(a_1,a_2/\Fbase) \le d$, which implies that $\td(a_1/\Fbase) = 0$. Since $\Fbase$ is full, that implies $(a_1,a_2) \in \Gamma(\Fbase)$, a contradiction.

Replacing $A$ by the $\Gamma$-subfield of $\Afull$ generated by $A \cup \{(a_1,a_2)\}$ and iterating appropriately, we see that $\Afull \in \Ctrans$.
\end{proof}

We will show that $\Ctrans$ is an amalgamation category by showing that the free amalgam of purely $\Gamma$-transcendental extensions is purely $\Gamma$-transcendental, using a lemma on stable groups.

\begin{lemma}\label{Ziegler lemma}
Let $H$ be a commutative algebraic group defined over an algebraically closed field $C$. Suppose $a_1,a_2,a_3 \in H$ are pairwise algebraically independent over $C$ and $a_1+a_2+a_3 = 0$. Then there is a connected algebraic subgroup $U$ of $H$ and cosets $c_i + U$ defined over $C$ such that $a_i$ is a generic point of $c_i + U$ over $C$, for each $i=1, 2, 3$. In particular, $\td(a_i/C) = \dim U$ for each $i$.
\end{lemma}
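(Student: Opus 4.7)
The plan is to realize each locus $V_i := \loc(a_i/C) \subseteq H$ as a coset of a common connected algebraic subgroup $U$, via a stabilizer argument. Each $V_i$ is a geometrically irreducible $C$-subvariety of $H$ of dimension $n := \td(a_i/C)$ (common to all three by pairwise algebraic independence), and since $C$ is algebraically closed, $V_i(C) \neq \emptyset$ by Nullstellensatz, so I fix $c_i \in V_i(C)$.

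The first main step is to show that $V_3 = -a_1 - V_2$ for any generic $a_1 \in V_1$ over $C$: indeed $a_2$ remains generic in $V_2$ over $Ca_1$ by the independence of $a_1$ and $a_2$, so $\loc(a_3/Ca_1) = -a_1 - V_2$ is irreducible of dimension $n$ and contained in the irreducible $V_3$, forcing equality. Taking two generic independent $a_1, a_1' \in V_1$ and comparing $-a_1 - V_2 = V_3 = -a_1' - V_2$ gives $a_1 - a_1' \in \operatorname{Stab}(V_2)$, and by Zariski closure, $V_1 - V_1 \subseteq \operatorname{Stab}(V_2)$. The same argument yields surjective morphisms $V_i \times V_j \twoheadrightarrow V_k$, $(x,y) \mapsto -x-y$, for each cyclic permutation; in particular $c_3 := -c_1 - c_2 \in V_3(C)$.

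Next I translate: set $V_i' := V_i - c_i$, so $0 \in V_i'(C)$ and the translated triple $(a_i - c_i)$ still sums to zero with pairwise algebraically independent entries. Since $0 \in V_2'$ one has $\operatorname{Stab}(V_2') \subseteq V_2'$, and combining with $V_1' - V_1' \subseteq \operatorname{Stab}(V_2')$ gives $V_1' \subseteq V_1' - V_1' \subseteq V_2'$. Running the argument cyclically yields $V_1' = V_2' = V_3' =: U$. Then $U - U \subseteq U$ and $0 \in U$, so the subvariety $U$ is closed under subtraction and hence is an algebraic subgroup of $H$, defined over $C$; since $V_i'$ is irreducible, $U$ is connected.

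Translating back, $V_i = c_i + U$ with $c_i \in V_i(C)$, and $a_i$ is by construction generic in the $C$-coset $c_i + U$, so $\td(a_i/C) = \dim V_i = \dim U$. The principal technical step is the stabilizer inclusion $V_1 - V_1 \subseteq \operatorname{Stab}(V_2)$ together with the observation that stabilizers sit inside a variety once it contains the origin; the remainder is dimension bookkeeping. I do not expect any serious obstacle, as the argument is purely geometric once one has passed to the loci.
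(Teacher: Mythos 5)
Your argument is correct, but it takes a genuinely different route from the paper, whose entire ``proof'' of this lemma is a citation: the statement is the algebraic-group instance of Ziegler's theorem on generic types in stable groups \cite[Theorem~1]{Ziegler06}, proved there with model-theoretic stabilizers of generic types. What you have done is specialize that stabilizer idea to the concrete setting and carry it out with elementary algebraic geometry: the identity $-x - V_2 = V_3$ for generic $x \in V_1$, the resulting inclusion $V_1 - V_1 \subseteq \operatorname{Stab}(V_2)$, the normalization by $C$-points so that $0 \in V_i'$ forces $\operatorname{Stab}(V_2') \subseteq V_2'$ and hence $V_1' \subseteq V_2'$ cyclically, and the observation that the common translate $U$, being closed, irreducible, containing $0$ and satisfying $U - U \subseteq U$, is a connected algebraic subgroup over $C$. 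All of these steps are sound. Your approach buys a self-contained proof requiring no stability theory; the paper's citation buys brevity and the greater generality of Ziegler's statement. Two small points to make explicit in a final write-up: (i) the passage from the particular $a_1$ of the hypothesis to ``any generic $a_1 \in V_1$'' needs the remark that $\{x : -x - V_2 = V_3\}$ is a Zariski-closed set defined over $C$ (a coset of $\operatorname{Stab}(V_2)$), so containing one generic point of $V_1$ it contains all of $V_1$ --- this is what your ``by Zariski closure'' is implicitly doing, and it is also what makes the map $V_1 \times V_2 \to V_3$ surjective and $-c_1 - c_2 \in V_3(C)$; (ii) the equality of the three transcendence degrees uses the relation $a_1 + a_2 + a_3 = 0$ (which makes $a_2$ and $a_3$ interalgebraic over $Ca_1$) together with pairwise independence, not pairwise independence alone.
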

\begin{proof}
This is the special case for algebraic groups of a result about stable groups due to Ziegler \cite[Theorem~1]{Ziegler06}.
\end{proof}

\begin{theorem}\label{Gammatr is amalg cat}
If $\Fbase$ is a full countable $\Gamma$-field then $\Ctrans$ and $\Ctransfull$ are amalgamation categories.
\end{theorem}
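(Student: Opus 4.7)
The plan is to verify the amalgamation-category axioms AC1--AC6 for $\Ctrans$ (and, analogously, for $\Ctransfull$). Axioms AC1--AC4 and AC6 are inherited from the parent categories $\Cc$ and $\Cfull$ treated in Theorem~\ref{amalg category theorem}: arrows remain monomorphisms (AC1); a directed union of purely $\Gamma$-transcendental strong extensions of $\Fbase$ is again purely $\Gamma$-transcendental by finite character of $\delta$, giving AC2; the countability axioms AC3 and AC4 follow from Corollary~\ref{0-stability over ess finitary} and Proposition~\ref{0-stability for full}; and AC6 (JEP) is the special case of AC5 with base $\Fbase$. The substance is therefore entirely in AC5.

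For AC5 I would re-use the free-amalgam construction of Proposition~\ref{AAP}. Given $A_0 \strong A_L$ and $A_0 \strong A_R$ in $\Ctrans$ with $A_0=\Fbase$ full countable, let $A_1$ be the field compositum in which $A_L$ and $A_R$ are $\ACF$-free over $A_0$, set $\Gamma(A_1) := \Gamma(A_L)+\Gamma(A_R)$, and take $A := A_1^{\mathrm{full}}$. Proposition~\ref{AAP} gives $A_L \strong A_1$ and, because both $A_0 \strong A_L$ and $A_0 \strong A_R$ are strong, also $A_R \strong A_1$, hence $A_0 \strong A_1$. By Lemma~\ref{full transc} it then suffices to show that $A_0 \strong A_1$ is purely $\Gamma$-transcendental. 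A useful first observation is that any strong extension $A_0 \strong A_L$ forces $\Gamma(A_L)\cap G(A_0)=\Gamma(A_0)$ (a witness in the difference would contribute $\delta\le -d<0$); hence $\Gamma(A_L)\cap\Gamma(A_R)=\Gamma(A_0)$, giving a direct-sum decomposition $\Gamma(A_1)/\Gamma(A_0) = \Gamma(A_L)/\Gamma(A_0) \oplus \Gamma(A_R)/\Gamma(A_0)$ and a unique decomposition $b \equiv b_L + b_R \pmod{\Gamma(A_0)}$ for every $b\in\Gamma(A_1)^n$, with $b_L\in\Gamma(A_L)^n$ and $b_R\in\Gamma(A_R)^n$.

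Suppose $b\in\Gamma(A_1)^n$ has $\delta(b/A_0)\le 0$; the goal is $b\subset\Gamma(A_0)^n$. The $\ACF$-freeness of $A_L$ and $A_R$ over $A_0$ together with the above direct-sum decomposition yields $\delta(\gen{A_0\, b_L\, b_R}/A_0) = \delta(b_L/A_0) + \delta(b_R/A_0)$, each summand a non-negative integer which is $\ge 1$ as soon as the corresponding tuple is not contained in $\Gamma(A_0)^n$ (by the purely $\Gamma$-transcendental hypothesis and integrality of $\delta$). The obstacle is to propagate these strict inequalities down to $\gen{A_0\, b}$: cancellation in $b = b_L + b_R$ could in principle make $\td(b/A_0)$ much smaller than $\td(b_L, b_R / A_0)$, and submodularity alone only gives upper bounds in the wrong direction. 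This is the main difficulty and is exactly what Lemma~\ref{Ziegler lemma} is designed to address. Applied over the algebraically closed base $A_0$ to the relation $b_L + b_R - b = 0$ in $G^n$, together with Lemma~\ref{subgroupsEndomorphisms} identifying connected algebraic subgroups of $G^n$ with $\OO$-linear loci, it forces the loci of $b_L$, $b_R$ and $b$ to be parallel cosets of a common connected algebraic subgroup $U\le G^n$ whenever the triple is pairwise algebraically independent over $A_0$, and otherwise exhibits an algebraic dependence which, via the direct-sum structure, drops a component of $b$ into $\Gamma(A_0)$. In either branch one reduces $n$ and iterates, concluding $b\subset\Gamma(A_0)^n$ and so establishing AC5 for $\Ctrans$. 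The same argument goes through verbatim for $\Ctransfull$, since $A = A_1^{\mathrm{full}}$ is already a full object of the category.
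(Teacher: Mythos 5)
Your overall strategy --- free field amalgam with $\Gamma(A_1)=\Gamma(A_L)+\Gamma(A_R)$, reduction to the full case via Lemma~\ref{full transc}, and Lemma~\ref{Ziegler lemma} to control the cancellation in $b=b_L+b_R$ --- is the paper's, and you have correctly identified the cancellation problem as the crux. But there is a genuine gap: you only amalgamate over $A_0=\Fbase$. That establishes the joint embedding property AC6, not the amalgamation property AC5, which requires completing a cospan over an \emph{arbitrary} finitely generated object $A_0$ of $\Ctrans$ (resp.\ $\Ctransfull$). The general case does not reduce to yours, because objects of $\Ctrans$ are purely $\Gamma$-transcendental over $\Fbase$, not over $A_0$; so your key step ``$\delta(b_L/A_0)\ge 1$ as soon as $b_L\not\subseteq\Gamma(A_0)^n$, by the purely $\Gamma$-transcendental hypothesis'' is unavailable when $A_0$ properly contains $\Fbase$ ($A_L$ may well contain tuples that are $\Gamma$-algebraic over $A_0$). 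The paper handles general $A_0$ by showing the amalgam is purely $\Gamma$-transcendental \emph{over $\Fbase$}: it first disposes of the case $B\wedge A_L\ne\Fbase$ or $B\wedge A_R\ne\Fbase$ by submodularity, and in the remaining case runs the transcendence computations over $A_0$ while deriving the final contradiction over $\Fbase$ (the relevant coset is shown to be defined over $\Fbase$ because $\td(b/\Fbase)=\td(b/A_0)=\dim U$ and $\Fbase$ is algebraically closed).

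Separately, the endgame of your Ziegler step is too vague to check. Under the contradiction hypothesis $\delta(B/\Fbase)\le 0$ one first forces $\td(b/A_0)=\td(b_L/A_0)=\td(b_R/A_0)=dn$ and pairwise algebraic independence of $b,b_L,b_R$ over $A_0$ (so your ``otherwise'' branch never arises), and then Lemma~\ref{Ziegler lemma} gives a connected $U\le G^n$ of dimension $dn$ with $b$ generic in an $\Fbase$-coset of $U$. The contradiction is not obtained by ``reducing $n$ and iterating'' but from the structure of algebraic subgroups of $G^n=(G_1\times G_2)^n$ (with $G_1$, $G_2$ non-isogenous and $G_2$ simple): writing $U=U_1\times U_2$, either $U_2=G_2^n$, forcing $U_1$ trivial and hence $b\in\Gamma(A_0)^n$ by fullness, contradicting $\td(b/A_0)=dn$; or $U_2$ is proper, yielding a nonzero $\OO$-linear relation on $\pi_2(b)$ over $\Fbase$ and hence, by fullness of $\Fbase$, a nonzero $\kO$-combination of $b$ in $\Gamma(\Fbase)$, contradicting $b$ being a basis. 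You need to supply this (or an equivalent) closing argument explicitly; as written, the claim that an algebraic dependence ``drops a component of $b$ into $\Gamma(A_0)$'' is not justified.
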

\begin{notation}
We write $\Mtr(\Fbase)$ for the Fra\"iss\'e limit in $\Ctrans(\Fbase)$.
\end{notation}
\begin{proof}[Proof of Theorem~\ref{Gammatr is amalg cat}]
Axioms AC1, AC3 and AC4 follow immediately from the fact that $\Ctrans$ and $\Ctransfull$ are full subcategories of $\Cc$. AC2 and AC6 are also immediate. It remains to prove AC5, the amalgamation property. 

Using Lemma~\ref{full transc}, the same argument as for Corollary~\ref{AP} allows us to reduce the amalgamation property for $\Ctrans$ to the amalgamation property for $\Ctransfull$. So suppose we have full $\Gamma$-fields
\[ \xymatrix{
    A_L & & A_R \\
    & A_0 \hstrarl[ul] \hstrar[ur] \\
    & \Fbase \hstrar[u] } \]
%    \begin{diagram}[height=1.5em,width=2em]
%       A_L &&&&A_R\\
%      &\luStrong&& \ruStrong\\
%      &&A_0\\
%      && \uStrong\\
%      &&\Fbase
%    \end{diagram}
with $A_0$, $A_L$ and $A_R$ all purely $\Gamma$-transcendental extensions of $\Fbase$. Let $A_1$ be the free amalgam of $A_L$ and $A_R$ over $A_0$ as in the proof of Proposition~\ref{AAP}. We must show that $A_1$ is a purely $\Gamma$-transcendental extension of $\Fbase$.

So let $B$ be a $\Gamma$-subfield of $A_1$ properly containing $\Fbase$ and finitely generated over it. It remains to show that $\delta(B/\Fbase) \ge 1$. If $B \wedge A_R \neq \Fbase$ then we have 
\begin{eqnarray*}
\delta(B/\Fbase) & = & \delta(B/B\wedge A_R) + \delta(B\wedge A_R/\Fbase) \\
& \ge & \delta(B A_R / A_R) + \delta(B\wedge A_R/\Fbase) \qquad \text{ by submodularity} \\
& \ge & 0 + 1 = 1,
\end{eqnarray*}
the last line because $A_R \strong A_1$ and $A_R$ is purely $\Gamma$-transcendental over $\Fbase$. So in this case we are done, and similarly if $B \wedge A_L \neq \Fbase$.

So we may assume that $B \wedge A_R = B \wedge A_L = \Fbase$. Choose a basis $b = (b^1,\ldots,b^n)$ of  $B$ over $\Fbase$. For each $i$, choose $b^i_L \in \Gamma(A_L)$ and $b^i_R \in \Gamma(A_R)$ such that $b^i = b^i_L + b^i_R$. Let $B_L$ and $B_R$ be the $\Gamma$-field extensions of $\Fbase$ generated by $b_L \leteq (b^1_L,\ldots,b^n_L)$, and $b_R \leteq (b^1_R,\ldots,b^n_R)$ respectively.

We claim that $B_L \wedge A_R = \Fbase$ and that $A_L \wedge B_R = \Fbase$. To see this, suppose that $v \in \Gamma(B_L \wedge A_R) = \Gamma(B_L) \cap \Gamma(A_R)$. Since $v \in \Gamma(B_L)$ there are $s_i \in \kO$ and some $a \in \Gamma(\Fbase)$ such that $v = \sum_{i=1}^n s_i b^i_L + a$. Let $u_L = v-a = \sum_{i=1}^n s_i b^i_L$, let $u_R = \sum_{i=1}^n s_i b^i_R$, and let $u = u_L + u_R = \sum_{i=1}^n s_i b^i$. Then $v, a \in \Gamma(A_R)$, so $u_L \in \Gamma(A_R)$, and also $u_R \in \Gamma(A_R)$, hence  $u \in \Gamma(A_R)$. But $u \in \Gamma(B)$ and we have $B\wedge A_R = \Fbase$. So $u \in \Gamma(\Fbase)$ and so each $s_i = 0$, and thus $v \in \Gamma(\Fbase)$. So $B_L \wedge A_R = \Fbase$. The same argument shows that $A_L \wedge B_R = \Fbase$, and in particular $B_L \wedge B_R = \Fbase$.

Let $C$ be the $\Gamma$-subfield of $A_1$ generated by $B \cup B_L$, and note that it is also generated by $B_L \cup B_R$. We have $B \wedge B_L = \Fbase = B_L \wedge B_R$, so applying modularity of linear dimension to the squares
\[ \xymatrix{
    & C & & & & C\\
    B \hstrar[ur] & & B_L \hstrarl[ul] & \text{and} & B_L \hstrar[ur] & & B_R \hstrarl[ul] \\
    & \Fbase \hstrarl[ul] \hstrar[ur] & & & & \Fbase \hstrarl[ul] \hstrar[ur] } \]
%   \begin{diagram}[height=2em,width=2em]
%      &&C && &&&& && C  \\
%      &\ruStrong&&\luStrong  & &&&&       &\ruStrong&&\luStrong  \\
%      B &&&&B_L && \text{and}&&  B_L &&&& B_R\\
%      &\luStrong&& \ruStrong & &&&&  &\luStrong&& \ruStrong \\
%      &&\Fbase && &&&& && \Fbase
%    \end{diagram}
we get 
\[\ldim_\kO(\Gamma(B_R)/\Gamma(\Fbase)) = \ldim_\kO(\Gamma(C)/\Gamma(B_L)) = \ldim_\kO(\Gamma(B)/\Gamma(\Fbase)) = n,\]
and so $b_R$ is $\kO$-linearly independent over $\Gamma(\Fbase)$, and hence, since $B_R \wedge A_0 = \Fbase$, over $\Gamma(A_0)$.
We have 
\[\td(b/\Fbase) \ge \td(b/A_0) \ge \td(b/A_0 b_L) = \td(b_R/A_0 b_L)   =  \td(b_R/A_0) \ge  dn\]
with the last three (in)equalities holding because $b=b_L + b_R$, $b_R$ is algebraically independent from $b_L$ over $A_0$, and because $A_0 \strong A_1$ and $b_R$ is $\kO$-linearly independent over $\Gamma(A_0)$. Similarly $\td(b/A_0) \ge \td(b_L/A_0) \ge dn$.

Suppose for a contradiction that $\delta(B/\Fbase) \le 0$. Then we must have 
\[\td(b/\Fbase) = \td(b/A_0) = \td(b_L/A_0) = \td(b_R/A_0) = dn,\]
and then we also have
\[\td(b,b_L/A_0) = \td(b,b_R/A_0)=\td(b_L,b_R/A_0) = \td(b,b_L,b_R/A_0) = 2dn\]
so $b,b_L,b_R$ are pairwise algebraically independent over $A_0$.

We apply Lemma~\ref{Ziegler lemma} with $H = G^n = G_1^n \cross G_2^n$, $a_1 = -b$, $a_2 = b_L$ and $a_3  = b_R$ to get a connected algebraic subgroup $U$ of $G^n$ of dimension $dn$ such that $b$ is in an $A_0$-coset of $U$. Since $\td(b/\Fbase) = \td(b/A_0) = \dim U$, and $\Fbase$ is an algebraically closed field, the coset is actually defined over $\Fbase$.

$G_1$ and $G_2$ are non-isogenous and so $U$ is of the form $U_1 \cross U_2$ where each $U_i$ is a connected subgroup of $G_i^n$. Since $\dim U = dn$, if $U_2 = G_2^n$ then $U_1$ is the trivial subgroup of $G_1^n$, so $\pi_1(b) \in G_1^n(A_0)$. But $A_0$ is a full $\Gamma$-field and so $b \in \Gamma(A_0)^n$ which contradicts $\td(b/A_0) = dn$ (and $n > 0$).  So $U_2$ must be a proper subgroup of $G_2^n$. Since $G_2$ is simple, it follows that $\pi_2(b)$ satisfies an $\OO$-linear equation $\sum_{i=1}^n s_i\pi_2(b^i) = c$ with $c \in G_2(\Fbase)$. Then, since $b \in \Gamma(B)^n$ and $\Fbase$ is a full $\Gamma$-field we have $\sum_{i=1}^n s_i b^i \in \Gamma(\Fbase)$, which contradicts $b$ being a basis for $B$ over $\Fbase$.

So we have $\delta(B/\Fbase) \ge 1$, and thus $A_1$ is a purely $\Gamma$-transcendental extension of $\Fbase$, as required.
\end{proof}

%Categoricity section
% !TEX root = PEM2_1.tex

\section{Categoricity}\label{categoricity section}

\subsection{Quasiminimal pregeometry structures}

This definition of quasiminimal pregeometry structures comes from \cite{BHHKK14}.
\begin{defn}
  Let $M$ be an $L$-structure for a countable language $L$, equipped with a
  pregeometry $\cl$ (or $\cl_M$ if it is necessary to specify $M$). Write $\qftp$ for the quantifier-free $L$-type. We say that
  $M$ is a \emph{quasiminimal pregeometry structure} if the following hold:
  \begin{enumerate}[QM1.]
    \item The pregeometry is determined by the language. That is, if $a$, $a'$ are singletons, $b$, $b'$ are tuples,
      $\qftp(a,b) = \qftp(a',b')$, and $a \in \cl(b)$ then $a'
      \in \cl(b')$.
    \item $M$ is infinite-dimensional with respect to $\cl$.
    \item (Countable closure property) If $A \subs M$ is finite then $\cl(A)$
      is countable.
    \item (Uniqueness of the generic type) Suppose that $C, C' \subs M$ are
      countable closed subsets, enumerated such that $\qftp(C)=\qftp(C')$. If $a
      \in M \minus C$ and $a' \in M \minus C'$ then  $\qftp(C,a) = \qftp(C',a')$
      (with respect to the same enumerations for $C$ and $C'$).
    \item ($\aleph_0$-homogeneity over closed sets and the empty set)
      \ \\ Let $C, C' \subs M$ be countable closed subsets or empty,
      enumerated such that $\qftp(C)=\qftp(C')$, and let $b,b'$ be finite
      tuples from $M$ such that $\qftp(C,b) = \qftp(C',b')$, and let $a
      \in \cl(C,b)$. Then there is $a'\in M$ such that $\qftp(C,b,a) =
      \qftp(C',b', a')$.
  \end{enumerate}
  We say $M$ is a \emph{weakly quasiminimal pregeometry structure} if it
  satisfies all the axioms except possibly QM2.
\end{defn}

\begin{defn}
Given $M_1$ and $M_2$ both weakly quasiminimal pregeometry $L$-structures, we
say that an $L$-embedding $\theta: M_1 \into M_2$ is a \emph{closed embedding}
if for each $A \subs M_1$ we have $\theta(\cl_{M_1}(A))=\cl_{M_2}(\theta(A))$. In particular, $\theta(M_1)$ is closed in $M_2$ with respect to $\cl_{M_2}$. We write $M_1 \closed M_2$ for a closed embedding.
\end{defn}

\begin{defn}
Given a quasiminimal pregeometry structure $M$, let $\K(M)$ be the smallest class of $L$-structures which
contains $M$ and all its closed substructures and is closed under isomorphism and under taking unions of directed systems of closed embeddings. We call any class of the form $\K(M)$  a \emph{quasiminimal class}.
\end{defn}

The purpose of these definitions is the categoricity theorem, which is Theorem~2.3 in \cite{BHHKK14}.
\begin{fact}\label{cat theorem}
  If $\K$ is a quasiminimal class then every structure $A \in \K$ is a weakly
  quasiminimal pregeometry structure, and up to isomorphism there is exactly one structure in $\K$ of each
  cardinal dimension. In particular, $\K$ is uncountably categorical.
  Furthermore, $\K$ is the class of models of an $\Looq$ sentence.
\end{fact}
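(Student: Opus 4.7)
The plan is to prove the three assertions---weak quasiminimality of members, categoricity in each cardinal dimension, and $\Looq$-axiomatisability---in that order, each building on the previous.

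First I would verify that every $A \in \K(M)$ satisfies QM1, QM3, QM4 and QM5 (possibly failing QM2, which is why the conclusion is \emph{weak} quasiminimal pregeometry). Since $A$ is either a closed substructure of $M$ or a directed union of such along closed embeddings, its pregeometry is inherited coherently from $M$ by the very definition of closed embedding. QM1 (determination of the pregeometry by the language) and QM3 (countable closure) then transfer immediately, and QM4 and QM5 transfer because the quantifier-free types realised in a closed substructure agree with those realised in $M$ on the same tuples. Preservation under directed unions of closed embeddings follows because each finite tuple, and hence its finitely generated closure, already lives in some stage of the chain.

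Second, for categoricity I would run a back-and-forth. Given $N_1,N_2 \in \K(M)$ of the same dimension $\kappa$, fix $\cl$-bases $B_i \subs N_i$ of cardinality $\kappa$. By QM1 and QM4 any bijection $B_1 \to B_2$ preserves quantifier-free types, so extends to an isomorphism of closures of initial segments. I would then enumerate $N_1$ and $N_2$ of length $\kappa + \aleph_0$ and inductively extend the partial isomorphism: at a successor stage, either the next element is in the closure of the current domain and QM5 supplies a matching image, or it is generic and QM4 does. The main technical point, and the one I expect to be the chief obstacle in this stage, is to maintain at limit stages that the image of the partial isomorphism remains a closed substructure so that QM5 can be reapplied; this is exactly what closure of $\K$ under directed unions of closed embeddings guarantees.

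Third, for $\Looq$-axiomatisation I would take: (i) the countably many quantifier-free types realised in countable closed substructures of $M$, encoded as an $\Loo$-disjunction saying every finite tuple realises one of them; (ii) sentences expressing QM1 in terms of these types; (iii) the countable closure property as a schema $\forall \bar b\, \neg Q x\, \phi(x;\bar b)$ for each formula $\phi$ defining closure over $\bar b$; and (iv) conditional sentences encoding QM4 and QM5. The historical obstacle here was \emph{excellence} (existence and uniqueness of prime models over independent $n$-dimensional systems of countable closed sets), which was previously assumed as an extra hypothesis in order to apply Shelah's machinery. The decisive step of \cite{BHHKK14}, which I expect to be the hardest part of the argument, is to show that excellence follows from QM1--QM5 alone; once this is established, the standard framework yields the $\Looq$-sentence whose models are exactly $\K$.
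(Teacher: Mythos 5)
This statement is quoted in the paper as a \emph{Fact}: the paper gives no proof at all, but cites it as Theorem~2.3 of \cite{BHHKK14} (together with the earlier machinery of \cite{Zilber05qmec} and \cite{OQMEC}). So there is no in-paper argument to compare against; what follows is a review of your sketch on its own terms.

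Your first and third steps are broadly in line with how the cited papers proceed. The genuine gap is in your second step. The back-and-forth you describe, with the domain of the partial isomorphism kept closed at each stage, only works while that domain is \emph{countable}, because QM5 is $\aleph_0$-homogeneity over countable closed sets (or the empty set) and QM4 likewise only speaks of countable closed $C$. Enumerating models of dimension $\kappa$ in length $\kappa+\aleph_0$ and extending element by element therefore gets you categoricity in dimension at most $\aleph_1$; at stage $\omega_1$ the domain becomes uncountable and neither axiom can be reapplied. The obstacle is not, as you say, ``maintaining at limit stages that the image remains closed'' --- unions of chains of closed sets are closed by finite character, so that part is harmless. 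The obstacle is precisely the one that forces the detour through excellence: to compare two models of large dimension one writes each as the direct limit of the closures $\cl(B_0)$ of finite subsets $B_0$ of a basis, and one needs uniqueness of prime models over the resulting independent $n$-dimensional systems of countable closed sets to glue the countably-indexed isomorphisms coherently. You correctly identify that \cite{BHHKK14} derives excellence from QM1--QM5, but you locate its use only in the $\Looq$-axiomatisability step; in fact it is the engine of the categoricity claim itself, and without it your step two does not go through beyond $\aleph_1$.

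A smaller point on your first step: for a structure obtained as a directed union along closed embeddings, a countable closed subset $C$ need not lie in any single stage of the system, so the transfer of QM4 and QM5 to such unions is not just ``each finite tuple lives in some stage''; this is part of what the cited theorem actually establishes rather than something that can be waved through.
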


We will verify axioms QM1--QM5 for the \Fraisse\ limits we constructed. We first make some general observations which simplify what we have to verify.
\begin{prop}\label{new QM axioms}
Suppose that $M$ is a countable $L$-structure. Then it satisfies QM1--QM5 if and only if it satisfies the following axioms.
 \begin{enumerate}[\textup{QM1$'$.}]
    \item If $a$ and $b$ are finite tuples and $\qftp(a) = \qftp(b)$ then $\dim(a) = \dim(b)$.
    \item[\textup{QM2.}] $M$ is infinite-dimensional with respect to $\cl$.
    \item[\textup{QM4.}] (Uniqueness of the generic type) 
    \item[\textup{QM5a.}] ($\aleph_0$-homogeneity over the empty set)\\
If $a$ and $b$ are finite tuples from $M$ and $\qftp(a) = \qftp(b)$ then there is $\theta \in \Aut(M)$ such that $\theta(a) = b$.
    \item[\textup{QM5b.}] (Non-splitting over a finite set) \\
    If $C \closed M$ and $b \in M$ is a finite tuple then there is a finite tuple $c \in C$ such that $\qftp(b/C)$ does not split over $c$. That is, for all finite tuples $a,a' \in C$, if $\qftp(a/c) = \qftp(a'/c)$ then $\qftp(a/cb) = \qftp(a'/cb)$.    
  \end{enumerate} 
\end{prop}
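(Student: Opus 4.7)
The plan is to establish both implications of the equivalence, using the countability of $M$ essentially throughout.

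\textbf{Forward direction:} Assume QM1--QM5. Axioms QM2 and QM4 appear verbatim in the primed list. Axiom QM5a is the instance of QM5 with $C = C' = \emptyset$, upgraded to a full automorphism of the countable model $M$ by a routine back-and-forth using QM5 to handle each extension step. For QM1$'$, iterate QM1: given $\qftp(a) = \qftp(b)$ for $n$-tuples, each closure relation $a_i \in \cl(a_1,\ldots,\widehat{a_i},\ldots,a_n)$ transfers to the corresponding relation on $b$, so a maximal cl-independent subtuple of $a$ yields a maximal cl-independent subtuple of $b$ of the same length; thus $\dim(a) = \dim(b)$. For QM5b, let $C \closed M$ be countable closed and $b \in M$ a finite tuple. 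Decompose $b = b_1 b_2$ using the pregeometry, with $b_2$ a cl-basis of $b$ over $C$ and $b_1 \subseteq \cl(Cb_2)$; by finite character choose finite $c \subseteq C$ with $b_1 \subseteq \cl(cb_2)$. To verify non-splitting over $c$, given finite $a,a' \in C$ with $\qftp(ca) = \qftp(ca')$: use QM5 to extend the type-isomorphism to $\qftp(\cl(ca)) = \qftp(\cl(ca'))$ in compatible enumerations; invoke QM4 to conclude $\qftp(\cl(ca), b_2) = \qftp(\cl(ca'), b_2)$ since $b_2$ is cl-independent from $C$; and finally use QM1 together with QM5 applied to the finite set of conjugates of $b_1$ in $\cl(cb_2)$ to obtain $\qftp(cab) = \qftp(ca'b)$.

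\textbf{Reverse direction:} Assume the primed list and $M$ countable. Axiom QM1 follows from QM1$'$ because $a \in \cl(b) \iff \dim(ab) = \dim(b)$ and the right-hand side is invariant under equality of qftp. Axioms QM2 and QM4 are given, and QM3 is automatic since $M$ itself is countable. For QM5, given $C, C', b, b', a$ with $\qftp(Cb) = \qftp(C'b')$ and $a \in \cl(Cb)$: apply QM5b to the finite tuple $ba \in M$ to obtain a finite $\tilde c \subseteq C$ over which $\qftp(ba/C)$ does not split, enlarging $\tilde c$ via finite character of $\cl$ so that $a \in \cl(\tilde c b)$; let $\tilde c' \in C'$ be the corresponding tuple under the type iso, so that $\qftp(\tilde c b) = \qftp(\tilde c' b')$. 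By QM5a there is $\sigma \in \Aut(M)$ with $\sigma(\tilde c b) = \tilde c' b'$; set $a' = \sigma(a) \in \cl(\tilde c' b')$, giving $\qftp(\tilde c b a) = \qftp(\tilde c' b' a')$. To verify $\qftp(Cba) = \qftp(C'b'a')$ in matching enumerations, fix any finite $e \in C$ with image $e' \in C'$; apply QM5a to $\qftp(\tilde c e b) = \qftp(\tilde c' e' b')$ to obtain $\tau$ with $\tau(\tilde c e b) = \tilde c' e' b'$ and set $a^* := \tau(a)$, so that $\qftp(\tilde c e b a) = \qftp(\tilde c' e' b' a^*)$; combining non-splitting of $\qftp(ba/C)$ over $\tilde c$ with the qftp iso and a uniqueness argument for $a \in \cl(\tilde c b)$ (via QM1) yields $a^* \equiv_{\tilde c' e' b'} a'$, and hence the required equality of types.

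\textbf{Main obstacle:} The subtlest step is the derivation of QM5b in the forward direction: one must verify that the finite base $c$ extracted from the pregeometric decomposition of $b$ actually witnesses non-splitting of $\qftp(b/C)$. This requires a careful staged argument that first extends the partial qftp-iso between $ca$ and $ca'$ to $\cl(ca)$ and $\cl(ca')$ using QM5, then adjoins the cl-independent tuple $b_2$ using the uniqueness of the generic type (QM4), and finally incorporates the algebraic part $b_1 \subseteq \cl(cb_2)$ using QM1 together with QM5 applied to the finitely many conjugates of $b_1$. The reverse direction's verification that $a^* \equiv_{\tilde c' e' b'} a'$ also hinges on this non-splitting, and so both directions ultimately rest on the same technical heart.
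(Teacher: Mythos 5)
The easy parts of your argument (QM1 $\Leftrightarrow$ QM1$'$ via $a\in\cl(b)\Leftrightarrow\dim(ab)=\dim(b)$, QM3 being automatic for countable $M$, and QM5a being the $C=\emptyset$ case of QM5 upgraded by back-and-forth) coincide with what the paper does, though note that the back-and-forth for QM5a also needs QM1, QM3 and QM4 to handle elements outside the closure of the current domain. The real divergence is the equivalence of QM5 over closed sets with QM5b: the paper does not prove this at all but cites \cite[Corollary~5.3]{BHHKK14}, whereas you attempt a direct proof, and your sketch of that equivalence has genuine gaps in both directions.

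In the forward direction, your final step appeals to ``QM5 applied to the finite set of conjugates of $b_1$ in $\cl(cb_2)$''. This is unjustified: QM3 only gives that $\cl(cb_2)$ is countable, so $b_1$ may have infinitely many conjugates over $cb_2$; and even with finitely many conjugates, the map you have built fixes $c$ and $b_2$ and sends $a\mapsto a'$, so it is only guaranteed to send $b_1$ to \emph{some} conjugate of $b_1$ over $cb_2$, not to fix it pointwise --- forcing it to fix $b_1$ is exactly the content of the non-splitting statement, so the argument is circular at its crucial point. In the reverse direction, the identification of $a^*$ with $a'$ over $\tilde c'e'b'$ requires applying non-splitting of $\qftp(ba/C)$ over $\tilde c$ to the pair $e$ and $\sigma^{-1}(e')$, but QM5b only compares tuples lying \emph{in} $C$, and $\sigma^{-1}(e')$ is not known to lie in $C$; the ``uniqueness argument via QM1'' cannot close this, since QM1 controls only the closure relation and two distinct conjugates of $a$ over $\tilde c'b'$ can have different types over the larger tuple $\tilde c'e'b'$. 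Either cite \cite[Corollary~5.3]{BHHKK14} as the paper does, or supply a genuine proof of the equivalence; as written, the technical heart of the proposition is asserted rather than proved.
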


\begin{proof}
QM1$'$ is equivalent to QM1, because $a \in \cl(b)$ if and only if $\dim(a,b) = \dim(b)$, so if quantifier-free types characterize the dimension they also characterize the closure operation, and vice versa.

QM3, the countable closure property, is immediate for a countable $M$.

Axiom QM5 with $C = \emptyset$ gives a back-and-forth condition which is equivalent to $\aleph_0$-homogeneity using the standard back-and-forth argument together with QM1 and QM4. Since $M$ is countable, the back-and-forth construction gives QM5a. The converse is immediate.

Then \cite[Corollary~5.3]{BHHKK14} shows the case of QM5 with $C$ closed is equivalent to QM5b.
\end{proof}

\begin{remark}
All the axioms refer to quantifier-free types with respect to a particular language, and from QM5a we get the conclusion that if two finite tuples from $M$ have the same quantifier-free type then they actually have the same complete type (even the same $L_{\infty,\omega}$-type, and furthermore they lie in the same automorphism orbit, that is, they have the same Galois-type). Since $M$ is not necessarily a saturated model of its first-order theory, it does not follow that every definable set is quantifier-free definable. Nonetheless, identifying the language which works allows us to understand the types which are realised in $M$.
\end{remark}

\subsection{Verification of the quasiminimal pregeometry axioms}

Recall that our language of $\Gamma$-fields is $L_\Gamma = \langle +,\cdot,-,\Gamma, (c_a)_{a\in \bk} \rangle$, where $\Gamma$ is a relation symbol of suitable arity to denote a subset of $G$. We start by defining the expansion \Lqe\ of $L_\Gamma$  in which we will have the form of quantifier-elimination described in the previous remark.

Let $W$ be any subvariety of $G^n \cross \A^r$ defined over $\Fbase$, for some $n, r \in \N$. (It suffices to consider those $W$ which are the graphs of rational maps $f : W' \to \A^r$, with $W' \subs G^n$.) Let $\phi_W(x,y)$ name the subset of $G(M)^n \cross M^r$ given by
\[(x,y) \in W  \quad\&\quad x \in \Gamma^n  \quad \&\quad x \text{ is $\OO$-linearly independent over } \Gamma(\Fbase)\]
and let $\psi_W(y)$ be the formula $\exists x \phi_W(x,y)$.

\begin{defn}
 We define \Lqe\ to be the expansion of $L_\Gamma$ by parameters for $\Fbase$ and relation symbols for all the formulas $\phi_W(x,y)$ and $\psi_W(y)$.
\end{defn}

\begin{remark}
Note that the formulas $\phi_W(x,y)$ are are always expressible in $\Loo(L_\Gamma)$ (with parameters in $\Fbase$), so a priori this is an expansion of $L_\Gamma(\Fbase)$ by $\Loo$-definitions. However, if the ring $\OO$ and its action on $G$ are definable and $\Gamma(\Fbase)$ is either of finite rank (which is true for example in pseudo-exponentiation) or is otherwise an $L_\Gamma$-definable set, then $\Lqe$ is just an expansion of $L_\Gamma(\Fbase)$ by first-order definitions.
\end{remark}

For the rest of this section we use tuples both from the field sort of a model $M$ and from $\Gamma(M)$, so to distinguish them we will use Latin letters for tuples from $M$ and Greek letters for tuples from $\Gamma(M)$.
\begin{theorem}\label{Fraisse limit is QPS}
Take $M$ to be either $M(\Fbase)$ or $\Mtr(\Fbase)$, the latter only if $\Fbase$ is a full $\Gamma$-field. Then, considered in the language $\Lqe$ and equipped with $\Gammacl$, $M$ is a quasiminimal pregeometry structure.
\end{theorem}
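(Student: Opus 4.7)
The plan is to verify QM1$'$, QM2, QM4, QM5a, and QM5b from Proposition~\ref{new QM axioms}, with $\Gammacl$ as the pregeometry and $\Lqe$ as the language. The guiding principle is that the quantifier-free $\Lqe$-type of a tuple from $M$ records exactly the algebraic data determining the isomorphism class of its hull as a finitely generated strong extension of $\Fbase$: the relations $\phi_W(x,y)$ encode the locus over $\Fbase$ of any companion tuple $x \in \Gamma^n$ (with accompanying field elements $y$) together with the $\kO$-linear independence of $x$ over $\Gamma(\Fbase)$, while the existentials $\psi_W(y)$ detect from $y$ alone which $\Gamma$-tuples with prescribed locus are present in $M$.

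For QM1$'$, this data determines $\trd$ and $\ldim_\kO$ over $\Fbase$, hence the predimension $\delta$; combined with Lemma~\ref{dim and predim}, which expresses $\Gammadim$ as a minimum of $\delta$ over finitely generated strong extensions realised in $M$, and with the fact that $\psi_W$-satisfaction detects exactly which such extensions are present, the qftp determines $\Gammadim$. For QM2, starting from any finitely generated strong $\Gamma$-subfield of $M$, Fra\"iss\'e saturation lets us adjoin a generic purely $\Gamma$-transcendental extension; iterating produces an infinite $\Gcl$-independent subset.

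The bulk of the argument is QM5a. Given tuples $a$, $b$ in $M$ with the same $\Lqe$-qftp, the goal is a $\Gamma$-field isomorphism $\hull{a} \iso \hull{b}$ over $\Fbase$ sending $a$ to $b$, which then extends to an automorphism of $M$ by Fra\"iss\'e homogeneity (Theorem~\ref{amalg category theorem}, with Theorem~\ref{Gammatr is amalg cat} handling $\Mtr(\Fbase)$). By Proposition~\ref{good bases exist}, possibly after dividing by an integer $m$ (the division point lies in $\Gamma(M)$ by divisibility), $\hull{a}$ admits a good basis $\alpha \in \Gamma(M)$ whose locus over $\Fbase$ determines $\hull{a}$ up to isomorphism via Corollary~\ref{0-stability over ess finitary}. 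The existence of such $\alpha$ with this locus is a $\psi_W$-statement in $\Lqe$ satisfied by the relevant projection of $a$, and hence by that of $b$, producing a matching good basis $\beta$ for $\hull{b}$ and the required isomorphism. The remaining $\Gamma$-witnesses making $\hull{a}$ strong are likewise encoded as further $\psi_W$-satisfactions and transfer to $\hull{b}$.

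Finally, QM4 follows from Lemma~\ref{changing sorts}: each $a \in M \minus C$ (with $C$ countable and $\Gamma$-closed) yields $\alpha \in \Gamma(M)$ with $\pi_1(\alpha)$ interalgebraic with $a$ over $C$, $\delta(\alpha/C) = 1$, and locus on a fixed $\bk$-definable curve independent of $C$ and $a$, so $\qftp(C,a)$ is determined by $\qftp(C)$. For QM5b, finite character of the hull (Lemma~\ref{hull finite char}) and of $\delta$ (Lemma~\ref{delta properties}) give a finite $c \subset C$ containing a good basis of $\hull{Cb}$ over $\hull{C}$ together with all witnesses needed for the relevant $\psi_W$'s, whence $\qftp(b/C)$ does not split over $c$. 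I expect the main obstacle to be the bookkeeping in QM5a: ensuring that the $\Lqe$-qftp truly records a good basis, its location inside $\Gamma$ after division, and all hull-witnesses in enough detail to reconstruct the hull up to $\Gamma$-field isomorphism, and that the good-basis integer $m$ from Proposition~\ref{good bases exist} can be read off the qftp alone so as to transfer between $a$ and $b$.
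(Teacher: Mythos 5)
Your overall route is the paper's: verify the reduced axiom list of Proposition~\ref{new QM axioms}, using good bases, the formulas $\psi_W$, and \Fraisse\ homogeneity, with Lemma~\ref{changing sorts} driving QM4. But the step you yourself flag as ``the main obstacle'' in QM5a is a genuine gap, and it is exactly where the real work lies. When you transfer $M \models \psi_W(a)$ to $M \models \psi_W(b)$ and extract a witness $\beta$ with $M \models \phi_W(\beta,b)$, nothing in the formula forces $\loc(\beta,b/\Fbase)$ to equal $W$: the witness could lie in a proper subvariety of $W$, in which case $\beta$ is not a good basis of an isomorphic copy of your chosen subfield and need not even generate a strong extension of $\Fbase$. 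Simply ``recording more $\psi_{W'}$-satisfactions'' cannot fix this, because satisfaction of $\psi_{W'}$ for smaller $W'$ is precisely what you need to exploit for $a$ while ruling out bad witnesses for $b$.

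The paper's resolution, which is missing from your sketch, is a minimality argument: choose the finitely generated strong $A \strong M$ with $a$ in its field so that $\delta(A/\Fbase)$ is \emph{minimal}, and take a good basis $\alpha$ of $A$ with $a \in \Fbase(\alpha)$ and $W = \loc(\alpha,a/\Fbase)$. If the witness $\beta$ for $b$ had locus a proper $W' \subsetneq W$, then $M \models \psi_{W'}(b)$, hence $M \models \psi_{W'}(a)$, and the resulting $\alpha'$ would generate a subfield containing $a$ of strictly smaller predimension, contradicting minimality; the same minimality gives $\gen{\Fbase,\beta} \strong M$. This argument simultaneously yields QM1$'$, since $\Gdim(a) = \delta(A/\Fbase)$ by Lemma~\ref{dim and predim}, rather than the somewhat circular claim that the qftp ``determines $\trd$ and $\ldim_\kO$''. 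A smaller point: in QM5b the finite tuple $c$ cannot ``contain a good basis of $\hull{Cb}$ over $\hull{C}$'' --- that basis lies outside $C$; what is needed, and what the paper takes, is a tuple $\gamma$ in $C$ over which $\loc(\beta,b/C)$ is defined, where $\beta$ is the good basis of the extension.
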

\begin{proof}
We verify the axioms from Proposition~\ref{new QM axioms}. The main difficulty is that the axioms refer to the field sort whereas the construction of $M$ was done in the sort $\Gamma$, and there is no canonical way to go from one sort to the other in either direction. However the sort $\Gamma$ has rank $d$ with respect to the pregeometry $\Gcl$, so as we want to include the case $d>1$ we have to verify the axioms with respect to the field sort.

\medskip

First we prove QM2. For any $n \in \N$, there is a strong $\Gamma$-field extension $A_n$ of $\Fbase$ generated by a tuple $\alpha \in \Gamma(A_n)^n$ such that $\alpha$ is generic in $G^n$ over $\Fbase$. Then $\delta(\alpha/\Fbase) = dn$. This $A_n$ embeds strongly in $M$ by the universality property of the Fra\"iss\'e limit, so $\Gammadim^M(\alpha) = dn$ by Lemma~\ref{dim and predim}. Hence $M$ is infinite-dimensional.

\medskip 

Now we prove QM1$'$ and QM5a together.

Suppose $a,b \in M^r$ with $\qftp_{\Lqe}(a) = \qftp_{\Lqe}(b)$. Choose a strong $\Gamma$-subfield $A \strong M$ which is a finitely-generated extension of $\Fbase$ such that $a \in A^r$ and $\delta(A/\Fbase)$ is minimal such. Let $\alpha \in \Gamma(A)^n$ be a good basis for $A$ over $\Fbase$, such that $a$ is in the field $\Fbase(\alpha)$, let $W = \loc(\alpha,a/\Fbase)$, and let $V = \loc(\alpha/\Fbase)$. Then $M \models \psi_W(a)$, so also $M \models \psi_W(b)$. So there is $\beta \in \Gamma(M)^n$ such that $M \models \phi_W(\beta,b)$. In particular, $\beta \in V \cap \Gamma(M)^n$, $\kO$-linearly independent over $\Gamma(\Fbase)$. We claim that $W = \loc(\beta,b/\Fbase)$. Suppose not, so $W' \leteq \loc(\beta,b/\Fbase)$ is a proper subvariety of $W$. We have $M \models \psi_{W'}(b)$, so since $a$ and $b$ have the same quantifier-free $\Lqe$-type, $M \models \psi_{W'}(a)$. So there is some $\alpha' \in \Gamma(M)^n$ such that $M \models \phi_{W'}(\alpha',a)$. $W$ is irreducible over $\Fbase$, so $\dim W' < \dim W$. Since $a$ is rational over $\Fbase(\alpha)$, $\dim W = \dim V$, and so $\alpha'$ lies in a subvariety $V'$ of $V$ with $\dim V' < \dim V$. But then setting $A' = \gen{\Fbase,\alpha'}$ we have $a \in A'^r$ and $\delta(A'/\Fbase) = \delta(\alpha'/\Fbase) = \dim V' - \ldim_\kO(\alpha'/\Gamma(\Fbase)) < \delta(A/\Fbase)$, which contradicts the choice of $A$. Thus $\loc(\beta,b/\Fbase) = W$, and in particular $\loc(\beta/\Fbase) = V$. Let $B = \gen{\Fbase,\beta}$. We further deduce that $B \strong M$, since if not the same proof would show that $\delta(A/\Fbase)$ were not minimal.

By Lemma~\ref{dim and predim}, we have $\Gdim(A) = \delta(A/\Fbase) = \delta(B/\Fbase) = \Gdim(B)$.
Now $\Gdim(a) = \Gdim(A)$ by the minimality of $\delta(A/\Fbase)$, since $A$ could be taken within $\Gcl(a)$,
and $\Gdim(b) \leq \Gdim(B)$, so $\Gdim(b) \leq \Gdim(a)$.
By symmetry, $\Gdim(a) = \Gdim(b)$, so QM1$'$ is proved.

Since $\alpha$ is a good basis, there is an isomorphism of $\Gamma$-fields $\theta_0: A \to B$ over $\Fbase$, with $\theta_0(\alpha) = \beta$. Then also $\theta_0(a) = b$. Since $M$ is $\Cfg$-homogeneous (or $\Ctrfg$-homogeneous), $\theta_0$ extends to an automorphism $\theta$ of $M$. That proves QM5a.

\medskip

QM4: Suppose that $C_1, C_2 \closed M$ with the same quantifier-free \Lqe-type according to some enumeration, and let $\theta : C_1 \iso C_2$ be the isomorphism given by the enumeration. Suppose also that $b_1 \in M \minus C_1$ and $b_2 \in M \minus C_2$.

Using Lemma~\ref{changing sorts} we get $\beta_1,\beta_2 \in \Gamma(M)$ such that $b_i \in C_i(\beta_i)$ and $\loc(\beta_1,b_1/C_1)$ is defined over $\bk$ and is equal to $\loc(\beta_2,b_2/C_2)$. Also, setting $B_i \leteq \gen{C_i,\beta_i}$ we have $B_i \strong M$ and $\beta_i$ is a good basis for $B_i$ over $C_i$. By the definition of a good basis, the isomorphism $\theta$ extends to $\theta_1: B_1 \iso B_2$ with $\theta_1(\beta_1) = \beta_2$ and hence $\theta_1(b_1) =b_2$.

Let $\Fbase \strong A_1 \strong C_1$ with $A_1$ finitely generated over $\Fbase$, and let $A_2 = \theta(A_1)$. Then $\theta_1$ restricts to an isomorphism $\theta_0: \gen{A_1\beta_1} \isom \gen{A_2\beta_2}$.  Also $\gen{A_i\beta_i} \strong  M$ since $\delta(\gen{A_i\beta_i}/A_i) = 1$
	and $\beta_i \notin \Gammacl(A_i)$. Since $M$ is $\Cfg$-homogeneous (or $\Ctrfg$-homogeneous), $\theta_0$ extends to an automorphism of $M$. So $\qftp_{\Lqe}(A_1 b_1) = \qftp_{\Lqe}(A_2 b_2)$ and thus, as $A_1$ ranges over strong $\Gamma$-subfields of $C_1$ finitely generated over $\Fbase$, we deduce that $\qftp_{\Lqe}(C_1 b_1) = \qftp_{\Lqe}(C_2 b_2)$ as required.
\medskip

\medskip

QM5b: Let $C \closed M$ and let $b\in M$ be a finite tuple. Let $B$ be a finitely generated $\Gamma$-field extension of $C$ such that $B \strong M$ and $b \in B$, and let $\beta \in \Gamma(B)^n$ be a good basis for $B$ over $C$ with $b \in C(\beta)$. 

Now choose a finitely generated $\Gamma$-field extension $C_0$ of $\Fbase$ in $C$ with $C_0 \strong C$, and a good
basis $\gamma$ for $C_0$, such that $\loc(\beta,b/C)$ is defined over $\Fbase(\gamma)$.

Suppose that finite tuples $a,a' \in C$ have $\qftp_{\Lqe}(a/\gamma) = \qftp_{\Lqe}(a'/\gamma)$. By QM5a, there is a $\Gamma$-field automorphism $\theta\in\Aut(M/\Fbase(\gamma))$ such that $\theta(a)=a'$. Let $A$ be a strong $\Gamma$-subfield of $\Gcl(C_0,a)$ which is finitely generated over $C_0$ and contains $a$, and let $A' = \theta(A)$. Then $A' \strong C$.

Let $V = \loc(\beta/C)$. Then $\loc(\beta/A) = \loc(\beta/A') = V$ because $V$ is defined over $\Fbase(\gamma)$.  So, since $\beta$ is a good basis, the isomorphism $\theta_0:A \iso A'$ extends to $\theta_1:\gen{A\beta} \iso \gen{A'\beta}$.

We claim that $\gen{A \beta} \strong B$. To see this, suppose that $X \subs B$ is a finitely generated extension of $\gen{A \beta}$, and let $X_0 = X \wedge C$. Let $\xi$ be a basis of $X_0$ over $A$. Then $\xi \cup \beta$ is a basis for $X$ over $A$, since $\beta$ is a basis for $B$ over $C$ and hence for $X$ over $X \wedge C = X_0$. So
\begin{eqnarray*}
\delta(X/A\beta) & = & \td(\xi/A\beta) - d \ldim_\kO(\xi/\Gamma(A),\beta) \\
 & = & \td(\xi/A) - d \ldim_\kO(\xi/\Gamma(A)) \\
 & = & \delta(\xi/A) \ge 0
\end{eqnarray*}
because $\beta$ is algebraically and linearly independent from $C$ over $A$ and $\xi \in C$. Since $B \strong M$ we have $\gen{A\beta} \strong M$. The same argument shows that $\gen{A'\beta} \strong M$.

Thus, since $M$ is $\Cfg$-homogeneous (or $\Ctrfg$-homogeneous), $\theta_1$ extends to an automorphism $\theta_2$ of $M$. Now $\theta_2$ fixes $b$ and $\gamma$ and $\theta_2(a) = a'$, so $\qftp_{\Lqe}(a/b \gamma) = \qftp_{\Lqe}(a'/b\gamma)$. Taking $c = \gamma$, considered as a tuple from the field sort of $C$, we see that $\tp(b/C)$ does not split over $c$, as required.
\end{proof}

\begin{remark}
A more complete analysis of splitting for pseudo-exponentiation was carried out in the PhD thesis of Robert Henderson \cite{Henderson14}.
\end{remark}

We conclude this section by showing that the $\Gamma$-algebraic types over finite tuples are isolated.
\begin{prop}\label{isozof types}
Suppose that $a, b$ are finite tuples in $M$ and that $b \in \Gcl^M(a)$. Then $\tp(b/a)$ is isolated by an $\Lqe$-formula.
\end{prop}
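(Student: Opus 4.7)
The plan is to exhibit an isolating formula of the form $\psi_W(a,y)$, where $W$ is the $\Fbase$-algebraic locus of a triple $(\beta,a,b)$ built from a good $\Gamma$-basis $\beta$ witnessing the relation $b \in \Gcl(a)$. Writing $k := \Gdim(a) = \Gdim(a,b)$, I will first produce a tuple $\beta \in \Gamma(M)^m$ with: (i) $\gen{\Fbase,\beta} \strong M$; (ii) $\beta$ a good basis for $\gen{\Fbase,\beta}$ over $\Fbase$; (iii) $a,b$ rational over $\Fbase(\beta)$; (iv) $\delta(\beta/\Fbase) = k$. To construct $\beta$ I start from a finitely generated strong extension $B \strong M$ with $a,b \in B$ and $\delta(B/\Fbase) = k$ (which exists by Lemma~\ref{dim and predim}), take any basis for $B$ over $\Fbase$, and divide by suitable integers via Proposition~\ref{good bases exist}, both to absorb $a$ and $b$ into the rational field of the basis and to force goodness. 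Setting $V := \loc(\beta/\Fbase)$ (so $\dim V = k + dm$) and $W := \loc(\beta,a,b/\Fbase) \subs G^m \times \A^{|a|+|b|}$, property (iii) makes $W$ the graph of a rational map $f = (f_a,f_b) : V \dashrightarrow \A^{|a|+|b|}$ defined over $\Fbase$; the candidate isolating formula is $\psi_W(z,y)$ substituted at $z = a$.

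The easy direction, $M \models \psi_W(a,b)$, will follow from $\phi_W(\beta,a,b)$, using that $\beta$ is $\OO$-linearly independent over $\Gamma(\Fbase)$ (equivalently, by divisibility of $\Gamma(\Fbase)$, $\kO$-linearly independent). For the reverse implication, given $b'$ with $M \models \psi_W(a,b')$ and witness $\beta' \in \Gamma(M)^m$, the crux of the proof will be the predimension chain
\[ k = \Gdim(a/\Fbase) \le \Gdim(a,b'/\Fbase) \le \Gdim(\beta'/\Fbase) \le \delta(\beta'/\Fbase) = \dim \loc(\beta'/\Fbase) - dm, \]
where the second inequality uses that $(a,b') = f(\beta')$ lies in $\Gcl(\beta')$ and the third is Lemma~\ref{dim and predim}. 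This forces $\dim \loc(\beta'/\Fbase) \ge k + dm = \dim V$, so by irreducibility $\loc(\beta'/\Fbase) = V$, and simultaneously collapses the chain to equalities, giving $\Gdim(\beta'/\Fbase) = \delta(\beta'/\Fbase)$ and hence $\gen{\Fbase,\beta'} \strong M$ via Lemma~\ref{dim and predim} again.

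With both $\gen{\Fbase,\beta}$ and $\gen{\Fbase,\beta'}$ now strong finitely generated extensions of $\Fbase$ whose bases share the $\Fbase$-locus $V$, goodness of $\beta$ supplies a $\Gamma$-field isomorphism $\gen{\Fbase,\beta} \iso \gen{\Fbase,\beta'}$ over $\Fbase$ sending $\beta \mapsto \beta'$; evaluating $f$ shows that this isomorphism sends $a \mapsto f_a(\beta') = a$ and $b \mapsto f_b(\beta') = b'$. By the $\Cfg$-homogeneity of the \Fraisse\ limit $M$ it will extend to an automorphism of $M$ fixing $a$ and sending $b$ to $b'$, proving $\tp(b/a) = \tp(b'/a)$. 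The principal obstacle is precisely the predimension calculation above: the formula $\psi_W$ on its own only places $\beta'$ somewhere in $V$, and it is the hypothesis $b \in \Gcl(a)$ — entering via the bound $\Gdim(a,b'/\Fbase) \ge k$ — that pushes $\dim \loc(\beta'/\Fbase)$ up to $\dim V$ and simultaneously delivers strongness of $\gen{\Fbase,\beta'}$.
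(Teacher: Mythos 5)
Your proof is correct and follows essentially the same route as the paper's: the same isolating formula $\psi_W(a^\frown y)$ built from a good basis $\beta$ with $a,b$ rational over $\Fbase(\beta)$, with the converse direction handled by showing that any witness $\beta'$ is generic in $V$ and generates a strong extension of $\Fbase$, and then invoking goodness of the basis together with $\Cfg$-homogeneity of the \Fraisse\ limit. The only difference is organisational: you extract genericity of $\beta'$ and strongness of $\gen{\Fbase,\beta'}$ simultaneously from the single squeezed chain $k \le \Gdim(a,b') \le \Gdim(\beta') \le \delta(\beta'/\Fbase) = \dim\loc(\beta'/\Fbase) - dm \le \dim V - dm = k$, whereas the paper deduces genericity of the witness first and strongness afterwards via the isomorphism with $B$; your packaging is, if anything, the cleaner way to justify that step.
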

\begin{proof}
Choose a finitely generated $\Gamma$-field $B \strong M$ with $B \subs \Gcl(a)$, and a good basis $\beta$ for $B$ such that $a,b \in \Fbase(\beta)$. Let $W = \loc(\beta,a^\frown b/\Fbase)$.

Then $M \models \phi_W(\beta,a^\frown b)$ and $M \models \psi_W(a^\frown b)$. Suppose $M \models \psi_W(a^\frown c)$. Then there is a tuple $\gamma$ from $\Gamma(M)$ such that $M \models \phi_W(\gamma,a^\frown c)$. So $\loc(\gamma/\Fbase) \subs V$ but $\Fbase \strong M$ and $\ldim_\kO(\gamma/\Gamma(\Fbase)) = \dim V$ by the definition of $\phi_W$, so $\gamma$ is generic in $V$ over $\Fbase$. Thus $\loc(\gamma/\Fbase) = \loc(\beta/\Fbase)$ so, since $\beta$ is a good basis, the $\Gamma$-field $C$ generated by $\gamma$ is isomorphic to $B$ via an isomorphism $\theta : B \to C$ such that $\theta(\beta) = \gamma$, and then necessarily $\theta(a) = a$ and $\theta(b) = c$.

Using Lemma~\ref{dim and predim} repeatedly,
\[\delta(C/\Fbase) = \delta(B/\Fbase) = \Gdim(B) = \Gdim(a) \le \Gdim(C)\]
and so $\delta(C/\Fbase) = \Gdim(C)$, and $C \strong M$. Thus $\theta$ extends to an automorphism of $M$, so $\tp(c/a) = \tp(b/a)$, so the formula $\psi_W(a^\frown x)$ isolates $\tp(b/a)$.
\end{proof}

% Classification of strong extensions
% !TEX root = PEM2_1.tex

\section{Classification of strong extensions}\label{rotund section}

We next give a classification of the finitely generated strong extensions. In the next section we will use it to give axiomatizations of the classes of $\Gamma$-closed fields we have constructed, generalizing Zilber's axioms for pseudo-exponentiation.

Since $G$ is an $\OO$-module, each matrix $M \in \Mat_n(\OO)$ defines an $\OO$-module homomorphism $\ra{G^n}{M}{G^n}$ in the usual way. If $V \subs G^n$, we write $M\cdot V$
for its image. Note that if $V$ is a subvariety of $G^n$ then $M\cdot V$ is a constructible set, and since the $\OO$-module structure is defined over $\bk$, if $V$ is defined over $A$ then $M\cdot V$ is defined over $\bk \cup A$. If $V$ is irreducible then $M\cdot V$ is also irreducible.

We have $G^n  = (G_1 \cross G_2)^n$ and we write $x_1,\ldots,x_n$ for the coordinates in $G_1$ and $y_1,\ldots,y_n$ for the coordinates in $G_2$.
\begin{defn}
 Let $V$ be an irreducible subvariety of $G^n$.
 Then $V$ is \emph{$G_1$-free} if $V$ does not lie inside any subvariety defined by an equation $\sum_{j=1}^n r_jx_j = c$ for any $r_j \in \OO$, not all zero, and any $c \in G_1$.  We define \emph{$G_2$-free} the same way. We say $V$ is \emph{free} if it is both $G_1$-free and $G_2$-free.
 
$V$ is \emph{rotund} (for $G$ as an $\OO$-module) if for every matrix $M \in \Mat_n(\OO)$ we have
  \[\dim (M\cdot V) \ge d \rk M\]
where $\dim$ means dimension as an algebraic variety or constructible set, $\rk M$ is the rank of the matrix $M$, and recall that $d = \dim G_1$.

$V$ is \emph{strongly rotund} if for every \emph{non-zero} matrix $M \in \Mat_n(\OO)$ we have
  \[\dim (M\cdot V) > d \rk M.\]

A reducible subvariety $V$ of $G^n$ is defined to be free / rotund / strongly rotund if at least one of its (absolutely) irreducible components is free / rotund / strongly rotund respectively. If we say that such a $V$ is \emph{free and (strongly) rotund} then we mean that the same irreducible component is free and (strongly) rotund.

%  A subvariety $V$ of $G^n$ is \emph{perfectly rotund} if it is irreducible, $\dim V = n$, for every $M \in \Mat_{n \cross n}(\Z)$ with $ 0 < \rk M < n$,
%  \[\dim M\cdot V \ge \rk M + 1\]
%and also $V$ is free and \Kummergen.
\end{defn}
So $V$ is free if it is ``free from $\OO$-linear dependencies'', and it is rotund if all its images under suitable homomorphisms are of large dimension.

\begin{lemma}\label{O-linear subgroups}
An irreducible subvariety $V \subs G^n$ is $G_2$-free if and only if $\pi_2(V)$ does not lie in a coset of a proper algebraic subgroup of $G_2^n$.
If $\OO = \End(G_1)$ then $V$ is $G_1$-free if and only if $\pi_1(V)$ does not lie in a coset of a proper algebraic subgroup of $G_1^n$.
\end{lemma}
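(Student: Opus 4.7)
The strategy is to use Lemma~\ref{subgroupsEndomorphisms} to translate between connected algebraic subgroups of $G_2^n$ and kernels of matrices in $\Mat_n(\OO)$, thereby converting the geometric condition ``lies in a coset of a proper algebraic subgroup'' into the $\OO$-linear condition appearing in the definition of $G_2$-freeness. The two halves of the lemma are proved identically; the only difference is that the $G_1$-case requires the hypothesis $\OO = \End(G_1)$ precisely so that Lemma~\ref{subgroupsEndomorphisms} applies to $G_1^n$. I therefore spell out only the $G_2$-case.

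For the forward implication, I would suppose $\pi_2(V) \subseteq c + H$ for some proper algebraic subgroup $H \le G_2^n$. The cosets of the connected component $H^o$ partition $c + H$ into finitely many closed pieces, and since $V$ is irreducible, so is $\pi_2(V)$; hence $\pi_2(V)$ lies inside a single coset $c' + H^o$. Because $G_2^n$ is connected and $H$ is proper, $H^o$ is also proper, so by Lemma~\ref{subgroupsEndomorphisms}(i) we may write $H^o = \ker(\eta)^o$ for some non-zero $\eta \in \Mat_n(\OO)$. Any non-zero row $(r_1,\dots,r_n)$ of $\eta$ then yields an $\OO$-linear relation
\[
\sum_{j=1}^n r_j y_j \;=\; \sum_{j=1}^n r_j c'_j
\]
that is satisfied on $\pi_2(V)$, so $V$ is not $G_2$-free.

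For the converse, I would suppose $V$ is contained in the subvariety defined by some equation $\sum_{j=1}^n r_j y_j = c$ with $(r_1,\dots,r_n) \in \OO^n$ non-zero and $c \in G_2$. The map $\eta : G_2^n \to G_2$, $(y_1,\dots,y_n) \mapsto \sum_j r_j y_j$, is a non-zero algebraic group homomorphism, and its image is a positive-dimensional connected algebraic subgroup of $G_2$. Since $G_2$ is simple, this image must be all of $G_2$; thus $\ker(\eta)$ is an algebraic subgroup of $G_2^n$ of dimension $d(n-1)$, and so is proper. Choosing any preimage $c'$ of $c$ under $\eta$, we conclude $\pi_2(V) \subseteq c' + \ker(\eta)$, a coset of a proper algebraic subgroup of $G_2^n$.

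I expect no real obstacle here; the only points that merit attention are the reduction from $H$ to $H^o$ by irreducibility in the forward direction, and the appeal to simplicity of $G_2$ (and of $G_1$ in the second, symmetric case) to guarantee that the $\OO$-linear homomorphism in the converse is surjective.
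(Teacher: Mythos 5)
Your proof is correct and follows exactly the route the paper intends: the paper's own proof is the single line ``immediate consequence of Lemma~\ref{subgroupsEndomorphisms}(i)'', and your argument simply spells out that consequence (including the correct reduction from $H$ to $H^o$ via irreducibility, and the surjectivity of a non-zero $\OO$-linear map onto the simple group in the converse).
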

\begin{proof}
  This is an immediate consequence of Lemma~\ref{subgroupsEndomorphisms}(i).
\end{proof}

\begin{prop}\label{rotund prop}
Suppose that $A$ is a full $\Gamma$-field, $A \subs B$ is a finitely generated extension of $\Gamma$-fields, preserving the kernels, and that $b \in \Gamma(B)^n$ is a basis for the extension. Let $V = \loc(b/A)$.

Then $V$ is free. Furthermore the extension is strong if and only if $V$ is rotund, and it is purely $\Gamma$-transcendental if and only if $V$ is strongly rotund.
\end{prop}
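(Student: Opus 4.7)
The plan is to derive all three claims from a single dimension-counting identity relating intermediate extensions to images of $V$ under $\OO$-linear maps. The key identity I will establish is
\[ \delta(\gen{A, Mb}/A) = \dim(M \cdot V) - d \rk M \]
for any $M \in \Mat_n(\OO)$ of rank $r$. To prove it, I would observe that (a) $Mb$ is generic in $M \cdot V$ over $A$ since $b$ is generic in $V$ and the morphism $V \to M \cdot V$ is dominant, giving $\td(Mb/A) = \dim(M \cdot V)$; and (b) the $\kO$-linear span of $Mb$ modulo $\Gamma(A)$ has dimension exactly $r$, using that $b$ is a $\kO$-basis of $\Gamma(B)/\Gamma(A)$. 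Taking an $r$-element $\kO$-linearly independent subtuple $c$ of $Mb$ modulo $\Gamma(A)$ and noting the remaining entries become algebraic over $A(c)$ yields the identity. Conversely, every finitely generated intermediate $\Gamma$-field $X$ of $B$ over $A$ arises from some matrix: pick a basis $c$ of $X$ over $A$, write $c \equiv Sb \pmod{\Gamma(A)}$ with $S \in \Mat_{k,n}(\kO)$, clear denominators to get $M \in \Mat_{k,n}(\OO)$, then pad with zero rows to make it $n \times n$ without changing rank or image dimension, so that $\gen{A, Mb} = X$ via divisibility of $\Gamma$.

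For part~(i) on freeness, I will argue as follows. Suppose $V$ is not $G_2$-free, so $\sum_j r_j \pi_2(b_j) = c$ for some $r_j \in \OO$ not all zero and some $c$; since $V$ is irreducible over the algebraically closed field $A$ and this equation is constant on $V$, we have $c \in G_2(A)$. Fullness of $A$ yields $a \in G_1(A)$ with $(a, c) \in \Gamma(A)$. Then
\[ \sum_j r_j b_j - (a, c) = \Bigl(\sum_j r_j \pi_1(b_j) - a,\; 0\Bigr) \in \Gamma(B), \]
so its first coordinate lies in $\ker_1(B) = \ker_1(A)$ by kernel preservation, forcing $\sum_j r_j b_j \in \Gamma(A)$ --- contradicting the $\kO$-linear independence of $b$ modulo $\Gamma(A)$. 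The $G_1$-freeness argument is symmetric, exchanging the roles of the two factors and invoking fullness to produce a witness in $G_2(A)$.

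Parts~(ii) and~(iii) then follow formally from the identity. Rotundity of $V$, i.e.\ $\dim(M \cdot V) \ge d \rk M$ for all $M \in \Mat_n(\OO)$, becomes $\delta(X/A) \ge 0$ for every intermediate finitely generated $X$, which is precisely strongness; for the reverse direction, the converse matrix-to-extension construction above exhibits any violation of rotundity as an intermediate $X$ with $\delta(X/A) < 0$. For purely $\Gamma$-transcendental, I note that strong rotundity implies rotundity and hence strongness via~(ii); then the condition $\dim(M \cdot V) > d \rk M$ for all nonzero $M$ corresponds via the identity to $\delta(X/A) > 0$ for all intermediate $X \supsetneq A$, using that $X = A$ holds iff the associated $M$ is zero (since $b$ is $\kO$-linearly independent modulo $\Gamma(A)$, any nonzero $M$ produces some entry of $Mb$ outside $\Gamma(A)$).

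The main obstacle will be the bookkeeping in the matrix/extension correspondence: passing from $\kO$- to $\OO$-matrices by clearing denominators (relying on divisibility of $\Gamma$ so that $\gen{A, c} = \gen{A, Nc}$), padding rectangular matrices to square form without altering rank or image dimension, and confirming that division points do not disturb the transcendence or linear dimension counts. The freeness step also needs a small justification that $c$ lies in $G_2(A)$, but that is routine given the irreducibility of $V$ over an algebraically closed base.
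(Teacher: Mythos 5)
Your proposal is correct and follows essentially the same route as the paper's proof: freeness is obtained from fullness of $A$ together with kernel preservation, and both directions of the rotundity and strong rotundity equivalences come from the correspondence between finitely generated intermediate extensions and the tuples $M\cdot b$ for $M \in \Mat_n(\OO)$, with $\ldim_\kO(M\cdot b/\Gamma(A)) = \rk M$ and $\td(M\cdot b/A) = \dim(M\cdot V)$. The extra bookkeeping you flag (clearing denominators, padding to a square matrix, genericity of $Mb$ in $M\cdot V$) is exactly what the paper leaves implicit in the phrase ``every finite tuple from $\Gamma(B)$ generates the same $\Gamma$-field extension of $A$ as some tuple $M\cdot b$''.
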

\begin{proof}
If $V$ is not $G_1$-free then, writing $b = (b_1^1,\ldots,b_1^n,b_2^1,\ldots,b_2^n) \in G_1^n \cross G_2^n$ we have $
\sum_{j=1}^n r_j b_1^j = c_1 \in G_1(A)$. Let $c_2 = \sum_{j=1}^n r_j b_2^j$. Then $(c_1,c_2) \in \Gamma(B)$ and since $A$ is full and the extension preserves the kernels we have $(c_1,c_2) \in \Gamma(A)$. That contradicts $b$ being a basis for the extension. So $V$ is $G_1$-free and, symmetrically, $G_2$-free.

For  $M \in \Mat_n(\OO)$ we have $M \cdot b \in \Gamma(B)^n$ with $\ldim_\kO (M\cdot b/\Gamma(A)) = \rk M$. Furthermore, every finite tuple from $\Gamma(B)$ generates the same $\Gamma$-field extension of $A$ as some tuple $M \cdot b$, because $b$ is a basis. Thus the extension is strong if and only if for all $M$ we have $\td(M \cdot b/A) \ge d \rk M$, if and only if for all $M$ we have $\dim (M\cdot V)^{\Zar} \ge d \rk M$, if and only if $V$ is rotund.

Similarly any finite tuple from $\Gamma(B)$ which is not in $\Gamma(A)$ generates the same extension of $A$ as a tuple $M \cdot b$ for some non-zero matrix $M$, and $V$ is strongly rotund  if and only if all such tuples have $\delta(M \cdot b/A) > 0$.
\end{proof}

\begin{corollary}\label{rotund cor}
Suppose that $A \strong B$ is a finitely generated strong extension of essentially finitary $\Gamma$-fields, that $\Afull \wedge B = A$, that $b \in \Gamma(B)^n$ is a basis for the extension, and that $V = \loc(b/A)$. Then $V$ is free and rotund, and it is strongly rotund if and only if $B$ is a purely $\Gamma$-transcendental extension of $A$.
\end{corollary}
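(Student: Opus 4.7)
The plan is to reduce Corollary~\ref{rotund cor} to Proposition~\ref{rotund prop} by replacing $A$ with its full closure $\Afull$, exploiting the hypothesis $\Afull \wedge B = A$. That hypothesis places $\Afull$ and $B$ inside a common ambient $\Gamma$-field $D$ with $\Gamma(\Afull) \cap \Gamma(B) = \Gamma(A)$; since both are $\Gamma$-subfields of $D$ preserving the kernels of $A$, we also get $\ker_i(D) = \ker_i(A)$ for $i=1,2$. Rotundness of $V$ and the equivalence ``strongly rotund iff purely $\Gamma$-transcendental'' transfer directly from the corresponding parts of the proof of Proposition~\ref{rotund prop}, which never use fullness of $A$: they only use that $b + \Gamma(A)$ is a $\kO$-basis of $\Gamma(B)/\Gamma(A)$ (so $\ldim_\kO(M\cdot b/\Gamma(A)) = \rk M$ for $M \in \Mat_n(\OO)$), that $b$ is generic in $V$ (so $\trd(M\cdot b/A) = \dim(M\cdot V)$), and the hypothesis $A \strong B$. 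For the pure-$\Gamma$-transcendental direction I will reduce to the square case by observing that any finite tuple in $\Gamma(B) \minus \Gamma(A)$ has the same $\Gamma$-field hull over $A$ as $M\cdot b$ for some non-zero square matrix $M \in \Mat_n(\OO)$, obtained if necessary by padding a non-square matrix with zero rows.

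The main obstacle is freeness of $V$, because the proof of Proposition~\ref{rotund prop} uses fullness of $A$ at exactly this step, to turn an $\OO$-linear relation on $b_1 = \pi_1(b)$ into a $\Gamma(A)$-linear relation on $b$. I will argue as follows. Suppose $V$ is not $G_1$-free; irreducibility of $V$ over $A$ yields a non-zero $r \in \OO^n$ and a $c_1 \in G_1(A)$ with $\sum_j r_j b_1^j = c_1$, and setting $c_2 = \sum_j r_j b_2^j$ we have $(c_1, c_2) = \sum_j r_j b^j \in \Gamma(B)$. Fullness of $\Afull$ supplies some $c_2' \in G_2(\Afull)$ with $(c_1, c_2') \in \Gamma(\Afull)$; inside $D$ the difference $(0, c_2 - c_2')$ lies in $\Gamma(D)$, so $c_2 - c_2' \in \ker_2(D) = \ker_2(A)$ and hence $(0, c_2 - c_2') \in \Gamma(A)$. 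Adding back, $(c_1, c_2) \in \Gamma(\Afull)$; combined with $(c_1, c_2) \in \Gamma(B)$ and the hypothesis $\Gamma(\Afull) \cap \Gamma(B) = \Gamma(A)$, this gives $(c_1, c_2) \in \Gamma(A)$, contradicting linear independence of the basis $b$ over $\Gamma(A)$. The $G_2$-free case is symmetric. This is exactly where the hypothesis $\Afull \wedge B = A$ earns its keep: it is what lets the relation descend from an element lying in both $\Gamma(\Afull)$ and $\Gamma(B)$ back down into $\Gamma(A)$.
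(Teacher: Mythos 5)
Your proof is correct and follows the paper's route: the paper's own proof of this corollary consists of the remark that, by essential finitarity, $\Afull$ sits canonically inside $\Bfull$ so the hypothesis $\Afull \wedge B = A$ makes sense, followed by the one-line assertion that the proof of Proposition~\ref{rotund prop} goes through with this weaker condition in place of $A = \Afull$. Your argument is precisely the verification of that assertion — you correctly isolate freeness as the only step where fullness of $A$ was used, and your detour through $\Gamma(\Afull)$, the kernel $\ker_2$, and the identity $\Gamma(\Afull) \cap \Gamma(B) = \Gamma(A)$ is the intended fix.
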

\begin{proof}
First note that, since $A$ and $B$ are essentially finitary, by Theorem~\ref{full extension}, $\Bfull$ is uniquely determined up to isomorphism and $\Afull$ is uniquely determined as a $\Gamma$-subfield of $\Bfull$, so the condition that $\Afull \wedge B = A$ makes unambiguous sense. Now the proof of Proposition~\ref{rotund prop} goes through with this weaker condition in place of $A = \Afull$.
\end{proof}

%Axiomatization section
% !TEX root = PEM2_1.tex

\section{Axiomatization of $\Gamma$-closed fields}\label{axiomatization section}

Recall we have fixed a ring $\OO$, algebraic $\OO$-modules $G_1$ and $G_2$, both of dimension $d$, defined over a countable field $\bk$, we have $G = G_1 \cross G_2$, and we consider structures in the language  $L_\Gamma = \langle +,\cdot,-,\Gamma, (c_a)_{a\in \bk} \rangle$, where $\Gamma$ is a relation symbol of appropriate arity to denote a subset of $G$. We are given an essentially finitary $\Gamma$-field $\Fbase$ containing $\bk$, of type (EXP), (COR), or (DEQ). We add parameters for $\Fbase$ to the language to get a language $L_\Fbase$. We also have an expanded language $\Lqe$.

\begin{defn}
A model in the quasiminimal class $\K(M(\Fbase))$ will be called a \emph{$\Gamma$-closed field} (with the countable closure property, on the base $\Fbase$).
\end{defn}

\begin{theorem}\label{main theorem}
An $L_\Fbase$ structure $F$ is a  $\Gamma$-closed field if and only if it satisfies the following list of axioms which we denote by $\GCFccp(\Fbase)$. \begin{description}
 \item[1. Full $\Gamma$-field] $F$ is an algebraically closed field containing $\bk$ and $\Gamma(F)$ is an $\OO$-submodule of $G(F)$ such that the projections of $\Gamma(F)$ to $G_1(F)$ and $G_2(F)$ are surjective.
 
 \item[2. Base and kernels] $F$ satisfies the full atomic diagram of $\Fbase$. (In some examples we will discuss how this can be weakened.) Also $\ker_i(F) = \ker_i(\Fbase)$ for $i=1,2$.
 
 \item[3. Predimension inequality (generalised Schanuel property)]
 The \emph{predimension function} 
\[\delta(x/\Fbase) \leteq \td(x/\Fbase)- d \ldim_\kO(x/\Gamma(\Fbase))\]
satisfies $\delta(x/\Fbase) \ge 0$ for all tuples $x$ from $\Gamma(F)$. 

\item[4. Strong $\Gamma$-closedness]
For every irreducible subvariety $V$ of $G^n$ defined over $F$ and of dimension $dn$, which is free and rotund for the $\OO$-module structure on $G$, and every finite tuple $a$ from $\Gamma(F)$, there is $b \in V(F) \cap \Gamma(F)^n$ such that $b$ is $\kO$-linearly independent over $\Gamma(\Fbase) \cup a$ (that is, no non-zero $\kO$-linear combination of the $b_i$ lies in the $\kO$-linear span of $\Gamma(\Fbase) \cup a$). 

\item[5. Countable Closure Property] For each finite subset $X$ of $F$, the $\Gamma$-closure $\Gammacl^F(X)$ of $X$ in $F$ is countable. 
\end{description}
\end{theorem}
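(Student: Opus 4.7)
The plan is to prove both directions, leveraging the categoricity of Fact~\ref{cat theorem}. For the forward direction, we check axioms 1--5 for $M(\Fbase)$ directly: axioms 1--3 follow from $\Fbase \strong M(\Fbase)$ and $M(\Fbase)$ being a directed union of structures in $\Cfullfg$; axiom 5 is the CCP (QM3); axiom 4 follows from Corollary~\ref{rotund cor}, which identifies free rotund subvarieties of $G^n$ of dimension $dn$ with loci of good bases of finitely generated strong $\Gamma$-algebraic extensions of finitely generated strong subfields, combined with $\Cfg$-saturation of $M(\Fbase)$. Axioms 1--5 transfer to closed substructures and to directed unions of closed embeddings (axiom 4 for a $\Gcl$-closed substructure uses that the relevant realization is $\Gamma$-algebraic, hence remains in the closed substructure), so every structure in $\K(M(\Fbase))$ satisfies the axioms.

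For the reverse direction, suppose $F$ satisfies axioms 1--5. Axioms 1--3 make $F$ a full $\Gamma$-field with $\Fbase \strong F$, so the pregeometry $\Gcl^F$ of Section~\ref{predim section} is defined, and axiom 5 is the countable closure property. The main task is to verify that $F$, considered in the language $\Lqe$ and equipped with $\Gcl^F$, satisfies the weakly quasiminimal pregeometry axioms QM1$'$, QM4, QM5a, QM5b of Proposition~\ref{new QM axioms}. The arguments run parallel to those of Theorem~\ref{Fraisse limit is QPS}, with axiom 4 taking the role of $\Cfg$-saturation: wherever the earlier proof embedded a finitely generated strong extension $A \strong \gen{A,\beta}$ given by a good basis $\beta$, we extract the free rotund locus $V = \loc(\beta/A)$ of dimension $dn$ (via Corollary~\ref{rotund cor}) and invoke axiom 4 inside $F$ to obtain the required realization; the good-basis property of $\beta$ then allows us to extend the relevant isomorphism as in Theorem~\ref{Fraisse limit is QPS}.

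Having shown $F$ is a weakly quasiminimal pregeometry structure, we identify its dim-$0$ closed substructure $\Gcl^F(\Fbase)$: it is countable (by CCP), $\Gamma$-algebraic over $\Fbase$ (Lemma~\ref{closure as union}), and $\aleph_0$-saturated for $\Gamma$-algebraic extensions (axiom 4 applied via Corollary~\ref{rotund cor}, since a finitely generated $\Gamma$-algebraic extension has good basis with free rotund locus of dimension $dn$), so by Proposition~\ref{uniqueness of M_0} it is isomorphic to $M_0(\Fbase)$. The class of $L_\Fbase$-structures satisfying axioms 1--5 is closed under $\Gcl$-closed substructures and directed unions of closed embeddings (as already argued in the forward direction), contains $M(\Fbase)$, and all its members are weakly quasiminimal pregeometry structures with a common dim-$0$ closed substructure (namely $M_0(\Fbase)$); by Fact~\ref{cat theorem}, this class is precisely $\K(M(\Fbase))$, and $F$ is isomorphic to its unique member of matching cardinal dimension.

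The main obstacle is the verification of QM5a (homogeneity over $\emptyset$) in the reverse direction. In the Fraïssé-limit setting, homogeneity is intrinsic to the construction, whereas here it must be extracted purely from axiom 4 via iterated applications of Corollary~\ref{rotund cor}: each back-and-forth step must produce a good basis for the relevant strong extension and realize its free rotund locus of dimension $dn$ inside $F$, ensuring along the way that the newly realized extension is itself strong in $F$ (which is guaranteed by the rotundity-and-$\kO$-linear-independence clause of axiom 4).
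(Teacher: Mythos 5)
Your forward direction is essentially the paper's argument and is fine. The reverse direction, however, departs from the paper and contains two genuine gaps. First, you propose to verify QM5a ($\aleph_0$-homogeneity over $\emptyset$ in the form ``equal quantifier-free types implies an automorphism'') for an arbitrary model $F$ of the axioms via Proposition~\ref{new QM axioms}; but that proposition applies only to \emph{countable} structures --- the equivalence of QM5a with QM5 over $\emptyset$ comes from a back-and-forth, which cannot produce an automorphism of an uncountable $F$. For uncountable $F$ you would have to verify QM4 and QM5 in their original forms, and the ``extend $\theta_0$ to an automorphism'' steps of Theorem~\ref{Fraisse limit is QPS}, which rest on $\Cfg$-homogeneity of the \Fraisse\ limit, have no analogue supplied by axiom 4 alone. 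Second, and more fundamentally, your concluding step misreads Fact~\ref{cat theorem}: that fact says every member of $\K(M(\Fbase))$ is a weakly quasiminimal pregeometry structure and that $\K(M(\Fbase))$ has exactly one member per cardinal dimension; it does \emph{not} say that every weakly quasiminimal pregeometry structure whose dimension-$0$ closed part is $M_0(\Fbase)$ belongs to $\K(M(\Fbase))$. Membership in $\K(M(\Fbase))$ is by definition a matter of being generated from $M(\Fbase)$ and its closed substructures by isomorphism and directed unions of closed embeddings, so one must actually exhibit $F$ in that form; identifying only the dimension-$0$ closed substructure does not do this.

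The paper closes exactly this gap without re-verifying any QM axioms for $F$: Lemma~\ref{SGClosed lemma} converts axiom 4 (given axioms 1--3) into $\aleph_0$-saturation for $\Gamma$-algebraic extensions; then every \emph{finite-dimensional} closed substructure $F_0$ of $F$ is $\Gamma$-algebraic over a finitely generated strong $C = \gen{\Fbase,\alpha_1,\ldots,\alpha_n}$ (obtained from a $\Gcl$-basis via Lemma~\ref{changing sorts}) and is saturated for $\Gamma$-algebraic extensions, hence by Proposition~\ref{uniqueness of M_0} is isomorphic to $M_0(C) \iso \Gcl^M(C)$, a closed substructure of $M(\Fbase)$. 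Since $F$ is the directed union of its finite-dimensional closed substructures (countable by axiom 5), it lies in $\K(M(\Fbase))$. If you wish to keep your strategy, you must carry out this identification for closed substructures of every finite dimension, not just dimension $0$, and drop the appeal to Fact~\ref{cat theorem} as a substitute for that identification.
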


Observe that if $F$ is a $\Gamma$-field for the $\OO$-module $G$, and in particular if it is full so satisfies axiom 1, then it satisfies axioms 2 and 3 if and only if $\Fbase \strong F$.

\begin{lemma}\label{SGClosed lemma}
If $F$ satisfies axioms 1--3 then it also satisfies axiom 4 if and only if it is $\aleph_0$-saturated for $\Gamma$-algebraic extensions (in the sense of Definition~\ref{alg-sat defn}).
\end{lemma}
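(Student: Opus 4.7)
The two directions translate between the algebraic-geometric content of axiom~4 and the model-theoretic saturation condition, bridged by Proposition~\ref{rotund prop} and Corollary~\ref{rotund cor}.

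For $(\Leftarrow)$, assume $\aleph_0$-saturation for $\Gamma$-algebraic extensions, and let $V \subs G^n$ be a free and rotund irreducible subvariety of dimension $dn$ defined over $F$, together with a finite tuple $a$ from $\Gamma(F)$. The plan is to take $A$ to be a finitely generated strong $\Gamma$-subfield of $F$ containing $\Fbase$, $a$ and enough $\Gamma$-points of $F$ to define $V$ (using Lemma~\ref{changing sorts} to translate the finitely many field parameters of $V$ into $\Gamma$-points). In a sufficiently large ambient extension, I would then construct a $\Gamma$-field $B = \gen{Ab}$ by adjoining to $\Gamma$ a tuple $b$ generic in $V$ over $A$, so that $\loc(b/A) = V$, $b$ is a basis of $B$ over $A$, and $\delta(b/A) = \dim V - dn = 0$. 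The rotundness-implies-strong calculation inside Proposition~\ref{rotund prop} (which does not actually use fullness of $A$) yields $A \strong B$, and $\delta(b/A) = 0$ forces $B/A$ to be $\Gamma$-algebraic. Saturation then produces a strong embedding $B \into F$ over $A$, and the image of $b$ is a point of $V(F) \cap \Gamma(F)^n$ that is $\kO$-linearly independent over $\Gamma(A) \supseteq \Gamma(\Fbase) \cup a$, as axiom~4 requires.

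For $(\Rightarrow)$, assume axiom~4 and let $A \strong F$ be finitely generated over $\Fbase$, with $A \strong B$ a finitely generated $\Gamma$-algebraic extension. The first step is a preliminary reduction to the hypothesis of Corollary~\ref{rotund cor}: working inside $\Bfull$ I would embed $\Afull$ over $A$ (by Theorem~\ref{full extension}) and replace $A$ by $A^\sharp := \Afull \wedge B$, a finitely generated $\Gamma$-algebraic extension of the original $A$ satisfying $\Afull \wedge B = A^\sharp$. Using axiom~1 (that $F$ is full) together with $A \strong F$, I would embed $\Afull$ into $F$ over $A$ as a strong full $\Gamma$-subfield $A^\circ$ by iteratively adjoining $\Gamma$-points of $F$ projecting to prescribed algebraic points; the image of $A^\sharp$ inside $A^\circ$ is strong in $F$ by Lemma~\ref{delta=0 implies strong}. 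After renaming $A^\sharp$ as $A$ we retain $A \strong F$ with $A$ finitely generated over $\Fbase$, and now $\Afull \wedge B = A$.

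Now choose a basis $b$ of $B$ over $A$ and divide it by a suitable integer via Proposition~\ref{good bases exist} so that $b$ becomes a good basis. Corollary~\ref{rotund cor} then gives that $V := \loc(b/A)$ is free and rotund of dimension $dn$. Applying axiom~4 to $V$, with the auxiliary parameter tuple consisting of $\kO$-module generators of $\Gamma(A)$ over $\Gamma(\Fbase)$, produces $b' \in V(F) \cap \Gamma(F)^n$ which is $\kO$-linearly independent over $\Gamma(A)$. A predimension calculation using $A \strong F$ forces $\delta(b'/A) = 0$ and hence $\loc(b'/A) = V$, so goodness of $b$ yields a $\Gamma$-field isomorphism $\gen{Ab} \iso \gen{Ab'}_F$ over $A$, which is the desired strong embedding of $B$ into $F$ over $A$. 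The main obstacle is precisely the non-fullness of $A$: without the reduction to $\Afull \wedge B = A$, the locus $\loc(b/A)$ need not be free and axiom~4 cannot be applied directly; the crucial point is that embedding $\Afull$ into $F$ uses only axiom~1, which avoids any circular appeal to the saturation we are trying to establish.
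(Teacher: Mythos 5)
Your proposal is correct and follows essentially the same route as the paper's proof: the same reduction to the case $\Afull \wedge B = A$ via Theorem~\ref{full extension}, the same use of good bases and Corollary~\ref{rotund cor} to produce a free, rotund $V$ of dimension $dn$, the same predimension argument showing the point supplied by axiom~4 is generic in $V$, and the same construction via Proposition~\ref{rotund prop} for the converse. Your added remarks (absorbing the parameters of $V$ into $\Gamma$-points, and noting that the rotund-implies-strong direction of Proposition~\ref{rotund prop} does not need fullness of $A$) are correct refinements of details the paper leaves implicit.
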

\begin{proof}
First assume that $F$ satisfies axioms 1--4. Suppose $A \strong F$ is finitely generated over $\Fbase$ and $A \strongembed B$ is a finitely generated $\Gamma$-algebraic extension. We have assumed that $\Fbase$ is essentially finitary, so $\Afull$ and $\Bfull$ are unique up to isomorphism as extensions of $A$ and $B$ respectively, so $\Afull$ embeds (strongly) into $\Bfull$. Choose an embedding. Since $\Afull$ embeds in $F$ we have $\Afull \wedge B$ embedding (strongly) into $F$, as summarised in the diagram below.
\[\begin{diagram}
&&&& \Afull & \rTo & & F\\
& &&\ruTo(4,2)  \ruDashto & & \rdDashto(3,2)\\
A & \rTo & \Afull \wedge B &&&&& \Bfull\\
& \rdTo(4,2) & & \rdDashto & & & \ruTo(3,2)\\
&&&& B
\end{diagram}\]

So it remains to embed $B$ in $F$ over $\Afull \wedge B$, so we may assume $A = \Afull \wedge B$. Let $a$ be a basis for $A$ over $\Fbase$ and let $b \in \Gamma(B)^n$ be a good basis for $B$ over $A$, which exists by Proposition~\ref{good bases exist}. Let $V = \loc(b/A)$, a subvariety of $G^n$. Then $V$ is free and rotund by Corollary~\ref{rotund cor}. Since $B$ is $\Gamma$-algebraic over $A$ we have $\delta(b/A) = 0$, so $\dim V = dn$. 

 Then, by axiom 4 applied to an absolutely irreducible component of $V$, there is $c \in \Gamma(F)^n \cap V(F)$, $\kO$-linearly independent
over $\Gamma(\Fbase) \cup \{a\}$. Since $A \strong F$ we have $\delta(c/A) \ge 0$, so $\td(c/A) = dn = \dim V$. Thus $c$ is generic in $V$ over $A$. Let $C$ be the $\Gamma$-subfield of $F$ generated by $A$ and $c$. Then $c$ is a good basis of $C$ over $A$ because this is a property of $\loc(c/A)$, that is, of $V$. So, by the definition of a good basis, $C$ is isomorphic to $B$ over $A$. So $F$ is $\aleph_0$-saturated for $\Gamma$-algebraic extensions over $\Fbase$.

For the converse, suppose that $F$ is $\aleph_0$-saturated for $\Gamma$-algebraic extensions over $\Fbase$. Let $V$ be a free and rotund absolutely irreducible subvariety of $G^n$ which is defined over $F$ and of dimension $dn$, and let $a$ be a finite tuple from $\Gamma(F)$. Extending $a$ if necessary, we may assume that $A = \gen{\Fbase, a} \strong F$ and that $V$ is defined over $a$. 

Consider a $\Gamma$-field extension $B$ of $A$, generated by a tuple $b \in \Gamma(B)$ such that $\loc(b/A) = V$. By Proposition~\ref{rotund prop} the extension is strong. Since $V$ is free, $\ldim_\kO(b/\Gamma(A)) = n$ and so $\delta(b/A) = \dim V - dn = 0$. So $B$ is a 
$\Gamma$-algebraic extension.  Thus $B$ embeds into $F$ over $A$ and so we have $b \in V(F) \cap \Gamma(F)^n$ which is $\kO$-linearly independent over $\Gamma(\Fbase) \cup \{a\}$ as required. 
\end{proof}

\begin{proof}[Proof of Theorem~\ref{main theorem}]

Suppose $F$ is a $\Gamma$-closed field. Then, by definition, $F$ is (isomorphic to) a closed substructure of the canonical model $M$ or 
is obtained from $M$ (and its closed substructures) as the union of a directed system of closed embeddings. If $F$ is a closed substructure of $M$ then certainly it is a full $\Gamma$-field strongly extending $\Fbase$, so it satisfies axioms 1--3, and it is countable so satisfies axiom 5. 

For axiom 4, suppose $A\strong F$ is finitely generated and $A \xrightarrow{\strong} B$ is a finitely generated and $\Gamma$-algebraic extension. Since $M$ is $\Cfg$-saturated, $B$ embeds strongly into $M$ over $A$ and since $F$ is closed in $M$, $B \subs F$. So by Lemma~\ref{SGClosed lemma}, $F$ satisfies axiom 4.

So closed substructures of $M$ satisfy axioms 1--5. Axioms 1--4 are preserved under unions of directed systems of strong embeddings, and all the axioms are preserved under unions of directed systems of closed embeddings, hence all $\Gamma$-closed fields satisfy all 5 axioms of $\GCFccp(\Fbase)$.

Suppose now that $F$ satisfies axioms 1--5. Since it satisfies axioms 1--3, we have the pregeometry $\Gammacl^F$ on $F$. If $F_0$ is a finite-dimensional substructure of $F$ then $F_0$ satisfies axioms 1--3 and 5 immediately and, using Lemma~\ref{SGClosed lemma}, also axiom 4.  Let $\abar$ be a $\Gcl^F$-basis for $F_0$. Using Lemma~\ref{changing sorts}, for each $a_i \in \abar$, choose $\alpha_i \in \Gamma(F_0)$, interalgebraic with $a_i$ over $\Fbase$. Let $C = \gen{\Fbase, \alpha_1,\ldots,\alpha_n}$. Then $F_0$ is $\Gamma$-algebraic over $C$ and is saturated for $\Gamma$-algebraic extensions so, by Proposition~\ref{uniqueness of M_0}, $F_0 \iso M_0(C)$. Now choose an embedding of $C$ into $M$ and note that $\Gcl^M(C)$ is also $\Gamma$-algebraic over $C$ and is saturated for $\Gamma$-algebraic extensions so is also isomorphic to $M_0(C)$. Hence $F_0$ is a $\Gamma$-closed field.

%If $\Gdim(F_0)$ is infinite, we observe that $F_0$ is actually $\Cfg$-saturated. Indeed suppose $A \strong F_0$ is finitely generated and $A \xrightarrow{\strong} B$ is a finitely generated extension of dimension $n$. Let $F_n$ be a closed submodel of $F_0$ of dimension $n$. Then, by the above, $B$ embeds strongly into $F_n$ over $A$, and hence into $F_0$ over $A$. So $F_0$ is $\Cfg$-saturated and hence is isomorphic to $M$, so is in $\GCFccp(\Fbase)$.

Now $F$ is the union of the directed system of all its finite-dimensional closed substructures, which by CCP are countable, and the class of $\Gamma$-closed fields is closed under such unions by definition, hence $F$ is a $\Gamma$-closed field.
\end{proof}

We can now prove Theorem~\ref{main Gamma theorem}.
\begin{proof}[Proof of Theorem~\ref{main Gamma theorem}]
By Theorem~\ref{Fraisse limit is QPS}, $M(\Fbase)$ is a quasiminimal pregeometry structure, so by Fact~\ref{cat theorem} the class $\K(M(\Fbase))$ is uncountably categorical and every model is quasiminimal. By Theorem~\ref{main theorem}, the list of axioms $\GCFccp(\Fbase)$ axiomatizes the class $\K(M(\Fbase))$.
\end{proof}

\begin{remarks}
\begin{enumerate}
\item It is easy to show that axioms 1--4 are $\Loo$-expressible, and axiom 5 is expressible as an $\Looq$-sentence.

\item  If we add an (\Loo-expressible) axiom stating that $F$ is infinite dimensional to axioms 1--4, the only countable model is $M$ and so we get an $\aleph_0$-categorical, and hence complete, $\Loo$-sentence.
\end{enumerate}
\end{remarks}

%Examples section
% !TEX root =  PEM2_1.tex

\section{Specific applications of the general construction}\label{applications section}
We list several instances of $\Gamma$-fields that are of interest, starting with the original example.

\subsection{Pseudo-exponentiation}

We take $\bk = \Q$, $G_1 = \Ga$ and $G_2 = \Gm$. Take $\OO = \Z$. Let $\tau$ be transcendental, and take $\Fbase$ to be the field $\Qab(\tau)$, where $\Qab$ is the extension of $\Q$ by all roots of unity. For each $m \in \N^+$, choose a primitive $m^{\mathrm{th}}$ root of unity $\omega_m$, such that for all $m,n \in \N^+$ we have $(\omega_{mn})^n = \omega_m$. We take $\Gamma(\Fbase)$ to be the graph of a homomorphism from the $\Q$-linear span of $\tau$ to the roots of unity such that $\tau/m \mapsto \omega_m$ for each $m \in \N^+$.
(This $\Fbase$ is called $\mathrm{SK}$ in the paper \cite{FPEF}.)

Then the construction gives a class of fields $F$ with a predicate $\Gamma(F)$ defining the graph of a surjective homomorphism from $\Ga(F)$ to $\Gm(F)$, with kernel $\tau \Z$, which we denote by $\exp$. The predimension inequality is precisely Schanuel's conjecture, and the strong existential closedness axiom is known as strong exponential-algebraic closedness. Thus we obtain a proof of Theorem~\ref{main exp theorem}, which we restate in explicit form.
\begin{theorem}
Up to isomorphism, there is exactly one model $\langle F;+,\cdot,\exp \rangle$ of each uncountable cardinality of the following list $\ECFskccp$ of axioms.
\begin{description}
 \item[1. ELA-field] $F$ is an algebraically closed field of characteristic zero, and $\exp$ is a surjective homomorphism from $\ga(F)$ to $\gm(F)$.

 \item[2. Standard kernel] the kernel of $\exp$ is an infinite cyclic group generated by a transcendental element $\tau$.

 \item[3. Schanuel Property] The \emph{predimension function} 
\[\delta(\xbar) \leteq \td(\xbar, \exp(\xbar))- \ldim_\Q(\xbar)\]
satisfies $\delta(\xbar) \ge 0$ for all tuples $\xbar$ from $F$. 

\item[4. Strong exponential-algebraic closedness] If $V$ is a rotund, additively and multiplicatively free subvariety of $\ga^n\cross \gm^n$ defined over $F$ and of dimension $n$, and $\abar$ is a finite tuple from $F$, then there is $\xbar$ in $F$ such that $(\xbar,e^\xbar) \in V$ and $\xbar$ is $\Q$-linearly independent over $\abar$ (that is, no non-zero $\Q$-linear combination of the $x_i$ lies in the $\Q$-linear span of the $a_i$).

\item[5. Countable Closure Property] For each finite subset $X$ of $F$, the exponential algebraic closure $\ecl^F(X)$ of $X$ in $F$ is countable. 
\end{description}
\end{theorem}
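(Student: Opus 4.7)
My plan is to realize this theorem as a direct instance of Theorem~\ref{main Gamma theorem} combined with the axiomatization Theorem~\ref{main theorem}, specialised to the (EXP)-type data $G_1 = \Ga$, $G_2 = \Gm$, $d = 1$, $\OO = \Z$, $\kO = \Q$, $\bk = \Q$, and $\Fbase = \Qab(\tau)$ equipped with the $\Gamma$-structure given by the graph of $\tau/m \mapsto \omega_m$. The bulk of the work is bookkeeping: confirm that the classical exponential-field axioms translate term-by-term to the general $\Gamma$-field axioms, apply the previously established categoricity result, and invoke the axiomatization theorem.

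First I would verify that $\Fbase = \Qab(\tau)$ is a $\Gamma$-field in the sense of Section~\ref{Gamma field defn section}. The submodule $\Gamma(\Fbase) \subs \Ga(\Fbase)\cross\Gm(\Fbase)$ is divisible by construction (the coherent choice of roots of unity $(\omega_m)$ gives division by every positive integer), its projection to $\Ga$ is $\Q\tau$, its projection to $\Gm$ is the full torsion $\Tor_2 = \mu_\infty$ of $\Gm$, and $\Tor_1 = 0 \in \Q\tau$, so both projections contain the relevant torsion subgroups. The field $\Qab(\tau)$ is generated (over $\bk = \Q$) by these projections. Moreover $\Fbase$ is finitely generated as a $\Gamma$-field (by the single element $(\tau,\omega_1) = (\tau,1)$, or rather by the whole divisible $\Z$-span of $(\tau,1)$, which is of $\Q$-linear dimension $1$), hence essentially finitary. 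This puts us in case (EXP) of Theorem~\ref{main Gamma theorem}, which yields uncountable categoricity and quasiminimality for $\K(M(\Fbase))$.

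Next I would check that the list $\ECFskccp$ is literally the specialisation of $\GCFccp(\Fbase)$. Axiom~1 (ELA-field) is axiom~1 of $\GCFccp$ together with the remark that when $\ker_2(F)$ is trivial (forced by the kernel axiom below and the standard kernel of $\Fbase$), the submodule $\Gamma(F) \subs \ga(F)\cross\gm(F)$ with surjective projections is exactly the graph of a surjective homomorphism $\exp : \ga(F) \to \gm(F)$. Axiom~2 (standard kernel: infinite cyclic generated by a transcendental $\tau$) is equivalent to ``$F$ satisfies the atomic diagram of $\Fbase$'' plus $\ker_i(F) = \ker_i(\Fbase)$: $\ker_1(\Fbase) = \tau\Z$ and $\ker_2(\Fbase) = 0$, which translates to $\ker\exp = \tau\Z$, while the diagram of $\Fbase$ fixes the compatible system of roots of unity. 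Axiom~3 is the predimension inequality with $d = 1$, recovering Schanuel's conjecture once we substitute $\Gamma = $ graph of $\exp$ (and note $\Gamma(\Fbase)$ is of $\Q$-linear dimension $1$, absorbed into the constant). Axiom~4 is strong $\Gamma$-closedness; under the identification of $\Gamma$ with the graph of $\exp$, freeness on each side becomes additive and multiplicative freeness, and rotundity matches the classical definition. Axiom~5 is the countable closure property, with $\Gammacl^F = \ecl^F$ under the same identification.

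Having established this dictionary, Theorem~\ref{main theorem} gives that $\ECFskccp$ axiomatizes the quasiminimal class $\K(M(\Fbase))$, and Theorem~\ref{main Gamma theorem} then delivers uncountable categoricity. The only step I expect to require care is the axiom-by-axiom matching in axiom~2, because $\GCFccp$ asks for the full atomic diagram of $\Fbase$ while $\ECFskccp$ only asks for the standard kernel; I would close this gap by noting that once $\ker\exp = \tau\Z$ with $\tau$ transcendental and $F$ is algebraically closed with $\exp$ a surjective homomorphism, the images $\exp(\tau/m) \in \gm(F)$ are forced (by the kernel condition) to be primitive $m$-th roots of unity compatible with the chosen system up to an automorphism of $\Qab$, and any such choice is isomorphic to the distinguished one; hence the full atomic diagram of $\Fbase$ is determined up to isomorphism of the ambient structure, and nothing is lost. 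No additional computation is required beyond these identifications.
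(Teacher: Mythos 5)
Your proposal is correct and follows essentially the same route as the paper: instantiate the general categoricity and axiomatization theorems with $G_1=\Ga$, $G_2=\Gm$, $\OO=\Z$, $\Fbase=\Qab(\tau)$, and then reconcile the two places where $\ECFskccp$ differs from $\GCFccp(\Fbase)$ — the absolute versus relative Schanuel property (via the addition formula and transcendence of $\tau$) and the standard-kernel axiom versus the atomic diagram of $\Fbase$ (via the forced coherent system of roots of unity and an automorphism of $\Qab$). The paper's proof is just a terser version of exactly these observations.
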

\begin{proof}
We apply Theorem~\ref{main Gamma theorem}, but note that axioms 2 and 3 are slightly different from the axioms given in the statement of Theorem~\ref{main theorem}. The Schanuel property holds on our choice of $\Fbase$ because $\tau$ is transcendental, and it follows from the addition property for $\delta$ that the two versions of axiom 3 are equivalent in this case. Since $\tau$ is transcendental and the kernel is standard,  it follows that $\Fbase$ embeds strongly in $F$, so the two versions of axiom 2 are also equivalent.
\end{proof}
We denote the canonical model of cardinality continuum by $\B$.

\subsection{Incorporating a counterexample to Schanuel's conjecture}

We proceed as in the previous example, except now we choose an irreducible polynomial $P(x,y) \in \Z[x,y]$ and take $(\epsilon, \tau)$ to be a generic zero of the polynomial $P(x,y)$. (We assume that $P$ is such that neither $\epsilon$ nor $\tau$ is zero.) Choose a division sequence $(\epsilon_m)$ for $\epsilon$, that is, numbers such that $\epsilon_1 = \epsilon$ and $(\epsilon_{mn})^n = \epsilon_m$ for all $m,n \in\N^+$. Now take $K$ to be the field $\Qab(\tau,(\epsilon_m)_{m \in \N^+})$, and define $\Gamma(K)$ to be the graph of a homomorphism from the $\Q$-linear span of $\tau$ and 1, with $\tau/m \mapsto \omega_m$ as above and $1/m \mapsto \epsilon_m$.

Now the construction gives us a canonical model $\B_P$, the unique model of cardinality continuum of almost the same list of axioms as those for $\B$, except that Schanuel's conjecture has this exception with the formal analogues $\epsilon$ and $\tau$ of $e$ and $2\pi i$ being algebraically dependent via the polynomial $P$. More precisely, the predimension axiom is replaced by an axiom scheme stating that $\exp(1)$ and $\tau$ are transcendental, that $P(\exp(1),\tau) = 0$, and the condition that for all tuples $\abar$, $\td(\abar,\exp(\abar)/\tau,\exp(1)) - \ldim_\Q(\abar/\tau,1) \ge 0$.

More generally, we can take any finitely generated partial exponential field with standard kernel (that is, a finitely generated $\Gamma$-field for the appropriate groups and kernels) as $\Fbase$ and do the same construction to build a quasiminimal exponential field $\MM(\Fbase)$ of size continuum with counterexamples to the Schanuel property within a finite-dimensional $\Q$-vector space, but with the Schanuel property holding over that vector space. Each $\MM(\Fbase)$ is unique up to isomorphism as a model of appropriate axioms, just as $\B$ is. One could conjecture that $\Cexp$ is isomorphic to one of these. Several people have asked us if it might be possible to prove Schanuel's conjecture easily by some method showing that $\Cexp$ must be isomorphic to \B, just because \B\ is categorical. Examples such as these show that soft methods which ignore transcendental number theory and analytic considerations cannot hope to work.

%\subsection{$2^z$}

%In $\B$, choose a logarithm of 2, that is, a $\lambda$ such that $\exp(\lambda) = 2$. Now define a new exponential map $t(z) = \exp_\B(\lambda z)$. So $t(z)$ is a single-valued function which could be written as $z \mapsto 2^z$.

%\Note{Show that $(\B,t)$ is rich and satisfies appropriate Schanuel property. Is this worth doing? Observe that all logarithms of 2 have the same Galois type over the empty set in $\B$, so different choices of $\lambda$ lead to isomorphic structures.}

\subsection{Pseudo-Weierstrass $\wp$-functions}\label{pseudo-p section}

%\Note{To Do: For $\exp$ and $\wp$, relate the predimension to Bertolin's conjectures and discuss the transcendence of the kernel generators given that $E$ is defined over $\Q\alg$.
% JK: i.e. for certain choices of base structure (kernel) $C$ should conjecturally correspond to the Weierstrass $\wp$ functions.
%Observe that the CCP holds in the complex setting.}

Let $E$ be an elliptic curve over a number field $K_0$. Choose a Weierstrass equation for $E$
\[Y^2 Z = 4X^3 - g_2 X Z^2 - g_3 Z^3\]
with $g_2,g_3 \in K_0$, which fixes an embedding of $E$ into projective space $\mathbb P^2$, with homogeneous coordinates $[X:Y:Z]$. Apart from the point $O = [0:1:0]$ at infinity, we can identify $E$ with its affine part, given by solutions of the equation
\[y^2 - 4x^3-g_2 x - g_3 = 0\]
in $\mathbb A^2$.

For our construction we take $G_1 = \Ga$ and $G_2 = E$, and we take $\OO = \End(E)$, so $\OO = \Z$ if $E$ does not have complex multiplication (CM) and $\OO = \Z[\tau]$ if $E$ has CM by the imaginary quadratic $\tau$. In the CM case, we assume that $\tau \in K_0$ (and adjoin it if not). Take $\omega_1$ transcendental over $K_0$ and $\omega_2$ transcendental over $K_0(\omega_1)$ if $E$ does not have CM, or $\omega_2 = \tau \omega_1$ if $E$ has CM by $\tau$.

As a field, we define $\Fbase = K_0(\Tor(E),\omega_1,\omega_2)$, where $\Tor(E)$ means the full torsion group of $E$, which is contained in $E(\Qalg)$. We define $\Gamma(\Fbase)$ to be the graph of a surjective $\OO$-module homomorphism from $\Q \omega_1 + \Q \omega_2$ to $\Tor(E)$, with kernel $\Lambda = \Z\omega_1 + \Z\omega_2$. While this may not specify $\Fbase$ up to isomorphism, we will see that Serre's open image theorem allows us to specify $\Fbase$ with only a finite amount of extra information.

In a model $M$, $\Gamma(M)$ will be the graph of a surjective homomorphism $\exp_{E,M} :\Ga(M) \to E(M)$ with kernel $\Lambda$. Using our chosen embedding of $E$ into $\mathbb P^2$, we can identify the components of the function $\exp_{E,M}$ with functions $\wp,\wp' : M \to M \cup \{\infty\}$, where $\exp_{E,M}(a) = [\wp(a) : \wp'(a) : 1]$. We call the function $\wp$ a ``pseudo-Weierstrass $\wp$-function''. 

Note that in our model $M$, $\Lambda$ is definable by the formula $\exp_{E,M}(x) = O$. In the non-CM case, $\Z$ is definable by the formula $\forall y[y \in \Lambda \to xy \in \Lambda]$, so $\Q$ is also definable as the field of fractions. In the CM case, these formulas define the rings $\Z[\tau]$ and $\Q[\tau]$.

Following \cite{Gav08}, we will apply the following version of Serre's open image theorem to show that only a finite amount of extra information is required to specify $\Fbase$ as a $\Gamma$-field.

\begin{fact}\label{fact:serre} Let $E$ be an elliptic curve defined over a number field $K_0$. Then there exists an $m \in \N$ such every $\End(E)$-module automorphism of the torsion $\Tor(E)$ which fixes the $m$-torsion $E[m]$ pointwise is induced by a field automorphism over $K_0$, that is, the natural homomorphism
    \[ \Gal(K_0^\alg/K_0(E[m])) \to  \Aut_{\End(E)}(\Tor(E)/E[m]) \]
    is surjective.
\end{fact}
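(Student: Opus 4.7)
The plan is to reduce Fact~\ref{fact:serre} to an immediate application of Serre's open image theorem, with the main work being the identification of the target group with an open neighborhood of the identity in the automorphism group of the Tate module.

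First I would set up the identification between $\Aut_{\End(E)}(\Tor(E))$ and $\Aut_{\End(E)}(\Tate(E))$. Since $\Tor(E)$ is the direct limit of the finite groups $E[n]$ (with inclusions via multiplication) and $\Tate(E)$ is the inverse limit (with transition maps given by multiplication), the two automorphism groups are canonically isomorphic, and each equals $\GL_2(\hat{\Z})$ in the non-CM case, or $\hat{\OO}^\times$ in the CM case where $\OO = \End(E)$. Crucially, since by the paper's conventions all endomorphisms of $E$ are defined over $K_0$, Galois commutes with the $\OO$-action, so the usual Galois representation factors through $\Aut_{\End(E)}(\Tate(E))$, giving $\rho : \Gal(K_0^\alg/K_0) \to \Aut_{\End(E)}(\Tor(E))$. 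The profinite topology on the target admits as a basis of open neighborhoods of the identity the subgroups
\[ U_m := \{\phi \in \Aut_{\End(E)}(\Tor(E)) : \phi|_{E[m]} = \id\}, \]
which are precisely the kernels of the natural reduction maps $\Aut_{\End(E)}(\Tor(E)) \to \Aut_{\End(E)}(E[m])$, and which I would identify with the target group $\Aut_{\End(E)}(\Tor(E)/E[m])$ in the statement.

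Next I invoke Serre's open image theorem: the image of $\rho$ is open in $\Aut_{\End(E)}(\Tate(E))$. The non-CM case is the celebrated result of Serre from \emph{Propri\'et\'es galoisiennes des points d'ordre fini des courbes elliptiques}; the CM case is classical, essentially due to Serre--Tate, and follows because the image lands in the commutative group $\hat{\OO}^\times$, with openness coming from the non-triviality of the representation on each $\ell$-adic Tate module. By openness, there exists $m \in \N^+$ with $U_m \subseteq \mathrm{Im}(\rho)$.

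Finally, unpack: given any $\phi \in U_m$, choose $\sigma \in \Gal(K_0^\alg/K_0)$ with $\rho(\sigma) = \phi$; then $\sigma$ acts trivially on $E[m]$, so $\sigma \in \Gal(K_0^\alg/K_0(E[m]))$, and $\rho$ restricts to a surjection $\Gal(K_0^\alg/K_0(E[m])) \to U_m$, which is the required statement. The main (only) obstacle is the black-box application of Serre's theorem itself; everything else is bookkeeping around the duality between $\Tor(E)$ and $\Tate(E)$ and the translation of openness into the mod-$m$ condition.
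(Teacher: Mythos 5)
Your proposal is correct and follows the same route as the paper: the paper's proof is literally just the citation of Serre's open image theorem (the non-CM case from the Introduction of \cite{Serre72} and the CM case from Section 4.5 there), and your argument supplies exactly the standard bookkeeping — identifying the congruence subgroups $U_m$ as a neighbourhood basis of the identity and translating openness of the image into $U_m \subseteq \mathrm{Im}(\rho)$ — that the paper leaves implicit.
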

\begin{proof}
    When $\End(E) \isom  \Z$, this is Serre's open image theorem
    \cite[Introduction (3)]{Serre72}. When $E$ has complex multiplication, 
    it is the analogous classical open image theorem \cite[Section 4.5,
    Corollaire]{Serre72}.
\end{proof}
Unfortunately the proof does not give an effective bound for $m$, so given an explicit $K_0$ and $E$ we do not know how to compute it.

We can now prove the first half of Theorem~\ref{main p theorem}, which we restate precisely.
\begin{theorem}\label{technical p theorem}
Let $E$ be an elliptic curve over a number field $K_0 \subs \C$. Up to isomorphism, there is exactly one model of each uncountable cardinality of the following list $\PCFskccp(E)$ of axioms, and these models are all quasiminimal.
\begin{description}
 \item[1. Full $\wp$-field] $M$ is an algebraically closed field of characteristic zero, and $\Gamma$ is the graph of a surjective homomorphism from $\ga(M)$ to $E(M)$, which we denote by $\exp_{E,M}$. We add parameters for the number field $K_0$.

 \item[2. Kernel and base (non-CM case)] There exist $\omega_1,\omega_2 \in \ga(M)$, $\Q$-linearly independent, such that the kernel of $\exp_{E,M}$ is of the form $\Z\omega_1 + \Z \omega_2$ and, for the number $m$ specified by Fact~\ref{fact:serre}, the algebraic type of the pair $(\exp_{E,M}(\omega_1/m), \exp_{E,M}(\omega_2/m)) \in E[m]^2$ over the parameters $K_0$ is specified.
 
 \item[2. Kernel and base (CM case)] There exists a non-zero $\omega_1 \in \ga(M)$ such that the kernel of $\exp_{E,M}$ is of the form $\Z[\tau]\omega_1$, and for the number $m$ specified by Fact~\ref{fact:serre}, the algebraic type of $\exp_{E,M}(\omega_1/m) \in E[m]$ over the parameters $K_0$ is specified.

\item[3. Predimension inequality] The \emph{predimension function} 
\[\delta(\xbar) \leteq \td(\xbar, \exp_{E,M}(\xbar))- \ldim_\kO(\xbar)\]
satisfies $\delta(\xbar) \ge 0$ for all tuples $\xbar$ from $M$ where $\kO = \Q$ or $\Q(\tau)$ as appropriate.
 
\item[4. Strong $\wp$-algebraic closedness] The specific case of strong $\Gamma$-closedness for this choice of $G$, $\OO$ and $\Fbase$, from Theorem~\ref{main theorem}.
\item[5. Countable Closure Property] as in Theorem~\ref{main theorem}.
\end{description}
\end{theorem}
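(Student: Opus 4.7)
The plan is to deduce Theorem~\ref{technical p theorem} as the specialization of Theorems~\ref{main Gamma theorem} and~\ref{main theorem} to the case (EXP) instance with $G_1 = \Ga$, $G_2 = E$, $\OO = \End(E)$, and the base $\Fbase$ constructed in Section~\ref{pseudo-p section}. First I verify that this $\Fbase$ is an essentially finitary $\Gamma$-field: $\Gamma(\Fbase)$ is a divisible $\OO$-submodule of $\Ga(\Fbase) \cross E(\Fbase)$, its projection to $E(\Fbase)$ contains all of $\Tor(E)$ by construction, its projection to $\Ga(\Fbase)$ is the divisible hull of $\Z\omega_1 + \Z\omega_2$ (resp.\ of $\Z[\tau]\omega_1$), and it has $\OO$-rank $2$ in the non-CM case and $1$ in the CM case, so is finitely generated as a $\Gamma$-field.

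By Theorem~\ref{main Gamma theorem}, the class $\K(M(\Fbase))$ is then uncountably categorical and every model is quasiminimal, and by Theorem~\ref{main theorem} it is axiomatized by $\GCFccp(\Fbase)$. It remains to check that $\PCFskccp(E)$ in the statement is equivalent to $\GCFccp(\Fbase)$. Axioms 1, 4, 5 translate directly once we note that since $\ker_2(\Fbase) = 0$ and $G_1 = \Ga$, fullness of $\Gamma(M)$ combined with surjectivity of $\pi_1$ forces $\Gamma(M)$ to be the graph of a surjective homomorphism $\exp_{E,M}: \Ga(M) \to E(M)$. For axiom 3, a direct computation gives $\delta(\omega_1,\omega_2/\bk) = 2 - 2 = 0$ in the non-CM case and $\delta(\omega_1/\bk) = 1-1 = 0$ in the CM case, and since $\Gamma(\Fbase)$ is the divisible $\OO$-hull of these periods together with $\Tor(E)$, the addition formula (Lemma~\ref{delta properties}(2)) gives that $\delta(\xbar) \geq 0$ for all $\xbar$ from $M$ iff $\delta(\xbar/\Fbase) \geq 0$ for all $\xbar$ from $\Gamma(M)$.

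The main obstacle is reconciling axiom 2 of $\PCFskccp(E)$, which specifies only finite data (the algebraic type of $m$-division points of the periods over $K_0$), with the requirement in axiom 2 of $\GCFccp(\Fbase)$ that $M$ model the full atomic diagram of $\Fbase$. Here I invoke Fact~\ref{fact:serre}, following Gavrilovich~\cite{Gav08}: given any two $\Gamma$-fields $\Fbase$, $\Fbase'$ satisfying the finite-data conditions of axiom 2, a field isomorphism between the finite extensions $K_0(E[m], \omega_1/m, \omega_2/m)$ and its primed counterpart exists by the specified equality of algebraic types. Serre's open image theorem then asserts that the Galois action on $\Tor(E)$ over these finite extensions realizes all of $\Aut_\OO(\Tor(E)/E[m])$, and a standard back-and-forth over this profinite action extends the isomorphism to an isomorphism of $\Gamma$-fields $\Fbase \iso \Fbase'$ (using also the $\ACF$-freeness of the transcendental periods over the torsion). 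Hence axiom 2 of $\PCFskccp(E)$ does characterize $\Fbase$ up to $\Gamma$-field isomorphism, so satisfying this finite-data axiom together with the kernel conditions is equivalent to satisfying the atomic diagram axiom. Combining this with the other axiom translations completes the proof.
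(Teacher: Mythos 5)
Your proposal is correct and follows essentially the same route as the paper: specialize Theorems~\ref{main Gamma theorem} and~\ref{main theorem} to this $\Fbase$, observe that the absolute predimension inequality is equivalent to the relative one plus transcendence of the periods, and use Fact~\ref{fact:serre} to show that the finite data in axiom 2 determines the atomic diagram of $\Fbase$. The paper's version of the last step is slightly more explicit — it builds the required $\End(E)$-module automorphism of $\Tor(E)$ fixing $E[m]$ and realizes it as a field automorphism via the surjectivity in Fact~\ref{fact:serre}, rather than a back-and-forth — but this is the same argument in substance.
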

\begin{proof}
Again we must show that these axioms are equivalent to those given in Theorem~\ref{main theorem}. As in the exponential case, we have the absolute form of the predimension inequality here, which is equivalent to the relative statement over the base together with the assertion that $\omega_1$ is transcendental and, in the non-CM case, that $\omega_2$ is transcendental over $K_0(\omega_1)$. It remains to show that the axioms here specify the atomic diagram of $\Fbase$.

So suppose that $M$ and $M'$ are both models of the axioms, and their bases, that is the $\Gamma$-subfields generated by the kernels, are $\Fbase$ and $\Fbase'$. We have $K_0$ as a common subfield, and the axioms give us kernel generators $(\omega_1,\omega_2) \in M^2$ and $(\omega_1',\omega_2') \in M'^2$ such that $(\alpha_1,\alpha_2) \leteq (\exp_{E,M}(\omega_1/m), \exp_{E,M}(\omega_2/m))$ and $(\alpha_1',\alpha_2') \leteq (\exp_{E,M'}(\omega_1'/m),\exp_{E,M'}(\omega_2'/m))$ have the same algebraic type over $K_0$. (In the CM case we define $\omega_2 = \tau \omega_1$ and $\omega_2' = \tau \omega_1'$ to treat the two cases at the same time.) The points $\alpha_i \in E$ generate the $m$-torsion subgroup $E[m]$ of $E$. So we can define a field isomorphism $\sigma_1 : K_0(E[m](M)) \to K_0(E[m](M'))$ by $\alpha_i \mapsto \alpha_i'$ for $i=1,2$. Then we extend $\sigma_1$ arbitrarily to a field isomorphism $\sigma_2: K_0(\Tor(E)(M)) \to K_0(\Tor(E)(M'))$.

Now define an $\End(E)$-module automorphism of $\Tor(E)(M)$ by
\[ \exp_{E,M}\left(\frac{\omega_1}{n_1} + \frac{\omega_2}{n_2}\right) \mapsto \sigma_2^{-1} \left( \exp_{E,M'}\left(\frac{\omega'_1}{n_1} + \frac{\omega'_2}{n_2}\right) \right) \]
for all $n_1,n_2 \in \Z$.
By construction of $\sigma_2$ this automorphism fixes $E[m]$ pointwise, so by Fact~\ref{fact:serre} it extends to a field automorphism $\sigma_3$ of $K_0(\Tor(E)(M))$. So defining $\sigma_4 = \sigma_2\circ\sigma_3$ we get a field isomorphism $\sigma_4 : K_0(\Tor(E)(M)) \to K_0(\Tor(E)(M'))$ such that $\sigma_4\left( \exp_{E,M}\left(\frac{\omega_1}{n_1} + \frac{\omega_2}{n_2}\right) \right) = \exp_{E,M'}\left(\frac{\omega'_1}{n_1} + \frac{\omega'_2}{n_2}\right)$ for all $n_1,n_2 \in \Z$.

The predimension inequality implies that $(\omega_1, \omega_2)$ and $(\omega_1',\omega_2')$ have the same field-theoretic type over $\Qalg$, so we can extend $\sigma_4$ to a field isomorphism $\sigma_5$ by defining $\sigma_5(\omega_i) = \omega_i'$ for $i=1,2$, and this $\sigma_5$ is a $\Gamma$-field isomorphism $\Fbase \to \Fbase'$ as required.
\end{proof}

Later in Proposition~\ref{wp predim prop} we will show that the predimension inequality above is the appropriate form of Schanuel's conjecture for the $\wp$-functions, and we will thereby complete the proof of Theorem~\ref{main p theorem}.

\subsection{Variants on $\wp$-functions.}
As in the exponential case, we can do variant constructions by changing the base field $\Fbase$ to a different finitely generated $\Gamma$-field, to incorporate some counterexamples to the predimension inequality. We can also do constructions of ``pseudo-analytic'' homomorphisms $\ga(M) \to E(M)$ which have no complex-analytic analogue. For example, choose an elliptic curve $E$ without complex multiplication and take the kernel lattice $\Lambda = \Z\omega_1 + \Z\omega_2$ for $\omega_1 / \omega_2 \in \R$, totally real (that is algebraic and such that it and all its conjugates are real), for example real quadratic. The construction still works perfectly well to produce a unique quasiminimal model, but no embedding of $\Lambda$ into $\C$ can be the kernel of a meromorphic homomorphism because it is dense on the line $\R \omega_1$.

\subsection{Exponential maps of simple abelian varieties}

This is the algebraic setup corresponding to the complex example described in Definition~\ref{analytic EXP defn}. Take $G_1 = \Ga^d$ and $G_2$ a simple abelian variety of dimension $d$, defined over a number field $K_0$. Take $\OO = \End(G_2)$, and suppose these endomorphisms are also defined over $K_0$. Fix an embedding of $K_0$ into $\C$.

Let $\omega_1,\ldots, \omega_{2d} \in \C^d$ be generators of a lattice $\Lambda$ such that $\C^d/\Lambda$ is isomorphic to $G_2(\C)$ as a complex $\OO$-module manifold.

We take $\Fbase$ to be the field generated by the $\omega_i$ together with $\Tor(G_2)$, and $\Gamma(\Fbase)$ to be the graph of an $\OO$-module homomorphism from $\Q \Lambda$ onto $\Tor(G_2)$.

For abelian varieties of dimension greater than 1 there is no non-conjectural analogue of Serre's open image theorem so we cannot be more specific about an axiomatization of the atomic diagram of the base. So we have no improvement on the statement of Theorem~\ref{main theorem} in this case.

\subsection{Factorisations via $\Gm$ of elliptic exponential maps}\label{theta factorization section}

The examples so far have all been of the exponential type, case (EXP). Here, we give an example in case (COR). Let $G_1 = \Gm$ and $G_2 = E$, an elliptic curve without complex multiplication, defined over a number field $K_0$. Let $\OO = \Z$.

Let $\omega$ be transcendental. As a field $\Fbase = K_0(\omega,\Tor(\Gm),\Tor(E))$ and we define $\Gamma(\Fbase)$ to be the graph of a surjective homomorphism from $\Q\omega + \Tor(\Gm)$ onto $\Tor(E)$ with kernel $\Z \omega$. Then for $\MM = \MM(\Fbase)$, $\Gamma(\MM)$ is the graph of a surjective homomorphism $\theta_\MM: \Gm(\MM) \to E(\MM)$.

In the complex case, the exponential map of $E$ factors through the exponential map of $\gm$ as
\[\begin{diagram}
\ga(\C) & \rTo^{[\wp:\wp':1]} & E(\C) \\
\dTo>{\exp} & \ruTo_{\theta} \\
\gm(\C)
\end{diagram}\]
and this pseudo-analytic map $\theta_\MM$ is an analogue of the complex map $\theta$. Since $E\times\Gm$ is not simple, the methods of this paper do not suffice to build a field $F$ equipped with a map $\theta$ and pseudo-exponential maps of $\gm$ and $E$ together, in which the analogue of the above commutative diagram would hold together with a suitable predimension inequality and a categoricity theorem for a reasonable axiomatization. However, it seems likely that this is achievable by combining the methods of this paper with those of \cite{TEDESV}.

\begin{question}
The main obstacle to stability for the first-order theory of $\B$ is the kernel. In this case the kernel is just a cyclic subgroup of $\gm$, and it is known that $\gm$ equipped with such a group is superstable. So it is natural to ask whether the first-order theory of $\MM$ in this case is actually superstable. One could even ask if any construction of type (B), say with finite rank kernels, produces a structure with a superstable first-order theory.
\end{question}

\subsection{Differential equations}

We now give an example of type \DE. Let $K_0$ be a countable field of characteristic 0, let $G_2$ be any simple semiabelian variety of dimension $d$ defined over $K_0$ and let $G_1 = \ga^d$. Let $\OO = \End(G_2)$. Let $C$ be a countable algebraically closed field extending $K_0$, and define $\Gamma(C) = G(C)$.

Now consider the amalgamation construction using $C$ as the base but considering only purely $\Gamma$-transcendental extensions of $C$, that is using the category $\Ctrans(C)$ in place of $\Cc(C)$. Theorem~\ref{Fraisse limit is QPS} shows we have a quasiminimal pregeometry structure, and hence a canonical model in each uncountable cardinality. The models we obtain are quasiminimal and $\Gcl(\emptyset) = C$.

This construction is also considered in \cite{TEDESV} where it is shown that the first-order theory of these models is $\aleph_0$-stable. 
In that paper it is also shown that if $\langle F; + ,\cdot,D \rangle$ is a differentially closed field and we define $\Gamma(F)$ to be the solution set to the exponential differential equation for $G_2$ then the reduct $\langle F; + ,\cdot,\Gamma \rangle$ is a model of the same first-order theory, and $C$ is the field of constants for the differential field.
The paper \cite{TEDESV} also considers the situation with several different groups $\Gamma_S$ relating to the exponential differential equations of different semiabelian varieties $S$, which do not have to be simple, but they do have to be defined over the constant field $C$.

%Comparison with analytic models
% !TEX root =  PEM2_1.tex

\section{Comparison with the analytic models}\label{analytic comparison}

Zilber conjectured that $\Cexp \iso \B$ and it seems reasonable to extend the conjecture to the exponential map of any simple complex abelian variety, and indeed to other analytic functions such as the function $\theta$ from section~\ref{theta factorization section}. In each example, axioms 1 and 2 are set up to describe properties we know about these analytic functions, so verifying the conjecture amounts to verifying the other three axioms. We consider the progress towards each of the axioms in turn.

\subsection{The predimension inequalities}

For the usual exponential function, the predimension inequality states that for all tuples $a$ from $\C$, $\td(a,e^a) \ge \ldim_\Q(a)$. This is precisely Schanuel's conjecture.

In the case of an elliptic curve $E$ defined over a number field, the predimension inequality states for all tuples $a$ from $\C$,
$\delta_E(a) = \td(a,\exp_{E,\C}(a)) - \ldim_{\kO}(a) \ge 0$. 

\begin{prop}\label{wp predim prop}
The predimension inequality above for the exponential map of an elliptic curve follows from the Andr\'e-Grothendieck conjecture on the periods of 1-motives.
\end{prop}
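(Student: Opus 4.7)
The plan is to translate the data $(\bar a, \bar b)$ with $b_i = \exp_{E,\C}(a_i)$ into the language of $1$-motives and apply the Andr\'e--Grothendieck period conjecture via Bertolin's work on the Mumford--Tate groups of $1$-motives attached to elliptic curves. I would first reduce to the case where $\bar a = (a_1,\ldots,a_n)$ is $\kO$-linearly independent modulo the kernel lattice $\Lambda$; equivalently, $\bar b$ is $\kO$-linearly independent modulo $\Tor(E)$. A nontrivial relation $\sum r_i a_i = \lambda \in \Lambda$ (with $r_i \in \OO$) forces $\sum r_i\cdot b_i = O_E$, and a careful case analysis shows that removing a suitable $a_j$ leaves $\delta_E$ unchanged: the decrease in $\ldim_\kO$ is matched exactly by the decrease in $\td_{K_0}(\bar a,\bar b)$, since $a_j$ is rational over the other $a_i$'s together with the lattice periods while $b_j$ is rational over the other $b_i$'s.

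Next I would associate to the reduced $\bar b$ the $1$-motive $M = [u \colon L \to E]$ with $L = \OO^n$ and $u(e_i) = b_i$, defined over $K_1 := K_0(\bar b)$. The Betti--de~Rham comparison for $M$ provides a period matrix whose entries include the elliptic logarithms $a_i$, the lattice periods $\omega_1,\omega_2$ of $E$, the Weierstrass quasi-periods $\eta_1,\eta_2$, and the quasi-logarithms $\zeta(a_i)$. Bertolin's explicit computation of $\mathrm{MT}(M)$ shows that under $\kO$-linear independence of $\bar b$ modulo torsion, $\mathrm{MT}(M)$ is an extension of $\mathrm{MT}(E)$ by a unipotent group of dimension $n\cdot \rk_\OO H_1(E,\Q)$. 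The Andr\'e--Grothendieck conjecture for $M$ then asserts that $\td_{K_1}$ of the full set of periods equals $\dim\mathrm{MT}(M)$; subtracting the contribution of the periods of $E$ alone yields a lower bound on the transcendence degree of $(a_1,\ldots,a_n,\zeta(a_1),\ldots,\zeta(a_n))$ over $K_1(\omega_1,\omega_2,\eta_1,\eta_2)$, from which, projecting onto the $a_i$'s and reassembling with the coordinates of $\bar b$ that define $K_1$ over $K_0$, we obtain $\td_{K_0}(\bar a,\bar b) \ge \ldim_\kO(\bar a)$.

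The main obstacle is the bookkeeping in this final step: the Mumford--Tate computation symmetrically counts the quasi-periods $\eta_i$ and quasi-logarithms $\zeta(a_i)$, which do not appear in $\delta_E$. One needs Bertolin's explicit decomposition of $\mathrm{MT}(M)$ into factors attached to periods and quasi-periods to make the projection onto the $a_i$'s rigorous; without this, the raw statement of Andr\'e--Grothendieck would not directly yield the predimension inequality.
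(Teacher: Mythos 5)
Your overall strategy is the one the paper uses: both routes go through Bertolin's explicit translation of the Andr\'e--Grothendieck conjecture for the $1$-motive $[\OO^n \to E]$ attached to the points $b_i=\exp_{E,\C}(a_i)$. But two steps of your outline do not work as stated. First, the reduction to $\kO$-linear independence modulo $\Lambda$ is not $\delta_E$-preserving. If $\sum r_i a_i = \lambda \in \Lambda$ with $\lambda \neq 0$ and $r_j \neq 0$, then removing $a_j$ drops $\ldim_\kO(\bar a)$ by $1$ whenever $\lambda \notin \kO\text{-span}(\bar a_{\neq j})$, while $\td(\bar a,\bar b)$ drops by $\td(\lambda/\bar a_{\neq j},\bar b_{\neq j})$, which can be $0$ (the period $\lambda$ may happen to be algebraic over the remaining coordinates without being in their $\kO$-span). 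In that case $\delta_E(\bar a) = \delta_E(\bar a_{\neq j}) - 1$, so proving the inequality for the reduced tuple gives nothing for the original one. The correct manoeuvre, and the one the paper makes, is to keep the periods in play: prove the relative inequality $\delta_E(\bar a/\omega_1,\omega_2) \ge 0$ and separately establish $\delta_E(\omega_1,\omega_2)=0$ (the $n=0$ case of the conjecture in the non-CM case; Chudnovsky's unconditional theorem when $E$ has CM), then combine by the addition formula, checking that $\delta_E(\omega_1,\omega_2/\bar a)\le 0$.

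Second, the step you yourself flag as the main obstacle --- discarding the quasi-periods $\eta_1,\eta_2$ and the quasi-logarithms --- is genuinely where the content lies, and it is not resolved by a ``decomposition of $\mathrm{MT}(M)$ into factors attached to periods and quasi-periods'' (the unipotent radical does not split that way). What saves the argument is the precise shape of Bertolin's Th\'eor\`eme~1.2: the lower bound is $2\ldim_\kO(\bar a/\omega_1,\omega_2) + 4[\kO:\Q]^{-1}$, with a coefficient $2$ on the linear dimension. Since the $n$ quasi-logarithms contribute at most $n = \ldim_\kO(\bar a/\omega_1,\omega_2)$ to the transcendence degree, discarding them still leaves $\ldim_\kO(\bar a/\omega_1,\omega_2) + 4[\kO:\Q]^{-1}$; the remaining constant is then consumed by $\omega_1,\omega_2,\eta_1,\eta_2$ (using the Legendre relation to dispose of $2\pi i$, and Chudnovsky again in the CM case to know how the transcendence distributes among the periods and quasi-periods). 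Without importing this coefficient $2$ from Bertolin's statement, your projection onto the $a_i$'s does not close.
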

The following proof was explained to us by Juan Diego Caycedo, and follows the proof of a related statement in section~3 of \cite{CZ14}.
\begin{proof}
By Th\'eor\`eme~1.2 of \cite{Bertolin02}, with $s=0$ and $n=1$, a special case of Andr\'e's conjecture (building on Grothendieck's earlier conjecture) states that if $j(E)$ is the $j$-invariant of $E$, $\omega_1$ and $\omega_2$ are the periods of $E$, $\eta_1$ and $\eta_2$ are the quasiperiods of $E$, $P_1,\ldots,P_n \in E(\C)$, $a_i$ is the integral of the first kind associated with $P_i$, and $d_i$ is the integral of the second kind associated with $P_i$ then:
\begin{equation}
\td(2\pi i, j(E),\omega_1,\omega_2,\eta_1,\eta_2,\bar{P}, a,d) \ge 2\ldim_\kO(a/\omega_1,\omega_2) + 4[\kO:\Q]^{-1} .
\end{equation}
In this case, we have that $P_i = [\wp(a_i) : \wp'(a_i) : 1] = \exp_{E,\C}(a_i)$. Since our $E$ is defined over a number field, $j(E)$ is algebraic. The Legendre relation states $\omega_1\eta_2 - \omega_2\eta_1 = 2 \pi i$, so $j(E)$ and $2 \pi i$ do not contribute to the above inequality.

If we assume that $a_1,\ldots,a_n \in \C$ are $\kO$-linearly independent over $\omega_1,\omega_2$ we can discard the integrals of the second kind to get the bound
\begin{equation}\label{Bert general}
\td(\omega_1,\omega_2,\eta_1,\eta_2, a,\exp_{E,\C}(a)) \ge \ldim_\kO(a/\omega_1,\omega_2) + 4[\kO:\Q]^{-1}.
\end{equation}

Consider the case where there is no CM, so $\kO = \Q$. Throwing away $\eta_1$ and $\eta_2$ we get
\begin{equation}\label{Bert noCM}
\td(\omega_1,\omega_2,a,\exp_{E,\C}(a)) \ge \ldim_\kO(a/\omega_1,\omega_2) + 2.
\end{equation}
From the case $n=0$ we see that $\td(\omega_1,\omega_2)=2$ and since $\omega_1$ and $\omega_2$ are $\Q$-linearly independent we have $\delta_E(\omega_1,\omega_2) = 0$. Then (\ref{Bert noCM}) implies for any $a$ we have $\delta_E(a/\omega_1,\omega_2) \ge 0$, and putting these two statements together we deduce that $\delta_E(a) \ge 0$.

Where $E$ does have CM (and is defined over a number field) Chudnovsky's theorem \cite[Theorem~1 and Corollary~2]{Chudnovsky80} gives us $\td(\omega_1,\omega_2,\eta_1,\eta_2) = \td(\omega_1,\pi) = 2$, so in particular $\td(\omega_1) = 1$. We also have $\kO = \Q(\tau)$ with $[\kO:\Q] = 2$ and $\omega_2 = \omega_1\tau$, so we can discard $\omega_2, \eta_1$ and $\eta_2$ from (\ref{Bert general}) to obtain
\begin{equation}\label{Bert CM}
\td(\omega_1,a,\exp_{E,\C}(a)) \ge \ldim_\kO(a/\omega_1) + 1.
\end{equation}
The same argument now shows that $\delta_E(a) \ge 0$ for any tuple $a$.
\end{proof}

\begin{proof}[Proof of Theorem~\ref{main p theorem}]
Theorem~\ref{technical p theorem} shows that the axioms $\PCFskccp(E)$ are uncountably categorical and that every model is quasiminimal. The analytic structure $\C_\wp$ is a model of the first two axioms by construction. Proposition~\ref{wp predim prop} shows that the predimension inequality given in axiom 3 is the appropriate analogue of Schanuel's conjecture for $\wp$-functions. Axiom 5, the countable closure property was proved in this case in \cite{JKS16}.
\end{proof}
We will give another proof of the countable closure property in Theorem~\ref{CCP for analytic Gamma-fields} in this paper.

Our understanding of the periods conjecture uses Bertolin's translation to remove the motives, which she did only in the cases of elliptic curves and $\Gm$. For abelian varieties of dimension greater than 1 we suspect that the predimension inequality axiom again follows from the Andr\'e-Grothendieck periods conjecture, but there are more complications because the Mumford-Tate group plays a role and so we have not been able to verify it.

\subsection{Strong $\Gamma$-closedness}

In the case of the usual exponentiation for $\gm$, Mantova \cite{Mantova14} currently has the best result towards proving the Strong $\Gamma$-closedness in the complex case. He only considers the case of a variety $V \subs G^n$ where $n=1$. A free and rotund $V \subs G^1$ is just the solution set of an irreducible polynomial $p(x,y) \in \C[x,y]$ which depends on both $x$ and $y$, that is, the partial derivatives $\frac{\partial p}{\partial x}$ and $\frac{\partial p}{\partial y}$ are both non-zero.
\begin{fact}
Suppose $p(x,y)\in \C[x,y]$ depends on both $x$ and $y$. Then there are infinitely many points $x \in \C$ such that $p(x,e^x) = 0$. Furthermore suppose Schanuel's conjecture is true and let $a$ be a finite tuple from $\C$. Then there is $x \in \C$ such that $(x,e^x)$ is a generic zero of $p$ over $a$.
\end{fact}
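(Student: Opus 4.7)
The plan is to treat the two parts with different tools. For the unconditional first part I would apply Hadamard's factorization theorem to the entire function $f(x) \leteq p(x,e^x)$, which is of order at most $1$. If $f$ had only finitely many zeros, Hadamard would give $f(x) = Q(x) e^{\alpha x + \beta}$ for some polynomial $Q \in \C[x]$ and constants $\alpha, \beta \in \C$. Expanding $p(x,y) = \sum_{j=0}^{n} p_j(x) y^j$ and using the $\C(x)$-linear independence of the family $\{e^{jx}\}_{j \in \Z_{\ge 0}}$, one forces $\alpha = j_0$ to be a non-negative integer, $p_j = 0$ for $j \neq j_0$, and hence $p(x,y) = e^{-\beta} Q(x) y^{j_0}$. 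This contradicts the hypothesis that the curve $V \leteq \{p = 0\}$ is free (equivalently, that the irreducible $p$ depends on both variables), since such a $p$ would force $V$ into a union of vertical or horizontal lines.

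For the second part I would argue by contradiction. Set $K \leteq \overline{\Q(a)}$ and $m \leteq \td(a/\Q)$. Since $V$ is a curve defined over $K$, each zero $x$ of $p(x,e^x)$ is either generic over $a$ (meaning $\td(x,e^x/K) = 1$) or satisfies $x, e^x \in K$. Assuming no zero is generic, the infinite set $Z \leteq \{x \in \C : p(x,e^x) = 0\}$ lies in
\[
L \leteq \{x \in \C \mid x \in K \text{ and } e^x \in K\},
\]
which is a $\Q$-subspace of $\C$ (using that $K$ is algebraically closed and so closed under rational roots of its nonzero elements). Applying Schanuel's conjecture to any $\Q$-linearly independent $\xi_1,\dots,\xi_r \in L$ gives $\td(\xi_1,\dots,\xi_r, e^{\xi_1},\dots,e^{\xi_r}/\Q) \ge r$, while all $2r$ of these numbers lie in $K$; hence $r \le m$ and $\dim_\Q L \le m$.

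It remains to derive a contradiction from the infinitude of $Z \cap L$ inside the finite-dimensional $\Q$-space $L$. Fixing a $\Q$-basis $\xi_1,\dots,\xi_r$ of $L$ and setting $\eta_i \leteq e^{\xi_i} \in K^{\times}$, each $x \in Z \cap L$ decomposes as $x = \sum q_i \xi_i$ with $q_i \in \Q$ and $e^x = \prod \eta_i^{q_i}$ for a consistent branch. After clearing a common denominator $N$, the relation $p(x,e^x) = 0$ becomes a polynomial identity involving the integer exponents $m_i = N q_i$ together with fixed $N$-th roots $\zeta_i \leteq \eta_i^{1/N}$ of the $\eta_i$. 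The $S$-unit / subspace theorem of Evertse--Schlickewei--Schmidt bounds the number of non-degenerate integer solutions $(m_1,\dots,m_r)$, while the freeness and rotundity of $V$ rule out the degenerate ones concentrating in cosets of proper algebraic subgroups of $\Ga \times \Gm$.

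The main obstacle is to control the denominator $N$ uniformly across all $x \in Z \cap L$: a priori each $x$ may require a different $N$, taking us outside a single application of the subspace theorem. I expect this step to require either a Kummer-theoretic argument bounding Galois orbits of $(x, e^x)$ over $K$, or an iterated application of Schanuel as $N$ grows, reducing the problem to a single large but uniform denominator.
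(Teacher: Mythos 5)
The paper does not actually prove this Fact: it cites Marker \cite{Marker06} for the first assertion (and for the whole statement when $p$ is over $\Qalg$) and Mantova \cite{Mantova14} for the general ``furthermore'', so I compare your argument with those. Your first part is correct and is precisely Marker's proof: Hadamard factorization of the order-one function $p(x,e^x)$ plus the $\C(x)$-linear independence of the $e^{jx}$ forces $p=cQ(x)y^{j_0}$, which is incompatible with an irreducible $p$ depending on both variables. (Irreducibility, implicit in the surrounding discussion of free rotund curves, is genuinely needed: $p=xy$ depends on both variables yet $xe^x$ has one zero.) Your opening reduction in the second part is also correct and is the standard first move: absorbing the coefficients of $p$ into $a$, if no zero is generic then $Z\subseteq L=\{x\in K : e^x\in K\}$ with $K=\overline{\Q(a)}$, and Schanuel gives $\dim_\Q L\le\td(a/\Q)$. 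This already disposes of Marker's case $a\subseteq\Qalg$, where $L=\{0\}$ and every nonzero solution is generic.

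The remaining step --- that the infinitely many zeros of $p(x,e^x)$ cannot all lie in such an $L$ --- is the real content of Mantova's theorem, and your proposal does not establish it. Beyond the unbounded-denominator problem you flag yourself, the equation you arrive at is not a unit equation: writing $x=\sum_i q_i\xi_i$, the relation $\sum_j p_j(x)(e^x)^j=0$ has coefficients $p_j(\sum_i q_i\xi_i)$ varying polynomially with the solution, so the Evertse--Schlickewei--Schmidt theorem (fixed coefficients, unknowns in a finite-rank multiplicative group) does not apply; one would need a Laurent-type polynomial--exponential finiteness theorem with rational exponents and arbitrary complex bases, which is not available. The claim that freeness of $V$ rules out the degenerate families is likewise unsubstantiated: for $n=1$ freeness only excludes $V$ being a vertical line or a torsion-coset fibre, and it says nothing about vanishing subsums of the derived equation. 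Note also that some further arithmetic input is unavoidable at this point, since the purely analytic statement is false: $e^z-1$ has all its zeros in the one-dimensional $\Q$-space $\Q\cdot 2\pi i$. Mantova's argument for this step is of a different character, exploiting the distribution of the zeros of $p(x,e^x)$ (they lie in logarithmic neighbourhoods of finitely many vertical lines, so $|e^x|$ is polynomially bounded above and below in $|x|$) together with arithmetic (height and Kummer-theoretic) estimates in the division group of $\langle e^{\xi_1},\dots,e^{\xi_r}\rangle$. As it stands, your proof covers the first assertion and the algebraic-parameter case of the second, but leaves the general case open.
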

The observation that there are infinitely many solutions and the whole statement in the case that $p$ is defined over $\Q^\alg$ is due to Marker \cite{Marker06}. The general case stated above is due to Mantova {\cite[Theorem~1.2]{Mantova14}}.

%For raising to powers in the multiplicative group, when the power $\lambda$ is a real number which is exponentially transcendental, Zilber proved the whole Strong $\Gamma$-closedness property \cite{Zilber02esesc}.

\subsection{$\Gamma$-closedness}
In section~\ref{GGC section} we will see that for some purposes strong $\Gamma$-closedness can be weakened to $\Gamma$-closedness.
\begin{defn}\label{G-closed defn}
A full $\Gamma$-field $F$ is \emph{$\Gamma$-closed} if for every irreducible subvariety $V$ of $G^n$ defined over $F$ and of dimension $dn$, which is free and rotund, $V(F) \cap \Gamma(F)^n$ is Zariski-dense in $V$.
\end{defn}
Using the classical Rabinovich trick, one can easily show this axiom scheme is equivalent to the existence of a single point $\beta \in V(F) \cap \Gamma(F)^n$, for every such $V$. For the usual exponentiation, $\Gamma$-closedness is known as exponential-algebraic closedness. In this direction, Brownawell and Masser \cite[Proposition~2]{BM16} have the following.
\begin{fact}
If $V \subs (\ga \cross \gm)^n(\C)$ is an algebraic subvariety of dimension $n$ which projects dominantly to $\ga^n$ then there is $a \in \C^n$ such that $(a,e^a) \in V$.
\end{fact}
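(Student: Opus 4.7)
The plan is to exploit the dominant projection to $\ga^n$ by locally expressing $y$ as a multivalued algebraic function of $x$, then solve the resulting transcendental system by counting zeros of suitable entire functions. Writing the dominant projection as $\pi_1 : V \to \ga^n$, I may reduce to $V$ irreducible; then $\pi_1$ is generically finite of some degree $d$, and there is a proper closed subvariety $Z \subs \ga^n$ such that over $\ga^n \minus Z$ the map $\pi_1$ is an \'etale cover of degree $d$. On a simply connected open $U \subs (\ga^n \minus Z)^{\mathrm{an}}$, we pick a holomorphic branch $f : U \to \gm^n$ of $\pi_1^{-1}$, so $\mathrm{graph}(f) \subs V$. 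Setting $g(x) = x - \log f(x)$ for a branch of the logarithm, the goal becomes finding $x \in U$ with $g(x) \in 2\pi i\, \Z^n$, for some branch choice.

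I would first settle the case $n=1$: here $V \subs \ga\times\gm$ is defined by an irreducible $p(x,y) \in \C[x,y]$ with $\partial p/\partial y \not\equiv 0$ (by dominance to $\ga$) and $\partial p/\partial x \not\equiv 0$ (else $V$ would be a union of ``vertical'' lines $\{x=c\}\times\gm$, contradicting dominance to $\ga$). The composition $h(x) = p(x,e^x)$ is then a non-constant entire function of finite order, and Hadamard's factorization theorem (together with Picard-type growth considerations) yields infinitely many zeros. This recovers Marker's theorem and gives the base case.

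For $n > 1$, the strategy is induction via hyperplane slicing. Pick a generic $c \in \C$ and consider the slice $V_c = V \cap \{x_n = c\,\wedge\, y_n = e^c\}$, a subvariety of $\{c\} \cross \ga^{n-1} \cross \{e^c\} \cross \gm^{n-1}$ which I identify with a subvariety of $\ga^{n-1} \cross \gm^{n-1}$. By Bertini-type arguments and the dominance hypothesis, for generic $c$ the projection of $V$ to the $(x_n, y_n)$-plane is dominant on the appropriate image, so cutting first by $x_n = c$ preserves dimension; then imposing $y_n = e^c$ is a codimension-one condition on a dominant projection to $\gm$ over the slice, so for generic $c$ the resulting $V_c$ has dimension $n-1$ and still projects dominantly to $\ga^{n-1}$ via $(x_1,\ldots,x_{n-1})$. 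By the induction hypothesis there is $(a_1, \dots, a_{n-1}) \in \C^{n-1}$ with $(a_1,\dots,a_{n-1}, e^{a_1},\dots, e^{a_{n-1}}) \in V_c$, and adjoining $a_n = c$ gives the required solution in $V$.

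The main obstacle will be establishing the two Bertini-type genericity claims in the inductive step, namely that for a Zariski-dense set of $c\in\C$ the slice $V_c$ has the correct dimension \emph{and} still projects dominantly to $\ga^{n-1}$. The first dimension claim is routine, but the second is delicate because $y_n = e^c$ is a specific point, not a generic one: we need to rule out that $V$ is contained in a subvariety forcing $y_n$ to depend algebraically on some $x_i$ with $i < n$ in a way incompatible with $y_n = e^c$ for all $c$. This is where the full strength of the hypothesis that $V$ projects dominantly onto $\ga^n$ (not merely to a smaller factor) enters, via a careful analysis of the closure of the image of $V$ under the projection to $\ga^{n-1}\cross \gm^{n-1}$ composed with the $(x_n,y_n)$-fibration; the exponential density of the graph $\{(c,e^c)\}$ in $\ga\times\gm$ then ensures generic $c$ of the required form can be found.
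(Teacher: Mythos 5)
The paper does not prove this statement: it is quoted as an external result of Brownawell and Masser \cite[Proposition~2]{BM16} (see also \cite{DAFT16}), so your argument has to stand on its own. It does not, because the inductive step contains a dimension-counting error that is fatal to the whole strategy.

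Since $\dim V = n$ and $V$ projects dominantly onto $\ga^n$, that projection is generically finite, so on $V$ each $y_i$ is (away from degenerate components) a nonconstant algebraic function of $(x_1,\ldots,x_n)$. Cutting by $x_n = c$ gives, for generic $c$, a variety of dimension $n-1$ which is still generically finite over $\ga^{n-1}$, and on which $y_n$ is still generically a nonconstant algebraic function of $(x_1,\ldots,x_{n-1})$; imposing the further equation $y_n = e^c$ therefore drops the dimension to $n-2$, not $n-1$. Concretely, for $n=2$ take $V = \{(x_1,x_2,y_1,y_2) : y_1 = x_2,\ y_2 = x_1\}$, which is $2$-dimensional and dominant over $\ga^2$: the slice $V \cap \{x_2 = c,\ y_2 = e^c\}$ is the single point $(e^c, c, c, e^c)$, which certainly does not dominate $\ga^1$. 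An $(n-2)$-dimensional slice can never project dominantly onto $\ga^{n-1}$, so the induction hypothesis is never applicable. This is not a repairable genericity issue of the kind you flag at the end: each condition $y_i = e^{x_i}$ has codimension one, and you must impose $n$ of them on the $n$-dimensional $V$; handling them one at a time by algebraic slicing costs two dimensions per step (one to fix $x_n = c$, one to match $y_n = e^c$) while shrinking the target $\ga^{n-1}$ by only one. The known proofs are genuinely $n$-dimensional analytic arguments: as in your first paragraph one takes a branch $f$ of $\pi_1^{-1}$ over a suitable region, but then solves the full system $f(x) = e^{x}$ at once by a several-variable Rouch\'e/topological-degree argument, playing the polynomial growth of the algebraic branch against the growth of $\exp$, rather than by induction on $n$.

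Two smaller points. In the base case, your reason for $\partial p/\partial x \not\equiv 0$ is backwards: if $p$ depends only on $y$ then $V = \ga \times \{p(y)=0\}$, which \emph{is} dominant over $\ga$ (that case is trivially solvable by taking $a$ to be a logarithm of a root, but it is not excluded by your dichotomy). And the closing appeal to ``exponential density of the graph $\{(c,e^c)\}$'' is not a precise statement, and in any case does not address the dimension collapse described above.
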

In this case $V$ can be taken free without loss of generality, and the condition of projecting dominantly to $\ga^n$ implies rotundity. However it is much stronger than rotundity. Another exposition of this theorem is given in \cite{DAFT16}.

\subsection{The pregeometry and the countable closure property}

To compare the pregeometry of our constructions such as $\B$ with the complex analytic models such as $\Cexp$ we have to define the appropriate pregeometry on the complex field. Given a $\Gamma$-field $F$, we defined a $\Gamma$-subfield $A$ of $F$ to be $\Gamma$-closed in $F$ if whenever $A \subs B \subs F$ with $\delta(B/A) \le 0$ then $B \subs A$. One can construct $\Gamma$-fields with no proper $\Gamma$-closed subfields. Fortunately we are able to show unconditionally that there is a countable $\Gamma$-subfield of $\C$ which is $\Gamma$-closed in $\C$.

For $\Cexp$, this was done in \cite{EAEF} by adapting the proof of Ax's differential forms version of Schanuel's conjecture. A similar proof was given in \cite{JKS16} for elliptic curves. The same method ought to work for any semiabelian varieties, but here we give a different approach, applying the main result of \cite{Ax72a} directly to generalise a theorem of Zilber in the exponential case \cite[Theorem~5.12]{Zilber05peACF0}.

Let $\C_\Gamma$ be an analytic $\Gamma$-field, which recall means a $\Gamma$-field from Definition~\ref{analytic EXP defn} or~\ref{analytic COR defn}. Then $\Gamma$ is a complex Lie subgroup of $G(\C)$, and $L\Gamma \leq LG(\C)$ is the graph of a $\C$-linear isomorphism between $LG_1(\C)$ and $LG_2(\C)$.
Thus $\Gamma^n$ is a complex Lie subgroup of $G^n(\C)$, so has a complex topology. It might not be a closed subgroup, so the topology might not be the subspace topology. 

\begin{defn}
Given an algebraic subvariety $V\subs G^n$, write $V^\mathrm{isol}$ for the set of all isolated points of $V(\C) \cap \Gamma^n$ with respect to the complex topology on $\Gamma^n$.
For any subset $A \subs \C$ we define $\Gcl'(A)$ to be
the subfield of $\C$ generated by the union of all $V^\mathrm{isol}$ where $V$ ranges over the algebraic subvarieties $V\subs G^n$ which are defined over $\bk(A)$.
% Note: not enough to take co-ordinates in general, since our constructible
% embedding of G in affine space need not have any surjective projection to a
% line.
We consider $\Gcl'(A)$ as a $\Gamma$-subfield of $\C$ by defining
$\Gamma(\Gcl'(A)) := \Gamma \cap G(\Gcl'(A))$.
\end{defn}
\begin{lemma}\label{CCP for Gcl'}
$\Gcl'$ is a closure operator on $\C$ and for any $A \subs \C$,
$\Gcl'(A)$ is a full $\Gamma$-subfield of $\C$ of cardinality $|A| + \aleph_0$.
\end{lemma}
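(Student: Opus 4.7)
The plan is to verify the closure-operator axioms for $\Gcl'$, to show $\Gcl'(A)$ is a full $\Gamma$-subfield of $\C$, and to bound its cardinality. Monotonicity is immediate. For the cardinality bound, $\Gamma^n$ is a connected complex Lie group of complex dimension $nd$, hence second countable, so every discrete subset is countable; in particular each $V^{\mathrm{isol}}$ is countable. Since there are only $|A|+\aleph_0$ algebraic subvarieties of the various $G^n$ defined over $\bk(A)$, the set of generators for $\Gcl'(A)$ has cardinality at most $|A|+\aleph_0$.

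For extensivity, the first observation is that the derivative of $\pi_1\restrict{\Gamma}:\Gamma\to G_1$ at any point is the projection $L\Gamma\to LG_1$, which is a $\C$-linear isomorphism by hypothesis; hence $\pi_1\restrict{\Gamma}$ is a local biholomorphism, and preimages of zero-dimensional subvarieties of $G_1$ under it are discrete in $\Gamma$. Given $a\in A$, I would choose $d$ coordinate functions $x_1,\ldots,x_d$ on an affine chart of $G_1$ with linearly independent differentials at a smooth point, and generic $a_2,\ldots,a_d\in\bk(A)$ with $a_1=a$, so that $Z:=\{u\in G_1 : x_i(u)=a_i\}$ is zero-dimensional, nonempty and defined over $\bk(A)$. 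Then $V:=Z\times G_2\subs G$ is $d$-dimensional over $\bk(A)$, and $V\cap\Gamma=(\pi_1\restrict{\Gamma})^{-1}(Z)$ is discrete in $\Gamma$; each of its points is thus isolated and has $a$ as its first coordinate, giving $a\in\Gcl'(A)$. (The exceptional $a$ not in the image of any such coordinate function lie in $\bk^{\mathrm{alg}}$ and are handled below.)

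Idempotence is the principal obstacle. Given $V\subs G^n$ defined over $\bk(\Gcl'(A))$ with $p$ isolated in $V\cap\Gamma^n$, each parameter $b_j$ of $V$ arises as a coordinate of some isolated point $q_j\in W_j\cap\Gamma^{n_j}$ with $W_j$ over $\bk(A)$. I assemble $W\subs G^{n+\sum n_j}$ over $\bk(A)$ by conjoining the defining equations of $V$ on the first block, with each $b_j$ replaced by the appropriate coordinate from the relevant extra block, together with the defining equations of each $W_j$ on its own block. Then $(p,q_1,\ldots,q_k)\in W\cap\Gamma^{n+\sum n_j}$ is isolated by a transversality argument: isolation of each $q_j$ in $W_j\cap\Gamma^{n_j}$ pins down the $b_j$ in any sufficiently small neighbourhood of the tuple, after which isolation of $p$ in $V\cap\Gamma^n$ pins down $p$. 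Some topological care is needed in case (COR), where $\Gamma$ is not closed in $G$ and the argument must be carried out in the intrinsic Lie topology of $\Gamma^n$ rather than the subspace topology.

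Finally, $\Gcl'(A)$ is a full $\Gamma$-subfield of $\C$: it is generated as a field by the coordinates of points of $\Gamma\cap G(\Gcl'(A))$ by construction; it is algebraically closed, by modifying the extensivity construction with the minimal polynomial of a given algebraic element in place of the equation $x_1=a_1$ and invoking idempotence (which also places $\bk^{\mathrm{alg}}$, and hence the torsion of each $G_i$, inside $\Gcl'(A)$); the set $\Gamma\cap G(\Gcl'(A))$ is a divisible $\OO$-submodule by divisibility of $\Gamma$ together with algebraic closedness of $\Gcl'(A)$; and the projections $\pi_i$ of this submodule are surjective onto $G_i(\Gcl'(A))$ by applying the discrete-fibre argument above to $V=\{u\}\times G_2$ for $u\in G_1(\Gcl'(A))$, and symmetrically for $G_2$.
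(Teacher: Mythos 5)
Your proof is correct and follows essentially the same route as the paper's: the cardinality bound via countability of the isolated points of a subset of the second-countable group $\Gamma^n$, extensivity and fullness via surjectivity of $\pi_1\restrict{\Gamma}$ together with discreteness of its kernel (so that fibres over zero-dimensional subvarieties of $G_1$ consist of isolated points), and transitivity by assembling a variety over $\bk(A)$ in which isolation of the parameter-witnesses pins down the parameters and isolation of the original point then pins down the point. The only differences are cosmetic: the paper cuts out the zero-dimensional set as the locus of conjugates over $\bk(A)$ rather than by generic hyperplane sections, and obtains algebraic closedness directly by showing $\acl(\bk(A)) \subs \Gcl'(A)$ rather than via minimal polynomials plus idempotence.
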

\begin{proof}
  For transitivity, suppose $x$ is a finite tuple from $\Gcl'(A)$ and $y\in\Gcl'(Ax)$. We may reduce to the case that
  $\alpha\in W(\C)\cap\Gamma^n$ is isolated and the tuple $x$ lists the co-ordinates of
  $\alpha$, and $\beta\in V(\C)\cap\Gamma^m$ is isolated and $y$ is a
  co-ordinate of $\beta$, with $W$ defined over $K_0(A)$ and $V$ over
  $K_0(Ax)$. Then $V$ can be written as a fibre $V'(\alpha)$ of a subvariety
  $V'\subs G^{m+n}$ over $K_0(A)$ projecting to $W \subs G^n$. Then $\beta\alpha
  \in V'\cap\Gamma^{m+n}$ is an isolated point, so $\beta\in\Gcl'(A)$.
  
  Now let $A\subs\C$, and set $A' := \acl^\C(K_0(A))$.
  Let $\alpha_0 \in G_1(A')$, and let $V_0 \subs G_1$ be the set of its
  conjugates, i.e.\ the 0-dimensional locus of $\alpha_0$ over $K_0(A)$, and let
  $V := V_0 \times G_2 \subs G$. Then $V\cap\Gamma$ is non-empty, since $\pi_1 :
  \Gamma \onto G_1(\C)$ is surjective, and it consists of isolated points since
  $\ker(\pi_1)$ does. This shows that $A' \subs \Gcl'(A)$, and hence in
  particular that $\Gcl'$ is a closure operator, and furthermore this argument 
  shows that $\Gcl'(A)$ is full.

  Finally, for the cardinality calculation, note there are only $(|A| + \aleph_0)$-many algebraic varieties $V$ defined over $A$ and for each there can be only countably many isolated points in $V(\C) \cap \Gamma^n$.
\end{proof}

\begin{prop}\label{Gcl=Gcl'}
For an analytic $\Gamma$-field $\C_\Gamma$, the closure operators $\Gcl$ and $\Gcl'$ are the same. In particular, $\Gcl'$ is a pregeometry on $\C$.
\end{prop}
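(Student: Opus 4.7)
For the inclusion $\Gcl'(A) \subseteq \Gcl(A)$, let $\alpha \in V(\C)\cap\Gamma^n$ be isolated with $V$ defined over $K_0(A)$. Setting $A' := \hull A_\C$ and replacing $V$ by $\loc(\alpha/A')$, we may assume $V$ is absolutely irreducible over $A'$ with $\alpha$ a generic point. Lifting $\alpha$ under the complex exponential map $LG^n(\C) \onto G^n(\C)$, the Lie subgroup $\Gamma^n$ is covered by the $\C$-vector subspace $L\Gamma^n \le LG^n(\C)$ of complex dimension $dn$, while $V$ is covered by a complex-analytic subset of the same local dimension at each lift. Isolation of $\alpha$ passes to the cover, so the standard complex-analytic intersection inequality in the ambient space of complex dimension $2dn$ forces $\dim V \le dn$. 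I then argue by induction on $n$. When $\alpha$ is $\kO$-linearly independent over $\Gamma(A')$, the calculation
\[ \delta(\alpha/A') \;=\; \td(\alpha/A') - dn \;\le\; \dim V - dn \;\le\; 0, \]
together with $A'\strong\C$, gives $\delta(\alpha/A') = 0$, so $\alpha\in\Gcl(A)$ by Lemma~\ref{closure as union}. If instead an $\OO$-linear identity $s_j\alpha_j = \sum_l r_{jl}\alpha_{i_l} + \gamma_j$ (with $\gamma_j\in\Gamma(A')$) holds on some coordinate, the corresponding base-change matrix $M\in\Mat_n(\OO)$ is an isogeny of $G^n$ carrying $V$ into a translate of $G^k$ by an $A'$-point, so $\dim V$ equals the dimension of the projection $V'\subs G^k$ onto a maximal $\kO$-independent subtuple $\alpha'$. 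By divisibility of $\Gamma$, any non-constant holomorphic arc in $V'\cap\Gamma^k$ through $\alpha'$ lifts via these relations to a non-constant arc in $V\cap\Gamma^n$ through $\alpha$, contradicting isolation; hence $\alpha'$ is isolated in $V'\cap\Gamma^k$, and the inductive hypothesis yields $\alpha'\in\Gcl(A)$, whence $\alpha\in\Gcl(A)$.

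For the reverse inclusion $\Gcl(A)\subseteq\Gcl'(A)$, iterating the above inductive step on generators of successive strong extensions of $A$ in $\C$ first shows $\hull A_\C \subs \Gcl'(A)$. Given $b\in\Gcl(A)$, use Lemma~\ref{closure as union} to pick a finitely generated $\Gamma$-subfield $B$ of $\C$ with $\hull A_\C \strong B$, $\delta(B/\hull A_\C)=0$, $b\in B$, and a $\kO$-basis $\beta$ of $B$ over $\hull A_\C$. Then $V:=\loc(\beta/\hull A_\C)$ has dimension $dn$ and is free and rotund by Corollary~\ref{rotund cor}. Suppose for contradiction $\beta$ is not isolated in $V\cap\Gamma^n$; then a non-constant holomorphic germ $\beta(t)\in V\cap\Gamma^n$ with $\beta(0)=\beta$ exists, and $\beta(t)$ lies in $\Gamma(F)^n$ for the differential field $F$ of meromorphic germs at $0\in\C$, whose field of constants is $\C$. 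Freeness of $V$ over $\hull A_\C$ rules out any non-trivial $\kO$-linear relation on $\beta(t)$ modulo $\Gamma(\C)$, since any such relation would cut $V$ down to a proper subvariety through the generic point $\beta$. Ax's theorem~\cite{Ax72a} for semi-abelian varieties then delivers $\td_\C(\beta(t)) > dn$, contradicting $\beta(t)\in V(F)$ with $\dim V = dn$. Hence $\beta\in V^{\mathrm{isol}}\subs\Gcl'(\hull A_\C)\subs\Gcl'(A)$, and so $b\in\Gcl'(A)$. Equality of $\Gcl$ and $\Gcl'$, together with Proposition~\ref{closed sets of pregeom}, then yields the pregeometry claim.

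The main obstacle is the second-direction invocation of Ax's theorem, particularly in case (COR) where $\Gamma$ is not a graph but the image of a $\C$-linear isomorphism between the Lie algebras of two non-isogenous simple semiabelian varieties. One must verify that algebraic freeness of $V$ over $\hull A_\C$ (an algebraic-geometric notion) does indeed translate into the $\kO$-linear independence modulo $\Gamma(\C)$ hypothesis that Ax requires (an analytic-differential one), and that the correct semi-abelian form of Ax--Schanuel, rather than only its multiplicative or exponential special cases, delivers the strict transcendence lower bound against $\dim V = dn$ needed to close the argument.
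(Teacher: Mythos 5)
Your first inclusion ($\Gcl'(A)\subseteq\Gcl(A)$) is essentially the paper's argument: isolation plus the local dimension inequality for complex analytic intersections forces $\dim V\le dn$, and then $\delta\le 0$ against a strong base does the rest; your inductive treatment of the $\kO$-linearly dependent coordinates is just a more explicit version of the paper's ``replace $\alpha$ by a subtuple''. The genuine gap is in the second inclusion, and it is exactly the point you flag at the end without closing: the independence hypothesis of Ax--Schanuel is \emph{not} implied by freeness of $V$. Your germ $\beta(t)$ has Zariski closure an irreducible subvariety $Y\subseteq V$ with $\dim Y\ge 1$ but typically $\dim Y<\dim V$; a relation $\sum_i s_i\beta_i(t)\equiv c$ constrains only $Y$, and the constant $c$ is manufactured by the arc itself, so it need not be rational over $\hull{A}_\C$. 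Freeness of $V=\loc(\beta/\hull{A}_\C)$ only forbids $V$ from lying inside such a coset; it says nothing about $Y$. This is precisely what the paper's proof is engineered to handle: it works with the whole analytic component $X$ of $V(\C)\cap\Gamma^n$ through $b$ rather than an arc, uses the hypothesis that the base $A$ is $\Gcl'$-closed together with topological density of $A$ to produce a point of $X$ lying in $G^n(A)$, and uses that point to pin the constant of any putative linear relation down to $G_1(A)$ --- only then does freeness of $\loc(b/A)$ kill the relation, showing $X^{\mathrm{Zar}}$ is free, which is what feeds Lemma~\ref{algSubgroupsAndGamma} and Fact~\ref{Ax72 fact}. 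You have no substitute for this step, and you cannot borrow it, because your base $\hull{A}_\C$ is not known to be $\Gcl'$-closed at that stage.

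There is a second, structural gap: you assert $\hull{A}_\C\subseteq\Gcl'(A)$ by ``iterating the above inductive step'', but that step proves the opposite inclusion; showing that the generators of the hull (tuples with $\delta\le 0$) are isolated in their loci intersected with $\Gamma^n$ requires the same Ax-type input you have not yet secured, so as written the argument is circular. The paper sidesteps both problems by arguing at the level of closed sets: it shows directly that a $\Gcl'$-closed $A$ satisfies $\delta(b/A)>0$ for every basis $b$ of every proper finitely generated extension, so such an $A$ is automatically strong and full and no hull ever appears. Your preference for the differential Ax--Schanuel over the analytic-subgroup statement is legitimate in principle --- it is the route of \cite{EAEF} and \cite{JKS16}, which the paper explicitly mentions as an alternative --- but the linear-independence-modulo-constants hypothesis has to be earned in either formulation, and earning it is the real content of this direction.
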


To prove this we will use a lemma and Ax's theorem on the transversality of intersections between analytic subgroups and algebraic varieties.
\begin{lemma}\label{algSubgroupsAndGamma}
  If $H \leq  G^n$ is a connected algebraic subgroup which is free then the analytic subgroup
  $H(\C) + \Gamma^n$ is equal to $G^n(\C)$.
\end{lemma}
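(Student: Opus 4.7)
The plan is to show that the $G_2$-freeness of $H$ forces the algebraic subgroup $\{0\} \times G_2^n$ to be contained in $H$; once this is established, the conclusion is immediate from the surjectivity of the first projection $\pi_1 : \Gamma^n \to G_1^n(\C)$. Indeed, given any $(a,b) \in G^n(\C)$, one picks $\gamma = (a, c) \in \Gamma^n$ above $a$ (possible in case (EXP) since $\pi_1|_\Gamma$ is a bijection onto $LG_2(\C)$, and in case (COR) since $\exp_{G_1}$ is surjective), and writes $(a,b) = \gamma + (0, b-c) \in \Gamma^n + H(\C)$.

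To prove $\{0\} \times G_2^n \subseteq H$, first apply Lemma \ref{subgroupsEndomorphisms}(i) to the connected algebraic subgroup $\pi_2(H) \leq G_2^n$, identifying it as the connected component of $\ker(\eta)$ for some $\eta \in \Mat_{r,n}(\OO)$. If $\eta$ were non-zero, then $\pi_2(H)$ would lie in the kernel of a non-trivial $\OO$-linear equation $\sum r_j y_j = 0$, contradicting the $G_2$-freeness of $H$. So $\pi_2(H) = G_2^n$. Setting $K := H \cap (G_1^n \times \{0\})$, the surjection $\pi_2|_H : H \twoheadrightarrow G_2^n$ has kernel $K$, so each fibre is a coset of $K$ inside $G_1^n \times \{g\}$; this defines an algebraic group homomorphism $\tau : G_2^n \to G_1^n/K$ assigning to $g$ the relevant coset.

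The heart of the argument is then the vanishing $\Hom_{\text{alg}}(G_2^n, G_1^n/K) = 0$. In case (EXP), $G_1^n/K$ is a quotient of the vector group $\Ga^{dn}$ and is therefore itself a vector group, and a semiabelian variety admits no non-trivial algebraic morphisms to a vector group (its image would have to be both complete up to a torus part and unipotent). In case (COR), $G_1^n/K$ is a quotient of $G_1^n$, hence isogenous to a power of $G_1$; since $G_1$ and $G_2$ are simple and non-isogenous, every algebraic homomorphism $G_2 \to G_1$ vanishes, and so $\Hom(G_2^n, G_1^n/K) = 0$. Thus $\tau = 0$, meaning every $g \in G_2^n$ has $(0, g) \in H$, i.e., $\{0\} \times G_2^n \subseteq H$.

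I do not expect that the main difficulty lies in invoking Ax-Schanuel style transcendence results, despite their prominent role in nearby material; the content of the lemma is algebraic-group-theoretic rather than transcendence-theoretic. The key inputs are Lemma \ref{subgroupsEndomorphisms} (which reduces algebraic subgroups of powers of simple semiabelian varieties to $\OO$-linear algebra) together with the elementary vanishing of Hom mentioned above. Note that only the $G_2$-freeness of $H$ is used in this approach; the $G_1$-freeness would come into play only to further conclude in case (COR) that $H = G^n$, which is not required.
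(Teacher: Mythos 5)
Your proof is correct and follows essentially the same route as the paper: the paper observes that since $G_1$ and $G_2$ are non-isogenous (or a vector group versus a semiabelian variety), every algebraic subgroup of $G^n$ splits as a product $H_1\times H_2$, so $G_2$-freeness forces $H \supseteq \{0\}\times G_2^n$, and then concludes exactly as you do from $\pi_1(\Gamma^n)=G_1^n(\C)$. Your construction of the homomorphism $\tau: G_2^n \to G_1^n/K$ and the verification that $\Hom(G_2^n, G_1^n/K)=0$ is just the detailed justification of that product decomposition, which the paper asserts without proof; your reading of the lemma as purely algebraic-group-theoretic (no Ax--Schanuel input) is also correct.
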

\begin{proof}
Since $G_2$ is simple and not isogenous to $G_1$, every algebraic subgroup of $G^n$ is of the form $H_1 \cross H_2$ with $H_i$ a subgroup of $G_i^n$, and since $H$ is free it is $G_2$-free. Since $\OO = \End(G_2)$ it follows that $H$ is of the form $H_1 \cross G_2^n$. Now since $\pi_1(\Gamma^n) = G_1^n(\C)$ we see $H(\C) + \Gamma^n = G^n(\C)$. 
\end{proof}

\begin{fact}[{\cite[Corollary~1]{Ax72a}}]\label{Ax72 fact}
Suppose that $\mathcal G$ is a complex algebraic group, $A$ is a connected analytic subgroup of $\mathcal G$, $U$ is open in $\mathcal G$ and $X$ is an irreducible analytic subvariety of $U$ such that $X \subs A$, $X^{\mathrm{Zar}}$ is the Zariski closure of $X$ and $H$ is the smallest algebraic subgroup of $\mathcal {G}$ containing $X$. Then
\[ \dim(H+A) \le \dim X^{\mathrm{Zar}} + \dim A - \dim X .\]
\end{fact}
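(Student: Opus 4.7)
The statement is Corollary~1 of Ax's 1972 paper, so the natural plan is to follow Ax's approach, which combines an elementary fibre-dimension count with his differential Schanuel theorem for commutative algebraic groups. First I would handle the easier half by fibre-counting. Consider the analytic map $\phi \colon X^{\mathrm{Zar}} \times A \to \mathcal{G}$, $(y,a) \mapsto y \cdot a$. Over any image point $z = y_0 a_0$ with $y_0 \in X$ and $a_0 \in A$, the fibre $\phi^{-1}(z) = \{(y, y^{-1} z) : y \in X^{\mathrm{Zar}},\ y^{-1} z \in A\}$ contains all pairs with $y \in X$: indeed, $y^{-1} y_0 \in A$ since $y, y_0 \in X \subs A$ and $A$ is a subgroup, whence $y^{-1} z = (y^{-1} y_0) a_0 \in A$. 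Thus the generic fibre has dimension at least $\dim X$, giving
\[ \dim(X^{\mathrm{Zar}} \cdot A) \le \dim X^{\mathrm{Zar}} + \dim A - \dim X. \]

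The substantial content of the theorem is the remaining inequality $\dim(H \cdot A) \le \dim(X^{\mathrm{Zar}} \cdot A)$. Rearranging, this is equivalent to $\dim H - \dim X^{\mathrm{Zar}} \le \dim(H \cap A) - \dim X$: the excess dimension of the algebraic group $H$ over the Zariski closure of $X$ must be absorbed inside $A$. After translating so that the identity $e$ of $\mathcal{G}$ lies in $X$ (possible since $X$ is non-empty) and passing to exponential coordinates, one parameterises a smooth patch of $X$ through $e$ by formal power series $f_1, \ldots, f_n \in \C[\![t_1, \ldots, t_{\dim X}]\!]$ whose image lies in the Lie subalgebra $\mathfrak{a} \subseteq \mathfrak{g} = \mathrm{Lie}(\mathcal{G})$ attached to $A$. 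Feeding this formal germ into Ax's differential Schanuel theorem then yields a transcendence inequality for the components of $f = (f_1, \ldots, f_n)$ over $\C$: the transcendence degree of these components equals $\dim X^{\mathrm{Zar}}$, while minimality of $H$ identifies the relevant algebraic subgroup as $H$ itself, and the resulting inequality translates directly into the required dimension bound.

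The main obstacle is precisely this second half. The analytic set $X^{\mathrm{Zar}} \cdot A$ is typically not a subgroup of $\mathcal{G}$, so the dimension comparison with the genuine analytic subgroup $H \cdot A$ cannot be carried out purely group-theoretically and genuinely requires a transcendence input, which is what Ax's differential Schanuel machinery supplies. For non-commutative $\mathcal{G}$, additional care is needed with the adjoint action on $\mathfrak{g}$, but this reduces to the commutative case by working with the formal group law associated to $\mathcal G$ at $e$. A more modern alternative would be to replace Ax's differential input by a tame-topological argument via the Pila-Wilkie counting theorem applied to the subanalytic set obtained from $X$ in exponential coordinates, but the overall shape of the proof is the same.
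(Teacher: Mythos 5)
The paper offers no proof of this statement: it is quoted as a Fact with a citation to Ax's 1972 paper, so your sketch has to stand on its own, and it does not. Your first inequality, $\dim(X^{\mathrm{Zar}}\cdot A)\le \dim X^{\mathrm{Zar}}+\dim A-\dim X$, is correct but is the trivial part of the statement; since $X^{\mathrm{Zar}}\subseteq H$, the whole content is the reverse comparison $\dim(H\cdot A)\le\dim(X^{\mathrm{Zar}}\cdot A)$, and at exactly that point you write that one ``feeds the formal germ into Ax's differential Schanuel theorem.'' If by that you mean Ax's 1971 power-series Schanuel theorem (about $\exp$ on $\mathbb{G}_{\mathrm{a}}^n\times\mathbb{G}_{\mathrm{m}}^n$), it simply does not apply: the Fact is needed in this paper for groups $\mathcal G$ built from abelian varieties, whose exponential maps are not the power-series exponential and are not algebraic, so ``passing to exponential coordinates'' does not put you in the scope of that theorem. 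If instead you mean the differential-algebraic Theorem~1 of the same 1972 paper, then your argument is circular in substance: that theorem is precisely the transcendence statement of which the quoted Fact is the analytic corollary, and you give no independent argument for it. Either way the essential step is missing.

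The sketched reduction also contains a concrete error: you parameterize a patch of $X$ by power series $f_1,\dots,f_n$ ``whose image lies in the Lie subalgebra $\mathfrak a$'' and then assert that the transcendence degree of these components equals $\dim X^{\mathrm{Zar}}$. For a Lie-algebra-valued (logarithmic) lift this identification is false: $\dim X^{\mathrm{Zar}}$ is the transcendence degree over $\C$ of the coordinates of the point of $\mathcal G(F)$ given by $X$ itself, where $F$ is the field of germs of meromorphic functions on the parameter space, not of its logarithm, whose Zariski closure in $\mathrm{Lie}(\mathcal G)$ is a different variety. Ax's deduction keeps the point in the group and encodes the hypothesis $X\subseteq A$ through the logarithmic derivative of that point lying in $\mathrm{Lie}(A)\otimes F$, precisely to avoid transcendental exponential coordinates. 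Two further steps are asserted without justification: the ``rearrangement'' uses $\dim(H+A)=\dim H+\dim A-\dim(H\cap A)$, which is fine for commutative $\mathcal G$ (the only case the paper uses) but needs an argument as stated, and the claim that the non-commutative case ``reduces to the commutative case by working with the formal group law'' is not how Ax (or anyone else) handles it. The Pila--Wilkie aside has the same character: $X$ is only a locally analytic set, so definability in an o-minimal structure is itself a genuine obstacle, not a routine substitute for the transcendence input.
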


\begin{proof}[Proof of Proposition~\ref{Gcl=Gcl'}]
Suppose that $A \subs \C$ is $\Gcl$-closed. Let $\alpha \in V^\mathrm{isol}$ for some $V$ defined over $\bk(A)$. Replacing $\alpha$ with a subtuple if necessary, we may assume $\alpha$ is $\kO$-linearly independent over $\Gamma(A)$. Then $\alpha$ is a smooth point of $V$ and of $\Gamma^n$, so by considering tangent spaces we see that $\td(\alpha/A) \le \dim V \le nd$, and hence $\delta(\alpha/A) \le 0$. So $\alpha \in \Gamma^n(A)$ and hence $A$ is $\Gcl'$-closed.
 
Now suppose $A$ is $\Gcl'$-closed, and that $A \subs B \subs \C$ is a proper finitely generated $\Gamma$-field extension in $\C_\Gamma$. Let $b \in \Gamma^n(B)$ be a basis for the extension and let $V = \loc(b/A)$. We will show that $\delta(b/A) > 0$.

Let $X$ be an irreducible analytic component containing $b$ of the analytic subset $V(\C) \cap \Gamma^n$ of the complex Lie group $\Gamma^n$. Since $A$ is $\Gcl'$-closed and $b \notin A$, $\dim X \ge 1$.
 
We claim that $X$ has some point in $A$. To see this, let $e$ be a smooth point of $X$, and take regular local co-ordinates $\eta_i$ at $e$ in $G^n$ such that $X$ is locally the graph of a function from the first $\dim X$ co-ordinates to the rest. $A$ is algebraically closed as a field, so is topologically dense in $\C$. So there is a point $a \in X$ close to $e$ such that the first $\dim X$ co-ordinates are in $A$. Let $W$ be the intersection of $V$ with $\eta_i=a_i$ for $i = 1,...,\dim X$. Then $W$ is defined over $A$ and $a$ is an isolated point of $W(\C) \cap \Gamma^n$, hence $a$ is in $G^n(A)$ as required.

Suppose $X^{\mathrm{Zar}}$ is not $G_1$-free, so say $(x,y)\in X^{\mathrm{Zar}}$ implies a non-trivial
  $\er$-linear equation $\sum_{j=1}^n r_jx_j = c$. Then this equation holds for $\pi_1(a)$, so $c \in G_1(A)$. But then since $b\in X$, already $(x,y)\in V$ implies this equation, so $V$ is not $G_1$-free, a contradiction since $V$ is the locus of a basis over $A$. The same proof shows that $X^{\mathrm{Zar}}$ is $G_2$-free, so it is free.
  
Let $H$ be the algebraic subgroup of $G^n$ generated by $X^{\mathrm{Zar}}(\C)-b$. Then $X^{\mathrm{Zar}}(\C)-b$ is free, so $H$ is free. So by Lemma~\ref{algSubgroupsAndGamma}, the subgroup $H + \Gamma^n$ is equal to $G^n$.

Applying Fact~\ref{Ax72 fact} we get
 \[  \dim (H + \Gamma^n)  \le  \dim (X^{\mathrm{Zar}}-b) + \dim \Gamma^n - \dim (X-b) \]
which gives
\[  	2dn 			      \le  \dim  X^{\mathrm{Zar}} + dn - \dim X\]
but $\dim X >0 $ and $X^{\mathrm{Zar}} \subs V$ so we deduce that $\dim V > nd$. Thus $\delta(b/A) >0$. So $A$ is $\Gcl$-closed, as required.
\end{proof}

\begin{proof}[Proof of Theorem~\ref{CCP for analytic Gamma-fields}]
Proposition~\ref{Gcl=Gcl'} shows that $\Gcl = \Gcl'$, and so Lemma~\ref{CCP for Gcl'} shows that $\Gcl$ has the countable closure property.
\end{proof}

\begin{remark}
In \cite{EAEF}, an algebraic version of the isolated points closure $\Gcl'$ was given, using the fact that a solution to a system of $2n$ equations of analytic functions in $2n$ variables is isolated if and only if a certain Jacobian matrix does not vanish at the point. So this gives a definition of a closure operator $\ecl$ which makes sense on any exponential field, and it was shown in \cite{EAEF} that $\ecl$-closed sets are strong and agree with the $\Gcl$-closed sets as we have defined them here, and in particular that $\ecl$ is always a pregeometry. This algebraic definition of the closure operator was suggested by Macintyre \cite{Macintyre96} although it had previously been used in the real and complex cases by Khovanskii and by Wilkie.
\end{remark}

%Generically Gamma-closed
% !TEX root = PEM2_1.tex

\section{Generically $\Gamma$-closed fields}\label{GGC section}

In this section we consider $\Gamma$-fields which may not be strongly $\Gamma$-closed but are generically strongly $\Gamma$-closed. 
Using the variant of the amalgamation construction from Section~\ref{purely G-trans section}, we show that such $\Gamma$-fields are also quasiminimal and that the \emph{strong} part of strong $\Gamma$-closedness becomes redundant in this generic case. 

Let $K$ be a full $\Gamma$-field. Recall from section~\ref{purely G-trans section} that an extension $K \subs A$ of $K$ is \emph{purely $\Gamma$-transcendental} if and only if $K \closed A$, if and only if for all tuples $b$ from $\Gamma(A)$, either $\delta(b/K) > 0$ or $b \subs \Gamma(K)$. Clearly an extension $A$ of $K$ is purely $\Gamma$-transcendental if and only if all of its finitely generated sub-extensions are.

\subsection{Generic $\Gamma$-closedness}

Recall from Theorem~\ref{main theorem} that a full $\Gamma$-field $F$ is \emph{strongly $\Gamma$-closed} if for every irreducible subvariety $V$ of $G^n$ defined over $F$ and of dimension $dn$, which is free and rotund for the $\OO$-module structure on $G$, and every finite tuple $a$ from $\Gamma(F)$, there is $b \in V(F) \cap \Gamma(F)^n$ such that $b$ is $\kO$-linearly independent over $\Gamma(\Fbase) \cup a$.

Assuming the Schanuel property, this is equivalent to requiring that $b$ is generic in $V$ over $\Fbase(a)$. Recall also the weaker form of $\Gamma$-closedness from Definition~\ref{G-closed defn}:

$F$ is \emph{$\Gamma$-closed} if for every irreducible subvariety $V$ of $G^n$ defined over $F$ and of dimension $dn$, which is free and rotund, $V(F) \cap \Gamma(F)^n$ is Zariski-dense in $V$.
%Using the classical Rabinovich trick, one can easily show this axiom scheme is equivalent to the existence of a single point $\beta \in V(F) \cap \Gamma(F)^n$, for every such $V$.
\medskip

For the concept of generic $\Gamma$-closedness with respect to a subfield $K$, we need to consider extensions of the form $K \closed A \strong B$ where $A$ and $B$ are finitely generated as extensions of the full $\Gamma$-field $K$. Say $\alpha$ is a basis of $A$ over $K$ and $\beta$ is a basis of $B$ over $A$, and that $V \leteq \loc(\beta/A)$ and $W \leteq \loc(\alpha,\beta/K)$. We also assume that $\Afull \wedge B = A$. Then by Corollary~\ref{rotund cor} both $V$ and $W$ are free, $V$ is rotund, and $W$ is strongly rotund.

\begin{defn}[Generic $\Gamma$-closedness]\label{GSGC defn}
Let $F$ be a full $\Gamma$-field and $K \closed F$, $K \neq F$. Suppose $V \subs G^n$ is free and rotund, irreducible and of dimension $dn$. Suppose also that there is $\alpha \in \Gamma(F)^r$, $\kO$-linearly independent over $\Gamma(K)$ such that $V$ is defined over $K(\alpha)$ and for $\beta \in V$, generic over $K(\alpha)$, the variety $W \leteq \loc(\alpha,\beta/K)$ is strongly rotund. 

We say that $F$ is \emph{generically $\Gamma$-closed over $K$} (G$\Gamma$C over $K$) if, for all such $V$, we have that $V(F) \cap \Gamma^n(F)$ is Zariski-dense in $V$.

$F$ is \emph{generically strongly $\Gamma$-closed over $K$} (GS$\Gamma$C over $K$) if, whenever $V$ and $\alpha$ are as above, there is $\gamma \in V(F) \cap \Gamma^n(F)$, $\kO$-linearly independent over $\Gamma(K) \cup \alpha$.

We say $F$ is G$\Gamma$C or GS$\Gamma$C without reference to $K$ to mean G(S)$\Gamma$C over $\Gcl^F(\emptyset)$.
\end{defn}

\begin{prop}\label{GSGC equiv saturation}
Suppose $F$ is a full $\Gamma$-field and $K \closed F$, $K \neq F$. Then $F$ is GS$\varGamma$C over $K$ if and only if $F$ is $\aleph_0$-saturated for $\Gamma$-algebraic extensions which are purely $\Gamma$-transcendental over $K$.
\end{prop}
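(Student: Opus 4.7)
The argument will parallel Lemma~\ref{SGClosed lemma}, with strong rotundity playing the role that free-and-rotund played there, and with Proposition~\ref{rotund prop} (applied to the full base $K$) used to translate between ``$B$ is purely $\Gamma$-transcendental over $K$'' and ``$\loc(\alpha\beta/K)$ is strongly rotund''.

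For the forward direction ($\Rightarrow$), suppose $F$ is GS$\Gamma$C over $K$, and let $K \strong A \strong F$ with $A$ finitely generated over $K$, and $A \strong B$ a finitely generated $\Gamma$-algebraic extension that is purely $\Gamma$-transcendental over $K$. Since a $\Gamma$-subfield of a p.G.t extension of $K$ is again p.G.t over $K$, we have $K \closed A$. Fix a basis $\alpha$ of $A$ over $K$ and, by Proposition~\ref{good bases exist}, a good basis $\beta$ for $A \strong B$. Set $V = \loc(\beta/A)$ and $W = \loc(\alpha\beta/K)$. By Corollary~\ref{rotund cor}, $V$ is free and rotund, and $\Gamma$-algebraicity gives $\dim V = dn$. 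Applying Proposition~\ref{rotund prop} to $K \strong B$ with basis $\alpha\beta$ (and $K$ full), the hypothesis that $B$ is p.G.t over $K$ translates into strong rotundity of $W$. So $V$ and $\alpha$ meet the hypotheses of Definition~\ref{GSGC defn}, and GS$\Gamma$C produces $\gamma \in V(F) \cap \Gamma^n(F)$ that is $\kO$-linearly independent over $\Gamma(K) \cup \alpha$, hence over $\Gamma(A)$. Because $A \strong F$ and $\dim V = dn$, $\gamma$ is generic in $V$ over $A$, so the good-basis property yields an isomorphism $B \cong \gen{A\gamma}$ over $A$, as required.

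For the reverse direction ($\Leftarrow$), suppose $F$ has the stated saturation property, and let $V$, $\alpha$ satisfy the hypotheses of Definition~\ref{GSGC defn}. Set $A = \hull{\gen{K\alpha}}_F$, a finitely generated strong $\Gamma$-subfield of $F$ over $K$, and form the abstract extension $B = \gen{A\gamma}$ with $\loc(\gamma/A) = V$. Because $V$ is free and rotund with $\dim V = dn$, Proposition~\ref{rotund prop} shows $A \strong B$ is a finitely generated $\Gamma$-algebraic extension. The key claim is that $B$ is purely $\Gamma$-transcendental over $K$: equivalently, by Proposition~\ref{rotund prop} applied to the full base $K$ with the basis $\alpha'\gamma$ of $B$ over $K$ (where $\alpha'$ extends $\alpha$ to a basis of $A$ over $K$), that $\loc(\alpha'\gamma/K)$ is strongly rotund. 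Once this is shown, saturation embeds $B$ over $A$ into $F$, and the image of $\gamma$ gives a point of $V(F) \cap \Gamma^n(F)$ that is $\kO$-linearly independent over $\Gamma(A) \supseteq \Gamma(K) + \kO\alpha$, hence over $\Gamma(K) \cup \alpha$.

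The main obstacle is verifying the claim that $B$ is p.G.t over $K$; this is immediate when $\gen{K\alpha}$ is already strong in $F$ (so $\alpha' = \alpha$ and $\loc(\alpha'\gamma/K) = W$), but in general requires showing that the strong rotundity of $W = \loc(\alpha\gamma/K)$ propagates to $\loc(\alpha'\gamma/K)$ after incorporating the hull-elements $\alpha' \setminus \alpha$. I expect to handle this either by a direct algebraic argument (case-splitting on which coordinates of a prospective witness matrix $N$ act nontrivially on the hull-part, and using that $\alpha' \setminus \alpha$ is $\Gamma$-algebraic over $\gen{K\alpha}$ together with $K \closed F$) or, equivalently, by constructing $B$ as an asymmetric amalgam of $A$ with the abstract $B_0 = \gen{K\alpha\gamma}$ (which is automatically p.G.t over $K$ by Proposition~\ref{rotund prop}) over $\gen{K\alpha}$ via Proposition~\ref{AAP}, and then adapting the proof of Theorem~\ref{Gammatr is amalg cat} to see that the amalgam remains p.G.t over $K$ despite the non-strongness of the embedding $\gen{K\alpha} \into A$.
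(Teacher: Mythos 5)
Your forward direction is essentially right and is what the paper intends (its own proof of this proposition is just the remark that the argument of Lemma~\ref{SGClosed lemma} goes through). Two routine steps are missing: before invoking Corollary~\ref{rotund cor} for freeness of $V$ you must first reduce to the case $\Afull\wedge B=A$, exactly as at the start of the proof of Lemma~\ref{SGClosed lemma} (freeness of $\loc(\beta/A)$ can fail otherwise); and since $V=\loc(\beta/A)$ is a priori only defined over $\gen{K\alpha}$, you should replace $\alpha$ by $\alpha/m$ for suitable $m$ so that $V$ is defined over the field $K(\alpha)$ as Definition~\ref{GSGC defn} requires. Neither causes any difficulty.

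The reverse direction, however, has a genuine gap at exactly the point you flag, and neither of your proposed repairs can close it, because the claim you need is false under the literal reading of Definition~\ref{GSGC defn}: if $\gen{K\alpha}$ is not strong in $F$, then $B=\gen{A\gamma}$ with $A=\hull{\gen{K\alpha}}_F$ need not be purely $\Gamma$-transcendental over $K$. For a counterexample (say $d=1$, type (EXP)), take $\alpha\in\Gamma(F)^2$ generic over $K$ with $\delta(\alpha/K)=2$ and $\alpha''=f(\alpha)\in\Gamma(F)\cap G(K(\alpha))$ for a sufficiently generic rational map $f$ over $K$; then $\gen{K\alpha\alpha''}$ is a strong purely $\Gamma$-transcendental extension of $K$ (so it is realised in $\Mtr(K)$), and it is the hull of $\gen{K\alpha}$ since $\delta(\alpha''/\gen{K\alpha})=-1$. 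Now let $V=C-\alpha''$ for $C$ a free rotund curve over $K$: one checks that $V$ and $\alpha$ satisfy all the stated hypotheses (strong rotundity of $W=\loc(\alpha,\beta/K)$ is blind to $\alpha''$ because $\alpha''$ is not an $\OO$-linear combination of $\alpha$ and $\beta$), yet for $\gamma$ generic in $V$ over $\Afull$ the element $\gamma+\alpha''\in\Gamma(B)$ is generic on $C$, so $\delta(\gamma+\alpha''/K)=\dim C-d=0$ and $B$ is not purely $\Gamma$-transcendental over $K$. Worse, since $K\closed F$ forces every point of $C(F)\cap\Gamma(F)$ into $\Gamma(K)$, the conclusion of GS$\Gamma$C for this pair $(V,\alpha)$ would force $C(K)\cap\Gamma(K)\neq\emptyset$, which saturation does not provide; so no alternative argument can rescue this case either. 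The resolution is that Definition~\ref{GSGC defn} is meant to be read with the additional requirement that $\gen{K,\alpha}\strong F$ --- this is precisely what the proof of Proposition~\ref{GSGC implies GGC} assumes when it asserts $\Gammadim^F(\alpha/K)=\delta(\alpha/K)$. With that reading, $A=\gen{K\alpha}$, $\alpha'=\alpha$, $\loc(\alpha\gamma/K)=W$, and your ``immediate'' case is the entire proof; the hull machinery, and both of your proposed workarounds, should be deleted.
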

\begin{proof}
This is essentially the same as the proof of Lemma~\ref{SGClosed lemma}.
\end{proof}

It is immediate from the definitions that GS$\Gamma$C over $K$ implies G$\Gamma$C over $K$. In \cite{ECFCIT} it was shown that if  the Conjecture on Intersections with Tori (CIT, now also known as the multiplicative case of the Zilber-Pink conjecture) is true, then any exponential field satisfying the Schanuel property which is exponentially-algebraically closed is also strongly exponentially algebraically closed. We use similar ideas here to prove that G$\Gamma$C over $K$ implies GS$\Gamma$C over $K$. The Schanuel property is replaced by the assumption that $K$ is $\Gamma$-closed in $F$, and instead of the Zilber-Pink conjecture it is enough to use the weak version which is a theorem. 

Given any variety $S$ and subvarieties $W, V$ of $S$, the typical dimension of $W \cap V$ is $\dim W + \dim V - \dim S$. If $X$ is an irreducible component of $W \cap V$ it is said to have \emph{atypical dimension} (for the intersection) if 
\[\dim X > \dim W + \dim V - \dim S.\]
We also say that $X$ is an \emph{atypical component} of the intersection. 
For the multiplicative group, the \emph{weak Zilber-Pink} theorem is known as \emph{weak CIT} and is due to Zilber~\cite[Corollary~3]{Zilber02esesc}.
The semiabelian form of the statement is the following theorem \cite[Theorem~4.6]{TEDESV}.

\begin{fact}[``Semiabelian weak Zilber-Pink'', basic version]\label{wCIT}
Let $S$ be a semiabelian variety, defined over an algebraically closed field of characteristic 0. Let $(W_b)_{b \in B}$ be a constructible family of constructible subsets of $S$. That is, $B$ is a constructible set and $W$ is a constructible subset of $B \cross S$, with $W_b$ the obvious projection of a fibre. Then there is a finite set $\mathcal{H}_W$ of connected proper algebraic subgroups of $S$ such that for any $b \in B$ and any coset $c{+}J$ of any connected algebraic subgroup $J$ of $S$, if $X$ is an irreducible component of $W_b \cap  c{+}J$ of atypical dimension with $c \in X$ then there is $H \in \mathcal{H}_W$ such that $X \subs c{+}H$.
\end{fact}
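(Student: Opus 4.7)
The plan is to prove Fact~\ref{wCIT} by reducing to the known multiplicative case (Zilber's weak CIT for tori) combined with the analogous statement for abelian varieties, exploiting the Chevalley-type decomposition $0 \to T \to S \to A \to 0$ of the semiabelian variety $S$ into a torus part and an abelian part. First I would stratify the constructible family: by noetherianity of $B$ and constructibility of $W$, one can partition $B$ into finitely many locally closed pieces over which the fibres $W_b$ have constant irreducible decomposition type and constant dimension. Since a finite union of finite sets $\mathcal{H}_W$ remains finite, it suffices to treat one such piece, i.e., to assume $W \subs B \cross S$ is irreducible with $\dim W_b = d$ constant.

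Next I would reduce to subgroups $J$ whose atypicality is ``primitive'' in the following sense. Write $\pi_A : S \to A$ for the projection and $T^o = T$ the toric part. Any connected algebraic subgroup $J \le S$ sits in an exact sequence $0 \to J_T \to J \to J_A \to 0$ with $J_T = J \cap T$ and $J_A = \pi_A(J)$. An atypical component $X$ of $W_b \cap (c+J)$ projects to an irreducible component $\pi_A(X) \subs \pi_A(W_b) \cap (\pi_A(c)+J_A)$, and its fibres over $\pi_A(X)$ are components of translates of $W_b \cap (c'+J_T)$ inside the torus $T$. The atypicality of $X$ forces atypicality in at least one of these two intersections by an additive dimension count. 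Applying Zilber's weak CIT~\cite{Zilber02esesc} to the constructible family of fibres in $T$ (varying in both $b$ and in the basepoint of the fibre over $A$), one obtains a finite collection $\mathcal{H}_T$ of proper algebraic subgroups of $T$. Applying the analogous weak Zilber-Pink for the abelian variety $A$ to the family $\pi_A(W_b)$, one obtains a finite $\mathcal{H}_A$ of proper subgroups of $A$.

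The desired finite set $\mathcal{H}_W$ is then the set of connected algebraic subgroups $H \le S$ built as extensions of some $H_A \in \mathcal{H}_A \cup \{A\}$ by some $H_T \in \mathcal{H}_T \cup \{T\}$ (excluding $H = S$), together with the degenerate cases where the atypicality is contained entirely in a fibre of $\pi_A$ or in its image. Finiteness of $\mathcal{H}_W$ follows because for fixed $H_T$ and $H_A$ the set of extension classes in $\mathrm{Ext}^1(H_A, H_T)$ realised inside $S$ is determined by the fixed ambient extension class of $S$, cutting down the possibilities to a finite list.

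The main obstacle will be handling the case where the subgroup $J$ itself is not split, so that its extension class genuinely twists the toric and abelian directions: here the atypicality can arise from the twist rather than from either factor, and the naive fibration argument can fail because $X$ may not project dominantly onto $\pi_A(X)$ with toric-type fibres. My plan for this step is to replace $S$ by an isogenous group in which $J$ splits up to finite index (which changes the set of subgroups only by a finite amount of data), or, failing a clean splitting, to invoke a model-theoretic compactness argument as in Zilber's original proof: work in a saturated elementary extension, assume for contradiction that no finite $\mathcal{H}_W$ suffices, produce an infinite sequence of incomparable ``new'' subgroups $J_n$ witnessing atypical intersections for parameters $b_n$, and derive a contradiction from the definability of the constructible family $W$ and the noetherianity of the lattice of connected algebraic subgroups of $S$.
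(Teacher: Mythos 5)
This statement is labelled a \emph{Fact} in the paper precisely because the paper does not prove it: it is quoted from \cite[Theorem~4.6]{TEDESV}, with the toric case being Zilber's weak CIT \cite[Corollary~3]{Zilber02esesc}. So there is no in-paper proof to compare yours against; what I can do is assess your proposed proof on its own terms, and there are genuine gaps.

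The central problem is that you never identify where the finiteness actually comes from. In every known proof the engine is a functional-transcendence input --- Ax's theorem on analytic subgroups of algebraic groups \cite{Ax72a}, packaged in \cite{TEDESV} as the Ax--Schanuel property of the exponential differential equation of $S$ --- combined with a compactness argument. Your devissage along $0 \to T \to S \to A \to 0$ pushes the burden onto ``the analogous weak Zilber--Pink for the abelian variety $A$'', which is not more elementary than the statement you are proving: it is the same theorem for a subclass of semiabelian varieties, and it is not proved by reduction to the toric case. Unless you supply an independent proof for abelian varieties, the reduction is circular. Your fallback --- compactness plus ``noetherianity of the lattice of connected algebraic subgroups of $S$'' --- cannot close the argument either: that lattice has finite length but contains infinite antichains (already $\Gm^2$ has infinitely many pairwise incomparable one-dimensional subtori), so no finiteness can be extracted from it together with mere definability of $W$; this is exactly the point at which Zilber and Kirby have to invoke Ax-type transcendence (or the indecomposability theorem), and your sketch has no substitute for it.

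Two secondary issues. First, the additive dimension count asserting that atypicality of $X$ forces atypicality either in $\pi_A(W_b) \cap (\pi_A(c)+J_A)$ or in the toric fibres is not automatic: $X$ may lie over a locus where the fibres of $W_b \to \pi_A(W_b)$ jump, and $\pi_A(X)$ need not be an irreducible \emph{component} of $\pi_A(W_b) \cap (\pi_A(c)+J_A)$; repairing this requires a further stratification by fibre dimension over $A$ (compare the fibre-dimension bookkeeping the paper does carry out in its proof of the horizontal version, Theorem~\ref{hwCIT}). Second, your finiteness claim for the assembly step is justified by the wrong mechanism: the set of connected $H \le S$ with prescribed $(H\cap T)^o$ and $\pi_A(H)$ is controlled not by finiteness of classes in $\mathrm{Ext}^1(H_A,T/H_T)$ but by the rigidity $\Hom(H_A, T/H_T)=0$ for an abelian variety $H_A$ and a torus $T/H_T$; as written the claim is unsubstantiated. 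My recommendation is to treat this statement as the paper does --- as an external input --- or, if you want a proof, to follow \cite{TEDESV}: derive it from the differential Ax--Schanuel theorem for $S$ by compactness, rather than by splitting $S$ into its toric and abelian parts.
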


We also need a version for subvarieties not of $S$ but of varieties of the form $U \cross S$, which is sometimes called a ``horizontal'' family of semiabelian varieties.
\begin{theorem}[``Horizontal semiabelian weak Zilber-Pink'']\label{hwCIT}
Let $S$ be a semiabelian variety and let $U$ be any variety. Let $( W_b)_{b \in B}$ be a constructible family of constructible subsets of $U \cross S$. Then there is a finite set $\mathcal{H}_W$ of connected proper algebraic subgroups of $S$ such that for any $b \in B$ and any coset $c{+}J$ of any connected algebraic subgroup $J$ of $S$, if $X$ is an irreducible component of $W_b \cap (U \cross c{+}J)$ of atypical dimension with $c \in X$ then there is $H \in \mathcal{H}_W$ such that $X \subs U \cross c{+}H$. Furthermore $H$ can be chosen such that we have
\[\quad (*) \hspace{3em} \dim X \le \dim \big(W_b \cap (U \cross c{+}H)\big) + \dim (H \cap J) - \dim H . \hspace{3em} {}\]
\end{theorem}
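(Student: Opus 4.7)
The plan is to reduce to the non-horizontal Fact~\ref{wCIT}, applied to a suitably enriched constructible family. As a first step, apply Fact~\ref{wCIT} to the family of fibres $\{(W_b)_u : (b,u) \in B \cross U\}$ of subsets of $S$. Writing $c = (u_0, c_S)$ with $c \in X$, at a generic $u \in \pi_U(X)$ the fibre $X_u$ is an irreducible component of $(W_b)_u \cap (c_S + J)$ of dimension $\dim X - \dim \pi_U(X)$, and a dimension count (using $\pi_U(X) \subseteq \pi_U(W_b)$) gives
\[\dim X_u - \big(\dim (W_b)_u + \dim J - \dim S\big) = \big(\dim X - \dim W_b - \dim J + \dim S\big) + \big(\dim \pi_U(W_b) - \dim \pi_U(X)\big) \geq 1,\]
so $X_u$ is atypical in $S$. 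Fact~\ref{wCIT} thus yields $H_u$ (from a finite set of subgroups of $S$) with $X_u \subseteq s + H_u$ for any $s \in X_u$, and pigeonhole over the finite set gives a single such $H$ valid for $u$ in a Zariski-dense constructible subset of $\pi_U(X)$.

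The subtlety is that this fibrewise control does not by itself imply $X \subseteq U \cross (c_S + H)$: it only yields that each $X_u$ lies in \emph{some} coset of $H$, and the cosets may vary with $u$. Moreover, it can entirely miss subgroups coming from the ``horizontal'' structure of $W_b$---for example, when $W_b$ is a graph $\{(u, f(u))\}$ whose image $f(U)$ lies in a non-trivial subgroup-coset of $S$, the fibres are single points and the fibrewise weak Zilber-Pink is vacuous, yet a non-trivial subgroup must appear in $\mathcal{H}_W$. To remedy both issues, one enriches the family before applying Fact~\ref{wCIT}: in addition to the fibres $(W_b)_u$, also apply Fact~\ref{wCIT} to the parameterized family of Zariski-closures of $S$-projections $\{\overline{\pi_S(W_b \cap (A \cross S))}\}$, where $A$ ranges over a suitable constructible family of subvarieties of $U$. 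Let $\mathcal{H}_W$ be the union of the resulting finite sets of subgroups.

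Given an atypical component $X$ with distinguished point $c \in X$, let $H$ be the minimal connected algebraic subgroup of $S$ such that $\pi_S(X) \subseteq c_S + H$. The claim is that $H \in \mathcal{H}_W$: depending on whether the atypicality of $X$ manifests fibrewise or horizontally, $H$ is produced by one of the two sub-families to which Fact~\ref{wCIT} was applied, and the specific coset $c_S + H$ is pinned down by the point $c$ together with irreducibility of $X$. The dimension inequality $(\ast)$ then follows from the expected-dimension formula: $X$ is an irreducible component of $(W_b \cap (U \cross c+H)) \cap (U \cross c+J)$ inside the ambient $U \cross (c+H)$, in which $U \cross (c+J)$ has codimension $\dim H - \dim(H \cap J)$.

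The main obstacle is the enrichment step: specifying the parameterized family of ``projections from subvarieties of $U$'' so that it is constructible (so Fact~\ref{wCIT} applies) and rich enough to capture the subgroup $H$ of any atypical $X$, together with the case analysis verifying that one of the two sub-families always produces the required $H$. This is the horizontal analogue of the compactness argument underlying Fact~\ref{wCIT} itself.
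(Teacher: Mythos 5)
Your proposal correctly identifies the central difficulty --- that fibring over $U$ loses the ``horizontal'' subgroups and lets the relevant coset vary with $u$ --- but it does not resolve it, and the resolution is where the entire content of the theorem lies. The ``enrichment'' by a family of projections $\overline{\pi_S(W_b\cap(A\times S))}$, with $A$ ranging over ``a suitable constructible family of subvarieties of $U$'', is left unspecified, and it is not clear that such a family exists: the subvarieties $\pi_U(X)$ arising from atypical components $X$ are not drawn from any a priori bounded constructible family, so Fact~\ref{wCIT} cannot be applied to them uniformly; nor is the promised ``case analysis'' carried out. The paper's proof avoids all of this by projecting in the other direction: stratify $W$ into constructible pieces $W^k$ on which the fibres of the coordinate projection $\pi\colon W_b\to S$ have constant dimension $k$, apply the $U=\mathrm{point}$ case to the family of images $\pi(W^k_b)\subseteq S$, and check that a component $X$ of $W_b\cap(U\times (c{+}J))$ projects into an atypical component of $\pi(W^k_b)\cap (c{+}J)$, all dimension counts transferring via the constant $k$. (Your fibrewise atypicality computation is also not correct as stated: it uses $\dim (W_b)_u=\dim W_b-\dim\pi_U(W_b)$ for $u$ generic in $\pi_U(X)$, but $\pi_U(X)$ may lie in a locus where the fibres of $W_b\to U$ jump, in which case $X_u$ need not be atypical in $(W_b)_u\cap(c_S{+}J)$.)

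The second gap is the justification of $(*)$. The expected-dimension formula for intersecting with $U\times(c{+}J)$ inside the ambient $U\times(c{+}H)$ gives a \emph{lower} bound $\dim X\ge \dim Z'-(\dim H-\dim(H\cap J))$, where $Z'$ is the component of $W_b\cap(U\times(c{+}H))$ containing $X$; but $(*)$ is the reverse inequality, asserting that $X$ is \emph{typical} for this intersection. That is not automatic for the first $H$ one finds: if it fails, $X$ is again an atypical component inside $c{+}H$, and the paper obtains $(*)$ by translating by $c$ and iterating Fact~\ref{wCIT} with $H$ in place of $S$, descending to strictly smaller subgroups until typicality holds, while keeping the accumulated set of subgroups finite. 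Without this descent the ``furthermore'' clause --- which is precisely what is used later in the proof of Proposition~\ref{GSGC implies GGC} --- is unproved.
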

\begin{proof}
  First suppose $U$ is a point, so $U \cross S \iso S$. The main part of the statement is then Fact~\ref{wCIT}. For the ``furthermore'' part, suppose $(*)$ does not hold for the $H$ we chose from $\mathcal{H}_W$. Then rename $\mathcal{H}_W$ as $\mathcal{H}_W^1$. We give an inductive argument to find a new $\mathcal{H}_W$ which suffices. We have the irreducible $X$ as a component of the intersection $(W_b \cap c{+}H) \cap c{+}(H \cap J)$, and the failure of $(*)$ says that $X$ is atypical as a component of this intersection considered as an intersection of subvarieties of $c{+}H$. 
  %Note that $W_b \cap c{+}H$ may not be irreducible, and the component containing $X$ may have smaller dimension than the whole thing, but that does not matter.
  Translating everything by $c$, we get that $X{-}c$ is an atypical component of the intersection $(W_b{-}c \cap H) \cap (H \cap J)$ inside $H$. Now apply Fact~\ref{wCIT} again with $H$ in place of $S$ to get a proper connected algebraic subgroup $H'$ of $H$ from the finite set $\mathcal{H}_W^2 \leteq \mathcal{H}_W^1 \cup \bigcup_{H \in \mathcal{H}_W^1} \mathcal{H}_{(W_b{-}c \cap H)_{b,c}}$ such that $X \subs c{+}H'$. If necessary we can iterate this construction and since $\dim H' < \dim H$ it stops after finitely many steps, and the $\mathcal{H}_W$ we eventually find is still finite.

  Now consider arbitrary $U$. Suppose first that all fibres of the co-ordinate
  projection $\pi: W_b \to S$ have the same dimension $k$, constant with
  respect to $b$.
  Take $\mathcal{H}_W$ to be the finite set $\mathcal{H}_{(\pi( W_b))_{b\in B}}$ given by this theorem with $U$ a point. We will see that
  this works as required.

  Indeed, let $c{+}J$ be a coset in $S$, let $X$ be an atypical component of $ W_b \cap (U\times c {+} J)$,
  let $Y$ be any irreducible component of $\pi( W_b) \cap c {+} J$ containing $\pi(X)$, and let $H \in \mathcal{H}_{(\pi( W_b))_{b\in B}}$ be as given by the theorem.

  Then by considering dimensions of fibres, we have
  \begin{eqnarray*}
   \dim X  & = & \dim(\pi(X)) + k \\
    &\le &\dim Y  + k\\
    &\le & \dim (\pi( W_b) \cap c {+} H)  + \dim (H \cap J) - \dim H  + k\\
     &=  &\dim \big( W_b \cap (U\times c {+} H) \big) + \dim (H \cap J) - \dim H 
  \end{eqnarray*}

  Now for a general family $ W \subseteq B \cross U\times S$, write $\pi : U \cross S \to S$ for the projection and define
\[W^k = \class{(b,u,s) \in W}{\dim (W_b \cap \pi^{-1}(s)) = k}\]
for each $k=0,\ldots,\dim W$. By the definability of dimension, these $W^k$ are all constructible subsets of $W$, partitioning it, and each $W^k$ satisfies the above constancy condition on fibres. For any $c {+} J$, any component $X$ of $ W_b \cap (U\times c {+} J)$ contains a dense constructible subset which lies in some piece $ W^k_b$. So we can take $\mathcal{H}_W$ to be $\bigcup_k \mathcal{H}_{ W^k}$.
\end{proof}

Now we can prove that GS$\Gamma$C and G$\Gamma$C are equivalent.
\begin{prop}\label{GSGC implies GGC}
Suppose $F$ is a full $\Gamma$-field and $K \closed F$, $K \neq F$. Then $F$ is GS$\varGamma$C over $K$ if and only if it is G$\varGamma$C over $K$.
\end{prop}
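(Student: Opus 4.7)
The plan is as follows. The forward implication GS$\Gamma$C $\Rightarrow$ G$\Gamma$C is obtained by iterating GS$\Gamma$C with extended parameter tuples: given an admissible $(V,\alpha)$ and a proper subvariety $V' \subsetneq V$ defined over $F$, I would use Lemma~\ref{changing sorts} to choose a tuple $\alpha^+ \in \Gamma(F)^{r+s}$ that is $\kO$-linearly independent over $\Gamma(K)$, extends $\alpha$, and contains parameters defining $V'$ in its field closure. Because $K \closed F$, every extension of $K$ by elements of $\Gamma(F)$ is purely $\Gamma$-transcendental, so Corollary~\ref{rotund cor} shows that the new $W^+$ remains strongly rotund and $(V,\alpha^+)$ remains admissible. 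Applying GS$\Gamma$C then yields $\gamma \in V(F) \cap \Gamma^n$ that is linearly independent over $\Gamma(K) \cup \alpha^+$; a predimension calculation using $\delta((\alpha^+,\gamma)/K) > 0$ together with the strong rotundity of $W^+$ then shows that $\gamma \notin V'$, establishing Zariski-density of $V(F) \cap \Gamma^n$ in $V$.

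The substantive content is the reverse implication G$\Gamma$C $\Rightarrow$ GS$\Gamma$C, which I would establish using Theorem~\ref{hwCIT}. Given admissible $(V,\alpha,W)$, the goal is to produce $\gamma \in V(F) \cap \Gamma^n$ with $(\alpha,\gamma)$ $\kO$-linearly independent over $\Gamma(K)$. The ``bad'' locus
\[ B := \{\gamma \in V : \exists N \in \OO^{r+n} \text{ with nonzero } \gamma\text{-component and } \exists c \in \Gamma(K),\ N(\alpha,\gamma) = c\} \]
is a countable union of fibres of maps $V \to G$, each a proper subvariety of $V$ (by freeness of $V$), but a priori their union could be Zariski-dense in $V$, so the Zariski-density granted by G$\Gamma$C is not enough on its own.

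To tame $B$, I would apply Theorem~\ref{hwCIT} to $W$ inside $G^{r+n} = U \cross S$, with $U = G_1^{r+n}$ and $S = G_2^{r+n}$, to obtain a finite family $\mathcal{H}$ of proper connected algebraic subgroups of $S$ controlling the atypical intersections of $W$ with cosets of algebraic subgroups of $S$. Each bad coset $\{N(a,g) = c\}$ decomposes as a product of a coset in $U$ (from the $G_1$-part of $N$) and a coset in $S$ (from the $G_2$-part); the plan is to absorb the $U$-component into a constructible base family of restrictions of $W$, and then apply Theorem~\ref{hwCIT} to this family. The strong rotundity of $W$ ensures the resulting intersections have atypical dimension, so each bad component is contained in a translate of some $H \in \mathcal{H}$. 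Transferring back to the $\alpha$-fibre, the bad locus $B$ is therefore contained in a \emph{finite} union of proper subvarieties of $V$ defined over $K(\alpha)$. By G$\Gamma$C, $V(F) \cap \Gamma^n$ is Zariski-dense in $V$, so it is not contained in this finite union, and any $\gamma$ outside the union gives the desired point.

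The hard part will be the careful adaptation of Theorem~\ref{hwCIT} in the third paragraph: the natural bad cosets constrain both the $U$- and $S$-coordinates of $G^{r+n}$, whereas Theorem~\ref{hwCIT} is formulated only for cosets of the form $U \cross (c+J)$, so the $U$-constraints must be absorbed into the base family of varieties to which the theorem is applied. A further technical point, in verifying that each exceptional subgroup $H \in \mathcal{H}$ yields a \emph{proper} bad subvariety of $V$ rather than all of $V$, will be a dimension-counting argument using strong rotundity of $W$ together with freeness of $V$.
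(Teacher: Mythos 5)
Your strategy for the substantive direction (G$\Gamma$C $\Rightarrow$ GS$\Gamma$C) is the paper's: absorb the $G_1$-part of each $\OO$-linear dependency into a constructible family of restrictions of $W$, apply Theorem~\ref{hwCIT} with $U = G_1^{r+n}$ and $S = G_2^{r+n}$, and finish with G$\Gamma$C. But two steps, as written, do not work. First, the atypicality of the component $X$ of the intersection of (a restriction of) $W$ with $U \cross (c{+}J)$ through a dependent point $\zeta = (\alpha,\xi) \in \Gamma(F)^{r+n}$ does not come from strong rotundity of $W$. It comes from the hypothesis $K \closed F$: since $X$ is defined over $K$ and contains $\zeta$, Lemma~\ref{dim and predim} gives $\dim X \ge \td(\zeta/K) \ge \Gdim^F(\alpha/K) + \dim J$, and freeness of $W$ (which forces the restricted family member to have dimension strictly less than $\dim W$) turns this into strict atypicality. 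Strong rotundity yields no lower bound on $\dim X$ at all --- through an arbitrary point of $W$ the component could be a single point --- and your sketch never invokes $K \closed F$ in this direction, so the application of Theorem~\ref{hwCIT} is unjustified as it stands.

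Second, the inference ``each bad component is contained in a translate of some $H \in \mathcal{H}$, therefore $B$ is contained in a finite union of proper subvarieties of $V$'' is a non sequitur: the translates $c{+}H$ range over infinitely many cosets, so finiteness of $\mathcal{H}$ alone bounds nothing. This is precisely what the quantitative ``furthermore'' clause $(*)$ of Theorem~\ref{hwCIT} is for. Combined with minimality of $J$ (giving $J \subs H$) and the lower bound above, $(*)$ shows that $\zeta$ lies in a fibre of the quotient $\theta_H : W \to W/TH$ (with $TH = LH \cross H$) of dimension at least $\Gdim^F(\alpha/K) + \dim H$, while strong rotundity of $W$ --- this is where it actually enters --- bounds the typical fibre dimension strictly below that. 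Hence $\zeta$ lies in the union $W_H$ of atypical fibres of $\theta_H$, a \emph{single} proper Zariski-closed subset of $W$ defined over $K$, uniform over all cosets; genericity of $\alpha$ in the projection of $W$ then yields one proper subvariety of $V$ for each of the finitely many $H$. You flag ``properness'' as the remaining technical point, but the real difficulty is uniformity over the infinitely many translates, which your plan does not address. (The forward direction is fine in outline, though Corollary~\ref{rotund cor} is there being applied to the abstract generic $\beta$ rather than to a point of $\Gamma(F)$, so pure $\Gamma$-transcendence of the extension generated by $\alpha^+$ and $\beta$ over $K$ requires the amalgamation argument of Theorem~\ref{Gammatr is amalg cat}, not just $K \closed F$.)
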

\newcommand{\Vadep}{\ensuremath{V_{\alpha,\mathrm{dep}}}}
\begin{proof}
As remarked earlier, it is immediate that GS$\Gamma$C over $K$ implies G$\Gamma$C over $K$. So assume $F$ is G$\Gamma$C over $K$. Let $V$, $\alpha$, $\beta$, and $W$ be as given in Definition~\ref{GSGC defn}. Let $V_{\alpha,\mathrm{dep}}$ be the set of points of $V(F)$ which are $\kO$-linearly dependent over $\Gamma(K) \cup \alpha$. We shall find a proper Zariski-closed subset of $V$ containing $\Vadep$.

We first work in case (EXP), so $G_2$ is a simple semiabelian variety of dimension $d$ and $G_1 = \ga^d$, which we identify with the Lie algebra $LG_2$ of $G_2$.

For a $d(r+n)$-square matrix $M$ and an $d(r+n)$-column vector $c$, let $\Lambda_{M,c} \subs \ga^{d(r+n)}$ be given by $x \in \Lambda_{M,c}$ if and only if $Mx = c$. So as $M$ and $c$ vary, we get the family of all possible affine linear subspaces. Let $U_{M,c} = W \cap  (\Lambda_{M,c} \cross G_2^{r+n})$.

Now suppose $\xi \in \Vadep \cap \Gamma(F)^n$. Let $\zeta =(\alpha,\xi) \in \Gamma(F)^{r+n}$. We write $\zeta$ also as $\zeta = (\zeta_1,\zeta_2) \in G_1^{r+n} \cross G_2^{r+n}$. Let $J$ be the smallest algebraic subgroup of  $G_2^{r+n}$ such that $\zeta_2$ lies in a $K$-coset of $J$,
say $\zeta_2 \in c_2'+J$ with $c_2' \in G_2(K)^{r+n}$.
Since $\xi \in \Vadep$, we see that $J$ is a proper algebraic subgroup of $G_2^{r+n}$.

By Lemma~\ref{subgroupsEndomorphisms},
there is $M \in \Mat_{r+n}(\OO)$ such that $J = (\ker(M))^o$ and $LJ = \ker(M)$.
Since $K$ is a full $\Gamma$-field,
there is $c_1' \in G_1^{r+n}(K)$ such that $c'\leteq (c_1',c_2') \in \Gamma(K)^{r+n}$.
Then $M(\zeta-c') \in \Gamma(K)^{r+n}$ since $K \leq F$ preserves the kernels.
Now $\Gamma(K)$ is a $k_\OO$-subspace of $\Gamma(F)$,
so in particular is existentially closed as an $\OO$-submodule,
so there exists $c'' \in \Gamma(K)^{r+n}$ such that $Mc'' = M(\zeta-c')$.
Set $c = (c_1,c_2) := c'+c'' \in \Gamma(K)^{r+n}$,
so $M(\zeta-c) = 0$.
Then $\zeta-c$ is divisible in $\ker(M)$,
since $\Gamma(F)$ is divisible and torsion-free,
so $\zeta-c \in LJ \times J$.

Now we have $\zeta \in U_{M,c_1} \cap (G_1^{r+n} \cross c_2 {+} J)$. Let $X$ be the irreducible component of this intersection containing $\zeta$.

We next show that $X$ has atypical dimension for the intersection.
From the definition of the predimension $\delta$ and its relationship with $\Gdim$ we have 
\begin{eqnarray}\label{ggc3}
\dim X \ge  \td(\zeta/K) & = & \delta(\zeta/K) + d \ldim_\OO(\zeta/\Gamma(K)) \nonumber\\
 & = & \delta(\zeta/K) + \dim J \nonumber\\
 & \ge  & \Gdim^F(\zeta/K) + \dim J \nonumber\\
\dim X & \ge  & \Gdim^F(\alpha/K) + \dim J.
\end{eqnarray}
Since $\alpha \in \Gamma(F)^r$ was chosen $\kO$-linearly independent over $\Gamma(K)$ and such that $\gen{K,\alpha} \strong F$, we have 
\begin{equation}\label{ggc5}
\Gdim^F(\alpha/K) = \delta(\alpha/K) = \td(\alpha/K) - d\ldim_\OO(\alpha/\Gamma(K)) = \td(\alpha/K) - dr. 
\end{equation}
Since $W= \loc(\alpha,\beta/K)$, and $V = \loc(\beta/K(\alpha))$ has dimension $dn$, using (\ref{ggc5}) we have
\begin{eqnarray}\label{ggc7}
\dim W &=& \dim V + \td(\alpha/K) \nonumber\\
& =  & d(r+n) + \Gdim^F(\alpha/K) 
\end{eqnarray}
From (\ref{ggc3}) and (\ref{ggc7}), 
\begin{eqnarray}
\dim X &\ge & \dim W + \dim J - d(r+n) \nonumber\\
 &= & \dim W + (\dim J + d(r+n)) - 2d(r+n) \nonumber\\
 & = & \dim W + \dim(G_1^{r+n} \cross c_2 {+} J) - \dim (G^{r+n}) \nonumber
\end{eqnarray}
but $W$ is free, so $\dim U_{M,c_1} < \dim W$ and so
\begin{equation}
\dim X  > \dim U_{M,c_1} + \dim(G_1^{r+n} \cross c_2 {+} J) - \dim (G^{r+n}).
\end{equation}
So $X$ has atypical dimension.

Applying Theorem~\ref{hwCIT} there is a proper algebraic subgroup $H$ of $G_2^{r+n}$ from the finite set $\mathcal{H}_U$ such that $X \subs G_1^{r+n} \cross c{+}H$. We have $\zeta \in X$, so $\zeta_2 \in c{+}H$. $J$ was chosen as the smallest algebraic subgroup of $S$ such that $\zeta_2$ lies in a $K$-coset of $J$, so $J \subs H$ and hence $H \cap J = J$. So, from the ``furthermore'' clause of Theorem~\ref{hwCIT} we have
\begin{equation}\label{ggc1}
\dim X \le \dim\bigg( U_{M,c_1} \cap (G_1^{r+n} \cross c_2 {+} J)\bigg) + \dim J - \dim H.
\end{equation}

We write $TJ = LJ \cross J$ and $TH = LH \cross H$, thinking of them as the tangent bundles. Then we have 
\begin{eqnarray*}
 U_{M,c_1} \cap (G_1^{r+n} \cross c_2 {+} J) & = & W \cap (c_1 {+} LJ \cross G_2^{r+n}) \cap (G_1^{r+n} \cross c_2 {+} J) \\
 & = & W \cap c {+} TJ\\
 & = & W \cap \zeta {+} TJ\\
 & \subs & W \cap \zeta {+} TH
\end{eqnarray*}
so
\begin{equation}\label{ggc2}
\dim \bigg( U_{M,c_1} \cap (G_1^{r+n} \cross c_2 {+} J)\bigg) \le \dim \big(W \cap \zeta {+} TH\big).
\end{equation}

Combining (\ref{ggc1}), (\ref{ggc2}), and (\ref{ggc3}) we get
\begin{eqnarray}\label{ggc4}
 \Gdim^F(\alpha/K) + \dim J & \le &  \dim \big(W \cap \zeta {+} TH\big) + \dim J - \dim H \nonumber\\
 \Gdim^F(\alpha/K) + \dim H & \le &  \dim \big(W \cap \zeta {+} TH\big) .
\end{eqnarray}

By Lemma~\ref{subgroupsEndomorphisms},
there is $M \in \Mat_{r+n}(\OO)$ such that $H = \ker(M)^o$ and $LH = \ker(M)$,
so $TH = \ker(M)^o$ for the action of $\OO$ on $G$.
So $M : G^{r+n} \to G^{r+n}$ factors as $G^{r+n} \onto G^{r+n} / TH \to G^{r+n}$,
where the first homomorphism is the quotient map,
and the kernel of the second homomorphism is the finite group $\ker(M) / \ker(M)^o$.
Now let $\theta_H : W \onto W / TH$ be the restriction of the quotient map $G^{r+n} \onto G^{r+n} / TH$.
Then $\dim(W/TH) = \dim(M\cdot W)$.
Now $H$ is a proper subgroup of $G_2^{r+n}$,
and $\dim(H) = \dim(\ker(M)) = d(r+n - \rk M)$,
and so $M$ is non-zero.
Then since $W$ is strongly rotund,
$\dim(W/TH) = \dim(M\cdot W) > d \rk M = d(r+n) - \dim H$.

So using the fibre dimension theorem,
the dimension of a typical fibre of $\theta_H$ is
\begin{eqnarray}
\dim (\text{typical fibre}) & = &\dim W - \dim(W/TH) \nonumber \\
& < & \big( d(r+n) + \Gdim^F(\alpha/K) \big) - \big( d(r+n) - \dim H \big) \nonumber\\
& = & \Gdim^F(\alpha/K) + \dim H
\end{eqnarray}
The fibre of $\theta_H$ in which $\zeta$ lies is $W \cap \zeta {+} TH$, so (\ref{ggc4}) says exactly that $\zeta$ lies in a fibre of $\theta_H$ of atypical dimension. By the fibre dimension theorem, there is a proper Zariski-closed subset $W_H$ of $W$, defined over $K$, containing all the fibres of $\theta_H$ of atypical dimension.

Since $\alpha$ is generic in the projection of $W$, and hence of $W_H$, the subset $V_{H,\alpha} \leteq \class{y\in V}{(\alpha,y) \in W_H}$ is proper Zariski-closed in $V$. Let $V_\alpha \leteq \bigcup_{H \in \mathcal{H}_W} H_{H,\alpha}$. Then $V_\alpha$ is also a proper Zariski-closed subset of $V$, and we have shown that $\Vadep \subs V_\alpha$.

So since $F$ is G$\Gamma$C over $K$, there is a point $\beta \in \Gamma(F)^n \cap V(F) \minus V_\alpha(F)$. Since $\beta \notin \Vadep$, $\beta$ is $\kO$-linearly independent over $\Gamma(K) \cup \alpha$. Hence $F$ is  GS$\Gamma$C over $K$ as required.

The proof for case (COR) is very similar, but instead of $\zeta \in (c_1 {+} LJ) \cross (c_2 {+} J)$ we have subgroups $J_1 \subs G_1^{r+n}$ and $J_2 \subs G_2^{r+n}$ which correspond to each other in the sense that they are connected components of solutions to the same system of $\OO$-linear equations. So we get $\zeta \in (c_1 {+} J_1) \cross (c_2 {+} J_2)$ with $\dim J_1 = \dim J_2 = \ldim_\kO(\zeta/\Gamma(K)) < r+n$. Then a similar calculation shows that $\zeta$ lies in a component of the intersection $W \cap c {+} (J_1 \cross J_2)$ of atypical dimension, and we apply the weak Zilber-Pink for the semiabelian variety $G^{r+n}$ and proceed as in case (EXP).
\end{proof}

\subsection{Sufficient conditions for quasiminimality}

\begin{theorem}
Suppose $F$ is a full $\Gamma$-field with the countable closure property which is generically $\Gamma$-closed over some countable $K \closed F$. Then $F$ is quasiminimal.
\end{theorem}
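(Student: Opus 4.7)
My plan is to show that $F$ belongs to the quasiminimal class $\K(\Mtr(K))$ generated by the \Fraisse\ limit $\Mtr(K)$ of the amalgamation category $\Ctrans(K)$; existence of $\Mtr(K)$ and its quasiminimal pregeometry status are supplied by Theorem~\ref{Gammatr is amalg cat} and Theorem~\ref{Fraisse limit is QPS} respectively (since $K$ is countable and full), whence quasiminimality of $F$ follows by Fact~\ref{cat theorem}. The case where $F$ has only finite $\Gcl$-dimension over $K$ is trivial: CCP then forces $F$ to be countable, so every subset of $F$ is automatically countable.

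First I would apply Proposition~\ref{GSGC implies GGC} to promote the hypothesis from G$\Gamma$C to GS$\Gamma$C over $K$, and then Proposition~\ref{GSGC equiv saturation} to conclude that $F$ is $\aleph_0$-saturated for finitely generated $\Gamma$-algebraic extensions which are purely $\Gamma$-transcendental over $K$. Next, I would present $F$ as the directed union of its countable $\Gcl^F$-closed substructures $F_0$ containing $K$; these are cofinal in $F$ and closed under pairwise joins by CCP, and the inclusions $F_0 \subseteq F_0'$ are closed embeddings in the quasiminimal pregeometry sense, since $\Gcl$ is preserved under passage to closed full $\Gamma$-superfields (noted after Proposition~\ref{closed sets of pregeom}). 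Each such $F_0$ is a full $\Gamma$-field (closed $\Gamma$-subfields of fulls are full) with $K \closed F_0$ (transitivity of $\Gamma$-closure), so $F_0$ is a purely $\Gamma$-transcendental extension of $K$, of infinite $\Gcl$-dimension over $K$ in the nontrivial case where $F$ itself is of infinite dimension.

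The remaining task is to show $F_0 \iso \Mtr(K)$ for each such $F_0$, whence $F_0 \in \K(\Mtr(K))$ and $F$ lies in the class as a directed union of closed embeddings. By the uniqueness clause of Theorem~\ref{amalgamation theorem}, it suffices to verify that $F_0$ is $\Ctrans(K)^{<\aleph_0}$-saturated. This splits into (i) realising finitely generated $\Gamma$-algebraic extensions purely $\Gamma$-transcendental over $K$, and (ii) realising extensions that add new $\Gcl$-independent generators. For (i), given $K \strong A \strong F_0$ finitely generated and a finitely generated $\Gamma$-algebraic extension $A \strong B$ purely $\Gamma$-transcendental over $K$, the $\aleph_0$-saturation of $F$ provides a strong embedding $B \strongembed F$ over $A$ whose image, being $\Gamma$-algebraic over $A \subseteq F_0$, lies in $\Gcl^F(A) \subseteq F_0$ because $F_0 \closed F$. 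Point (ii) is immediate from the infinite $\Gcl$-dimensionality of $F_0$ over $K$, by choosing a fresh generic element. The delicate point is (i): one must use $\Gcl^F$-closedness of $F_0$ together with $\Gamma$-algebraicity of the extension to ensure that the embedding supplied by the saturation of $F$ actually lands inside $F_0$ rather than merely somewhere in $F$.
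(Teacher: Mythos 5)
Your overall strategy is exactly the paper's: promote G$\Gamma$C to GS$\Gamma$C via Proposition~\ref{GSGC implies GGC}, translate that into $\aleph_0$-saturation for $\Gamma$-algebraic extensions purely $\Gamma$-transcendental over $K$ via Proposition~\ref{GSGC equiv saturation}, and conclude $F \in \K(\Mtr(K))$. The paper discharges the membership step by rerunning the proof of Theorem~\ref{main theorem}, which works with \emph{finite-dimensional} closed substructures and identifies each with a Fra\"iss\'e limit of $\Gamma$-algebraic extensions over a well-chosen finitely generated base $C$ inside it (Proposition~\ref{uniqueness of M_0}); you instead work with countable closed substructures $F_0$ and try to verify $\Ctrans(K)^{<\aleph_0}$-saturation of each directly. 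Two points in your version need repair. First, it is false that every countable $\Gcl$-closed $F_0$ with $K \subseteq F_0 \subseteq F$ has infinite dimension over $K$ when $F$ does: $K$ itself is such an $F_0$, as is $\Gcl^F(a)$ for a single $a$. You must restrict to the (still cofinal and directed) subfamily of infinite-dimensional countable closed substructures before invoking the uniqueness clause of Theorem~\ref{amalgamation theorem} to get $F_0 \iso \Mtr(K)$.

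Second, and more substantively, your cases (i) and (ii) do not by themselves exhaust the arrows of $\Ctrans(K)^{<\aleph_0}$: a general finitely generated strong extension $A \strong B$ with $\delta(B/A) = m > 0$ is neither $\Gamma$-algebraic nor generated by ``fresh generic elements''. One must factor it, and the factorisation passes through $\Bfull$: pick a $\Gcl$-basis of $B$ over $A$ inside $\Bfull$, use Lemma~\ref{changing sorts} to replace it by points $\beta_1,\ldots,\beta_m \in \Gamma(\Bfull)$ with $\delta(\beta_i/A\beta_1\cdots\beta_{i-1}) = 1$ and loci controlled over $\bk$, set $C = \gen{A\beta_1\cdots\beta_m}$, realise $C$ in $F_0$ using infinite-dimensionality together with Lemma~\ref{changing sorts}, and then realise the $\Gamma$-algebraic extension $\gen{BC}$ of $C$, which contains $B$ and remains purely $\Gamma$-transcendental over $K$ by Lemma~\ref{full transc}. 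Since the auxiliary $\beta_i$ need not lie in $B$, this decomposition is not a splitting of $A \strong B$ itself, which is why the paper routes the argument through Proposition~\ref{uniqueness of M_0} over the enlarged base rather than through direct saturation. Your treatment of case (i) — realising the $\Gamma$-algebraic extension in $F$ by saturation and then observing its image lies in $\Gcl^F(A) \subseteq F_0$ because $F_0 \closed F$ — is correct and is indeed the crux.
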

\begin{proof}
We take $\Fbase = K$ and consider the category $\Ctrans(K)$. By Theorem~\ref{Gammatr is amalg cat} it is an amalgamation category so we have a \Fraisse\ limit $\Mtr(K)$. By Theorem~\ref{Fraisse limit is QPS}, $\Mtr(K)$ is a quasiminimal pregeometry structure, so defines a quasiminimal class $\K(\Mtr(K))$. Substituting Proposition~\ref{GSGC equiv saturation} for Lemma~\ref{SGClosed lemma}, the proof of Theorem~\ref{main theorem} shows that the models in this class are precisely the full $\Gamma$-fields which are purely $\Gamma$-transcendental extensions of $K$, are $\aleph_0$-saturated with respect to the $\Gamma$-algebraic extensions which are purely $\Gamma$-transcendental over $K$, and satisfy the the countable closure property. Hence by Propositions~\ref{GSGC equiv saturation} and~\ref{GSGC implies GGC}, $F$ is in $\K(\Mtr(K))$ and hence is quasiminimal.
\end{proof}

If $F$ is the complex field, in practice it might be difficult or impossible to identify a countable $\Gamma$-closed $K$ and prove directly that 
$F$ is generically $\Gamma$-closed over $K$. Thus the following corollaries may be more useful.

\begin{corollary}
Suppose $F$ is a full $\Gamma$-field with the countable closure property which is \emph{$\Gamma$-closed}. Then $F$ is quasiminimal.
\end{corollary}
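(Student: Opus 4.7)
The plan is to reduce the corollary to the immediately preceding theorem (which concludes quasiminimality from generic $\Gamma$-closedness over a countable $K \closed F$) by exhibiting such a $K$ and verifying the generic $\Gamma$-closedness hypothesis.

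First I would take $K := \Gcl^F(\emptyset)$. By Proposition~\ref{closed sets of pregeom} this is $\Gamma$-closed in $F$, and by the countable closure property it is countable. If $F = K$ then $F$ itself is countable and so trivially quasiminimal, so we may suppose $K \neq F$ and that the preceding theorem is applicable as soon as we check G$\Gamma$C of $F$ over $K$.

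The key observation is that generic $\Gamma$-closedness is, by its very definition (Definition~\ref{GSGC defn}), a \emph{restriction} of $\Gamma$-closedness: it only considers free and rotund irreducible subvarieties $V \subs G^n$ of dimension $dn$ which happen to be defined over $K(\alpha)$ for some suitable $\alpha \in \Gamma(F)^r$ satisfying the strongly rotund condition on $W = \loc(\alpha,\beta/K)$. Since any such $V$ is in particular defined over $F$, the hypothesis that $F$ is $\Gamma$-closed (Definition~\ref{G-closed defn}) directly yields that $V(F) \cap \Gamma^n(F)$ is Zariski-dense in $V$. Hence $F$ is G$\Gamma$C over $K$.

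Applying the preceding theorem then gives that $F$ is quasiminimal, completing the proof. There is no serious obstacle here: all the substantive work has already been done, namely the amalgamation-categoricity machinery producing $\Mtr(K)$ as a quasiminimal pregeometry structure (Theorem~\ref{Fraisse limit is QPS} applied in the purely $\Gamma$-transcendental category $\Ctrans(K)$), together with the nontrivial equivalence of G$\Gamma$C and GS$\Gamma$C (Proposition~\ref{GSGC implies GGC}), which itself relied on the horizontal semiabelian weak Zilber-Pink theorem. The present corollary merely packages these results using the CCP to produce the needed countable base $K$.
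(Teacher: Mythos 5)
Your proposal is correct and follows essentially the same route as the paper: the paper's proof is the one-line observation that $\Gamma$-closedness implies generic $\Gamma$-closedness (over any suitable countable $K \closed F$), after which the preceding theorem applies. You have merely made explicit the choice $K = \Gcl^F(\emptyset)$, the use of CCP to get countability, and the trivial edge case $F = K$, all of which the paper leaves implicit.
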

\begin{proof}
Clearly $\Gamma$-closedness implies generic $\Gamma$-closedness.
\end{proof}
Since $\Cexp$ has the countable closure property by Proposition~\ref{Gcl=Gcl'}, this completes the proof of Theorem~\ref{eac implies qm}. We can do slightly better.
\begin{corollary}\label{CCP + almost Gcl implies qm}
Suppose $F$ is a full $\Gamma$-field with the countable closure property which is \emph{almost $\Gamma$-closed}. That is, for all but countably many free and rotund, irreducible subvarieties $V \subs G^n$ of dimension $dn$, $\Gamma^n(F) \cap V(F)$ is Zariski-dense in $V$. Then $F$ is quasiminimal.
\end{corollary}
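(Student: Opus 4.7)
The plan is to reduce to the preceding theorem of this section by absorbing the countable family of exceptional varieties into a countable $\Gamma$-closed base $K \closed F$, over which $F$ can then be shown to be generically $\Gamma$-closed.

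Concretely, I will let $\mathcal{E}$ denote the countable collection of ``bad'' free, rotund, irreducible subvarieties $V \subs G^n$ of dimension $dn$ for which $V(F) \cap \Gamma^n(F)$ is not Zariski-dense, pick a countable $A \subs F$ containing defining parameters for every $V \in \mathcal{E}$, and set $K := \Gcl^F(A)$. The countable closure property ensures $K$ is countable, and Proposition~\ref{closed sets of pregeom} (together with the structural lemma on $\Gcl$-closed subfields immediately before it) gives $K \closed F$, with $K$ a full (hence algebraically closed) $\Gamma$-subfield of $F$. By construction, every $V \in \mathcal{E}$ is defined over $K$.

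The main step is to verify that $F$ is generically $\Gamma$-closed over $K$ in the sense of Definition~\ref{GSGC defn}; the preceding theorem then delivers quasiminimality. The essential obstacle is to show that no bad $V \in \mathcal{E}$ can satisfy the hypothesis of that definition relative to $K$. Suppose for contradiction that some $V \in \mathcal{E}$ is defined over $K(\alpha)$ for some $\alpha \in \Gamma^r(F)$ that is $\kO$-linearly independent over $\Gamma(K)$, and that $W := \loc(\alpha, \beta/K)$ is strongly rotund for $\beta$ generic in $V$ over $K(\alpha)$. Since $V$ is already defined over $K$, $\beta$ is algebraically independent of $\alpha$ over $K$, so $W = \loc(\alpha/K) \cross V$ (both sides being irreducible over the algebraically closed field $K$ and of equal dimension). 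Then the diagonal matrix $M \in \Mat_{r+n}(\OO)$ with $r$ zeros followed by $n$ ones satisfies $M \cdot W = \{0\}^r \cross V$, of dimension $dn = d \cdot \rk M$, contradicting strong rotundity of $W$. Hence every $V$ meeting the hypothesis of Definition~\ref{GSGC defn} lies outside $\mathcal{E}$, so $V(F) \cap \Gamma^n(F)$ is automatically Zariski-dense, and the preceding theorem applies to conclude that $F$ is quasiminimal.
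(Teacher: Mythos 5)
Your proof is correct and follows essentially the same route as the paper: absorb the countably many exceptional varieties into a countable closed base $K=\Gcl^F(K_1)$ and invoke the preceding theorem via generic $\Gamma$-closedness over $K$. The paper simply asserts that $F$ is then generically $\Gamma$-closed over $K$; your verification that an exceptional $V$ (being defined over $K$) forces $W=\loc(\alpha/K)\cross V$ to fail strong rotundity is a correct filling-in of that implicit step.
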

\begin{proof}
Suppose $F$ is almost $\Gamma$-closed, and take $K_1$ to be a countable subfield of $F$ over which all the countably many exceptional varieties $V$ are defined. Take $K = \Gcl^F(K_1)$. Then $F$ is generically $\Gamma$-closed over $K$.
\end{proof}

Overall we have proved the following generalization of Theorem~\ref{eac implies qm}, which applies to the exponential function, the Weierstrass $\wp$-functions, the exponential maps of simple abelian varieties, and more.
\begin{theorem}
Let $\C_\Gamma$ be an analytic $\Gamma$-field. If $\C_\Gamma$ is almost $\Gamma$-closed then it is quasiminimal. 
\end{theorem}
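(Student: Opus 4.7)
The statement is essentially the combination of two results already established earlier in the paper, so the proof will be quite short. My plan is to simply cite them and stitch them together.

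First, I would observe that an analytic $\Gamma$-field is in particular a full $\Gamma$-field (both in the $(\mathrm{EXP})$ and $(\mathrm{COR})$ definitions, $\Gamma$ is set up so that $\pi_1(\Gamma) = G_1(\C)$ and $\pi_2(\Gamma) = G_2(\C)$, and $\C$ is algebraically closed). Then I would invoke Theorem~\ref{CCP for analytic Gamma-fields} to conclude that $\C_\Gamma$ satisfies the countable closure property for $\Gcl$.

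Given almost $\Gamma$-closedness of $\C_\Gamma$ by hypothesis, and CCP by the above, I would apply Corollary~\ref{CCP + almost Gcl implies qm} directly: choose a countable $K_1 \subseteq \C$ over which all of the (countably many) exceptional free and rotund irreducible subvarieties of dimension $dn$ are defined, set $K = \Gcl^{\C_\Gamma}(K_1)$, which is countable by CCP and $\Gcl$-closed in $\C_\Gamma$, and observe $\C_\Gamma$ is then generically $\Gamma$-closed over $K$. The quasiminimality then follows from the main theorem of the section, namely that any full $\Gamma$-field with CCP which is generically $\Gamma$-closed over some countable closed $K$ lies in the quasiminimal class $\K(\Mtr(K))$ via Propositions~\ref{GSGC equiv saturation} and~\ref{GSGC implies GGC} combined with Theorems~\ref{Gammatr is amalg cat} and~\ref{Fraisse limit is QPS}.

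There is no real obstacle here; the theorem is posed as an explicit corollary/generalisation, and all the genuine work has been done upstream (the pregeometry comparison $\Gcl = \Gcl'$ using Ax's theorem, which underwrites CCP for analytic $\Gamma$-fields, and the use of horizontal semiabelian weak Zilber--Pink to upgrade $\Gamma$-closedness to strong $\Gamma$-closedness in the generic setting, which underwrites Proposition~\ref{GSGC implies GGC}). So my ``proof'' is really a two-line bookkeeping argument citing Theorem~\ref{CCP for analytic Gamma-fields} and Corollary~\ref{CCP + almost Gcl implies qm}.
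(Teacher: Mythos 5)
Your proposal is correct and matches the paper's proof, which likewise just combines Corollary~\ref{CCP + almost Gcl implies qm} with the countable closure property for analytic $\Gamma$-fields (Theorem~\ref{CCP for analytic Gamma-fields}, itself resting on Proposition~\ref{Gcl=Gcl'}). The additional remark that analytic $\Gamma$-fields are full, so the corollary applies, is a correct and harmless piece of bookkeeping.
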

\begin{proof}
Combine~ \ref{CCP + almost Gcl implies qm} with Proposition~\ref{Gcl=Gcl'} which gives CCP.
\end{proof}
Since being $\Gamma$-closed implies being almost $\Gamma$-closed, Theorem~\ref{Gcl implies qm} follows. Theorem~\ref{eac implies qm} is a special case.

\begin{remark}
We do not know if almost $\Gamma$-closedness is a necessary condition for quasiminimality. For example in the exponential case, is it possible to build an uncountable quasiminimal exponential field $F$ with a definable family $(V_p)_{p \in P}$ of rotund and free varieties such that for only countably many $p$ (perhaps none) there is $(\xbar,e^\xbar) \in V_p(F)$?
\end{remark}

%\bibliographystyle{alpha}
%\bibliography{../papers}

%\end{document}

%\newcommand{\etalchar}[1]{$^{#1}$}

\end{document}